\newtheorem{theorem}{Theorem}[section]
\newtheorem{corollary}[theorem] {Corollary}
\newtheorem{definition}[theorem]{Definition}
\newtheorem{lemma} [theorem]{Lemma}
\newtheorem{proposition}[theorem]{Proposition}
\newtheorem{remark}[theorem]{Remark}
\newtheorem{question}[theorem]{Question}
\numberwithin{equation}{section}
\newcommand{\F}{\mathbb{F}}
\newcommand{\Q}{\mathbb{Q}}
\newcommand{\C}{\mathbb{C}}
\newcommand{\GL}{{\rm GL}}
\newcommand{\Sp}{{\rm Sp}}
\newcommand{\Hom}{{\rm Hom}}
\newcommand{\Ind}{{\rm Ind}}
\newcommand{\tr}{{\rm tr}}
\newcommand{\Res}{{\rm Res}}
\newcommand{\Irr}{{\rm Irr}}
\newcommand{\Ps}{{\rm Ps}}
\newcommand{\diag}{{\rm diag}}
\newcommand{\Diag}{{\rm Diag}}
\newcommand{\Gr}{{\rm Gr}}
\newcommand{\cO}{\mathfrak{o}}
\newcommand{\Span}{{\rm Span}}
\title[On the Degenerate Whittaker space]{On the Degenerate Whittaker space for some induced representations of $\GL_4(\cO_2)$}  
\author{Ankita Parashar}
\address{Department of Mathematics, IIT Delhi, Hauz Khas, New Delhi - 110016}
\email{ankitaparashar216@gmail.com}
\author{Shiv Prakash Patel}
\address{Department of Mathematics, IIT Delhi, Hauz Khas, New Delhi - 110016} 
\email{shivprakashpatel@gmail.com}
\keywords{Degenerate Whittaker space,  Regular representations}
\subjclass[2020]{Primary: 20G25, Secondary: 20G05, 20C15}
\begin{document}

\begin{abstract}
Let $\cO_l$ be a finite principal ideal local ring of length $l$.
The degenerate Whittaker space  associated with a representation of $\GL_{2n}(\cO_l)$ is a representation of $\GL_n(\cO_l)$. 
For strongly cuspidal representations of $\GL_{2n}(\cO_l)$ the structure of degenerate Whittaker space 
is described by Prasad's conjecture, which has been proven for $\GL_4(\cO_2)$.
In this paper, we describe the degenerate Whittaker space 
for certain induced representations of $\GL_4(\cO_2)$, specifically those induced from subgroups analogous to the maximal parabolic subgroups of $\GL_4(\F_q)$.
\end{abstract}

\maketitle

\section{Introduction}\label{Sec : 1}

Let $F$ be a finite unramified extension of $\Q_p$ or $\F_{p}((t))$. 
Let $\mathfrak{o}$ be its ring of integers with a uniformizer $\varpi$. 
For any positive integer $l$, let $\mathfrak{o}_l:=\mathfrak{o}/(\varpi^l)$, which is a principal ideal local ring of length $l$. 
Note that $\mathfrak{o}/(\varpi) \cong \mathbb{F}_q$, a finite field of order $q=p^f$ of characteristic $p>0$. 
Let $G = \GL_{2n}(\mathfrak{o}_l)$ and $P_{n,n} \subset G$ the subgroup of all $n \times n$ block upper triangular matrices in $G$.
Then $P_{n, n} =M \ltimes N$, where $M \cong \GL_n(\cO_l) \times \GL_n(\cO_l)$ is the set of  block diagonal matrices and $N \hookrightarrow G$ is given by $X \mapsto \left(\begin{matrix} I_n & X \\0 & I_n \end{matrix} \right)$, where $I_n$ denotes the $n \times n$ identity matrix. Note that $N \cong M_{n}(\cO_l)$.

Let $\psi : N \rightarrow \mathbb{C}^{\times}$ be a character of $N$.
For a representation $(\pi, V)$ of $G$, let $V_{N,\psi}$ be the maximal subspace of $V$ on which $N$ operates by $\psi$, i.e.
$$
V_{N,\psi} := \{ v \in V : \pi(X)v=\psi(X)v \,\, \text{ for~all }X \in N \}.
$$ 
This space is invariant under the action of  $M_{\psi} := \{ m \in M ~|~ \psi(mnm^{-1}) = \psi(n) \forall n \in N \}$, which is a subgroup of $M$. 
Thus we get a representation, say $(\pi_{N, \psi}, V_{N, \psi})$ of $M_{\psi}$, which is known as $(N, \psi)$-twisted Jacquet module of $\pi$ or $(N,\psi)$-Whittaker space of $\pi$. 
We now state the central question considered in this paper.
\begin{question} \label{main question}
For a given representation $\pi$ of $\GL_{2n}(\cO_l)$ and a character $\psi$ of $N$, what is $\pi_{N, \psi}$ as a representation of $M_{\psi}$?   
\end{question}

This question largely remains unanswered. 
For $l=1$, let $\psi : N \rightarrow \C^{\times}$ given by $\psi(X) = \psi_{0}(\tr(X))$, where $\psi_0$ is a non-trivial character of $\cO_1 \cong \F_{q}$. 
For a cuspidal representation $\pi$ of $\GL_{2n}(\F_q)$, a description of  $\pi_{N, \psi}$ is provided by Dipendra Prasad \cite{Prasad2000}.
His work has inspired further results, such as \cite{Himanshi2022, Himanshi, balasubramanian}, where variations of the character $\psi$ are considered.
Analogous questions over $p$-adic fields have also been studied; see \cite{Prasad2001} for $\GL_4$, and   \cite{Sanjeev2024} for $\Sp_4$.
Over finite fields, the focus has primarily been on the case where $\pi$ is cuspidal.
For $l>1$, a broader conjecture of Prasad proposes a description of $\pi_{N, \psi}$ for strongly cuspidal representation $\pi$ of $\GL_{2n}(\cO_l)$ \cite[Conjecture 1.4]{Ankita-Shiv}.
This has been verified for $n=2, l=2$ by the authors (see {\it loc. cit.} Theorem 7.11), but it remains open in general.

Prasad has also analyzed some principal series representations $\pi$ of $\GL_4(\F_q)$, providing the following theorem \cite[Theorem 4]{Prasad2000} and we will be considering analogous representations of $\GL_4(\cO_2)$.
\begin{theorem} [Prasad] \label{DP theorem} 
The degenerate Whittaker space of $\Ps(\pi_1,\pi_2)$, where $\pi_1$ and $\pi_2$ are irreducible representations of $\GL_2(\F_q)$ (neither of which is 1-dimensional) with central characters $\omega_{\pi_1}$ and $\omega_{\pi_2}$, is
\begin{center}
    $(\pi_1\otimes\pi_2)\oplus \Ps(\omega_{\pi_1},\omega_{\pi_2})$
\end{center}
where $\Ps(\omega_{\pi_1},\omega_{\pi_2})$ is the principal series representation of $\GL_2 (\F_q)$ induced from the character $\omega_{\pi_1} \otimes \omega_{\pi_2}$ of $\F_q^\times\times\F_q^\times$.
\end{theorem}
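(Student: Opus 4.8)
The plan is to compute the degenerate Whittaker space directly by Mackey theory. Put $G=\GL_4(\F_q)$ and $P=P_{2,2}=M\ltimes N$ with $M\cong\GL_2(\F_q)\times\GL_2(\F_q)$, and write $\Ps(\pi_1,\pi_2)=\Ind_P^G(\pi_1\otimes\pi_2)$, the outer tensor product inflated to $P$. For $\psi(X)=\psi_0(\tr X)$ one has $M_\psi=\{\diag(g,g):g\in\GL_2(\F_q)\}$, which we identify with $\GL_2(\F_q)$; thus $\Ps(\pi_1,\pi_2)_{N,\psi}$ is a representation of $\GL_2(\F_q)$. First I would restrict to $P$ and apply Mackey's formula along $P\backslash G/P$. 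There are exactly three double cosets, represented by $1$, by the permutation matrix $s$ of the transposition $(2\,3)$, and by the block swap $w=\left(\begin{smallmatrix}0&I_2\\ I_2&0\end{smallmatrix}\right)$; this writes $\Res_P\Ps(\pi_1,\pi_2)$ as a direct sum of three $P$-submodules, and passing to the $\psi$-eigenspace of the abelian group $N$ commutes with this decomposition, each summand inheriting an $M_\psi$-action.

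The cell $1$ contributes nothing, since there $N$ acts trivially while $\psi\neq 1$. The cell $w$ is a single $(P,N)$-double coset (as $w$ normalizes $M$, one has $PwP=PwN$), and the corresponding submodule is $\Ind_M^P$ of $\pi_1\otimes\pi_2$ twisted by $w$; since $N\cong M\backslash P$, its restriction to $N$ contains every character of $N$ with multiplicity $\dim\pi_1\cdot\dim\pi_2$, so the $(N,\psi)$-part is a single copy of $V_{\pi_1}\otimes V_{\pi_2}$, on which a short computation shows $M_\psi$ acts by $g\mapsto\pi_1(g)\otimes\pi_2(g)$. Hence the cell $w$ contributes the representation $\pi_1\otimes\pi_2$ of $\GL_2(\F_q)$.

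The cell $s$ is the heart of the proof. Its submodule is $\Ind_H^P\big({}^{s}(\pi_1\otimes\pi_2)\big)$ with $H=P\cap sPs^{-1}$, and one runs a second Mackey computation along $H\backslash P/N$. The structural point is that the image of $H$ in $M=P/N$ is $B\times B$, where $B$ is the upper-triangular Borel of $\GL_2(\F_q)$; consequently $H\backslash P/N\cong(B\backslash\GL_2(\F_q))\times(B\backslash\GL_2(\F_q))$. Comparing, on each double coset $g$, the restriction of $\psi$ to the relevant subgroup $N\cap g^{-1}Hg$ of $N$ with the twist of $\pi_1\otimes\pi_2$ living there, one finds that only the $q+1$ cosets lying over the diagonal of $(B\backslash\GL_2(\F_q))^2$ survive — on the others $\psi$ is nontrivial on a subgroup of $N$ on which the inflated representation acts trivially. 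On each surviving coset the $(N,\psi)$-part equals $(V_{\pi_1})_{U,\psi_0}\otimes(V_{\pi_2})_{U,\psi_0}$, the tensor product of the Whittaker spaces of $\pi_1$ and $\pi_2$ for the maximal unipotent $U$ of $\GL_2(\F_q)$, which is one-dimensional since $\pi_1$ and $\pi_2$ are irreducible and not one-dimensional, hence generic with a unique Whittaker model. Finally $M_\psi$ permutes these $q+1$ lines transitively, with point-stabilizer $B$ acting on the base line through the character $\diag(t_1,t_2)u\mapsto\omega_{\pi_1}(t_1)\omega_{\pi_2}(t_2)$ — the unipotent part $u$ trivially, the torus through the central characters. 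Therefore the cell $s$ contributes $\Ind_B^{\GL_2(\F_q)}(\omega_{\pi_1}\otimes\omega_{\pi_2})=\Ps(\omega_{\pi_1},\omega_{\pi_2})$, and adding the three cells gives $\Ps(\pi_1,\pi_2)_{N,\psi}\cong(\pi_1\otimes\pi_2)\oplus\Ps(\omega_{\pi_1},\omega_{\pi_2})$.

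\textbf{Main obstacle.} The delicate part is the cell $s$: identifying $H$ and the double coset space $H\backslash P/N$, determining which double cosets are compatible with $\psi$ — this is precisely where $\pi_1$ and $\pi_2$ get replaced by their central characters — and reassembling the one-dimensional Whittaker spaces into a principal series with the correct torus twist. The other two cells and the bookkeeping of the $M_\psi$-actions are routine Mackey theory, the only external ingredient being the classical uniqueness of Whittaker models for non-one-dimensional irreducible representations of $\GL_2(\F_q)$.
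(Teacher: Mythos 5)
The paper does not prove this theorem itself --- it is quoted from Prasad (\cite{Prasad2000}, Theorem~4) --- but your Mackey-theoretic argument follows exactly the strategy the paper later deploys for the $\cO_2$-analogue in Section~\ref{Sec:4}: decompose $\Res_P \Ind_P^G(\pi_1\otimes\pi_2)$ over $P\backslash G/P$, observe that the identity cell kills $\psi$, that the block-swap cell yields $\pi_1\otimes\pi_2$ via uniqueness of the $\psi$-isotypic line in $\mathbb{C}[N]$, and that the transposition cell produces the principal series once the interior Mackey pass over $(B\backslash\GL_2)^2$ and the one-dimensionality of the Whittaker spaces of $\pi_1,\pi_2$ collapse the surviving $q+1$ lines into $\Ind_B^{\GL_2}(\omega_{\pi_1}\otimes\omega_{\pi_2})$. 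The argument is correct and is essentially Prasad's original proof, specialized from the paper's six $\cO_2$-cosets to the three $\F_q$-cosets.
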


Assume $l>1$ and fix a non-trivial additive character \( \psi_0 : \mathfrak{o}_l \to \mathbb{C}^\times \) such that \( \psi_0|_{\varpi^{l-1} \mathfrak{o}_l} \neq 1 \). 
Define a character $\psi : N \rightarrow \C^{\times}$ given by $\psi(X)= \psi_{0}(\tr(X))$.
Then the $(N, \psi)$-Whittaker space $\pi_{N, \psi}$ of a representation $\pi$ of $\GL_{2n}(\cO_l)$ is a representation of $M_{\psi} \cong \GL_n(\cO_l)$, which is embedded diagonally inside $M$.
This paper is concerned with the case $l=2$ and $n=2$, i.e. describing the degenerate Whittaker space for representations of $\GL_4(\cO_2)$ as a representation of $\GL_2(\cO_2)$.
Since the strongly cuspidal representations were addressed in our earlier work \cite{Ankita-Shiv}, we focus here exclusively on some induced representations of $\GL_4(\cO_2)$. 
Consider the following subgroups $P$ and $Q$ of $\GL_4(\cO_2)$; 
\[ \begin{array}{ll}
    P= &  \left\lbrace\begin{pmatrix}
    g_1 & X\\
    0 & g_2
\end{pmatrix}:g_1,g_2\in \GL_2(\cO_2),~X\in M_2(\cO_2) \right\rbrace \cong M \ltimes N,\\
    Q= & \left\lbrace\begin{pmatrix}
    h_1 & Y\\
    0 & h_2
\end{pmatrix}:h_1\in \GL_3(\cO_2),~h_2\in \GL_1(\cO_2),~Y\in M_{3\times 1}(\cO_2)\right\rbrace \cong L \ltimes U
\end{array} \]
where $M \cong \GL_2(\cO_2) \times \GL_2(\cO_2), N \cong M_{2 \times 2}(\cO_2), L \cong \GL_3(\cO_2) \times \GL_1(\cO_2)$ and $U \cong M_{3 \times 1}(\cO_2)$.

Our main results are the following.
\begin{theorem} \label{induction from P-intro}
Let $\pi_1 = \Ind_{I(\phi_{B_1})}^{\GL_2(\cO_2)}  (\tilde{\phi}_{B_1})$ and $\pi_2 =  \Ind_{I(\phi_{B_2})}^{\GL_2(\cO_2)}  (\tilde{\phi}_{B_2})$ be strongly cuspidal representations of $\GL_2(\cO_2)$ with central characters $\omega_{\pi_1}$ and $\omega_{\pi_2}$ respectively (see Section \ref{construction of regualr representations--} for notation).
Let $\pi=\Ind_{P}^{\GL_4(\cO_2)}(\pi_1\otimes \pi_2)$, where $\pi_1 \otimes \pi_2$ is realized as a representation of $P$ via $P \twoheadrightarrow P/N \cong M$.
Assume that $\tr(B_1) \neq \tr(B_2)$ and $B  = \diag (\tr(B_1), \tr(B_2))$.
Then 
\[
\pi_{N,\psi} \cong 
(\pi_1\otimes \pi_2) ~~~~ \bigoplus ~~~~ \Ind_{\mathfrak{B}_2}^{\GL_2(\cO_2)} (\omega_{\pi_1} \otimes \omega_{\pi_2}) 
~~~~ \bigoplus ~~~~ \Ind_{Z \cdot J^1_2}^{\GL_2(\cO_2)} (\omega_{\pi} \phi_B).
\]
\end{theorem}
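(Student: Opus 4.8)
The plan is to compute the $(N,\psi)$-twisted Jacquet module of $\pi = \Ind_P^{\GL_4(\cO_2)}(\pi_1 \otimes \pi_2)$ by first using Mackey theory to decompose the restriction of $\pi$ to $P$ (equivalently, to $N$) into a sum over double cosets $P \backslash \GL_4(\cO_2) / P$, and then taking the $\psi$-isotypic component of each piece. The relevant double coset representatives are indexed by the standard Weyl-group elements together with their $\cO_2$-refinements; concretely, the geometry mirrors the $\GL_4(\F_q)$ computation in Prasad's Theorem 1.3, but now each $\F_q$-point must be thickened to an $\cO_2$-point, which is where the extra summand $\Ind_{Z\cdot J^1_2}^{\GL_2(\cO_2)}(\omega_\pi \phi_B)$ will come from. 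For each double coset $PwP$, Frobenius reciprocity reduces the $\psi$-twisted Jacquet functor to computing $\big((\pi_1\otimes\pi_2)|_{P \cap wPw^{-1}}\big)_{(N \cap w N w^{-1},\, \psi^w)}$ and then inducing up to $M_\psi \cong \GL_2(\cO_2)$; the key is to track for which $w$ the character $\psi$ restricted to $N \cap wNw^{-1}$ is compatible with the character by which $N$ already acts (trivially) on $\pi_1 \otimes \pi_2$.

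First I would set up the orbit analysis: $M_\psi$ acting on the set of double cosets, and for each orbit determine the stabilizer and the twisted Jacquet module of the local data. The identity double coset $PeP = P$ contributes $\big((\pi_1 \otimes \pi_2)\big)_{N,\psi}$; since $N$ acts trivially on $\pi_1\otimes\pi_2$ and $\psi$ is nontrivial (indeed $\psi_0|_{\varpi\cO_2}\neq 1$), one must check this vanishes — so the summand $\pi_1 \otimes \pi_2$ in the answer must arise from a \emph{different} double coset, the one corresponding to the long Weyl element $w_0$ permuting the two blocks, for which $N \cap w_0 N w_0^{-1} = 0$ and the whole of $\pi_1 \otimes \pi_2$ survives, restricted along $M_\psi \hookrightarrow M$ diagonally. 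The middle summand $\Ind_{\mathfrak{B}_2}^{\GL_2(\cO_2)}(\omega_{\pi_1}\otimes\omega_{\pi_2})$ should come from an intermediate coset where $N \cap wNw^{-1}$ is one-dimensional and $\psi$ picks out the central characters $\omega_{\pi_1}, \omega_{\pi_2}$ of $\pi_1,\pi_2$ (exactly as the off-diagonal torus does in Prasad's $\F_q$ theorem, with $\mathfrak{B}_2$ playing the role of the Borel-type subgroup of $\GL_2(\cO_2)$). The third summand is the genuinely new $\cO_2$-phenomenon: on the ``level-one'' part of $N$ (the $\varpi M_2(\cO_2)$ layer) the representations $\pi_1,\pi_2$ are not yet trivialized, and here the explicit models $\pi_i = \Ind_{I(\phi_{B_i})}^{\GL_2(\cO_2)}(\tilde\phi_{B_i})$ together with the hypothesis $\tr(B_1)\neq \tr(B_2)$, $B = \diag(\tr B_1, \tr B_2)$ let one identify the surviving contribution with $\Ind_{Z\cdot J^1_2}^{\GL_2(\cO_2)}(\omega_\pi \phi_B)$; this is where I would invoke the Clifford-theoretic structure of strongly cuspidal representations of $\GL_2(\cO_2)$ recalled in Section \ref{construction of regualr representations--}, and the hypothesis $\tr(B_1)\neq\tr(B_2)$ is precisely what makes the relevant orbit a single $M_\psi$-orbit with stabilizer $Z\cdot J^1_2$.

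The main obstacle, I expect, is the bookkeeping in the third case: one must (i) enumerate the double cosets refining the given Weyl element over $\cO_2$ and (ii) for each, perform the twisted Jacquet computation on a tensor product of two induced representations, which involves a second layer of Mackey theory (a double-coset decomposition for $I(\phi_{B_1})\backslash \GL_2(\cO_2)/I(\phi_{B_2})$ intertwined with the $N$-character). Keeping the two inductions and the $\psi$-twist simultaneously under control, and verifying that the non-vanishing condition forces exactly $B = \diag(\tr B_1, \tr B_2)$ and the stabilizer $Z \cdot J^1_2$, is the delicate point; the condition $\tr(B_1)\neq\tr(B_2)$ should be exactly what prevents the orbit from degenerating and collapsing this summand. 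Once the three nonzero contributions are identified and the remaining double cosets are shown to contribute zero (because $\psi$ restricted to the relevant subgroup of $N$ fails to match the trivial $N$-action on $\pi_1\otimes\pi_2$), assembling them gives the claimed decomposition, and a dimension count against $\dim \pi_{N,\psi} = \dim(\pi_1)\dim(\pi_2)\cdot q^{n^2}/|\text{something}|$ serves as a useful consistency check.
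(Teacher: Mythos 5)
Your proposal follows essentially the same route as the paper: restrict $\pi$ to $P$, decompose via Mackey theory over $P\backslash\GL_4(\cO_2)/P$ (the paper's Theorem~\ref{6 cosets P-G-P} gives six representatives, three of which refine the Weyl element $I_4$, two of which refine $(13)$, and one which is $(13)(24)$), show the cosets refining the identity give zero (Proposition~\ref{deltas with 0 WM}), and identify the three surviving cosets with the three summands. Your identifications are exactly the paper's: $\delta_4$ (block swap, with $N\cap\delta_4 N\delta_4^{-1}$ trivial) gives $\pi_1\otimes\pi_2$; $\delta_5$ gives $\Ind_{\mathfrak{B}_2}^{\GL_2(\cO_2)}(\omega_{\pi_1}\otimes\omega_{\pi_2})$; and the genuinely new $\cO_2$-coset $\delta_6$ gives $\Ind_{Z\cdot J^1_2}^{\GL_2(\cO_2)}(\omega_\pi\phi_B)$, with the hypothesis $\tr(B_1)\neq\tr(B_2)$ pinning down $B$. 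Two small inaccuracies worth noting but not affecting the overall plan: for $\delta_5$ the intersection $\delta_5 P\delta_5^{-1}\cap N$ is three-dimensional over $\cO_2$ (not one-dimensional), and the second Mackey layer the paper actually uses for $\delta_6$ involves $K_I\backslash P/T_1T_2N$ (parameterized via $\GL_2(\F_q)/\F_{q^2}^\times\times\GL_2(\F_q)/\F_{q^2}^\times$) and then $K_I^\gamma\backslash\Delta\GL_2(\cO_2)\cdot N/\Delta I(\phi_B)\cdot N$, rather than a decomposition $I(\phi_{B_1})\backslash\GL_2(\cO_2)/I(\phi_{B_2})$; the dimension counts the paper uses are per-coset ($q(q+1)$ for $\delta_5$, $q(q^2-1)$ for $\delta_6$) rather than a single global one.
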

This result is a variation of Theorem \ref{induction from P-intro} and its proof is detailed in Section \ref{Sec:4}.
We begin by restricting $\pi$ to the subgroup $P$ and analyzing $\pi|_{P}$ using Mackey theory.
We need a description of double cosets $P\backslash \GL_4(\cO_2) / P$ and get that there are six orbits in the Mackey theory (see Theorem \ref{6 cosets P-G-P}). 
We explicitly compute the degenerate Whittaker spaces associated to all six orbits appearing in $\pi|_{P}$ and
find that exactly three of the six orbits contribute non-trivially to $\pi_{N, \psi}$, which appear in the direct summand of $\pi_{N, \psi}$ as in Theorem \ref{induction from P-intro}.

For $\GL_4(\F_q)$, Prasad studied $\pi_{N, \psi}$ for representations that are parabolic induction from the $(2,2)$ parabolic subgroup (see Theorem \ref{DP theorem}), but not for those induced from the $(3,1)$ parabolic, which apparently does not have a nice description.
In contrast, we also describe $\pi_{N, \psi}$ for representations induced from $Q$ as well.
More precisely, we prove the following.

\begin{theorem} \label{induction from Q-intro}
Let $\rho$ be a strongly cuspidal representation of $\GL_3(\cO_2)$ and $\chi$ a character of $\GL_1(\cO_2) \cong \cO_2^\times$.
Let $\pi = \Ind_Q^{\GL_4(\cO_2)}(\rho \otimes \chi)
$, where $\rho \otimes \chi$ is realized as a representation of $Q$ via $Q \twoheadrightarrow Q/U \cong L$.
Let $m_0 \in \F_q$ be such that 
$\psi_0(2m_0 x) = \omega_{\pi}(1+ \varpi \tilde{x})$
for all $x \in \F_q$, where $\tilde{x} \in \cO_2$ is any lift of $x$. 
Then 
$$
\pi_{N,\psi} \cong \bigoplus_{B} \Ind_{Z \cdot J^1_2}^{\GL_2(\cO_2)}(\omega_{\pi} \phi_B)
$$
where $B$ varies over the set of equivalence classes of all regular elements of $M_2(\F_q)$ with trace $2m_0$.
\end{theorem}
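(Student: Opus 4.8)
The plan is to mimic the strategy announced for Theorem~\ref{induction from P-intro}: restrict $\pi = \Ind_Q^{\GL_4(\cO_2)}(\rho \otimes \chi)$ to $P$ and compute the $(N,\psi)$-Whittaker space orbit by orbit via Mackey theory, only now applied to the double coset space $Q \backslash \GL_4(\cO_2) / P$. First I would work out a complete set of representatives for $Q \backslash \GL_4(\cO_2) / P$; since $Q$ is (an analogue of) the $(3,1)$-parabolic and $P$ the $(2,2)$-parabolic, the reduction mod $\varpi$ is governed by the Bruhat decomposition for $\GL_4(\F_q)$ with these two parabolics, and then one must track the extra cosets coming from the level-$2$ structure (the congruence subgroup $1 + \varpi M_4(\cO_1)$). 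For each representative $g$, Mackey gives a summand $\Ind_{g^{-1}Qg \,\cap\, P}^{P}\big((\rho\otimes\chi)^g\big)$ of $\pi|_P$, and I would then compute the $(N,\psi)$-coinvariants of each such summand.

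The key computational device is the standard one: for an induced representation $\Ind_H^P(\sigma)$ one has $(\Ind_H^P \sigma)_{N,\psi} \cong \bigoplus_{w \in H \backslash P / N} (\text{something})$, and after using $P = M \ltimes N$ one reduces the $N$-coinvariant computation to asking, on each piece, whether the character $\psi$ restricted to $N \cap (\text{conjugate of } Q)$ matches the character by which that unipotent part of $Q$ acts. Because $\psi_0$ is chosen so that $\psi_0|_{\varpi \cO_2} \neq 1$ (the ``primitive'' normalization), many orbits will be annihilated and only those where the relevant matching forces the trace condition $\tr = 2m_0$ will survive — this is exactly where the constant $m_0$ defined via $\psi_0(2m_0 x) = \omega_\pi(1 + \varpi \tilde x)$ enters, since $\omega_\pi$ is the central character of $\pi$ and the central torus of $\GL_2(\cO_2)$ interacts with $N$ through the trace. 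On each surviving orbit I expect the contribution to be, after identifying $M_\psi$ with $\GL_2(\cO_2)$ embedded diagonally, an induced representation of the shape $\Ind_{Z \cdot J^1_2}^{\GL_2(\cO_2)}(\omega_\pi \phi_B)$ for a regular $B \in M_2(\F_q)$; running over the orbits should produce exactly one such summand for each equivalence class of regular $B$ with $\tr(B) = 2m_0$, giving the stated direct sum.

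To carry this out cleanly I would organize it as: (i) a lemma enumerating $Q\backslash \GL_4(\cO_2)/P$ and, for each representative, describing $g^{-1}Qg \cap P$ explicitly as a subgroup of $M \ltimes N$; (ii) for each orbit, a Clifford-theoretic analysis of how $\rho \otimes \chi$ (transported by $g$) decomposes over $N \cap g^{-1}Qg$ and which characters of $N$ occur — here the strong cuspidality of $\rho$ is essential, as it pins down the $N$-orbit structure of the ``bottom'' of $\rho$ and guarantees that a generic (regular) character shows up; (iii) assembling the surviving pieces and recognizing each as $\Ind_{Z\cdot J^1_2}^{\GL_2(\cO_2)}(\omega_\pi \phi_B)$, using the notation and the explicit form of $J^1_2$ and $\phi_B$ from Section~\ref{construction of regualr representations--}. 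The main obstacle I anticipate is step~(ii) on the ``large'' orbit(s): the restriction of a strongly cuspidal $\rho$ of $\GL_3(\cO_2)$ to the relevant non-parabolic subgroup $g^{-1}Qg \cap P$ is not itself a parabolically induced or easily decomposed object, and extracting precisely which $\psi$-eigenspaces survive — and showing the multiplicities are exactly $1$ per regular class with the right trace — will require a careful orbit count for the coadjoint-type action together with the regularity hypothesis on $\rho$, rather than a formal manipulation.
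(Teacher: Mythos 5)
Your architecture is the right one and matches the paper's: restrict $\pi$ to $P$, use Mackey theory on $P\backslash \GL_4(\cO_2)/Q$ (you wrote $Q\backslash G/P$, which is the same thing), identify which double cosets contribute, and recognize the survivors as induced representations $\Ind_{Z\cdot J^1_2}^{\GL_2(\cO_2)}(\omega_\pi\phi_B)$. You also correctly anticipate that the level-$2$ structure adds exactly one coset beyond the Bruhat picture over $\F_q$ (the paper gets three representatives $I_4$, $I^\varpi$, $(24)$, with only $(24)$ contributing), and that $m_0$ arises from matching the central character against $\psi_0$.

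There are, however, two gaps you should close. First, the multiplicity-one statements you plan to prove (for each regular $B$ of trace $2m_0$, the representation $\Ind_{Z\cdot J^1_2}^{\GL_2(\cO_2)}(\omega_\pi\phi_B)$ occurs once in $\pi^{(24)}_{N,\psi}$) give only an inclusion $\bigoplus_B \Ind_{Z\cdot J^1_2}^{\GL_2(\cO_2)}(\omega_\pi\phi_B) \subseteq \pi^{(24)}_{N,\psi}$; nothing in your outline rules out additional summands, e.g.\ non-regular irreducibles of $\GL_2(\cO_2)$. The paper closes this by a separate dimension computation, $\dim(\pi^{(24)}_{N,\psi}) = q^2(q^2-1)$ (Theorem~\ref{dim of pi^24_N,psi}), which exactly matches the dimension of the proposed direct sum, so the inclusion is forced to be an equality. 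Without either this count, or a case-by-case argument covering all irreducibles of $\GL_2(\cO_2)$, your argument does not terminate.

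Second, your step (ii) — ``Clifford-theoretic analysis'' plus ``strong cuspidality pins down the $N$-orbit structure'' — undersells what actually has to happen. The paper realizes $\rho$ as $\Ind_{I(\phi_C)}^{\GL_3(\cO_2)}\tilde\phi_C$ for a regular elliptic $C \in \F_{q^3}\subset M_3(\F_q)$, fixes a concrete embedding via an irreducible cubic $y^3-y-a$, and then writes down explicit matrix equations in the entries $g_1,\dots,g_4$ of $\GL_2(\F_q)$; the non-vanishing criterion for each $W_{\gamma,\lambda}$ reduces to a system like $c_1g_3+c_2g_4=1,\ c_1g_1+c_2g_2=0,\ c_1g_2^2-(c_1+c_2a)g_1^2=0$, and in the bad cases the irreducibility of the cubic yields a contradiction $c_1^3-c_1c_2^2-c_2^3a=0$ (Proposition~\ref{reduction to one gamma for split semisimple and unipotent-1}). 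So the regularity of $\rho$ is not just a Clifford-theoretic hypothesis; the specific arithmetic of the cubic extension is what makes the orbit count come out to exactly one $g$ per surviving $(\gamma,\lambda)$. Your sketch should make this explicit, since it is precisely the step you flag as the ``main obstacle'' and it cannot be resolved by a formal orbit count alone.
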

We prove the above theorem in Section \ref{Sec : 5}. Again, we restrict $\pi$ to $P$ and apply Mackey theory to analyze  $\pi|_{P}$.
The double coset decomposition $P\backslash \GL_4(\cO_2) / Q$ gives three orbits.
Explicit computations of the degenerate Whittaker spaces for each orbit show that only one of them contributes non-trivially.
Note that the resulting description of $\pi_{N, \psi}$ in Theorem \ref{induction from Q-intro} is particularly simple.
We further deduce the following corollaries.
\begin{corollary}
Let $\pi$ be as in Theorem \ref{induction from Q-intro}. Then
\begin{enumerate}[label = ($\alph*$)]
\item The degenerate Whittaker space $\pi_{N, \psi}$ consists of all the regular representations of $\GL_2(\cO_2)$ with central character as $\omega_{\pi}$.
\item $\pi_{N, \psi}$ is a multiplicity-free representation. 
\end{enumerate}  
\end{corollary}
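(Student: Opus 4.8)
The corollary follows almost immediately from Theorem \ref{induction from Q-intro}, so the plan is essentially to unwind the definitions and invoke the parametrization of regular representations of $\GL_2(\cO_2)$. First I would recall from Section \ref{construction of regualr representations--} that the strongly cuspidal (more generally, regular) representations of $\GL_2(\cO_2)$ built from a regular element $B \in M_2(\F_q)$ are precisely of the form $\Ind_{Z \cdot J^1_2}^{\GL_2(\cO_2)}(\omega \phi_B)$, and that two such induced representations are isomorphic if and only if the corresponding data $(B, \omega)$ agree up to the natural equivalence (conjugacy on $B$, equality of the central character $\omega$). In particular, each summand $\Ind_{Z \cdot J^1_2}^{\GL_2(\cO_2)}(\omega_\pi \phi_B)$ appearing in Theorem \ref{induction from Q-intro} is a regular representation of $\GL_2(\cO_2)$ whose central character is $\omega_\pi$, and the trace condition $\tr(B) = 2m_0$ is exactly the constraint forced by the requirement that the central character equal $\omega_\pi$ (via the relation $\psi_0(2m_0 x) = \omega_\pi(1 + \varpi\tilde x)$ defining $m_0$). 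This already gives the inclusion: every summand is a regular representation with central character $\omega_\pi$.

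For part (a) I would then argue the reverse inclusion: given an arbitrary regular representation $\tau$ of $\GL_2(\cO_2)$ with central character $\omega_\pi$, write $\tau \cong \Ind_{Z \cdot J^1_2}^{\GL_2(\cO_2)}(\omega_\pi \phi_{B'})$ for some regular $B' \in M_2(\F_q)$; the central character being $\omega_\pi$ forces $\psi_0(\tr(B') x) = \omega_\pi(1 + \varpi \tilde x)$ for all $x$, hence $\tr(B') = 2m_0$, so $B'$ lies in the index set of the sum in Theorem \ref{induction from Q-intro} and $\tau$ occurs in $\pi_{N,\psi}$. Thus $\pi_{N,\psi}$ consists of exactly the regular representations of $\GL_2(\cO_2)$ with central character $\omega_\pi$, each occurring once.

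For part (b), multiplicity-freeness is then immediate: the summands in Theorem \ref{induction from Q-intro} are indexed by distinct equivalence classes of regular elements $B$ (all with trace $2m_0$), and distinct classes give non-isomorphic irreducible representations $\Ind_{Z \cdot J^1_2}^{\GL_2(\cO_2)}(\omega_\pi \phi_B)$ by the classification recalled above; since no class is repeated in the index set, no irreducible constituent of $\pi_{N,\psi}$ appears more than once. (If one wants the two claims in a single line, (b) is a formal consequence of (a) together with the fact that the regular representations with a fixed central character are pairwise non-isomorphic and irreducible.)

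The only real content beyond bookkeeping is making sure the dictionary between the pair $(B,\omega)$ and the isomorphism class of $\Ind_{Z \cdot J^1_2}^{\GL_2(\cO_2)}(\omega \phi_B)$ is stated sharply enough — in particular that the central character of this induced representation is determined by $\tr(B)$ via the stated formula, and that the induced representations for inequivalent regular $B$ (with the same central character) are genuinely distinct; I expect this to be exactly the place where one cites the construction/classification in Section \ref{construction of regualr representations--} rather than reproving anything, so the main (minor) obstacle is simply pinning down the precise form of that classification statement and checking the trace/central-character compatibility is an equivalence and not merely an implication.
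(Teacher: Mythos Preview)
Your proof has a genuine gap stemming from a misreading of the construction in Section~\ref{construction of regualr representations--}. The irreducible regular representations of $\GL_2(\cO_2)$ are of the form $\Ind_{I(\phi_B)}^{\GL_2(\cO_2)}\tilde\phi_B$, where $\tilde\phi_B$ is an extension of $\phi_B$ to the inertia group $I(\phi_B)$; they are \emph{not} of the form $\Ind_{Z\cdot J^1_2}^{\GL_2(\cO_2)}(\omega\phi_B)$. Since $Z\cdot J^1_2$ is a proper subgroup of $I(\phi_B)$, the representation $\Ind_{Z\cdot J^1_2}^{\GL_2(\cO_2)}(\omega_\pi\phi_B)$ is reducible: by transitivity of induction it equals $\Ind_{I(\phi_B)}^{\GL_2(\cO_2)}\bigl(\Ind_{Z\cdot J^1_2}^{I(\phi_B)}(\omega_\pi\phi_B)\bigr)$, and the inner induction splits as a direct sum of the various extensions $\tilde\phi_B$ of $\omega_\pi\phi_B$ to $I(\phi_B)$. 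Thus each summand in Theorem~\ref{induction from Q-intro} is a \emph{package} of several irreducible regular representations, not a single one, and your sentence ``write $\tau\cong\Ind_{Z\cdot J^1_2}^{\GL_2(\cO_2)}(\omega_\pi\phi_{B'})$'' for an irreducible $\tau$ is simply false.

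This breaks both halves of your argument. For part~(a) you need the extra step that the irreducible constituents of $\Ind_{Z\cdot J^1_2}^{\GL_2(\cO_2)}(\omega_\pi\phi_B)$ are exactly the irreducible regular representations attached to the conjugacy class of $B$ with central character $\omega_\pi$; granted this, your trace/central-character matching then shows that summing over all regular $B$ with $\tr(B)=2m_0$ exhausts the regular representations with central character $\omega_\pi$, which is the paper's route via Section~\ref{reg reps of GL_2}. For part~(b), your claim that ``distinct classes give non-isomorphic irreducible representations'' is vacuous once you know the summands are reducible; what is actually needed is that each $\Ind_{Z\cdot J^1_2}^{\GL_2(\cO_2)}(\omega_\pi\phi_B)$ is itself multiplicity-free (the extensions $\tilde\phi_B$ are pairwise distinct characters, hence so are the induced irreducibles) and that summands for inequivalent $B$ share no constituents (their restrictions to $J^1_2$ involve inequivalent orbits of characters). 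This is precisely how the paper argues part~(b).
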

\begin{corollary}
Let $\pi_1 = \Ind_Q^{\GL_4(\cO_2)}(\rho_1 \otimes \chi_1)$ and $\pi_2 = \Ind_Q^{\GL_4(\cO_2)}(\rho_2 \otimes \chi_2)$, where $\rho_1, \rho_2$ are strongly cuspidal representations of $\GL_3(\cO_2)$ and $\chi_1, \chi_2$ are characters of $\cO_2^{\times}$ (as in Theorem \ref{induction from Q-intro}). If the central characters of $\pi_1$ and $\pi_2$ are the same, then $(\pi_1)_{N, \psi} \cong (\pi_2)_{N, \psi}$ as representations of $\GL_2(\cO_2)$. 
\end{corollary}
We end the introduction by mentioning that in Section \ref{Sec:2} we recall required preliminaries, 
in Section \ref{Sec:4} we prove Theorem \ref{induction from P-intro} and in Section \ref{Sec : 5} we prove Theorem \ref{induction from Q-intro}.

\section{Preliminaries }\label{Sec:2}
Let $F$ be a finite unramified extension of $\Q_p$ or $\F_{p}((t))$. 
Let $\mathfrak{o}$ be its ring of integers with a uniformizer $\varpi$. 
For any positive integer $l$, define $\mathfrak{o}_l:=\mathfrak{o}/(\varpi^l)$. 
Note that $\mathfrak{o}/(\varpi) \cong \mathbb{F}_q$, where $\F_{q}$ is a finite field of order $q=p^f$ and characteristic $p>0$. 
For any integer \( m \) with \( 1 \leq m < l \), the natural projection 
$\mathfrak{o}_l \rightarrow \mathfrak{o}_m$,
induces a projection
$\GL_n(\mathfrak{o}_l) \rightarrow \GL_n(\mathfrak{o}_m)$.
For $g \in \GL_n(\mathfrak{o}_l)$, we denote its image in $\GL_n(\mathfrak{o}_m)$ by $\bar{g}$. 
For any \( h \in \GL_n(\mathfrak{o}_m) \), we denote by \( \tilde{h} \in \GL_n(\mathfrak{o}_l) \) a preimage of \( h \).
Define the $m$-the principal congruence subgroup of $\GL_n(\mathfrak{o}_l)$ as
$K_l^m := \mathrm{Ker}\left( \GL_n(\mathfrak{o}_l) \to \GL_n(\mathfrak{o}_m) \right)$.
Furthermore, if $m \geq l/2$, then $K_l^m$ is abelian.
This yields the following natural filtration:
$$
\{ I_n \} \subseteq K_l^{l-1} \subseteq \cdots \subseteq K_l^m \subseteq \cdots \subseteq K_l^1 \subseteq \GL_n(\mathfrak{o}_l).
$$
For each \( m \) such that \( 1 \leq m \leq l - 1 \), the quotient satisfies $K_l^m / K_l^{m+1} \cong (M_n(\mathbb{F}_q), +)$.
Let \( \widehat{M_n(\mathfrak{o}_l)} \) denote the Pontryagin dual of \( (M_n(\mathfrak{o}_l), +) \). 
Fix a non-trivial additive character \( \psi_0 : \mathfrak{o}_l \to \mathbb{C}^\times \) such that \( \psi_0|_{\varpi^{l-1} \mathfrak{o}_l} \neq 1 \).
For each \( A \in M_n(\mathfrak{o}_l) \), define  \( \psi_A \in \widehat{M_n(\mathfrak{o}_l)} \) by
\[
\psi_A(B) := \psi_0(\mathrm{tr}(AB)).
\]
The map
$A \mapsto \psi_A$ is an isomorphism
of \( M_n(\mathfrak{o}_l) \rightarrow \widehat{M_n(\mathfrak{o}_l)} \), which depends on the choice of \( \psi_0 \).

\subsection{Regular representations of $\GL_n(\cO_l)$}\label{construction of regualr representations--}
\begin{definition}
\begin{enumerate}
\item An element $x \in M_{n}(\F_{q})$ is called regular if its characteristic polynomial coincides with its minimal polynomial. 
\item For $x \in M_{n}(\F_q)$, the character $\phi_x : M_n(\mathbb{F}_q) \rightarrow \mathbb{C}^{\times}$ is called regular if $x$ is a regular element.
\item
A representation $\rho$ of $\GL_n(\cO_l)$ is said to be regular if its restriction to $K^{l-1}_l\cong M_n(\F_q)$ contains regular characters.
\end{enumerate}
\end{definition}
In the representation theory of \( \GL_n(\mathfrak{o}_l) \),  {\it{regular representations}} play a central role. 
In the case where \( l \) is even, a construction of regular representations was given by Hill~\cite{Hill1995}. This foundational work was later extended by Krakovski, Onn and Singla~\cite{Singla2018}, who developed a construction that works for all $l$.
In what follows, we recall the construction of regular representations of \( \GL_n(\mathfrak{o}_l) \) under the assumption that \( l \) is even.

\begin{theorem} \label{Construction of regualr reprresentation}
Let $l > 1$ be an even integer and set
$m = l/2$. 
Let $\rho : K^m_l \rightarrow \C^{\times}$ be a character such that $\rho|_{K^{l-1}_l}$ is a regular character. 
Let the inertia group of $\rho$ be
$$
I(\rho) : = \left\lbrace g\in \GL_n(\mathfrak{o}_l):\rho (g^{-1}xg)=\rho(x)~\forall ~x\in K^m_l\right\rbrace.
$$
Then the following hold.
\\
(1) The character $\rho$ extends to its inertia group $I(\rho)$.
If $\tilde{\rho}$ is an extension of $\rho$ to $I(\rho)$ then $\Ind_{I(\rho)}^{\GL_n(\mathfrak{o}_l)}\tilde{\rho}$ is an irreducible regular representation of $\GL_n(\mathfrak{o}_l)$.\\
(2) For a given regular representation $\pi$ of $\GL_n(\mathfrak{o}_l)$ such that $\rho$ appears in $\pi|_{K^m_l}$, then there exists $\tilde{\rho}$ a character in $I(\rho)$ such that $\pi=\Ind_{I(\rho)}^{\GL_n(\mathfrak{o}_l)}\tilde{\rho}$.
\end{theorem}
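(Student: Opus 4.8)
The plan is to run Clifford theory with respect to the abelian normal subgroup $K^m_l \trianglelefteq \GL_n(\cO_l)$, supplemented by the structure of centralisers of regular elements over $\cO_l$. The regularity hypothesis on $\rho|_{K^{l-1}_l}$ will enter at exactly one point — to force the inertia group $I(\rho)$ to be abelian modulo $K^m_l$ — and everything else is formal (Clifford theory, Mackey's irreducibility criterion, and divisibility of $\C^\times$). To begin, since $m = l/2$ the group $K^m_l$ is abelian, and $I_n + \varpi^m X \mapsto X$ identifies it with $(M_n(\cO_m),+)$; under the induced identification of its Pontryagin dual, $\rho$ corresponds to an element $\beta \in M_n(\cO_m)$, and the hypothesis that $\rho|_{K^{l-1}_l}$ is regular translates into: the reduction $\bar\beta$ of $\beta$ modulo $\varpi$ is a regular matrix over $\F_q$. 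Conjugation by $g$ on $K^m_l$ factors through $\bar g \in \GL_n(\cO_m)$ and carries $\beta$ to $\bar g \beta \bar g^{-1}$, so $I(\rho)$ is precisely the preimage in $\GL_n(\cO_l)$ of the centraliser of $\beta$ in $\GL_n(\cO_m)$; note that $K^m_l \subseteq I(\rho)$, using that $K^m_l$ is abelian.

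The heart of the matter is the following. Fix any lift $\tilde\beta \in M_n(\cO_l)$ of $\beta$ and set $\mathfrak{A} := \cO_l[\tilde\beta] \subseteq M_n(\cO_l)$. Since $\tilde\beta$ is regular modulo $\varpi$, Nakayama's lemma shows that $\cO_l^{n}$ is a cyclic $\mathfrak{A}$-module, whence $\mathfrak{A}$ is $\cO_l$-free of rank $n$ and is its own centraliser in $M_n(\cO_l)$; reducing modulo $\varpi^m$ gives that the centraliser of $\beta$ in $\GL_n(\cO_m)$ equals $(\cO_m[\beta])^\times = (\mathfrak{A}/\varpi^m\mathfrak{A})^\times$. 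As units lift along the surjection $\mathfrak{A}^\times \twoheadrightarrow (\mathfrak{A}/\varpi^m\mathfrak{A})^\times$, this yields
\[
I(\rho) \;=\; \mathfrak{A}^\times \cdot K^m_l, \qquad I(\rho)/K^m_l \;\cong\; \mathfrak{A}^\times/(1+\varpi^m\mathfrak{A}) \;\cong\; (\cO_m[\beta])^\times,
\]
which is \emph{abelian}. I expect this step to be the main obstacle, since it relies on the structure theory of centralisers of regular elements over the local ring $\cO_l$; it is the only place where the regularity hypothesis is genuinely used.

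For part~(1): as $\mathfrak{A}^\times$ is abelian and $\C^\times$ is divisible, the character $\rho|_{\mathfrak{A}^\times \cap K^m_l} = \rho|_{1+\varpi^m\mathfrak{A}}$ extends to a character $\chi$ of $\mathfrak{A}^\times$, and I would then define $\tilde\rho(ak) := \chi(a)\rho(k)$ for $a \in \mathfrak{A}^\times$, $k \in K^m_l$. This is well defined because $\chi$ and $\rho$ agree on $\mathfrak{A}^\times \cap K^m_l$, and multiplicative because $\rho$ is $\mathfrak{A}^\times$-invariant (write $aka'k' = (aa')({a'}^{-1}ka')k'$). For irreducibility of $\Ind_{I(\rho)}^{\GL_n(\cO_l)}\tilde\rho$ I would invoke Mackey's criterion: for $g \notin I(\rho)$ one has $K^m_l \subseteq I(\rho)\cap {}^{g}I(\rho)$, while $\tilde\rho|_{K^m_l} = \rho \neq {}^{g}\rho = ({}^{g}\tilde\rho)|_{K^m_l}$, so the two characters are already disjoint on this common subgroup; the same argument applies to any extension of $\rho$. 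Restricting the induced representation to $K^{l-1}_l$ shows it contains the regular character $\rho|_{K^{l-1}_l}$, hence it is regular.

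For part~(2): let $\pi$ be a regular representation with $\rho$ occurring in $\pi|_{K^m_l}$. Clifford theory for the normal subgroup $K^m_l$ gives $\pi \cong \Ind_{I(\rho)}^{\GL_n(\cO_l)} W$, where $W$ is the $\rho$-isotypic component of $\pi$, an irreducible representation of $I(\rho)$ whose restriction to $K^m_l$ is a multiple of $\rho$. Twisting by the character $\tilde\rho^{-1}$ from part~(1), the representation $W \otimes \tilde\rho^{-1}$ is trivial on $K^m_l$, hence factors through the abelian quotient $I(\rho)/K^m_l$ and is therefore one-dimensional; consequently $W$ is itself a character of $I(\rho)$ extending $\rho$, and $\pi = \Ind_{I(\rho)}^{\GL_n(\cO_l)} W$ is of the asserted form.
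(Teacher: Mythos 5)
The paper does not supply its own proof of this theorem: it is stated as a recollection of Hill's construction of regular representations for even $l$ (citing \cite{Hill1995}) and its extension by Krakovski--Onn--Singla \cite{Singla2018}. So there is no proof in the paper to compare against, and the question is only whether your argument is sound.

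It is. Your proof is essentially the standard Clifford-theoretic argument for the even-$l$ case. The translation via $K^m_l \cong M_n(\cO_m)$ and the trace pairing, the identification of $I(\rho)$ as the preimage of the centraliser of $\beta$ in $\GL_n(\cO_m)$, the cyclic-vector/Nakayama argument showing $\mathfrak{A}=\cO_l[\tilde\beta]$ is free of rank $n$ and self-centralising, the resulting isomorphism $I(\rho)/K^m_l \cong (\cO_m[\beta])^\times$ (abelian), the extension of $\rho$ using divisibility of $\C^\times$, Mackey's criterion with disjointness detected already on $K^m_l$, and the part-(2) argument via twisting by $\tilde\rho^{-1}$ and factoring through the abelian quotient are all correct. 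Two small points worth tightening if you wrote this out in full: the identity $\mathfrak{A}^\times\cap K^m_l = 1+\varpi^m\mathfrak{A}$ relies on $\mathfrak{A}\cap\varpi^m M_n(\cO_l)=\varpi^m\mathfrak{A}$, which holds because $1,\tilde\beta,\ldots,\tilde\beta^{n-1}$ are linearly independent mod $\varpi$ and hence extend to an $\cO_l$-basis of $M_n(\cO_l)$, so $\mathfrak{A}$ is a direct summand; and the clause ``reducing modulo $\varpi^m$ gives that the centraliser of $\beta$ in $\GL_n(\cO_m)$ equals $(\cO_m[\beta])^\times$'' is best justified by running the cyclic-vector argument directly over $\cO_m$ (lifting a centralising element from $\cO_m$ back to $\cO_l$ is not automatic). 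Neither affects correctness.
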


\begin{remark}
For $l=2m$, $K^{m}_{l} \cong M_{n}(\cO_{m})$ and any character of $M_{n}(\cO_{m})$ is $\phi_{B}$ for some $B \in M_{n}(\cO_{m})$.
For a regular representation $\pi$ of $\GL_{n}(\cO_l)$, if $\phi_{B}$ appears in  $\pi|_{K^{m}_{l}}$ then $\phi_{B'}$ appears in $\pi|_{K^{m}_{l}}$ if and only if $B' = gBg^{-1}$ for some
$g \in \GL_n(\cO_m)$. 
\end{remark}

\subsection{Regular representations of $\GL_2(\cO_2)$} \label{reg reps of GL_2}
We now describe all irreducible regular representations of $\GL_2(\cO_2)$.  
By Theorem~\ref{Construction of regualr reprresentation}, every irreducible regular representation of $\GL_2(\cO_2)$ is of the form
$\Ind_{I(\phi_B)}^{\GL_2(\cO_2)} \tilde{\phi}_B$,
where $B$ is a regular element in $M_2(\F_q)$ and $\tilde{\phi}_{B}$ is an extension of the character $\phi_{B}$ of $J^1_2:= I_2 + \varpi M_2(\cO_2)$ to the inertia group $I(\phi_{B})$.
Since any non-scalar element in $M_2(\F_q)$ is a regular element, the following three sets form a complete set of representatives for the conjugacy classes of regular elements in \( M_2(\mathbb{F}_q) \):
{\small
\begin{align*}
\mathcal{X}_1 &:= \left\{
\begin{pmatrix}
m & n\alpha\\
n & m
\end{pmatrix} : m \in \mathbb{F}_q,\ n \in \mathbb{F}_q^\times \right\}, 
\mathcal{X}_2 := \left\{
\begin{pmatrix}
m & 1\\
0 & m
\end{pmatrix} : m \in \mathbb{F}_q \right\},
\end{align*}
\[ 
\mathcal{X}_3 := \left\{
\begin{pmatrix}
m & 0\\
0 & n
\end{pmatrix} : m,n \in \mathbb{F}_q,\ m \neq  n \right\}.
\]
}
Therefore, any irreducible regular representation of $\GL_2(\cO_2)$ is of the form
$\Ind_{I(\phi_B)}^{\GL_2(\cO_2)} \tilde{\phi}_B$,
where $B$ ranges over  $\mathcal{X}_1 \cup \mathcal{X}_2 \cup \mathcal{X}_3$.

\subsection{Mackey theory}
Let $H$ and $K$ be subgroups of a finite group $G$. 
Let $\rho$ be a representation of $H$. 
Then Mackey theory describes the restriction of the induced representation $\Ind_{H}^{G} \rho$ of $G$ to the subgroup $K$, denoted $\Res_{K}^{G} \Ind_{H}^{G} \rho$, see \cite{Serre} for more details.
This description is as follows.
First, choose a set $S$ of representatives for the double cosets $K\backslash G/H$, so that $G=\underset{s\in S}{\bigsqcup }KsH$. 
For $s\in S$, define the subgroup $H_s :=sHs^{-1}\cap K$ of $K$. 
Define
$$
\rho^s(x)=\rho(s^{-1}xs) \quad \text{for all } x\in H_s.
$$
Then $\rho^s$ is a representation of $H_s$ and the restriction of $\Ind_{H}^{G}\rho$ to $K$ decomposes as 
\begin{equation}\label{2.1 equation for mackey theory}
\Res^{G}_{K}\Ind_{H}^{G}\rho \cong \underset{s\in K\backslash G/H}{\bigoplus}\Ind_{H_s}^{K}\rho^{s}.
\end{equation}

\subsection{Multiplicity}
For finite dimensional representations $\rho$ and $\rho'$ of a finite group $H$, we define 
\[
m( \rho, \rho') := \dim \left(\Hom_{H}(\rho, \rho')\right).
\]
If $\rho'$ is irreducible then $m(\rho, \rho')$ is said to be multiplicity of $\rho'$ in $\rho$, which counts the number of times $\rho'$ appears in $\rho$.
Let $\pi$ and $\sigma$ be irreducible representations of $\GL_{2n}(\cO_l)$ and $\GL_n(\cO_l)$, respectively. Then 
\begin{equation}\label{equivalence in Hom for Ch-2}
\Hom_{\GL_n(\cO_l)}(\pi_{N,\psi}, \sigma) \cong \Hom_{\Delta \GL_n(\cO_l)\cdot N}(\pi, \sigma \otimes \psi).
\end{equation}
In particular, $m (\pi_{N,\psi}, \sigma) = m (\pi, \sigma \otimes \psi)$.

\subsection{A few double cosets}
The following double cosets will play an important role in what follows.
\begin{lemma}\label{double cosets P-1-n-2_GL_n-F_{q^n}}
Let $P_{1,n-1}$ be the parabolic subgroup of $\GL_n(\F_q)$ corresponding to the partition $(1,n-1)$ of $n$. Then
\begin{equation}\label{double cosets for GL_n}
\GL_n(\F_q)=P_{1,n-1}\cdot \F_{q^n}^\times.
\end{equation}
\end{lemma}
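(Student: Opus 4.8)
The plan is to interpret both subgroups geometrically. Recall that $P_{1,n-1}$ is the stabilizer in $\GL_n(\F_q)$ of a line $\ell_0 \subset \F_q^n$, while $\F_{q^n}^\times \hookrightarrow \GL_n(\F_q)$ is the non-split (Singer) torus: identifying $\F_q^n$ with the field $\F_{q^n}$ viewed as an $n$-dimensional $\F_q$-vector space, the subgroup $\F_{q^n}^\times$ consists of the multiplication maps $m_\alpha \colon x \mapsto \alpha x$. With this setup, the identity $\GL_n(\F_q) = P_{1,n-1}\cdot \F_{q^n}^\times$ is equivalent to saying that $P_{1,n-1}$ and the Singer torus form a single double coset, and I would deduce this from a transitivity statement.

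Concretely, write $G = \GL_n(\F_q)$, $P = P_{1,n-1}$, $T = \F_{q^n}^\times$. The first step is to identify the coset space $P\backslash G$, $G$-equivariantly, with the set $\mathbb{P}$ of lines in $\F_q^n$ (via $Pg \mapsto \ell_0 g$ for the right action, or after passing to $G/P$ for the left action; I will fix one convention). Under this identification $P\backslash G/T$ corresponds to the set of $T$-orbits on $\mathbb{P}$, so $G = PT$ holds exactly when $T$ acts transitively on $\mathbb{P}$. The second step verifies this: $T = \F_{q^n}^\times$ acts on $\F_{q^n}\setminus\{0\}$ freely (if $\alpha x = x$ with $x\neq 0$ then $\alpha = 1$) and $|T| = q^n - 1 = |\F_{q^n}\setminus\{0\}|$, so the action is simply transitive on nonzero vectors, hence a fortiori transitive on lines. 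This gives $G = PT$, which is the claim.

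I do not anticipate a serious obstacle; the only delicate points are (a) recording that the embedding $\F_{q^n}^\times \hookrightarrow \GL_n(\F_q)$ is the Singer embedding (forced by the notation), and (b) keeping the left/right coset conventions consistent in the reduction to transitivity. As an alternative that avoids (b) entirely, one can argue by counting orders: $|PT| = |P|\,|T|/|P\cap T|$, where $|P| = (q-1)\,q^{\,n-1}\,|\GL_{n-1}(\F_q)|$, $|T| = q^n - 1$, and $P\cap T = \F_q^\times$ (the scalars), of order $q-1$ — indeed any element $m_\alpha$ of the Singer torus fixing a line forces $\alpha\in\F_q^\times$, i.e.\ $m_\alpha$ is a scalar matrix. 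Multiplying out gives $|PT| = q^{\,n-1}\,|\GL_{n-1}(\F_q)|\,(q^n-1) = |\GL_n(\F_q)|$, whence $PT = G$.
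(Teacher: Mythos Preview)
Your proof is correct and follows essentially the same route as the paper: identify $P_{1,n-1}\backslash \GL_n(\F_q)$ with the set of lines in $\F_q^n \cong \F_{q^n}$, then observe that $\F_{q^n}^\times$ acts transitively on nonzero vectors and hence on lines, giving a single double coset. Your additional counting argument via $|PT|=|P|\,|T|/|P\cap T|$ is a nice alternative not present in the paper.
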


\begin{proof}
We know that the quotient $\GL_n(\mathbb{F}_q)/P_{1,n-1}$  is nothing but the Grassmannian $\Gr(n,1)$, i.e. the set of all $1$-dimensional subspaces in a $n$-dimensional vector space over $\F_q$. 
For a group $G$ acting on a set $X$ we write, \def\acts{\curvearrowright} $G \acts X$.
Naturally, \def\acts{\curvearrowright}
$\GL_n(\mathbb{F}_q)\acts$ Gr$(n,1)$ transitively and distinct orbits of the restricted action to the subgroup $\F_{q^n}^\times \hookrightarrow \GL_n(\mathbb{F}_q)$ corresponds to the distinct double cosets $\F_{q^n}^\times\backslash \GL_n(\F_q)/P_{1,n-1}$.
Now, consider $\F_{q^n}$ as a $n$-dimensional vector space over $\F_q$. 
Any element of Gr$(n,1)$ can be expressed as $\Span\{x\}$, the span of vector $x \in \F_{q^n}^\times$.
The action \def\acts{\curvearrowright} $\F_{q^n}^\times\acts$ Gr$(n,1)$ is transitive, since any nonzero vector can be mapped to any other nonzero vector by multiplication by a suitable element from $\F_{q^n}^{\times}$.
Thus there is a unique double coset in $\F_{q^n}^\times\backslash \GL_n(\F_q)/P_{1,n-1
}(\F_q)$, proving the lemma.
\end{proof}
Recall the natural quotient map \( \GL_n(\cO_2) \rightarrow \GL_n(\F_q) \).
For any subset \( X \subseteq \GL_n(\cO_2) \) and \( Y \subseteq \GL_n(\F_q) \), we write \( \bar{X} \) for the image of \( X \), and \( \tilde{Y} \) for the preimage of \( Y \) under the quotient map.
Let $\mathfrak{B}_n$ be the subgroup of upper triangular matrices in $\GL_n(\cO_2)$, and let $w_0:=\begin{pmatrix}
    0 & 1\\
    1 & 0
\end{pmatrix}$. 
Let $H_2(\F_q):=\left\{\begin{pmatrix}
    x & y\\
    0 & 1
\end{pmatrix}:x\in \F_q^\times, y\in \F_q\right\}$.
\begin{lemma}\label{GL_2 double cosets}
\begin{enumerate}
\item 
A set of representatives for $\bar{\mathfrak{B}}_2\backslash \GL_2(\F_q)/\bar{\mathfrak{B}}_2$ can be taken to be $
\left\{ I_2, w_0
\right\}$.
\item 
A set of representatives for $\mathfrak{B}_2\backslash \GL_2(\cO_2)/\mathfrak{B}_2$ can be taken to be 
$\left\{ I_2, w_0, \begin{pmatrix}
    1 & 0\\
    \varpi & 1
\end{pmatrix}
\right\}$.
\item A set of representatives for $\GL_2(\cO_2)/\mathfrak{B}_2$ can be taken to be $\left\{ \begin{pmatrix}
    1 & 0\\
    x & 1
\end{pmatrix},\begin{pmatrix}
    \varpi y & 1\\
    1 & 0
\end{pmatrix} : x,y\in \cO_2
\right\}$.
\item A set of  for $\GL_2(\F_q)/H_2(\F_q)$ can be taken to be $\left\{ \begin{pmatrix}
    1 & 0\\
    c & a
\end{pmatrix}, dw_0:
    a, d\in \F_q^\times, 
    c \in \F_q\right\}$.
\end{enumerate}
\end{lemma}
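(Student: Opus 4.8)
The plan is to realise each of the four coset (or double-coset) spaces as an explicit transitive $\GL_2$-set and then read off orbit representatives. Part (1) is just the Bruhat decomposition of $\GL_2$ over the field $\F_q$: for $g=\begin{pmatrix} a & b\\ c & d\end{pmatrix}\in\GL_2(\F_q)$, if $c=0$ then $g\in\bar{\mathfrak B}_2$, while if $c\neq 0$ one clears the $(1,1)$ and $(2,2)$ entries against $w_0$ by left and right multiplication by upper triangular matrices, placing $g$ in $\bar{\mathfrak B}_2\,w_0\,\bar{\mathfrak B}_2$; disjointness of the two double cosets is immediate by comparing bottom-left entries. For part (4), the key observation is that $H_2(\F_q)$ is precisely the stabiliser of the row vector $(0,1)$ under the right action $v\mapsto vg$ of $\GL_2(\F_q)$ on $\F_q^2\smallsetminus\{0\}$, an action which is transitive; hence $g\,H_2(\F_q)\mapsto (0,1)g^{-1}$ identifies $\GL_2(\F_q)/H_2(\F_q)$ with $\F_q^2\smallsetminus\{0\}$, a set of size $q^2-1$. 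A direct computation then shows the matrices $\begin{pmatrix}1&0\\ c& a\end{pmatrix}$ ($a\in\F_q^\times,\ c\in\F_q$) map onto the vectors with nonzero second coordinate and the matrices $dw_0$ ($d\in\F_q^\times$) onto the nonzero vectors with zero second coordinate, so together they hit every element of $\F_q^2\smallsetminus\{0\}$ exactly once.

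For part (3), note that $\mathfrak B_2$ is the stabiliser in $\GL_2(\cO_2)$ of the free rank-one submodule (``line'') $\cO_2 e_1\subset\cO_2^2$, and $\GL_2(\cO_2)$ acts transitively on the set $\mathbb P^1(\cO_2)$ of all such lines; thus $g\,\mathfrak B_2\mapsto \cO_2\cdot(\text{first column of }g)$ is a bijection of $\GL_2(\cO_2)/\mathfrak B_2$ with $\mathbb P^1(\cO_2)$. Every line is generated by a unimodular vector $(a,b)^{\mathrm t}$, unique up to $\cO_2^\times$; if $a\in\cO_2^\times$ one normalises to $(1,x)^{\mathrm t}$ with $x\in\cO_2$ (giving $q^2$ lines), and if $a\notin\cO_2^\times$ then $b\in\cO_2^\times$ and one normalises to $(\varpi y,1)^{\mathrm t}$, where $\varpi y$ depends only on $y\bmod\varpi$ (giving $q$ lines), for a total of $q^2+q=|\mathbb P^1(\cO_2)|$. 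Completing these vectors to the invertible matrices $\begin{pmatrix}1&0\\ x&1\end{pmatrix}$ and $\begin{pmatrix}\varpi y& 1\\ 1& 0\end{pmatrix}$ yields the asserted system of representatives.

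Finally, part (2) follows by analysing $\mathfrak B_2$-orbits on $\GL_2(\cO_2)/\mathfrak B_2=\mathbb P^1(\cO_2)$ using the parametrisation from part (3). Acting by $\begin{pmatrix}\alpha&\beta\\0&\delta\end{pmatrix}\in\mathfrak B_2$ one checks that $\cO_2 e_1$ is fixed (the orbit of $I_2$); that $\cO_2 e_2=\cO_2\,(0,1)^{\mathrm t}$ has orbit $\{(x,1)^{\mathrm t}: x\in\cO_2\}$ of size $q^2$ (the orbit of $w_0$); and that $(1,\varpi)^{\mathrm t}$ has orbit $\{(1,\varpi c)^{\mathrm t}: c\in\F_q^\times\}$ of size $q-1$ (the orbit of $\begin{pmatrix}1&0\\\varpi&1\end{pmatrix}$). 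Since $1+q^2+(q-1)=q^2+q$ already exhausts $\mathbb P^1(\cO_2)$, these three are all the double cosets. (Alternatively, one can lift the Bruhat decomposition of $\GL_2(\F_q)$ through $\GL_2(\cO_2)\to\GL_2(\F_q)$ and track the extra contribution of the congruence kernel $K^1_2$, which produces exactly the representative $\begin{pmatrix}1&0\\\varpi&1\end{pmatrix}$ over the cell of $I_2$ while the cell of $w_0$ absorbs $K^1_2$.)

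The computations involved — the orbit calculations in part (2) and the coset matchings in parts (3)–(4) — are routine. The only place demanding genuine care is the bookkeeping over $\cO_2$: keeping straight which matrix entries are units versus in $\varpi\cO_2$, and distinguishing $\cO_2$-valued from $\F_q$-valued parameters, so that the counts come out to $q^2+q$ and $q^2-1$ exactly; and in part (2) one must confirm that the three listed orbits are exhaustive, not merely pairwise disjoint, which is where the $\mathbb P^1(\cO_2)$ count does the real work.
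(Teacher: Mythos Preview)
Your argument is correct and entirely self-contained, but it proceeds along a different path from the paper. The paper dispatches parts (1) and (2) by citing the Bruhat decomposition over local principal ideal rings from \cite{Amritanshu}, and deduces parts (3) and (4) from the single factorisation $\GL_2(\F_q)=\F_{q^2}^\times\cdot\bar{\mathfrak B}_2$ (Lemma~\ref{double cosets P-1-n-2_GL_n-F_{q^n}} with $n=2$). By contrast, you work everything out directly: the classical Bruhat argument for (1), identification of $\GL_2(\cO_2)/\mathfrak B_2$ with $\mathbb P^1(\cO_2)$ for (3), a $\mathfrak B_2$-orbit count on $\mathbb P^1(\cO_2)$ for (2), and the identification of $\GL_2(\F_q)/H_2(\F_q)$ with $\F_q^2\smallsetminus\{0\}$ via the stabiliser description for (4). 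Your route has the advantage of being independent of any outside reference and of making the coset counts $q^2+q$ and $q^2-1$ visibly match; the paper's route is terser and ties the lemma into the same $\F_{q^n}^\times\cdot P_{1,n-1}$ decomposition used elsewhere in the argument, at the cost of leaving more to the reader.
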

\begin{proof}
Part $(1)$ and $(2)$ follow from \cite[Section 3]{Amritanshu}.  
Part $(3)$ and $(4)$ follow from the observation $\GL_2(\F_q) = \F_{q^2}^{\times} \cdot \bar{\mathfrak{B}}_{2}$.
\end{proof}

The following proposition from \cite[Exercise 4.1.18]{Bump}
gives an easy description of certain double cosets which will be helpful in Section \ref{Sec:4}.
\begin{proposition}
\label{Double cosets proposition over finite field}
Let $\mathcal{P}_1$ and $\mathcal{P}_2$
be two standard parabolic subgroups of $\GL_n(\F_q)$.
Let $W$ be the subgroup
of $\GL_n(\F_q)$ consisting of permutation matrices.
Let $W_{\mathcal{P}_i} = W\cap \mathcal{P}_i$ for $i=1,2$.
The inclusion of $W$ in $G$ induces a bijection
between the double cosets $\mathcal{P}_2\backslash \GL_n(\F_q)/\mathcal{P}_1$ and $W_{\mathcal{P}_2}\backslash W/W_{\mathcal{P}_1}$.
\end{proposition}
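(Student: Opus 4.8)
The plan is to derive the bijection from the Bruhat decomposition of $\GL_n(\F_q)$ and its refinement along standard parabolic subgroups. Write $B$ for the Borel subgroup of upper triangular matrices, so that $B \subseteq \mathcal{P}_i$ for $i = 1,2$, and identify $W$ with the symmetric group $S_n$ realized as permutation matrices normalizing the diagonal torus; thus $\GL_n(\F_q)$ carries a $BN$-pair with Borel subgroup $B$ and Weyl group $W$. For a standard parabolic $\mathcal{P}$ the subgroup $W_{\mathcal{P}} = W \cap \mathcal{P}$ is the standard Young subgroup generated by the simple transpositions contained in $\mathcal{P}$, and one has the parabolic Bruhat decomposition $\mathcal{P} = \bigsqcup_{w \in W_{\mathcal{P}}} BwB$.

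First I would check that $w \mapsto \mathcal{P}_2 w \mathcal{P}_1$ factors through $W_{\mathcal{P}_2}\backslash W/W_{\mathcal{P}_1}$: if $w' = uwv$ with $u \in W_{\mathcal{P}_2} \subseteq \mathcal{P}_2$ and $v \in W_{\mathcal{P}_1} \subseteq \mathcal{P}_1$, then $\mathcal{P}_2 w' \mathcal{P}_1 = \mathcal{P}_2 w \mathcal{P}_1$, so the assignment is well defined on double cosets. Surjectivity of the induced map $W_{\mathcal{P}_2}\backslash W/W_{\mathcal{P}_1} \to \mathcal{P}_2\backslash\GL_n(\F_q)/\mathcal{P}_1$ is then immediate from $\GL_n(\F_q) = \bigcup_{w \in W} BwB$, since each cell $BwB$ lies inside $\mathcal{P}_2 w \mathcal{P}_1$.

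The real content is injectivity: given $\mathcal{P}_2 w \mathcal{P}_1 = \mathcal{P}_2 w' \mathcal{P}_1$, I want $w' \in W_{\mathcal{P}_2}\, w\, W_{\mathcal{P}_1}$. Writing $\mathcal{P}_i = \bigsqcup BxB$ over $W_{\mathcal{P}_i}$ and using $w' \in \mathcal{P}_2 w \mathcal{P}_1$, there are $u \in W_{\mathcal{P}_2}$ and $v \in W_{\mathcal{P}_1}$ with $w' \in (BuB)(BwB)(BvB)$. I would then invoke the rank-one multiplication rule of a $BN$-pair, $BsB \cdot ByB \subseteq BsyB \cup ByB$ for a simple reflection $s$ and any $y \in W$, together with the fact that a reduced expression for an element of $W_{\mathcal{P}_2}$ uses only simple reflections lying in $W_{\mathcal{P}_2}$; a short induction on $\ell(u)$, peeling off one simple reflection from the left at a time, yields $(BuB)(BwB) \subseteq \bigcup_{u' \in W_{\mathcal{P}_2}} Bu'wB$, and symmetrically $(Bu'wB)(BvB) \subseteq \bigcup_{v' \in W_{\mathcal{P}_1}} Bu'wv'B$. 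Hence $w'$ lies in $\bigcup_{x \in W_{\mathcal{P}_2} w W_{\mathcal{P}_1}} BxB$, and since distinct Bruhat cells are disjoint while $w' \in Bw'B$, this forces $w' \in W_{\mathcal{P}_2} w W_{\mathcal{P}_1}$, establishing injectivity.

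The step I expect to be the only genuine obstacle is this inclusion $(BuB)(BwB)(BvB) \subseteq \bigcup_{x \in W_{\mathcal{P}_2} w W_{\mathcal{P}_1}} BxB$: multiplying Bruhat cells naively spreads the product over many cells, and one needs the inductive length bookkeeping above, or equivalently the theory of minimal-length double-coset representatives $^{I}W^{J}$ (each coset $W_{\mathcal{P}_2} w W_{\mathcal{P}_1}$ has a unique shortest element), to see that the product stays inside the single double coset $W_{\mathcal{P}_2} w W_{\mathcal{P}_1}$. Everything else is formal bookkeeping with the $BN$-pair axioms. For the small parabolics actually needed in Section \ref{Sec:4} one could alternatively enumerate the relevant double cosets by hand, but the route above gives the clean general statement, which is \cite[Exercise 4.1.18]{Bump}.
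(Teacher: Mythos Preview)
Your argument is correct and follows the standard $BN$-pair route to this result. Note, however, that the paper does not supply its own proof of this proposition: it simply records the statement and attributes it to \cite[Exercise 4.1.18]{Bump}, using it as a black box in Theorem~\ref{6 cosets P-G-P} and Lemma~\ref{Double coset for finite field}. So there is no proof in the paper to compare against; what you have written is exactly the kind of solution one would give for that exercise, and in fact more than the paper requires.
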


\subsection{Certain embeddings} \label{all embeddings}
We now fix embeddings of the finite fields \(\F_{q^2}\), \(\F_{q^3}\) into the matrix algebras \(M_2(\F_q)\), \(M_3(\F_q)\) respectively. 
For the embeddings of $\F_{q^2}$ into $M_2(\F_q)$, we assume that the characteristic of $\F_q$ is not equal to 2.

\begin{enumerate}[label = (\alph*)]
\item 
Let $\alpha \in \F_{q}^{\times} \smallsetminus \F_{q}^{\times 2}$. 
Then $\F_{q^2}=\F_{q}[\sqrt{\alpha}]$.
We fix the embedding $\F_{q^2} \hookrightarrow M_2(\F_q)$ as follows
\begin{equation}\label{standard F_q^2 embedding}
c+d\sqrt{\alpha} ~~ \mapsto ~~ \begin{pmatrix}
    c & d\alpha\\
    d & c
\end{pmatrix}.
\end{equation}

\item
For $a \in \F_q$, the map $\F_q \rightarrow \F_q$ given by $y \mapsto y^3-y-a$ is not one-to-one (0 and 1 have the same image!) hence not onto. 
Therefore, $\exists ~ a \in \F_q^{\times}$ such that the polynomial $y^3-y-a$ does not have a zero in $\F_q$ and hence irreducible. 
We fix $a \in \F_{q}^{\times}$ such that $y^3-y-a$ is irreducible.
Then $\F_{q^3} = \F_{q}[\zeta]$, where $\zeta$ is a root of 
$y^3-y-a$.
We fix the embedding of $\F_{q^3}  \hookrightarrow M_3(\F_q)$ as follows
\begin{equation}\label{Embedding of Fq^3}
    a_0+a_1\zeta +a_2\zeta^2 \mapsto\begin{pmatrix}
        a_0 & a_2a & a_1a\\
        a_1 & a_0+a_2 & a_1+a_2a\\
        a_2 & a_1 & a_2+a_0
    \end{pmatrix}.
\end{equation}
\end{enumerate}

\subsection{Irreducible components of $\mathcal{I}(\chi)$}
\label{Section 2.7}
Let $\chi_1, \chi_2 : \cO_2^{\times} \rightarrow \C^{\times}$ be two characters.
Let us define $\mathcal{I}(\chi) := \Ind_{\mathfrak{B}_2}^{\GL_2(\cO_2)} (\chi)$,
where $\chi = \chi_1 \otimes \chi_2 : \mathfrak{B}_2 \rightarrow \C^{\times}$ is given by
$\chi \left( \begin{matrix}
x & y \\ 
0 & z \end{matrix} \right) : = \chi_1(x) \chi_2(z)$.

\begin{lemma} \label{irreducibility of ps of gl2}
\begin{enumerate}[label = (\alph*)]
\item $\mathcal{I}(\chi)$ is irreducible if and only if $\chi_1|_{1+ \varpi \cO_2} \neq \chi_2|_{1+ \varpi \cO_2}$.
\item $\mathcal{I}(\chi)$ has exactly two irreducible components if and only if $\chi_1|_{1+ \varpi \cO_2} = \chi_2|_{1+ \varpi \cO_2}$ but $\chi_1 \neq \chi_2$.
\item $\mathcal{I}(\chi)$ splits into three irreducible components if and only if $\chi_1 = \chi_2$.
\end{enumerate}
\end{lemma}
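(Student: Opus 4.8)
The plan is to analyze the structure of $\mathcal{I}(\chi) = \Ind_{\mathfrak{B}_2}^{\GL_2(\cO_2)}(\chi_1 \otimes \chi_2)$ by computing its endomorphism algebra via Mackey theory, since the number of irreducible components of a representation $\rho$ equals $\dim \End(\rho)$ only when $\rho$ is multiplicity-free, so first I would verify multiplicity-freeness and simultaneously count $\dim \Hom_{\GL_2(\cO_2)}(\mathcal{I}(\chi), \mathcal{I}(\chi))$. By Frobenius reciprocity and Mackey's formula (the formula \eqref{2.1 equation for mackey theory} with $H = K = \mathfrak{B}_2$), we have
\[
\End_{\GL_2(\cO_2)}(\mathcal{I}(\chi)) \cong \bigoplus_{s \in \mathfrak{B}_2 \backslash \GL_2(\cO_2) / \mathfrak{B}_2} \Hom_{\mathfrak{B}_2^s}\big(\chi^s, \chi|_{\mathfrak{B}_2^s}\big),
\]
where $\mathfrak{B}_2^s := s \mathfrak{B}_2 s^{-1} \cap \mathfrak{B}_2$. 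By Lemma \ref{GL_2 double cosets}(2), the double cosets are represented by $\{I_2, w_0, \left(\begin{smallmatrix} 1 & 0 \\ \varpi & 1 \end{smallmatrix}\right)\}$, so there are exactly three terms to examine.

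The next step is to evaluate each of the three contributions. For $s = I_2$, the term is $\Hom_{\mathfrak{B}_2}(\chi, \chi) = \C$, contributing $1$ always. For $s = w_0$, one computes $\mathfrak{B}_2^{w_0} = w_0 \mathfrak{B}_2 w_0^{-1} \cap \mathfrak{B}_2$ equals the diagonal torus $T$ of $\GL_2(\cO_2)$, and $\chi^{w_0}$ restricted to $T$ is $\chi_2 \otimes \chi_1$ while $\chi|_T = \chi_1 \otimes \chi_2$; these agree on $T$ precisely when $\chi_1 = \chi_2$, contributing $1$ in that case and $0$ otherwise. For $s = u := \left(\begin{smallmatrix} 1 & 0 \\ \varpi & 1 \end{smallmatrix}\right)$, I would compute $\mathfrak{B}_2^u = u\mathfrak{B}_2 u^{-1} \cap \mathfrak{B}_2$; since $u$ is a lift of the identity, this intersection is large — it should contain the diagonal torus $T$ together with the congruence subgroup $\mathfrak{B}_2 \cap J^1_2$, and the condition for $\Hom_{\mathfrak{B}_2^u}(\chi^u, \chi) \neq 0$ will force equality of $\chi_1, \chi_2$ on $1 + \varpi\cO_2$ (coming from the unipotent-direction congruence part) but not full equality. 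Carrying out this conjugation computation carefully — in particular pinning down $\mathfrak{B}_2^u$ and the character $\chi^u$ on it — is the main obstacle, as it requires matrix arithmetic modulo $\varpi^2$ and care with which coordinates the character constraint lands on.

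Assembling these: $\dim \End(\mathcal{I}(\chi))$ equals $1$ (from $I_2$), plus $1$ iff $\chi_1|_{1+\varpi\cO_2} = \chi_2|_{1+\varpi\cO_2}$ (from $u$), plus $1$ iff $\chi_1 = \chi_2$ (from $w_0$). This gives total dimension $1$, $2$, or $3$ in exactly the three stated regimes. To conclude that these dimensions equal the number of irreducible components, I would argue $\mathcal{I}(\chi)$ is multiplicity-free: an explicit decomposition is standard (the components are the twist of the Steinberg-type and trivial-type constituents of $\GL_2(\F_q)$ inflated, together with a genuinely depth-one piece), or more cleanly, since $\dim\End \le 3$ in all cases and a self-dual permutation-type module with a commutative enough Hecke algebra is multiplicity-free — here one can check the Hecke algebra $\End(\mathcal{I}(\chi))$ is commutative directly from the double coset structure (the three double cosets give a basis, and $u^2, w_0 u w_0$ etc. reduce appropriately), whence all multiplicities are $1$ and the number of summands is exactly $\dim \End(\mathcal{I}(\chi))$. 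Matching the three cases with (a), (b), (c) then finishes the proof.
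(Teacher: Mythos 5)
Your approach is the same as the paper's: Mackey theory plus Frobenius reciprocity to compute $\dim \End_{\GL_2(\cO_2)}(\mathcal{I}(\chi))$ over the three double cosets of Lemma~\ref{GL_2 double cosets}(2), with the same contribution from each coset, and the same conclusion. Two small remarks. First, your sketch of $\mathfrak{B}_2^u$ for $u = \left(\begin{smallmatrix} 1 & 0 \\ \varpi & 1 \end{smallmatrix}\right)$ is not quite right: $u\mathfrak{B}_2 u^{-1}\cap\mathfrak{B}_2$ does \emph{not} contain the full diagonal torus $T$; a direct computation modulo $\varpi^2$ shows it is exactly $\left\{\left(\begin{smallmatrix} x & y \\ 0 & z\end{smallmatrix}\right) : x\equiv z \pmod{\varpi}\right\}$. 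If it really contained $T$ one would be led to the extra constraint $\chi_1=\chi_2$, so this detail matters; you do flag this computation as the gap to fill, and your stated conclusion (agreement only on $1+\varpi\cO_2$) is the correct one, so the issue is in the sketch rather than the final answer. Second, the multiplicity-freeness excursion (commutativity of the Hecke algebra, self-duality) is unnecessary: once $\dim\End(\mathcal{I}(\chi))\le 3$, the identity $\dim\End = \sum_i m_i^2$ already forces every multiplicity $m_i$ to be $1$, since any $m_i\ge 2$ would contribute at least $4$; the paper takes this for granted and so can you.
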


\begin{proof}
By Mackey theory and Frobenius reciprocity, 
\begin{align*}
\Hom_{\GL_2(\cO_2)}( \mathcal{I}(\chi), \mathcal{I}(\chi) ) 
& \cong  \underset{\eta\in \mathfrak{B}_2\backslash \GL_2(\cO_2)/\mathfrak{B}_2}{\bigoplus} \Hom_{\eta \mathfrak{B}_2\eta^{-1}\cap \mathfrak{B}_2}\left(\chi^{\eta}, \chi\right).
\end{align*}
The lemma follows by using Lemma \ref{GL_2 double cosets} together with the following observations:
\\
For $\eta =I_2$, we have
$\Hom_{\eta \mathfrak{B}_2\eta^{-1}\cap \mathfrak{B}_2}\left(\chi^{\eta}, \chi\right) \cong \C$.   
\\
For $\eta = w_0$, we have $\Hom_{\eta \mathfrak{B}_2\eta^{-1}\cap \mathfrak{B}_2}\left(\chi^{\eta}, \chi\right) \cong \C$ if and only if $\chi_1 = \chi_2$ and $0$ otherwise.
\\
For $\eta = \begin{pmatrix}
    1 & 0\\
    \varpi & 1
\end{pmatrix}$,
we have  $\Hom_{\eta \mathfrak{B}_2\eta^{-1}\cap \mathfrak{B}_2}\left(\chi^{\eta}, \chi\right) \cong \C$ if and only if $\chi_1|_{1+\varpi \cO_2} = \chi_2|_{1+\varpi \cO_2}$ and $0$ otherwise.
\end{proof}

\begin{lemma}\label{Lemma 2.1.7}
If $\chi_1|_{1+ \varpi \cO_2} = \chi_2|_{1+ \varpi \cO_2}$, then  $\Ind_{\mathfrak{B}_2}^{\tilde{\mathfrak{B}}_2} \chi$ decomposes into two irreducible components of dimensions 1 and $q-1$.
\end{lemma}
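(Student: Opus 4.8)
The statement to prove is Lemma 2.1.7 (labeled `Lemma 2.1.7` in the excerpt): if $\chi_1|_{1+\varpi\cO_2} = \chi_2|_{1+\varpi\cO_2}$, then $\Ind_{\mathfrak{B}_2}^{\tilde{\mathfrak{B}}_2}\chi$ decomposes into two irreducible components of dimensions $1$ and $q-1$. Here $\tilde{\mathfrak{B}}_2$ is the preimage in $\GL_2(\cO_2)$ of the Borel $\bar{\mathfrak{B}}_2 \subset \GL_2(\F_q)$, so $\tilde{\mathfrak{B}}_2 = \mathfrak{B}_2 \cdot K_2^1$ where $K_2^1 = I_2 + \varpi M_2(\cO_2)$, and $[\tilde{\mathfrak{B}}_2 : \mathfrak{B}_2] = q$ since the cokernel is the lower-triangular nilpotent part of $M_2(\F_q)$, which has order $q$. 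So the induced representation has dimension $q$, consistent with a decomposition $1 + (q-1)$.

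Let me think about the proof.

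**Approach.** I would compute the endomorphism algebra via Mackey theory exactly as in the proof of Lemma 2.1.6 (irreducibility of PS of $\GL_2$), but now with the double coset space $\mathfrak{B}_2 \backslash \tilde{\mathfrak{B}}_2 / \mathfrak{B}_2$ instead of $\mathfrak{B}_2 \backslash \GL_2(\cO_2) / \mathfrak{B}_2$. From Lemma 2.1.5(2), the full double coset space $\mathfrak{B}_2 \backslash \GL_2(\cO_2) / \mathfrak{B}_2$ has representatives $\{I_2, w_0, \left(\begin{smallmatrix} 1 & 0 \\ \varpi & 1\end{smallmatrix}\right)\}$; of these, $I_2$ and $\left(\begin{smallmatrix} 1 & 0 \\ \varpi & 1\end{smallmatrix}\right)$ lie in $\tilde{\mathfrak{B}}_2$ (they reduce to the identity in $\GL_2(\F_q)$), while $w_0$ does not. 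So $\mathfrak{B}_2 \backslash \tilde{\mathfrak{B}}_2 / \mathfrak{B}_2$ has exactly two elements, represented by $I_2$ and $\eta := \left(\begin{smallmatrix} 1 & 0 \\ \varpi & 1\end{smallmatrix}\right)$. (I would justify that these genuinely give distinct double cosets and exhaust $\tilde{\mathfrak{B}}_2$; this should be immediate from the reduction map and the structure of $K_2^1$.)

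Then, as in Lemma 2.1.6's proof,
$$
\Hom_{\tilde{\mathfrak{B}}_2}(\Ind_{\mathfrak{B}_2}^{\tilde{\mathfrak{B}}_2}\chi, \Ind_{\mathfrak{B}_2}^{\tilde{\mathfrak{B}}_2}\chi) \cong \bigoplus_{\eta \in \mathfrak{B}_2 \backslash \tilde{\mathfrak{B}}_2 / \mathfrak{B}_2} \Hom_{\eta\mathfrak{B}_2\eta^{-1} \cap \mathfrak{B}_2}(\chi^\eta, \chi).
$$
The $I_2$ term contributes $\C$. For $\eta = \left(\begin{smallmatrix} 1 & 0 \\ \varpi & 1\end{smallmatrix}\right)$, the computation in Lemma 2.1.6's proof already shows $\Hom_{\eta\mathfrak{B}_2\eta^{-1}\cap\mathfrak{B}_2}(\chi^\eta,\chi) \cong \C$ precisely when $\chi_1|_{1+\varpi\cO_2} = \chi_2|_{1+\varpi\cO_2}$, which is our hypothesis. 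Hence $\dim \End(\Ind_{\mathfrak{B}_2}^{\tilde{\mathfrak{B}}_2}\chi) = 2$, so the induced representation is a sum of two distinct irreducible components, say of dimensions $d_1 + d_2 = q$. It remains to show $\{d_1, d_2\} = \{1, q-1\}$, i.e. to exhibit a $1$-dimensional constituent. The natural candidate: $\chi$ extends to $\tilde{\mathfrak{B}}_2$ as a character, because $\tilde{\mathfrak{B}}_2 / [\tilde{\mathfrak{B}}_2, \tilde{\mathfrak{B}}_2]$ contains the relevant diagonal torus data and, under the hypothesis $\chi_1|_{1+\varpi\cO_2} = \chi_2|_{1+\varpi\cO_2}$, the character $\chi$ is trivial on the commutators coming from $K_2^1$; by Frobenius reciprocity this $1$-dimensional character of $\tilde{\mathfrak{B}}_2$ appears in $\Ind_{\mathfrak{B}_2}^{\tilde{\mathfrak{B}}_2}\chi$. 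Thus $d_1 = 1$ and $d_2 = q-1$.

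**Main obstacle.** The endomorphism-algebra count giving "two components" is essentially free once the double coset computation is in place — the hard computation was already done in Lemma 2.1.6. The genuinely new content is pinning down that one component is $1$-dimensional, i.e. constructing the character extension of $\chi$ to $\tilde{\mathfrak{B}}_2$ and verifying it is well-defined: one must check that $\chi$, viewed via $\mathfrak{B}_2 \to \mathfrak{B}_2/(\mathfrak{B}_2 \cap K_2^2) \to (\cO_2^\times)^2$, together with the constraint on $1+\varpi\cO_2$, glues consistently across the unipotent radical of $K_2^1$ — equivalently, that the relevant character of the abelian group $\tilde{\mathfrak{B}}_2^{ab}$ restricting to $\chi$ on $\mathfrak{B}_2$ exists. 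Concretely I would write $\tilde{\mathfrak{B}}_2 = \mathfrak{B}_2 \ltimes V$ with $V \cong (\F_q,+)$ the lower unipotent part of $K_2^1$, note $[\mathfrak{B}_2, V] = V$ so any character of $\tilde{\mathfrak{B}}_2$ is trivial on $V$, and check the resulting character of $\mathfrak{B}_2$ (which must be $\mathfrak{B}_2$-conjugation-invariant when extended trivially on $V$) is exactly $\chi$ under the hypothesis — this is where $\chi_1|_{1+\varpi\cO_2} = \chi_2|_{1+\varpi\cO_2}$ is used a second time. Once that character is exhibited, dimension count finishes the proof.
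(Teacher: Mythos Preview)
Your approach is essentially the paper's: compute $\dim\End(\Ind_{\mathfrak{B}_2}^{\tilde{\mathfrak{B}}_2}\chi)=2$ via Mackey over the two double cosets $\{I_2,\ \eta\}$ (the paper says ``by Mackey theory, it is simple to check''), then exhibit a one-dimensional extension of $\chi$ to $\tilde{\mathfrak{B}}_2$ and finish by the dimension count $q=1+(q-1)$.

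There is, however, a genuine slip in your construction of the extension. The decomposition $\tilde{\mathfrak{B}}_2=\mathfrak{B}_2\ltimes V$ you propose is false: $V=\{I_2+\varpi zE_{21}\}$ is \emph{not} normal in $\tilde{\mathfrak{B}}_2$. For instance, conjugating $\left(\begin{smallmatrix}1&0\\ \varpi&1\end{smallmatrix}\right)$ by $\left(\begin{smallmatrix}1&1\\ 0&1\end{smallmatrix}\right)\in\mathfrak{B}_2$ gives $\left(\begin{smallmatrix}1+\varpi&-\varpi\\ \varpi&1-\varpi\end{smallmatrix}\right)\notin V$. So the argument ``extend trivially across $V$'' does not go through as written. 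The fix is to use the genuinely normal subgroup $K_2^1=I_2+\varpi M_2(\cO_2)$ instead: under the hypothesis $\chi_1|_{1+\varpi\cO_2}=\chi_2|_{1+\varpi\cO_2}$, the restriction $\chi|_{\mathfrak{B}_2\cap K_2^1}$ equals $\phi_{mI_2}|_{\mathfrak{B}_2\cap K_2^1}$ for the scalar $m$ determined by that common restriction, and $\phi_{mI_2}$ is $\bar{\mathfrak{B}}_2$-conjugation-invariant on $K_2^1$. Then $\tilde\chi(bk):=\chi(b)\,\phi_{mI_2}(k)$ for $b\in\mathfrak{B}_2$, $k\in K_2^1$ is well defined on $\tilde{\mathfrak{B}}_2=\mathfrak{B}_2\cdot K_2^1$ and multiplicative, giving the required one-dimensional constituent. (As a side remark, the paper's displayed formula $\left(\begin{smallmatrix}x&y\\ \varpi z&w\end{smallmatrix}\right)\mapsto\chi_1(x)\chi_2(w)$ is itself only literally a homomorphism when $\chi_i|_{1+\varpi\cO_2}$ is trivial; in general one needs exactly the correction factor $\phi_{mI_2}$ above.)
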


\begin{proof}
By Mackey theory, it is simple to check that $\Ind_{\mathfrak{B}_2}^{\tilde{\mathfrak{B}}_2} \chi$ has exactly two irreducible components under the given condition. 
Indeed, $\chi$ extends to $\tilde{\mathfrak{B}}_2$ via $\begin{pmatrix}
    x & y\\
    \varpi z & w
\end{pmatrix} \mapsto \chi_1(x) \chi_2(w)$ giving the one dimensional representation and the lemma follows.
\end{proof}

Note that  $\dim \mathcal{I} (\chi) = q(q+1)$.    
Now we describe the dimensions of the irreducible components of $\mathcal{I}(\chi)$. 

\begin{proposition}\label{Proposition 2.1.6}
\begin{enumerate}[label = (\alph*)]
\item If $\chi_1|_{1+ \varpi \cO_2} = \chi_2|_{1+ \varpi \cO_2}$ but $\chi_1 \neq \chi_2$ then the dimensions of the two irreducible components of 
$\mathcal{I} (\chi)$ are $q+1$ and $q^2 -1$.
\item If $\chi_1 = \chi_2$ then the  dimensions of the three irreducible components of $\mathcal{I}(\chi)$ are $1, q$ and $q^2 -1$.
\end{enumerate}    
\end{proposition}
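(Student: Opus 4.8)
The plan is to combine the irreducibility analysis of Lemma~\ref{irreducibility of ps of gl2} with the dimension bookkeeping $\dim \mathcal{I}(\chi) = q(q+1)$, using the auxiliary group $\tilde{\mathfrak{B}}_2$ and Lemma~\ref{Lemma 2.1.7} as an intermediate step. For part (a), assume $\chi_1|_{1+\varpi\cO_2} = \chi_2|_{1+\varpi\cO_2}$ but $\chi_1 \neq \chi_2$. By Lemma~\ref{irreducibility of ps of gl2}(b) we know $\mathcal{I}(\chi)$ has exactly two irreducible components, say of dimensions $d_1 \leq d_2$ with $d_1 + d_2 = q(q+1) = q^2 + q$. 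To pin down the split, I would induct in stages: $\mathcal{I}(\chi) = \Ind_{\mathfrak{B}_2}^{\GL_2(\cO_2)} \chi = \Ind_{\tilde{\mathfrak{B}}_2}^{\GL_2(\cO_2)} \left( \Ind_{\mathfrak{B}_2}^{\tilde{\mathfrak{B}}_2} \chi \right)$. By Lemma~\ref{Lemma 2.1.7}, $\Ind_{\mathfrak{B}_2}^{\tilde{\mathfrak{B}}_2}\chi = \chi' \oplus \tau$ where $\chi'$ is the one-dimensional extension of $\chi$ and $\tau$ has dimension $q-1$. Since $[\GL_2(\cO_2) : \tilde{\mathfrak{B}}_2] = [\GL_2(\F_q):\bar{\mathfrak{B}}_2] = q+1$, inducing gives $\mathcal{I}(\chi) = \Ind_{\tilde{\mathfrak{B}}_2}^{\GL_2(\cO_2)} \chi' \;\oplus\; \Ind_{\tilde{\mathfrak{B}}_2}^{\GL_2(\cO_2)} \tau$, of dimensions $q+1$ and $(q-1)(q+1) = q^2-1$ respectively. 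It remains to check that each of these two summands is itself irreducible; since we already know from Lemma~\ref{irreducibility of ps of gl2}(b) that $\mathcal{I}(\chi)$ has exactly two irreducible components, and we have exhibited it as a sum of two nonzero pieces, each piece must be irreducible. This gives dimensions $q+1$ and $q^2-1$.

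For part (b), assume $\chi_1 = \chi_2$. Then $\mathcal{I}(\chi)$ twisted by $\chi_1^{-1}\circ\det$ is $\Ind_{\mathfrak{B}_2}^{\GL_2(\cO_2)}\mathbf{1}$, so without loss of generality $\chi$ is trivial. By Lemma~\ref{irreducibility of ps of gl2}(c) there are exactly three irreducible components. One of them is the trivial representation $\mathbf{1}$ (dimension $1$), which occurs with multiplicity one by Frobenius reciprocity. Applying the same two-step induction as above, $\mathcal{I}(\mathbf{1}) = \Ind_{\tilde{\mathfrak{B}}_2}^{\GL_2(\cO_2)}\mathbf{1} \oplus \Ind_{\tilde{\mathfrak{B}}_2}^{\GL_2(\cO_2)}\tau$, where $\tau$ is the $(q-1)$-dimensional piece of $\Ind_{\mathfrak{B}_2}^{\tilde{\mathfrak{B}}_2}\mathbf{1}$; the second summand has dimension $q^2-1$ and, by the count of three total components, the first summand $\Ind_{\tilde{\mathfrak{B}}_2}^{\GL_2(\cO_2)}\mathbf{1}$ (dimension $q+1$) must split into exactly two irreducibles. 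But $\Ind_{\tilde{\mathfrak{B}}_2}^{\GL_2(\cO_2)}\mathbf{1}$ factors through $\GL_2(\F_q)$ and equals $\Ind_{\bar{\mathfrak{B}}_2}^{\GL_2(\F_q)}\mathbf{1}$, the $(q+1)$-dimensional principal series at the trivial character, which is classically $\mathbf{1} \oplus \mathrm{St}$ with $\dim\mathrm{St} = q$. Hence the three dimensions are $1$, $q$, and $q^2-1$, and I would finally note $1 + q + (q^2-1) = q^2+q = \dim\mathcal{I}(\chi)$ as a consistency check.

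The main obstacle is verifying that the two induced summands $\Ind_{\tilde{\mathfrak{B}}_2}^{\GL_2(\cO_2)}\chi'$ and $\Ind_{\tilde{\mathfrak{B}}_2}^{\GL_2(\cO_2)}\tau$ are genuinely the irreducible constituents — i.e., that the stage-wise induction does not hide further splitting and does not accidentally merge constituents. The clean way around this is precisely the component-counting argument: Lemma~\ref{irreducibility of ps of gl2} already tells us the \emph{total} number of irreducible summands of $\mathcal{I}(\chi)$ (two in case (a), three in case (b)), so once we write $\mathcal{I}(\chi)$ as a direct sum of that many nonzero subrepresentations, each is forced to be irreducible, and reading off dimensions finishes the proof. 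The only real computation needed is the dimension $q-1$ of the nontrivial piece of $\Ind_{\mathfrak{B}_2}^{\tilde{\mathfrak{B}}_2}\chi$ (from Lemma~\ref{Lemma 2.1.7}) together with the index $[\GL_2(\cO_2):\tilde{\mathfrak{B}}_2] = q+1$, and, in part (b), the well-known decomposition of the $\GL_2(\F_q)$ principal series at a trivial character.
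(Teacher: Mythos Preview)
Your proposal is correct and follows essentially the same route as the paper: induct in stages through $\tilde{\mathfrak{B}}_2$, apply Lemma~\ref{Lemma 2.1.7} to split $\Ind_{\mathfrak{B}_2}^{\tilde{\mathfrak{B}}_2}\chi$ into pieces of dimensions $1$ and $q-1$, then use the component count from Lemma~\ref{irreducibility of ps of gl2} to force irreducibility of the induced pieces. Your treatment of part~(b) --- twisting to the trivial character and invoking the classical $\mathbf{1}\oplus\mathrm{St}$ decomposition over $\GL_2(\F_q)$ --- is a clean way to make explicit what the paper leaves as ``it can be verified.''
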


\begin{proof}
By previous lemma, let us write $\Ind_{\mathfrak{B}_2}^{\tilde{\mathfrak{B}_2}}(\chi) \cong \tilde{\chi} \oplus \tau$, where $\dim(\tilde{\chi})=1$ and $\dim (\tau)=q-1$.
Then, by the transitivity of induction, we obtain
\[
\mathcal{I}(\chi)  \cong \Ind_{\tilde{\mathfrak{B}}_2}^{\GL_2(\cO_2)} \tilde{\chi} \oplus \Ind_{\tilde{\mathfrak{B}}_2}^{\GL_2(\cO_2)} \tau. 
\]
\begin{enumerate}[label = ($\alph*$)]
\item 
If $\chi_1 \neq \chi_2$, then Lemma \ref{irreducibility of ps of gl2} implies that $\mathcal{I}(\chi)$ has exactly two irreducible components. 
Therefore, $\sigma_1 := \Ind_{\tilde{\mathfrak{B}}_2}^{\GL_2(\cO_2)} \tilde{\chi}$ and $\sigma_2 := \Ind_{\tilde{\mathfrak{B}}_2}^{\GL_2(\cO_2)} \tau$ are irreducible. 
Since $[\GL_2(\cO_2) : \tilde{\mathfrak{B}}_2] = q+1$, we have $\dim(\sigma_1) =q+1$ and $\dim (\sigma_2) =q^2 -1$. 
\item If $\chi_1 = \chi_2$,  Lemma \ref{irreducibility of ps of gl2} shows that $\mathcal{I}( \chi)$ splits into three irreducible components. 
Moreover, it can be verified that $\Ind_{\tilde{\mathfrak{B}}_2}^{\GL_2(\cO_2)} \tilde{\chi}$ decomposes into two irreducible components, say $\sigma'_1$ and $\sigma'_2$, where $\dim(\sigma'_1)=1$ and $\dim( \sigma'_2) =q$.
Indeed, $\sigma'_1 = \chi_1 \circ \det$.
We conclude that $\mathcal{I}(\chi) = \sigma'_1 \oplus \sigma'_2 \oplus \sigma'_3$, where $\sigma'_3 := \Ind_{\tilde{\mathfrak{B}}_2}^{\GL_2(\cO_2)} \tau$. \qedhere 
\end{enumerate}
\end{proof}


\section{$\pi_{N,\psi}$ for $\pi=\Ind_{P}^{\GL_4(\cO_2)}(\pi_1\otimes \pi_2)$}\label{Sec:4}

For $i=1,2$, we fix strongly cuspidal irreducible representations $\pi_i = \Ind_{I(\phi_{B_{i}})}^{\GL_2(\cO_2)} \tilde{\phi}_{B_i}$ of $\GL_2(\cO_2)$, where $B_i=\begin{pmatrix}
    m_i & n_i\alpha\\
    n_i & m_i
\end{pmatrix}$ with $m_i \in \F_{q}$, $n_{i} \in \F_{q}^{\times}$ and $\alpha \in \F_{q}^{\times}$ is a fixed non-square element. 
In this section, we study the degenerate Whittaker space $\pi_{N, \psi}$ for $\pi=\Ind_{P}^{\GL_4(\cO_2)}(\pi_1\otimes \pi_2)$, where $\pi_1 \otimes \pi_2$ is a representation of $P$ via $P \rightarrow P/N \cong \GL_2(\cO_2) \times \GL_2(\cO_2)$.
Restricting $\pi$ to $P$, Mackey theory gives the following decomposition
\begin{center}
$ \pi|_{P} \cong \underset{ \delta } {\bigoplus} \pi^{\delta} $
\end{center} 
where $\delta$ varies over a set of representatives for the double cosets $P \backslash \GL_4(\cO_2)/P$ and $\pi^{\delta} := \Ind_{\delta P \delta^{-1} \cap P}^{P} (\pi_1\otimes\pi_2)^{\delta}$. 
For a subgroup $H \subseteq G$, let $\Delta H \subset G \times G$ denote the diagonal embedding of $H$ in $G \times G$.
Since $\Delta \GL_2(\cO_2) \cdot N \subseteq P$, we get
\begin{equation}\label{3.1 equation-}
\pi_{N, \psi} 
\cong (\pi|_{P})_{N, \psi} 
\cong \underset{\delta\in P\backslash \GL_4(\cO_2)/P}{\bigoplus}\pi^{\delta}_{N, \psi}
\cong \underset{\delta\in P\backslash \GL_4(\cO_2)/P}{\bigoplus}\Hom_{N}(\pi^{\delta},\psi).  
\end{equation} 
We describe $\pi^{\delta}_{N,\psi}$ as a representation of $\GL_2(\cO_2)$ for every $\delta \in P\backslash \GL_4(\cO_2)/P$.

\subsection{A description of $P\backslash \GL_4(\cO_2)/P$ }
We write $\diag(a_1, a_2,...,a_k)$ for the diagonal matrix with diagonal entries $a_1, a_2, ...,a_k$.
Now, we describe the double cosets $P\backslash \GL_4(\cO_2)/P$.

\begin{theorem}\label{6 cosets P-G-P}
The number of distinct double cosets for $P\backslash \GL_4(\cO_2)/P$ is $6$.
A set of distinct representatives of the double coset {\small{$P\backslash \GL_4(\cO_2)/P$ is given by
{\footnotesize{\begin{center}
$\left\{
\delta
_1=I_4,\delta
_2=\begin{pmatrix}
  I_2 & 0 \\
  \diag(\varpi , 0) & I_2
\end{pmatrix}
,\delta_3=\begin{pmatrix}
  I_2 & 0 \\
\varpi I_2 & I_2
\end{pmatrix},
\delta_4=\begin{pmatrix}
  0 & I_2\\
  I_2 & 0
\end{pmatrix},
\delta_5= \begin{pmatrix}
    1 & 0 & 0\\
    0 & w_0 & 0 \\
    0 & 0 & 1
\end{pmatrix}
,
\delta_6 = \begin{pmatrix}
    1 & 0 & 0\\
    0 & w_0 & 0 \\
    \varpi & 0 & 1
\end{pmatrix}
 \right\}$.
\end{center}}}}}
\end{theorem}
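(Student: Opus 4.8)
The plan is to compute $P\backslash\GL_4(\cO_2)/P$ in two stages: first reduce modulo $\varpi$ to split the problem into three ``coarse'' pieces, then refine each piece. For the first stage, note that the reduction map $\GL_4(\cO_2)\to\GL_4(\F_q)$ carries $P$ onto the standard $(2,2)$-parabolic subgroup $\bar P\subseteq\GL_4(\F_q)$, hence induces a surjection $P\backslash\GL_4(\cO_2)/P\twoheadrightarrow\bar P\backslash\GL_4(\F_q)/\bar P$. By Proposition~\ref{Double cosets proposition over finite field} (applied with $W=S_4$ and $W_{\bar P}=S_2\times S_2$) the target has exactly three elements, represented by $I_4$, the permutation $(2\,3)$ (which is $\bar\delta_5$), and the permutation $(1\,3)(2\,4)$ (which is $\bar\delta_4$). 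Since $\delta_1,\delta_2,\delta_3$ all reduce to $I_4$, while $\delta_5,\delta_6$ reduce to $(2\,3)$ and $\delta_4$ reduces to $(1\,3)(2\,4)$, these three batches already lie in distinct double cosets of $\GL_4(\cO_2)$; it remains to prove that the fibre over $I_4$ consists of exactly the three classes of $\delta_1,\delta_2,\delta_3$, the fibre over $(2\,3)$ of exactly the two classes of $\delta_5,\delta_6$, and the fibre over $(1\,3)(2\,4)$ of exactly the one class of $\delta_4$.

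For the refinement I would pass to the geometric model. The group $\GL_4(\cO_2)$ acts transitively on the set $\mathcal L$ of rank-$2$ free $\cO_2$-direct summands of $\cO_2^4$ (a submodule lies in $\mathcal L$ precisely when the reduction mod $\varpi$ of any $4\times 2$ matrix having it as column space has $\F_q$-rank $2$), and $P$ is the stabiliser of $L_0:=\cO_2 e_1\oplus\cO_2 e_2$, where $e_1,\dots,e_4$ is the standard basis of $\cO_2^4$. Hence $P\backslash\GL_4(\cO_2)/P$ is the set of $\GL_4(\cO_2)$-orbits of ordered pairs $(L_0,L')$, i.e.\ the set of $P$-orbits on $\mathcal L$; writing $L'$ as the column space of a $4\times 2$ matrix with blocks $Y_1$ (top) and $Y_2$ (bottom), the problem becomes the classification of such matrices up to left multiplication by $P$ and right multiplication by $\GL_2(\cO_2)$. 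The first invariant is $r:=\mathrm{rank}_{\F_q}\overline{Y_2}\in\{0,1,2\}$, which equals $\dim_{\F_q}\overline{L'}-\dim_{\F_q}(\overline{L'}\cap\overline{L_0})$ and hence reads off the coarse class of the first stage. If $r=2$, a column operation makes $Y_2=I_2$ and the unipotent part $X$ of $P$ then kills $Y_1$: one orbit, with representative $\delta_4$. If $r=0$, then $Y_1\in\GL_2(\cO_2)$ and $Y_2=\varpi C$; normalising $Y_1=I_2$ reduces the classification to the orbits of $\GL_2(\F_q)\times\GL_2(\F_q)$ on $M_2(\F_q)$ under $(a,b)\cdot C=bCa^{-1}$, of which there are exactly three, indexed by $\mathrm{rank}(C)\in\{0,1,2\}$ and represented by $\delta_1,\delta_2,\delta_3$.

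The decisive stratum is $r=1$, and this is the step I expect to be the main obstacle. Here I would first bring $Y_2$ to Smith normal form over the local ring $\cO_2$, necessarily $\diag(1,c)$ with $c\in\{0,\varpi\}$; then, for each fixed $c$, compute the residual stabiliser inside $P\times\GL_2(\cO_2)$ preserving $Y_2=\diag(1,c)$ and use its $X$-part together with the left $\GL_2(\cO_2)$-action on $Y_1$ to clear the first column of $Y_1$ and to act transitively on the reduction of its second column (which is forced to be nonzero by the rank-$2$ condition on the full matrix), obtaining exactly one orbit for each value of $c$, represented by $\delta_5$ ($c=0$) and $\delta_6$ ($c=\varpi$). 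The subtlety is twofold. First, the right multiplication by $\GL_2(\cO_2)$ is shared between the normalisation of $Y_2$ and that of $Y_1$, so the residual stabiliser after fixing $Y_2=\diag(1,c)$ must be tracked carefully. Second, one must be sure that $c=0$ and $c=\varpi$ really give distinct double cosets rather than an artefact of the normalisation; the clean way to see this is the intrinsic identification $L_0\cap L'\cong\ker(Y_2)$, which is free of rank $1$ when $c=0$ and isomorphic to $\F_q$ when $c=\varpi$, so the two cannot be interchanged by any element of the group. Summing the strata gives $1+3+2=6$ double cosets, and checking that $\delta_iL_0$ is the column space produced in each case identifies the six representatives with those listed in the statement.
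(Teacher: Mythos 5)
Your argument is correct, and its key step---the refinement over each coarse Bruhat class---is the part the paper leaves unproven (the paper's proof stops at \emph{``It can be verified that the six representatives form a complete set of distinct representatives''}). You share the paper's first stage: reduce mod $\varpi$ and use Proposition~\ref{Double cosets proposition over finite field} to get the three cosets $I_4$, $(13)\sim(23)$ and $(13)(24)$. Where you diverge is in actually completing the second stage, which you do via the Grassmannian model: identify $P\backslash\GL_4(\cO_2)/P$ with the $P$-orbits on rank-$2$ free direct summands $L'\subseteq\cO_2^4$, encode $L'$ by a $4\times 2$ block matrix $\left(\begin{smallmatrix}Y_1\\ Y_2\end{smallmatrix}\right)$, use $r=\mathrm{rank}_{\F_q}\overline{Y_2}$ to recover the coarse class, and then finish the $r=0$ stratum by the rank of $\bar C$ under the $\GL_2(\F_q)\times\GL_2(\F_q)$ bi-action, the $r=2$ stratum by the unipotent radical, and the $r=1$ stratum by Smith normal form of $Y_2$ over the chain ring $\cO_2$, together with the intrinsic invariant $L_0\cap L'\cong\ker Y_2$ to separate $c=0$ from $c=\varpi$. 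This is a genuinely different (and more systematic) route than what the paper implicitly has in mind: the paper parametrises each coarse class as $\{w+\varpi A\}$ and asks the reader to check the six representatives directly, whereas your invariants give a conceptual reason why there are exactly $3+2+1$ classes and why the six listed matrices are pairwise inequivalent. The trade-off is that your $r=1$ refinement requires tracking the residual stabiliser inside $P\times\GL_2(\cO_2)$ carefully (you flag this yourself); working that out, one finds that after fixing $Y_2=\mathrm{diag}(1,c)$ the group $g_1\in\GL_2(\cO_2)$ plus the appropriate $X$-translations (arbitrary on the first column, $\varpi$-valued on the second when $c=\varpi$) does act transitively on the admissible $Y_1$'s, yielding exactly one orbit per $c$, as you claim.
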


\begin{proof}
Note that the natural map $\GL_4(\cO_2) \twoheadrightarrow \GL_4(\mathbb{F}_q)$ induces a surjective map
\begin{equation}\label{surjective map}
P \backslash \GL_4(\cO_2) / P \twoheadrightarrow \bar{P} \backslash \GL_4(\mathbb{F}_q) / \bar{P}.
\end{equation}
Using Proposition 
\ref{Double cosets proposition over finite field}, there exists a bijection between $\bar{P}\backslash \GL_4(\F_q) / \bar{P}$ and $W_{\bar{P}} \backslash W / W_{\bar{P}}$, where $W_{\bar{P}} \cong W \cap \bar{P}$ and $W$ is the Weyl group of $\GL_4(\F_q)$ which can be identified with the set of permutation matrices in $\GL_4(\mathbb{F}_q)$.
Observe that $W_{\bar{P}} = 
\{
I_4,(12),(34),(12)(34)
\}$, where  $(12)$ (respectively, $(34)$) denote the permutation matrices obtained by interchanging the first and the second rows (respectively, the third and the forth rows) of $I_4$.
It can be seen that
\begin{center}
$\left\lbrace I_4, (13), (13)(24)\right\rbrace$
\end{center}
is a set of distinct representatives of the double cosets
$W_{\bar{P}}\backslash W/W_{\bar{P}}$.
Therefore, using the map in \eqref{surjective map}, an exhaustive set of representatives for $P\backslash \GL_4(\cO_2)/P$ is
\begin{equation}\label{exhaustive set}
\left\lbrace I_4+\varpi A,  (13)+\varpi B,(13)(24)+\varpi C:A,B,C\in M_4(\mathfrak{o}_2)\right\rbrace.
\end{equation}
It can be verified that the six representatives mentioned in the theorem form a complete set of distinct representatives for  $P\backslash \GL_4(\cO_2)/P$.
\qedhere
\end{proof}

\subsection{$\pi^{\delta_i}_{N, \psi}= 0$ for $i=1,2,3$}
\begin{proposition} \label{deltas with 0 WM}
If $\delta\in \left\lbrace\delta_1,\delta_2,\delta_3\right\rbrace$, then $\pi^{\delta}_{N,\psi}=0$.
\end{proposition}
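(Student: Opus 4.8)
The plan is to show that for $\delta \in \{\delta_1, \delta_2, \delta_3\}$ the representation $\pi^\delta = \Ind_{\delta P \delta^{-1} \cap P}^P (\pi_1 \otimes \pi_2)^\delta$ carries no $(N,\psi)$-isotypic vectors, i.e.\ $\Hom_N(\pi^\delta, \psi) = 0$. By Frobenius reciprocity and Mackey theory applied once more to the pair of subgroups $N \subseteq P$ and $\delta P \delta^{-1} \cap P \subseteq P$, this Hom-space decomposes as a sum over the double cosets $N \backslash P / (\delta P \delta^{-1} \cap P)$ of terms $\Hom_{N_\xi}((\pi_1 \otimes \pi_2)^{\delta, \xi}, \psi^\xi)$, where $N_\xi = N \cap \xi(\delta P \delta^{-1} \cap P)\xi^{-1}$. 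So the key is to understand, for each of the three $\delta$'s, the subgroup $\delta P \delta^{-1} \cap P$ and the character that $N$ (conjugated into it) would have to match.

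First I would treat $\delta_1 = I_4$. Here $\delta P \delta^{-1} \cap P = P = M \ltimes N$, and $(\pi_1 \otimes \pi_2)^{\delta_1}$ is $\pi_1 \otimes \pi_2$ inflated through $P \to M$, hence $N$ acts \emph{trivially} on it. Since $\psi$ is a nontrivial character of $N$ (its restriction to the top congruence piece is nontrivial by the choice of $\psi_0$), there can be no $N$-equivariant map to $\psi$; thus $\pi^{\delta_1}_{N,\psi} = 0$ immediately — in fact $\pi^{\delta_1}|_N$ is a multiple of the trivial character.

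For $\delta_2$ and $\delta_3$ the argument is structurally the same but requires a short computation: I would compute $\delta_i P \delta_i^{-1}$ explicitly (both $\delta_i$ are lower-unitriangular with entry in $\varpi M_2(\cO_2)$, so conjugation is easy to carry out) and intersect with $P$. The point to extract is that $\delta_i P \delta_i^{-1} \cap P$ still projects onto a large part of $M$ but its unipotent radical is a \emph{proper} piece of $N$ — concretely, conjugating $N$ by $\delta_i$ moves part of $N$ into the lower-triangular block or into $K_2^1$-type directions, so that $N \cap \delta_i P \delta_i^{-1}$, together with the constraint coming from $(\pi_1\otimes\pi_2)^{\delta_i}$ (on which $N$ again acts trivially since it is inflated from $M$), forces any $N$-equivariant functional to factor through a character of $N$ that is trivial on $\varpi M_2(\cO_2) \cap (\text{relevant subspace})$, hence cannot equal $\psi$. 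Running the Mackey double-coset sum $N\backslash P/(\delta_i P\delta_i^{-1}\cap P)$, each summand $\Hom_{N_\xi}(\cdots, \psi^\xi)$ vanishes because $\psi^\xi$ restricted to $N_\xi$ is nontrivial while the other representation restricted to $N_\xi$ is trivial (being inflated from a quotient that kills $N_\xi$); I would verify this by checking that for every coset representative $\xi$ the subgroup $N_\xi$ meets the top layer $K_2^1$ of $N$ on which $\psi$ is nontrivial.

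The main obstacle is purely bookkeeping: correctly identifying $\delta_i P \delta_i^{-1} \cap P$ and the coset representatives $\xi$ for $\delta_2, \delta_3$, and then checking in each case that $N_\xi$ is not contained in $\ker\psi$. This is where one must be careful with the $\varpi$-adic entries, since over $\cO_2$ a congruence-subgroup direction can look "trivial mod $\varpi$" yet still support the nontrivial character $\psi_0|_{\varpi\cO_2}$. Once the subgroup computations are pinned down, the vanishing of every Hom-summand is immediate from the mismatch between a nontrivial character $\psi$ and a representation on which $N$ acts trivially. I would organize the write-up as: (i) reduce to the Mackey sum; (ii) dispatch $\delta_1$ in one line; (iii) give the conjugation computation for $\delta_2$ and $\delta_3$ and conclude.
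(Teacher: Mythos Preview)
Your overall approach matches the paper's: reduce $\Hom_N(\pi^\delta,\psi)$ via Mackey to a sum over $N\backslash P/(\delta P\delta^{-1}\cap P)$ and kill each term by a character mismatch. Your treatment of $\delta_1=I_4$ is exactly right.

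For $\delta_2,\delta_3$, however, two of your intermediate claims are wrong and would derail the computation as stated. First, $N$ is \emph{entirely} contained in $\delta_i P\delta_i^{-1}\cap P$, not a proper piece: writing $\delta_i=\begin{pmatrix}I_2&0\\ \varpi D&I_2\end{pmatrix}$ with $D=\diag(1,0)$ or $I_2$, a direct calculation using $\varpi^2=0$ in $\cO_2$ gives
\[
\delta_i^{-1}\begin{pmatrix}I_2&X\\0&I_2\end{pmatrix}\delta_i=\begin{pmatrix}I_2+\varpi XD&X\\0&I_2-\varpi DX\end{pmatrix}\in P,
\]
so nothing leaks into the lower-triangular block, and $N_\xi=N$ for every $\xi\in P$. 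Second, and more importantly, $(\pi_1\otimes\pi_2)^{\delta_i}$ is \emph{not} trivial on all of $N$: by the formula above it acts as $\pi_1(I_2+\varpi XD)\otimes\pi_2(I_2-\varpi DX)$, which is genuinely nontrivial since $\pi_1,\pi_2$ are regular. Your inflation argument applies to $\pi_1\otimes\pi_2$, not to its $\delta_i$-twist. The paper's remedy is to pass to the subgroup $N_1=\{n\in N: n\equiv I_4\pmod\varpi\}$: there the diagonal perturbation $\varpi XD$ vanishes, so $(\pi_1\otimes\pi_2)^{\delta_i}|_{N_1}$ \emph{is} trivial, while $\psi^{\gamma^{-1}}|_{N_1}$ remains nontrivial for every $\gamma\in P$ (since $P$ normalizes $N_1$ and $\psi_0|_{\varpi\cO_2}\neq 1$). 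So the vanishing does go through along your outline, but only after identifying $N_1$; the justification you give for triviality on $N_\xi$ is incorrect as written.
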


\begin{proof}
Recall that $\pi^{\delta}=\Ind_{\delta P\delta^{-1}\cap P}^{P}(\pi_1\otimes \pi_2)^{\delta}$.
For $\delta = \delta_1 =I_4$, $\pi^{\delta} = \pi_1 \otimes \pi_2$ on which $N$ acts trivially. 
On the other hand, the character $\psi$ of $N$ is non-trivial. 
Hence using \eqref{3.1 equation-} $\pi^{I_4}_{N, \psi} =0$.

Since \( \pi^{\delta} = \Ind_{\delta P \delta^{-1} \cap P}^P (\pi_1 \otimes \pi_2)^{\delta} \), using Mackey theory we get
\begin{align} \label{equation for pi-delta-N,psi for trivial doubel coset}
\Hom_{N}(\pi^{\delta}, \psi) 
& \cong \underset{\gamma \in N \backslash P / \delta P \delta^{-1} \cap P}{\bigoplus}\Hom_{N} \left( \Ind_{\gamma (\delta P \delta^{-1} \cap P) \gamma^{-1} \cap N}^N (\pi_1 \otimes \pi_2)^{\delta \gamma}, \psi \right) \nonumber \\
& \cong \underset{\gamma \in N \backslash P / \delta P \delta^{-1} \cap P}{\bigoplus}\Hom_{N} \left(  (\pi_1 \otimes \pi_2)^{\delta}, \psi^{\gamma^{-1}} \right)
\end{align}
where the last equality follows from $\gamma (\delta P \delta^{-1} \cap P) \gamma^{-1} \cap N = N$ for any $\gamma \in P$.
Let 
$$
N_1 := \left\lbrace \begin{pmatrix}
    I_2 & \varpi X\\
    0 & I_2
\end{pmatrix}: X \in M_2(\cO_2) \right \rbrace \subset N.
$$
Now we consider $\delta \in \{ \delta_2, \delta_3 \}$.
Then $(\pi_1 \otimes \pi_2)^{\delta}$ is trivial on $N_1$.
Since $\psi$ is non-trivial on $N_1$, it follows that $\psi^{\gamma^{-1}}$ is non-trivial on $N_1$ for any $\gamma \in P$.
Therefore, $\Hom_{N_1} \left(  (\pi_1 \otimes \pi_2)^{\delta}, \psi^{\gamma^{-1}} \right) =0$ for any $\gamma \in P$.
Since $N_1 \subset N$, by \eqref{equation for pi-delta-N,psi for trivial doubel coset}, we get $\pi^{\delta}_{N, \psi} = \Hom_{N}(\pi^{\delta}, \psi) = 0$.
\end{proof}

\subsection{A description of $\pi^{\delta_4}_{N,\psi}$}

\begin{lemma}\label{delta_4 coset}
The representation $\pi^{\delta_4}_{N,\psi} \cong \pi_1\otimes\pi_2$.
\end{lemma}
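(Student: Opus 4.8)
\textbf{Proof proposal for Lemma \ref{delta_4 coset}.}

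The plan is to compute $\pi^{\delta_4}_{N,\psi} = \Hom_N(\pi^{\delta_4},\psi)$ directly, where $\pi^{\delta_4} = \Ind_{\delta_4 P \delta_4^{-1}\cap P}^{P}(\pi_1\otimes\pi_2)^{\delta_4}$ and $\delta_4 = \left(\begin{smallmatrix} 0 & I_2 \\ I_2 & 0 \end{smallmatrix}\right)$. First I would identify the subgroup $\delta_4 P \delta_4^{-1}\cap P$: conjugating $P = M\ltimes N$ by $\delta_4$ swaps the two $\GL_2$-blocks of $M$ and sends the unipotent radical $N$ (upper-triangular) to the opposite unipotent radical $N^- = \left\{\left(\begin{smallmatrix} I_2 & 0 \\ Y & I_2\end{smallmatrix}\right)\right\}$. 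Hence $\delta_4 P \delta_4^{-1}\cap P = M$ (the block-diagonal Levi), and $(\pi_1\otimes\pi_2)^{\delta_4}$ is the representation $\pi_2\otimes\pi_1$ of $M$ inflated to $M$ (still trivial on $N$ in the original $P$, but here the relevant ambient intersection is just $M$).

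Next I would apply Mackey theory inside $P$ to $\Hom_N\big(\Ind_{M}^{P}(\pi_2\otimes\pi_1),\psi\big)$, writing $P = \bigsqcup_\gamma N\gamma M$ and using
\[
\Hom_N(\pi^{\delta_4},\psi) \cong \bigoplus_{\gamma\in N\backslash P/M}\Hom_{\gamma M\gamma^{-1}\cap N}\big((\pi_2\otimes\pi_1)^{\gamma},\psi\big).
\]
Since $P = N\rtimes M$ and $N\cap M = \{I\}$, there is a \emph{single} double coset $N\backslash P/M$ with representative $\gamma = I_4$, and $\gamma M\gamma^{-1}\cap N = M\cap N = \{I_4\}$. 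Therefore the sum collapses to $\Hom_{\{I_4\}}(\pi_2\otimes\pi_1,\mathbf{1}) = \pi_2\otimes\pi_1$ as a vector space, but now I must track the residual action. The space $\pi^{\delta_4}_{N,\psi}$ carries an action of $M_\psi\cong\GL_2(\cO_2)$ (the stabilizer of $\psi$ in $M$, embedded diagonally). Under the identification above, a diagonal element $\diag(g,g)\in M_\psi$ acts on the Mackey summand indexed by $\gamma=I_4$ exactly by $(\pi_2\otimes\pi_1)(\diag(g,g)) = \pi_2(g)\otimes\pi_1(g)$, which is the representation $\pi_1\otimes\pi_2$ of $\GL_2(\cO_2)$ (the tensor factors commute, so the ordering is immaterial). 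This gives $\pi^{\delta_4}_{N,\psi}\cong\pi_1\otimes\pi_2$.

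The one genuinely delicate point — and the step I expect to be the main obstacle — is the bookkeeping in the Mackey computation: one must be careful that the unipotent group appearing is the \emph{same} $N$ on both sides (the $N$ cutting out the Whittaker space and the $N$ inside $P$), and that $\delta_4$ conjugates $N$ to $N^-$, so that $\delta_4 P\delta_4^{-1}\cap P$ really is all of $M$ rather than a proper subgroup. Once that is pinned down, the vanishing of nontrivial contributions is automatic because $N\cap M$ is trivial, and the only subtlety left is confirming that no twist of $\psi$ interferes (it does not, since $M$ normalizes $N$ and the single coset representative is the identity). I would also double-check that $M_\psi$ acts through $\diag(g,g)\mapsto \pi_1(g)\otimes\pi_2(g)$ and not through some twist by $\psi_0$ of a trace, which here contributes nothing since $M$ fixes $N$ pointwise in the relevant sense.
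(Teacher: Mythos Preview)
Your proposal is correct and follows essentially the same approach as the paper: both identify $\delta_4 P\delta_4^{-1}\cap P = M$ and exploit the semidirect product structure $P = M\ltimes N$ to reduce to a single Mackey summand. The only organizational difference is that the paper restricts all the way to $\Delta\GL_2(\cO_2)\cdot N$ in one step and writes $\Ind_{\Delta\GL_2}^{\Delta\GL_2\cdot N}(\pi_1\otimes\pi_2)\cong(\pi_1\otimes\pi_2)\otimes\mathbb{C}[N]$, which makes the $M_\psi$-action transparent (it acts trivially on the one-dimensional $\psi$-line in $\mathbb{C}[N]$), whereas you restrict to $N$ first and then track the $\Delta\GL_2$-action through the Mackey isomorphism by hand. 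The paper's packaging is slightly cleaner precisely at the point you flagged as delicate, but the content is the same.
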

\begin{proof}
Recall that $\pi^{\delta_4} = \Ind_{\delta_4 P \delta_4^{-1} \cap P}^{P}(\pi_1\otimes\pi_2)^{\delta_4}$ and $\delta_4 P \delta_4^{-1}\cap P \cong \GL_2(\cO_2)\times\GL_2(\cO_2)$. 
Therefore, 
\begin{center}
$\Res^{P}_{\Delta \GL_2(\cO_2) \cdot N} \pi^{\delta_4} \cong \Ind_{\Delta \GL_2(\cO_2)}^{\Delta \GL_2(\cO_2) \cdot N} (\pi_1\otimes\pi_2 )
\cong (\pi_1 \otimes \pi_2) \otimes \Ind_{\{I_4\}}^{N} \mathbb{C}$.
\end{center}
Note that $\Ind_{\{I_4\}}^{N} \mathbb{C} \cong \mathbb{C}[N]$ contains all the characters of $N$ exactly once.
Since $\Delta \GL_2(\cO_2)$ stabilizes the character $\psi$ of $N$, we get
$\pi^{\delta_4}_{N, \psi} \cong \pi _1 \otimes \pi_2$.
\end{proof}

\subsection{A description of $\pi^{\delta_5}_{N,\psi}$}

Let us write $u^+(x) := \begin{pmatrix}
    1 & x\\
    0 & 1
\end{pmatrix}$ and $u^-(y) := \begin{pmatrix}
    1 & 0\\
    y & 1
\end{pmatrix}$.

\begin{lemma}\label{tensor product on hom}
Let $\pi_1$ and $\pi_2$ be regular representations of $\GL_2(\cO_2)$ and let $N' : = \left\{ u^+(x)
: x \in \cO_2 \right\}$.  
Define  a character $\psi$ of $N' \times N'$ as $\psi := \psi_{0} \otimes \psi_{0}$. 
Then $\Hom_{N'\times N'}(\pi_1\otimes \pi_2, \psi)\cong \C$.
\end{lemma}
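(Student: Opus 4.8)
The plan is to reduce the statement $\Hom_{N'\times N'}(\pi_1\otimes\pi_2,\psi)\cong\C$ to a statement about the restriction of each $\pi_i$ to the unipotent subgroup $N'=\{u^+(x):x\in\cO_2\}$ of $\GL_2(\cO_2)$. Since $\Hom_{N'\times N'}(\pi_1\otimes\pi_2,\psi_0\otimes\psi_0)\cong\Hom_{N'}(\pi_1,\psi_0)\otimes\Hom_{N'}(\pi_2,\psi_0)$ (because $\psi=\psi_0\otimes\psi_0$ factors as an external tensor and the group is a direct product), it suffices to prove that $\dim\Hom_{N'}(\pi_i,\psi_0)=1$ for a regular representation $\pi_i$ of $\GL_2(\cO_2)$, where $\psi_0$ is viewed as the character $u^+(x)\mapsto\psi_0(x)$ of $N'$. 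In other words, the claim is exactly that regular representations of $\GL_2(\cO_2)$ are generic (have a one-dimensional space of Whittaker vectors with respect to the standard non-degenerate character of the upper-triangular unipotent subgroup).

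To establish $\dim\Hom_{N'}(\pi_i,\psi_0)=1$, I would use the explicit description of regular representations from Section \ref{reg reps of GL_2}: $\pi_i=\Ind_{I(\phi_{B_i})}^{\GL_2(\cO_2)}\tilde\phi_{B_i}$ with $B_i$ regular. By Mackey theory applied to the subgroups $N'$ and $I(\phi_{B_i})$ of $\GL_2(\cO_2)$,
\[
\Hom_{N'}(\pi_i,\psi_0)\cong\bigoplus_{\eta\in N'\backslash\GL_2(\cO_2)/I(\phi_{B_i})}\Hom_{N'\cap\eta I(\phi_{B_i})\eta^{-1}}\bigl(\tilde\phi_{B_i}^{\eta},\psi_0\bigr).
\]
The key input is the coset representatives for $N'\backslash\GL_2(\cO_2)/I(\phi_{B_i})$, together with an analysis, coset by coset, of when the character $\tilde\phi_{B_i}^\eta$ agrees with $\psi_0$ on the intersection $N'\cap\eta I(\phi_{B_i})\eta^{-1}$. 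Since $I(\phi_{B_i})\supseteq Z\cdot J^1_2$ and $J^1_2=I_2+\varpi M_2(\cO_2)$ is normal, the intersections are controlled, and on most cosets the character $\tilde\phi_{B_i}^\eta$ will already fail to match $\psi_0$ on the part of $N'$ lying in $\varpi M_2(\cO_2)$ — which is where $\phi_{B_i}$ is non-trivial while $\psi_0$ on $\varpi\cO_2$ is governed by $\psi_0|_{\varpi\cO_2}\ne 1$. The single surviving coset should yield exactly $\C$.

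The main obstacle I anticipate is the bookkeeping in that Mackey computation: one must identify the double coset space $N'\backslash\GL_2(\cO_2)/I(\phi_{B_i})$ explicitly (this depends on which of $\mathcal{X}_1,\mathcal{X}_2,\mathcal{X}_3$ the class of $B_i$ lies in, though in the paper's setting $B_i\in\mathcal{X}_1$), and then for each representative $\eta$ verify the compatibility of $\tilde\phi_{B_i}^\eta$ with $\psi_0$ on the relevant unipotent intersection, keeping careful track of the extension $\tilde\phi_{B_i}$ beyond $J^1_2$. An alternative, possibly cleaner route — which I would fall back on if the direct Mackey analysis gets unwieldy — is to invoke the general fact that over $\cO_l$ a regular representation is obtained by Clifford theory from a regular character of $K^{l-1}_l$ and is known to be generic with multiplicity one (this is essentially the $\GL_2$ case of genericity of regular representations, cf. the construction in Theorem \ref{Construction of regualr reprresentation}); combined with the direct-product factorization of $\Hom_{N'\times N'}$ above, this immediately gives $\Hom_{N'\times N'}(\pi_1\otimes\pi_2,\psi)\cong\C\otimes\C\cong\C$.
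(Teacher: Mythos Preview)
Your proposal is correct and takes essentially the same approach as the paper: factor $\Hom_{N'\times N'}(\pi_1\otimes\pi_2,\psi_0\otimes\psi_0)\cong\Hom_{N'}(\pi_1,\psi_0)\otimes\Hom_{N'}(\pi_2,\psi_0)$ and then use that each factor is one-dimensional. The only difference is that the paper does not carry out the Mackey analysis you sketch but simply invokes the uniqueness of Whittaker models for regular representations of $\GL_2(\cO_2)$ by citing \cite{SP2022}, which is exactly your fallback route.
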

\begin{proof}
The proof follows from
$$
\Hom_{N' \times N'}(\pi_1 \otimes \pi_2, \psi_1 \otimes \psi_2)\cong \Hom_{N'}(\pi_1,\psi_1)\otimes \Hom_{N'}(\pi_2, \psi_2),
$$
and using the uniqueness of Whittaker models for $\pi_1$ and $\pi_2$, see \cite{SP2022}.
\end{proof}
Since there is a natural surjection $\cO_2\rightarrow\cO_{1}=\F_q$ written as $x\mapsto\bar{x}$, we have, $ \F_q=\cO_{1}\cong \varpi \mathfrak{o}_2\subseteq \cO_2$.
Given a character $\psi_0:\cO_2\rightarrow\C^\times$, its restriction to the subgroup $\varpi \cO_2$ can be seen as character of $\cO_{1}$ again denoted by $\psi_{0}$ satisfying the following property
$\psi_0(\varpi a)=\psi_0(\bar{a})$ for all $a\in \cO_{2}$. 
This isomorphism depends on the choice of uniformizer $\varpi$.

We denote a block diagonal matrix with diagonal blocks $A_1, A_2,..., A_k$ by $\mathrm{Diag}(A_1,A_2,..., A
_k)$.
\begin{lemma} \label{dim of pi delta_5}
Let $\delta=\delta_5$. 
Then  $\dim(\pi^{\delta}_{N,\psi})=q(q+1)$.
\end{lemma}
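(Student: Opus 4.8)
The goal is to compute $\dim \pi^{\delta_5}_{N,\psi}$, where $\pi^{\delta_5} = \Ind_{\delta_5 P \delta_5^{-1} \cap P}^{P}(\pi_1 \otimes \pi_2)^{\delta_5}$ and, by \eqref{3.1 equation-}, $\pi^{\delta_5}_{N,\psi} \cong \Hom_N(\pi^{\delta_5}, \psi)$. The strategy is to apply Mackey theory once more to break $\Hom_N(\pi^{\delta_5}, \psi)$ as a sum over double cosets $N \backslash P / (\delta_5 P \delta_5^{-1} \cap P)$, exactly as in \eqref{equation for pi-delta-N,psi for trivial doubel coset}, and then to identify, for each surviving coset representative $\gamma$, the space $\Hom_{\gamma(\delta_5 P \delta_5^{-1} \cap P)\gamma^{-1} \cap N}\big((\pi_1 \otimes \pi_2)^{\delta_5 \gamma}, \psi\big)$. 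So the first step is to work out the subgroup $H_{\delta_5} := \delta_5 P \delta_5^{-1} \cap P$ explicitly: conjugating $P$ by $\delta_5 = \mathrm{Diag}(1, w_0, 1)$ (viewed as a $4\times 4$ matrix that swaps the two ``middle'' coordinates) one sees $H_{\delta_5}$ consists of matrices in $P$ whose $(2,3)$-ish block structure is further constrained; concretely it should contain $\Delta$-type copies of parabolic subgroups of the two $\GL_2(\cO_2)$ factors together with a chunk of $N$.

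**Main computation.** Having $H_{\delta_5}$ in hand, I would enumerate $N \backslash P / H_{\delta_5}$. Since $N$ is normal in $P$ with $P/N \cong M \cong \GL_2(\cO_2) \times \GL_2(\cO_2)$, this double coset space is really $\{1\} \backslash M / \bar{H}_{\delta_5}$, i.e. just the cosets $M / \bar{H}_{\delta_5}$, and $\bar H_{\delta_5}$ should be (a congruence-subgroup thickening of) a product of Borel-type subgroups $\mathfrak{B}_2 \times \mathfrak{B}_2$, or something close to it — this is where Lemma \ref{GL_2 double cosets}(3) and (4) enter, giving an explicit transversal. For each representative $\gamma$, the relevant unipotent subgroup $\gamma H_{\delta_5} \gamma^{-1} \cap N$ will be a subgroup of $N$ on which we must test whether the restriction of $(\pi_1 \otimes \pi_2)^{\delta_5 \gamma}$ can match $\psi$; the cosets for which the intersection with $N_1 = \{I_2 + \varpi X\} \subset N$ forces a contradiction with $\psi|_{N_1}$ (as in Proposition \ref{deltas with 0 WM}) contribute $0$, and the surviving ones should, after conjugating, reduce to a Whittaker-type functional computation governed by Lemma \ref{tensor product on hom} — each contributing exactly $\dim \C = 1$ to $\Hom$, so that $\dim \pi^{\delta_5}_{N,\psi}$ equals the number of surviving cosets. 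The target value $q(q+1)$ strongly suggests the answer is ``$\dim \mathcal I(\chi)$ worth'' of cosets, i.e. $q(q+1) = [\GL_2(\cO_2) : \mathfrak{B}_2]$, matching the transversal of $\GL_2(\cO_2)/\mathfrak{B}_2$ from Lemma \ref{GL_2 double cosets}(3); so I expect exactly one ``branch'' of the double coset space to survive, indexed by $\GL_2(\cO_2)/\mathfrak{B}_2$, each contributing a one-dimensional Whittaker functional.

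**Execution details and the obstacle.** Concretely, I would: (1) write $H_{\delta_5}$ and $\bar H_{\delta_5}$ down explicitly; (2) use $N \backslash P / H_{\delta_5} \leftrightarrow M/\bar H_{\delta_5}$ and Lemma \ref{GL_2 double cosets} to list representatives $\gamma$; (3) for each $\gamma$ compute $\gamma H_{\delta_5}\gamma^{-1} \cap N$ and the restriction of $\psi$ and of $(\pi_1\otimes\pi_2)^{\delta_5\gamma}$ to it; (4) discard the cosets where $\psi$ disagrees with the (trivial on a $\varpi$-part) action — these vanish by the $N_1$-argument — and (5) for the remaining cosets, invoke uniqueness of the Whittaker model via Lemma \ref{tensor product on hom} to get a contribution of $1$ each, then count. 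The main obstacle is step (3)–(4): carefully tracking how the twist by $\delta_5\gamma$ transports the character $\phi_{B_i}$ (equivalently the central/Whittaker data of $\pi_i$) onto the various subgroups of $N$, and verifying that on each surviving coset the resulting character of the relevant unipotent subgroup is \emph{non-degenerate} in the right sense so that Lemma \ref{tensor product on hom} applies with a one-dimensional answer — and, dually, that on the discarded cosets the mismatch with $\psi$ is genuine. Keeping the bookkeeping of the $4\times 4$ block conjugations consistent (especially the placement of the ``$\varpi$'' entries coming from $\delta_5$ and from the representatives in Lemma \ref{GL_2 double cosets}(3)) is the part most prone to error, but it is routine once the subgroup $H_{\delta_5}$ is pinned down.
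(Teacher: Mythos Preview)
Your plan matches the paper's proof: Mackey theory reduces $\Hom_N(\pi^{\delta_5},\psi)$ to a sum over $\gamma=\Diag(g,h)$ with $g,h$ running over $\GL_2(\cO_2)/\mathfrak{B}_2$ (so indeed the image of $P_{\delta_5}$ in $M$ is exactly $\mathfrak{B}_2\times\mathfrak{B}_2$), and the contributing cosets are precisely the diagonal $g\sim h$, each yielding a one-dimensional space via Lemma~\ref{tensor product on hom}, for a total of $q(q+1)$.

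One correction to your step~(4): the subgroup that kills the off-diagonal cosets is not $N_1$ from Proposition~\ref{deltas with 0 WM} but the one-parameter group $N_0\subset P_{\delta_5}\cap N$ consisting of matrices with only a $(1,4)$ entry $p_{14}\in\cO_2$. On $N_0$ the action of $(\pi_1\otimes\pi_2)^{\delta_5}$ is trivial, while $\psi^{\gamma^{-1}}|_{N_0}$ is nontrivial exactly when $g\not\sim h$. The $N_1$-argument does not transfer here because $(\pi_1\otimes\pi_2)^{\delta_5}$ acts on $P_{\delta_5}\cap N$ through $\pi_1(u^+(p_{13}))\otimes\pi_2(u^+(p_{24}))$, which is \emph{not} trivial on the $\varpi$-level part; so your parenthetical ``trivial on a $\varpi$-part'' is the one place the heuristic would mislead you if taken literally.
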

\begin{proof}
We know
$\pi^{\delta_5}_{N,\psi}= \Hom_{N}\left(\Ind_{\delta_5 P\delta_5^{-1}\cap P}^{P}(\pi_1\otimes \pi_2)^{\delta_5}, \psi\right)$.
Using Mackey theory and the fact that $N$ is a normal subgroup of $P$, we get 
\begin{align}\label{4.7}
\Hom_{N}\left(\Res_{N}^{P}\Ind_{\delta_5 P\delta_5^{-1}\cap P}^{P}(\pi_1\otimes \pi_2)^{\delta_5}, \psi\right)
 & \cong \underset{\gamma\in \Gamma}{\bigoplus}~ W_{\gamma}, 
\end{align}
where $W_{\gamma}:=\Hom_{\delta_5 P\delta_5^{-1}\cap N}\left((\pi_1\otimes \pi_2)^{\delta_5},\psi^{\gamma^{-1}}\right)$ and $\Gamma$ is a set of representatives of $N\backslash P/\delta_5 P\delta_5^{-1}\cap P$, which can be taken to be 
$$
\left\{ \Diag(g,h)
: g,h\in \left\{{u^-(a)},\begin{pmatrix}
    \varpi b & 1\\
    1 & 0
\end{pmatrix} : a,b\in \cO_2\right\}\right\}
$$
We write $P_{\delta_5}=\delta_5 P\delta_5^{-1}\cap P$, we have
$$ 
P_{\delta_5}\cap N=
\left\lbrace
Y : =\begin{pmatrix}
    I_2 & \begin{pmatrix}
        p_{13} & p_{14}\\
        0 & p_{24}
    \end{pmatrix} \\
    0  & I_2
\end{pmatrix} ~{\Big{|}}~  p_{13}, p_{14}, p_{24}\in \cO_2\right\rbrace.
$$
Let $N_0$ be the subgroup of $P_{\delta_5}$ as follows 
\begin{equation}\label{The subgoup N_0}
N_0:=\left\lbrace n_0:=\begin{pmatrix}
    I_2 & 
    u^+(p_{14}) \\
    0 & I_2
\end{pmatrix}~{\big{|}}~ p_{14}\in \cO_2
\right\rbrace
\end{equation}
For $Y \in  P_{\delta_5}\cap N$, we have 
\begin{equation}\label{pi_1otimes pi_2 twisted by delta_5 action}
(\pi_1\otimes \pi_2)^{\delta_5}(Y)=
\pi_1(u^+(p_{13}))
\otimes 
\pi_2(u^+(p_{24})).
\end{equation}
We now divide the proof into cases to compute $\dim(W_{\gamma})$ for all $\gamma\in \Gamma$.
\\
Case 1 :
Let $\gamma=\Diag(g, h)$ with $g=u^-(a_1)$ and $h=u^-(a_2)$ where $a_1,a_2\in \cO_2$. 
Then
\begin{equation}\label{a_1=a_2, character value}
\psi^{\gamma^{-1}}(Y)=\psi_0(p_{24}+p_{13}+(a_1-a_2)p_{14}).
\end{equation}
Clearly, $(\pi_1\otimes \pi_2)^{\delta_5}$ is trivial on $N_0$, while $\psi^{\gamma^{-1}}(n_0)=\psi_0((a_1-a_2)p_{14})$ is non-trivial, if $a_1\neq a_2$.
Thus, if $a_1\neq a_2$,
$\Hom_{N_0}\left((\pi_1\otimes \pi_2)^{\delta_5},\psi^{\gamma^{-1}}\right)=0$ which implies $W_{\gamma}=0$.
Now, assume $a_1=a_2$.
Then using  \eqref{pi_1otimes pi_2 twisted by delta_5 action} and \eqref{a_1=a_2, character value}, together with Lemma \ref{tensor product on hom}, we get
$\dim\left(W_{\gamma}\right) =1$.
\\
Case 2 : Let $\gamma= \Diag(u^-(a), h)$ 
or 
$\Diag(h, u^-(a))$
with $h=\begin{pmatrix}
    \varpi b & 1\\
    1 & 0
\end{pmatrix}$. 
Then 
$$
\psi^{\gamma^{-1}}(n_0)=\psi_0((1-\varpi ab)p_{14}),
$$
which is a non-trivial on $N_0$. 
This gives
$\Hom_{N_0}\left((\pi_1\otimes \pi_2)^{\delta_5},\psi^{\gamma^{-1}}\right)=0$ and then $W_{\gamma}=0$. 
\\
Case 3 :
If $\gamma= \Diag(g,h)$
with $g=\begin{pmatrix}
    \varpi b & 1\\
    1 & 0
\end{pmatrix}$ and $h=\begin{pmatrix}
    \varpi c & 1\\
    1 & 0
\end{pmatrix}$, then we get
\begin{equation}\label{b=c redcution in computation of Hom}
\psi^{\gamma^{-1}}(Y)=\psi_0(p_{24}+p_{13}+\varpi (b-c)p_{14}).    
\end{equation}
If $\varpi b\neq \varpi c$, then $\psi^{\gamma^{-1}}$ is non-trivial on $N_0$, which gives
$\Hom_{N_0}\left((\pi_1\otimes \pi_2)^{\delta_5},\psi^{\gamma^{-1}}\right)=0$ and hence $W_{\gamma}=0$.
If $\varpi b =\varpi c$, then using \eqref{b=c redcution in computation of Hom}, \eqref{pi_1otimes pi_2 twisted by delta_5 action} and Lemma \ref{tensor product on hom}, we get
$\dim\left(W_{\gamma}\right)=1$.
\\
From Case 1, Case 2 and Case 3, we get \[
\dim(\pi^{\delta_5}_{N,\psi}) = \sum_{\gamma \in \Gamma} \dim (W_{\gamma}) = q^2 +q. \qedhere 
\] 
\end{proof}

\begin{proposition}
\label{dimension of all prinicpal series in pi-delta_5-N,psi}
Let  $\omega_{\pi_1}$ and $\omega_{\pi_2}$ be the central characters of the strongly cuspidal representations $\pi_1$ and $\pi_2$ of $\GL_2(\cO_2)$, respectively.
Let $\sigma=\Ind_{\mathfrak{B}_2}^{\GL_2(\cO_2)}(\omega_{\pi_1}\otimes \omega_{\pi_2})$.
Then the following holds.
\begin{enumerate}[label = (\roman*)]
\item 
If $\omega_{\pi_1}|_{1+\varpi \cO_2}\neq \omega_{\pi_2}|_{1+\varpi \cO_2}$, then 
$m \left(\pi^{\delta_5}_{N, \psi},\sigma \right) = 1$.
\item 
If $\omega_{\pi_1}\neq \omega_{\pi_2}$, $\omega_{\pi_1}|_{1+
\varpi \cO_2}=\omega_{\pi_2}|_{1+
\varpi \cO_2}$, then
$m \left(\pi^{\delta_5}_{N, \psi},\sigma \right) = 2$.
\item  
If $\omega_{\pi_1}=\omega_{\pi_2}$, then $m \left(\pi^{\delta_5}_{N, \psi},\sigma \right) = 3$.
\end{enumerate}
\end{proposition}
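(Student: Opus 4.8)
The plan is to determine $\pi^{\delta_5}_{N,\psi}$ completely as a representation of $\GL_2(\cO_2)$, showing that it is isomorphic to $\sigma=\mathcal{I}(\omega_{\pi_1}\otimes\omega_{\pi_2})$; the three cases of the multiplicity assertion then fall out of the endomorphism computation already carried out inside the proof of Lemma \ref{irreducibility of ps of gl2}.

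First I would revisit the proof of Lemma \ref{dim of pi delta_5}. There the Mackey decomposition gives $\pi^{\delta_5}_{N,\psi}\cong\bigoplus_{\gamma\in\Gamma}W_\gamma$ as vector spaces, with $\Gamma$ consisting of the matrices $\Diag(g,h)$ where $g,h$ run over the set $\{u^-(a),\,\left(\begin{smallmatrix}\varpi b & 1\\ 1 & 0\end{smallmatrix}\right):a,b\in\cO_2\}$ — which by Lemma \ref{GL_2 double cosets}(3) is precisely a set of representatives for $\GL_2(\cO_2)/\mathfrak{B}_2$. Cases 1--3 there show that $\dim W_\gamma=1$ exactly when $g=h$ and $W_\gamma=0$ otherwise. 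So the surviving lines are indexed bijectively by $\GL_2(\cO_2)/\mathfrak{B}_2$ via $g\mathfrak{B}_2\mapsto W_{\Diag(g,g)}$, which already re-derives $\dim\pi^{\delta_5}_{N,\psi}=[\GL_2(\cO_2):\mathfrak{B}_2]=q(q+1)$.

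Next I would upgrade this to an isomorphism of $\GL_2(\cO_2)$-modules. Since $M_\psi=\Delta\GL_2(\cO_2)$ normalises $N$ and fixes $\psi$, it acts on $\pi^{\delta_5}_{N,\psi}$; left multiplication by $\Diag(k,k)$ carries the double coset $N\,\Diag(g,h)\,P_{\delta_5}$ to $N\,\Diag(kg,kh)\,P_{\delta_5}$, and since every $\Diag(\beta_1,\beta_2)$ with $\beta_1,\beta_2$ upper triangular lies in $P_{\delta_5}=\delta_5 P\delta_5^{-1}\cap P$, this is the double coset labelled by $(kg\mathfrak{B}_2,\,kh\mathfrak{B}_2)$. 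Hence $M_\psi$ permutes the surviving lines exactly as $\GL_2(\cO_2)$ permutes $\GL_2(\cO_2)/\mathfrak{B}_2$ by left translation: one transitive orbit, and the stabiliser of the base line $W_{\Diag(I_2,I_2)}$ is $\mathfrak{B}_2$. By the imprimitivity form of Mackey's theorem this forces $\pi^{\delta_5}_{N,\psi}\cong\Ind_{\mathfrak{B}_2}^{\GL_2(\cO_2)}\mu$, where $\mu$ is the character by which $\mathfrak{B}_2$ acts on $W_{\Diag(I_2,I_2)}=\Hom_{P_{\delta_5}\cap N}((\pi_1\otimes\pi_2)^{\delta_5},\psi)$. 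To identify $\mu$, note that for $\beta=\left(\begin{smallmatrix}t_1 & u\\ 0 & t_2\end{smallmatrix}\right)$ one computes $\delta_5\,\Diag(\beta,\beta)\,\delta_5=\left(\begin{smallmatrix}t_1 I_2 & u I_2\\ 0 & t_2 I_2\end{smallmatrix}\right)$, whose image in $M\cong\GL_2(\cO_2)\times\GL_2(\cO_2)$ is the central pair $(t_1 I_2,\,t_2 I_2)$; hence $\Diag(\beta,\beta)$ acts on the space of $(\pi_1\otimes\pi_2)^{\delta_5}$, and so on the one-dimensional Whittaker line $W_{\Diag(I_2,I_2)}$ (spanned by $\ell_1\otimes\ell_2$ for the unique $\psi_0$-Whittaker functionals $\ell_i$ of $\pi_i$, Lemma \ref{tensor product on hom}), by the scalar $\omega_{\pi_1}(t_1)\,\omega_{\pi_2}(t_2)$. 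Thus $\mu=\omega_{\pi_1}\otimes\omega_{\pi_2}$ and $\pi^{\delta_5}_{N,\psi}\cong\Ind_{\mathfrak{B}_2}^{\GL_2(\cO_2)}(\omega_{\pi_1}\otimes\omega_{\pi_2})=\sigma$ — the exact $\cO_2$-analogue of the principal series summand in Prasad's Theorem \ref{DP theorem}. Then $m(\pi^{\delta_5}_{N,\psi},\sigma)=\dim\End_{\GL_2(\cO_2)}\sigma$, and the proof of Lemma \ref{irreducibility of ps of gl2} evaluates this via $\mathfrak{B}_2\backslash\GL_2(\cO_2)/\mathfrak{B}_2=\{I_2,\,w_0,\,\left(\begin{smallmatrix}1&0\\\varpi&1\end{smallmatrix}\right)\}$: the coset $I_2$ always contributes $1$; the coset $\left(\begin{smallmatrix}1&0\\\varpi&1\end{smallmatrix}\right)$ contributes $1$ iff $\omega_{\pi_1}|_{1+\varpi\cO_2}=\omega_{\pi_2}|_{1+\varpi\cO_2}$; and $w_0$ contributes $1$ iff $\omega_{\pi_1}=\omega_{\pi_2}$. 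Summing gives $1,2,3$ in cases (i), (ii), (iii), which is the assertion.

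The main obstacle will be the second step — turning the vector-space decomposition of Lemma \ref{dim of pi delta_5} into an honest module isomorphism: one must verify that the surviving lines form a single $\GL_2(\cO_2)$-orbit with stabiliser $\mathfrak{B}_2$, that the gluing data along the orbit is scalar, and then read off $\mu$ on the base line. The one genuinely delicate point in that last computation is that the torus element $\diag(t_1,t_2)$ does not preserve the Whittaker character of either $\pi_i$ separately; what rescues the argument is that $\delta_5\,\Diag(\beta,\beta)\,\delta_5$ lands in $M$ with \emph{central} block components, which is precisely why $\omega_{\pi_1},\omega_{\pi_2}$ (and nothing finer) appear. If one prefers to sidestep the explicit module identification, the same count of $1$, $2$, or $3$ can be obtained by computing $m(\pi^{\delta_5}_{N,\psi},\sigma)=\dim\Hom_{\mathfrak{B}_2}(\pi^{\delta_5}_{N,\psi},\omega_{\pi_1}\otimes\omega_{\pi_2})$ directly from the decomposition in the first step, grouping the surviving lines into the three $\mathfrak{B}_2$-orbits on $\GL_2(\cO_2)/\mathfrak{B}_2$ and checking non-vanishing of a $\Hom$-space over each orbit's stabiliser against the same conditions on $\omega_{\pi_1},\omega_{\pi_2}$.
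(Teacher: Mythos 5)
Your proposal is correct, and it takes a genuinely different (and arguably tighter) route than the paper. The paper's proof of this proposition applies Mackey theory directly to $\Hom_{\Delta \GL_2(\cO_2)\cdot N}(\pi^{\delta_5}, \sigma\otimes\psi)$: after Frobenius reciprocity it reduces to a sum over $\Delta\mathfrak{B}_2\cdot N\backslash P/P_{\delta_5}$ with representatives $\Diag(g,h)$, $g\in\mathfrak{B}_2\backslash\GL_2(\cO_2)/\mathfrak{B}_2$, $h\in\GL_2(\cO_2)/\mathfrak{B}_2$, and then shows by direct computation that only the three cosets $\Diag(g,g)$ with $g\in\{I_2, w_0, u^-(\varpi)\}$ can contribute, each with a multiplicity-one contribution conditional on the relation between $\omega_{\pi_1}$ and $\omega_{\pi_2}$ on $\cO_2^\times$ or on $1+\varpi\cO_2$. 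The isomorphism $\pi^{\delta_5}_{N,\psi}\cong\sigma$ is then derived afterwards in Corollary \ref{delta_5 coset}, which requires some further Mackey computations to pin down which irreducible summands of $\sigma$ actually occur. You invert this dependency: you prove $\pi^{\delta_5}_{N,\psi}\cong\sigma$ first, by observing that the one-dimensional pieces surviving in the Mackey decomposition of Lemma \ref{dim of pi delta_5} (those indexed by $\Diag(g,g)$) form a single system of imprimitivity under the $\Delta\GL_2(\cO_2)$-action, with block stabiliser $\mathfrak{B}_2$, and then extracting the inducing character by the pivotal computation $\delta_5\Diag(\beta,\beta)\delta_5 = \left(\begin{smallmatrix}t_1 I_2 & u I_2 \\ 0 & t_2 I_2\end{smallmatrix}\right)$, which is central in $M$ modulo $N$ and hence acts on the unique Whittaker line by $\omega_{\pi_1}(t_1)\omega_{\pi_2}(t_2)$. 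The multiplicity count $1,2,3$ then falls out of $\dim\End_{\GL_2(\cO_2)}\sigma$, which is already computed coset-by-coset in the proof of Lemma \ref{irreducibility of ps of gl2}. This buys you Proposition \ref{dimension of all prinicpal series in pi-delta_5-N,psi} and Corollary \ref{delta_5 coset} simultaneously and sidesteps the dimension bookkeeping in the latter; the price is that you have to be slightly careful that the Mackey decomposition of Lemma \ref{dim of pi delta_5} (which there is presented only as an equality of vector spaces) is actually $\Delta\GL_2(\cO_2)$-equivariant — which is fine, since left translation permutes double cosets and the surviving ones are exactly those fixed under the swap $g\leftrightarrow h$, hence form one orbit with stabiliser $\mathfrak{B}_2$.
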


\begin{proof}

Using Mackey theory and Frobenius reciprocity, we get
\begin{align}\label{principal series computation with Hom}
\Hom_{\Delta \GL_2(\cO_2)\cdot N}(\pi^{\delta_5},\sigma\otimes \psi) 
& \cong \Hom_{\Delta \GL_2(\cO_2)\cdot N}\left(\Ind_{P_{\delta_5}}^{P}(\pi_1\otimes \pi_2)^{\delta_5}, \Ind_{\Delta \mathfrak{B}_2\cdot N}^{\Delta \GL_2(\cO_2)\cdot N}\omega_{\pi_1}\otimes \omega_{\pi_2}\otimes \psi\right)
\nonumber
\\
& \cong \Hom_{\Delta \mathfrak{B}_2\cdot N}\left(\Res_{\Delta\mathfrak{B}_2\cdot N}^{P}\Ind_{P_{\delta_5}}^{P}(\pi_1\otimes \pi_2)^{\delta_5}, \omega_{\pi_1}\otimes \omega_{\pi_2}\otimes \psi\right)
\nonumber
\\
& \cong \underset{\gamma\in \Delta\mathfrak{B}_2\cdot N\backslash P/P_{\delta_5}}{\bigoplus} W_{\gamma},
\end{align}
where  $W_{\gamma}:= \Hom_{\gamma P_{\delta_5}\gamma^{-1}\cap \Delta \mathfrak{B}_2\cdot N}\left((\pi_1\otimes\pi_2)^{\delta_5\gamma}, \omega_{\pi_1}\otimes \omega_{\pi_2}\otimes \psi\right)$.
Recall the subgroup $P_{\delta_5} \subseteq P$ given by 
{\small{\begin{equation} \label{subgroup p_delta5}
P_{\delta_5}  
= \left\{ Y := \begin{pmatrix}
    p_{11} & p_{12} & p_{13} & p_{14} \\
    0 & p_{22} & 0 & p_{24} \\
    0 & 0 & p_{33} & p_{34} \\
    0 & 0 & 0 & p_{44}
\end{pmatrix} ~{\Bigg{|}}~ \begin{matrix}
p_{ii} \in \cO_2^\times \text{ for }1\leq i\leq 4 , 
\\
\; p_{ij} \in \cO_2 \text{ for } i \neq j
\end{matrix}\right\}.
\end{equation}
}}
A set of representatives of $\Delta \mathfrak{B}_2\cdot N\backslash P/P_{\delta_5}$ is given by the set of matrices of the form $\Diag(g,h)$,
where $g$ varies over a set of representatives of  $\mathfrak{B}_2\backslash\GL_2(\cO_2)/\mathfrak{B}_2$ and $h$ varies over a set of representatives of  $\GL_2(\cO_2)/\mathfrak{B}_2$ as given in Lemma \ref{GL_2 double cosets}. 

Now we compute $W_{\gamma}$ appearing in \eqref{principal series computation with Hom} for every $\gamma\in \Delta\mathfrak{B}_2\cdot N\backslash P/P_{\delta_5}$.

\begin{enumerate}[label = (\alph*)]
\item 
For $\gamma=I_4$, we have $\gamma P_{\delta_5}\gamma^{-1}\cap \Delta \mathfrak{B}_2\cdot N = P_{\delta_5}\cap \Delta\mathfrak{B}_2\cdot N$.
Therefore, we have
$W_{\gamma} \neq 0$ if and only if $(\pi_1\otimes \pi_2)^{\delta_5}|_{P_{\delta_5}\cap \Delta \mathfrak{B}_2\cdot N}$ contains the character $(\omega_{\pi_1}\otimes \omega_{\pi_2}\otimes \psi)|_{P_{\delta_5}\cap \Delta \mathfrak{B}_2\cdot N}$. 
Recall that $Y \in P_{\delta_5}$ (see \eqref{subgroup p_delta5}).
Then $Y\in P_{\delta_5}\cap \Delta\mathfrak{B}_2\cdot N$ if and only if $p_{33}=p_{11},~p_{34}=p_{12},~p_{44}=p_{22}$.
For $Y\in P_{\delta_5}\cap \Delta \mathfrak{B}_2\cdot N$, we get
\begin{align}\label{equation 4.16 for w-model}
(\pi_1 \otimes \pi_2)^{\delta_5}(Y) 
& = \pi_1 \begin{pmatrix}
    p_{11} & p_{13} \\
    0 & p_{11}
\end{pmatrix} \otimes \pi_2\begin{pmatrix}
    p_{22} & p_{24} \\
    0 & p_{22}
\end{pmatrix} 
\nonumber
\\
& =
\omega_{\pi_1}(p_{11}) \cdot 
\pi_1 (u^+(p_{13}))
\otimes 
\omega_{\pi_2}(p_{22}) \cdot 
\pi_2 (u^+(p_{24})).
\end{align}
and
\begin{equation}\label{eq:4.12}
(\omega_{\pi_1} \otimes \omega_{\pi_2} \otimes \psi)(Y)
= \omega_{\pi_1}(p_{11}) \, \omega_{\pi_2}(p_{22}) \, \psi_0(p_{13} + p_{24}).
\end{equation}
Using Equation \eqref{equation 4.16 for w-model} and \eqref{eq:4.12}, together with Lemma \ref{tensor product on hom}, we get
$\dim\left( W_{\gamma} \right) = 1$.
\item For $\gamma=\Diag(w_0,w_0)$.
We have,
{\small{
$$
\gamma P_{\delta_5}\gamma^{-1}\cap \Delta \mathfrak{B}_2\cdot N =\left\{
Y^{w_0} : =\begin{pmatrix}
p_{22} & 0 & p_{24} &  0\\
0 &  p_{11} & p_{14} & p_{13}\\
0 & 0 & p_{22} & 0\\
0 & 0 & 0 & p_{11}
\end{pmatrix} ~{\Bigg{|}}~ \begin{matrix}
p_{11},p_{22}\in \cO_2^\times, 
\\
p_{ij}\in \cO_2 \text{ for $i\neq j$}
\end{matrix}
\right\}
$$
}}
It is easy to verify that $\dim (W_{\gamma}) =1$ if $\omega_{\pi_1}=\omega_{\pi_2}$ and 0 otherwise.

\item For $\gamma=\Diag(u^-(\varpi ),u^-(\varpi ))$.
The subgroup $\gamma P_{\delta_5}\gamma^{-1}\cap \Delta\mathfrak{B}_2\cdot N $ of $P$ is as follows
{\small{$$
\left\{
Y^{\varpi} : =\begin{pmatrix}

p_{11}-\varpi p_{12} & p_{12} & p_{13}-\varpi p_{14} &  p_{14}\\
0 &  p_{22}+\varpi p_{12} & \varpi(p_{13}-p_{24}) & p_{24}+\varpi p_{14}\\
0 & 0 & p_{11}-\varpi p_{12} & p_{12}\\
0 & 0 & 0 & p_{22}+\varpi p_{12}
\end{pmatrix} ~\middle |~ \begin{matrix}
p_{11},p_{22}\in \cO_2^\times, \\
p_{22}-p_{11}\in \varpi \cO_2,
\\
p_{ij}\in \cO_2 \text{ for $i\neq j$}
\end{matrix}
\right\}.
$$
}}
It is easy to verify that $\dim (W_{\gamma}) =1$ if $\omega_{\pi_1}|_{1+\varpi \cO_2}=\omega_{\pi_2}|_{1+\varpi \cO_2}$ and 0 otherwise.

\item Now, we prove that if $\gamma \notin \Gamma'=\left\{ \Diag(g,g) : g\in \left\{
I, w_0, u^-(\varpi) \right\} \right\}$, then $W_{\gamma}=0$.
It suffices to prove that, for $N_0 \subseteq P_{\delta_5}$ (as defined in \eqref{The subgoup N_0}),
\begin{equation}\label{redution of Hom compuattion}
\Hom_{\gamma N_0\gamma^{-1}\cap (\Delta \mathfrak{B}_2\cdot N)}\left((\pi_1\otimes \pi_2)^{\delta_5 \gamma},\, \omega_{\pi_1}\otimes \omega_{\pi_2}\otimes \psi\right) = 0.
\end{equation}
Note that for $\gamma=\Diag(g,h)\notin \Gamma'$,  
{\small{$\gamma N_0\gamma^{-1}\cap \Delta \mathfrak{B}_2\cdot N =\left\lbrace \tilde{Z}:=\begin{pmatrix}
    I_2 & Z\\
    0 & I_2
\end{pmatrix}~|~ Z=g \begin{pmatrix}
    0 & p_{14}\\
    0 & 0
\end{pmatrix}h^{-1}\right\rbrace$.}}
It can be checked easily that for all $\gamma\notin \Gamma'$, $(\pi_1\otimes \pi_2)^{\delta_5 \gamma}$ is trivial on $\gamma N_0\gamma^{-1}\cap \Delta \mathfrak{B}_2\cdot N$,
while  $\omega_{\pi_1}\otimes \omega_{\pi_2}\otimes\psi$ is non-trivial.
Hence, for $\gamma\notin \Gamma'$, $W_{\gamma}=0$.
\end{enumerate}
Now, the proposition follows from (a), (b), (c), (d) together with \eqref{principal series computation with Hom}.
\qedhere
\end{proof}

\begin{corollary}\label{delta_5 coset}
Let $\pi_1, \pi_2, \sigma$ be as in the previous proposition. Then $\pi^{\delta_5}_{N, \psi} \cong \sigma$.
\end{corollary}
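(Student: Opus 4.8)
My plan is to squeeze more out of the Mackey decomposition of $\pi^{\delta_5}_{N,\psi}$ that already appears in the proof of Lemma \ref{dim of pi delta_5}, by making the $M_\psi\cong\GL_2(\cO_2)$-action on it explicit rather than only counting dimensions. Recall that $\pi^{\delta_5}_{N,\psi}=\Hom_N(\pi^{\delta_5},\psi)$ decomposes canonically as $\bigoplus_\gamma W_\gamma$ over the double cosets in $N\backslash P/P_{\delta_5}$, with $P_{\delta_5}=\delta_5 P\delta_5^{-1}\cap P$. Since $M_\psi=\Delta\GL_2(\cO_2)$ lies inside $P$, it acts on $\pi^{\delta_5}_{N,\psi}$ by permuting the pieces $W_\gamma$ according to left translation on $N\backslash P/P_{\delta_5}$; so the first step is to describe this set and this action.

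As $N$ is normal in $P$ with $P/N\cong M$, one gets $N\backslash P/P_{\delta_5}\cong M/\overline{P_{\delta_5}}\cong(\GL_2(\cO_2)/\mathfrak{B}_2)\times(\GL_2(\cO_2)/\mathfrak{B}_2)$, the image of $P_{\delta_5}$ in $M$ being $\mathfrak{B}_2\times\mathfrak{B}_2$ by \eqref{subgroup p_delta5}, and under this identification $M_\psi$ acts diagonally. Next I would feed in the case analysis of Lemma \ref{dim of pi delta_5}: the nonzero pieces $W_\gamma$ are precisely those indexed by the diagonal $\{(g\mathfrak{B}_2,g\mathfrak{B}_2)\}$ (Cases $1$ and $3$ there), each one-dimensional, and the diagonal is a single $M_\psi$-orbit since $\GL_2(\cO_2)$ acts transitively on $\GL_2(\cO_2)/\mathfrak{B}_2$. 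This orbit is represented by $\gamma=I_4$, whose stabiliser in $M_\psi$ is $\Delta\mathfrak{B}_2$. Hence $\pi^{\delta_5}_{N,\psi}\cong\Ind_{\Delta\mathfrak{B}_2}^{\GL_2(\cO_2)}(W_{I_4})$, and it only remains to identify the character $W_{I_4}$ of $\Delta\mathfrak{B}_2$.

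For $s=\left(\begin{smallmatrix}x&y\\0&z\end{smallmatrix}\right)\in\mathfrak{B}_2$, the element $\Delta(s)^{-1}=\Delta(s^{-1})$ is block-diagonal with upper-triangular blocks and so already lies in $P_{\delta_5}$ (no $N$-part is needed), whence $\Delta(s)$ acts on $W_{I_4}$ simply by $(\pi_1\otimes\pi_2)^{\delta_5}(\Delta s)=(\pi_1\otimes\pi_2)(\delta_5^{-1}\Delta(s)\delta_5)$; conjugation by $\delta_5$ turns $\Delta(s)$ into a block-upper-triangular matrix whose $M$-part is $(xI_2,zI_2)$, and since $\pi_1\otimes\pi_2$ is inflated from $P/N\cong M$ the unipotent entries drop out, leaving the scalar $\omega_{\pi_1}(x)\,\omega_{\pi_2}(z)$. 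Thus $W_{I_4}\cong\omega_{\pi_1}\otimes\omega_{\pi_2}$ and $\pi^{\delta_5}_{N,\psi}\cong\Ind_{\mathfrak{B}_2}^{\GL_2(\cO_2)}(\omega_{\pi_1}\otimes\omega_{\pi_2})=\sigma$. (This simultaneously re-derives Proposition \ref{dimension of all prinicpal series in pi-delta_5-N,psi}, as $\dim\End(\sigma)$ is the number of irreducible constituents of $\mathcal{I}(\omega_{\pi_1}\otimes\omega_{\pi_2})$ by Lemma \ref{irreducibility of ps of gl2}.)

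The step I expect to be the main obstacle is the last one, the character computation on $W_{I_4}$: it is easy to introduce a spurious dependence on the unipotent entry $y$ if one forgets that $\pi_1\otimes\pi_2$ is a representation of $P$ only through $P/N$, and one must verify that $\Delta(s^{-1})$ genuinely lies in $P_{\delta_5}$ rather than merely in $NP_{\delta_5}$, so that no $\psi$-factor enters. A lighter alternative, which however does not by itself settle the cases of small $q$, is the purely numerical one: by Lemma \ref{dim of pi delta_5} one has $\dim\pi^{\delta_5}_{N,\psi}=q(q+1)=\dim\sigma$, by Proposition \ref{dimension of all prinicpal series in pi-delta_5-N,psi} the multiplicity $m(\pi^{\delta_5}_{N,\psi},\sigma)$ equals the number of (distinct) constituents of $\sigma$, and by Proposition \ref{Proposition 2.1.6} these constituents have known dimensions; in case (i) this already forces $\pi^{\delta_5}_{N,\psi}\cong\sigma$, while in cases (ii)–(iii) it reduces the claim to checking that each constituent of $\sigma$ actually occurs in $\pi^{\delta_5}_{N,\psi}$ — which is exactly what the orbit argument above supplies.
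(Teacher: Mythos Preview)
Your argument is correct and genuinely different from the paper's. The paper proceeds indirectly: it first matches dimensions (Lemma \ref{dim of pi delta_5}), then computes $m(\pi^{\delta_5}_{N,\psi},\sigma)$ via a separate Mackey/Frobenius calculation (Proposition \ref{dimension of all prinicpal series in pi-delta_5-N,psi}), and finally, in the reducible cases, runs yet another round of Mackey computations over $\Delta\tilde{\mathfrak{B}}_2\cdot N\backslash P/P_{\delta_5}$ to pin down the multiplicity of each individual constituent $\sigma_i$ before concluding by dimension. Your approach bypasses all of this case analysis by tracking the $M_\psi$-action directly on the decomposition $V_{N,\psi}=\bigoplus_\gamma (V_\gamma)_{N,\psi}$: you observe that $M_\psi$ permutes the summands according to its diagonal action on $(\GL_2(\cO_2)/\mathfrak{B}_2)^2$, that the nonzero summands (identified in Lemma \ref{dim of pi delta_5}) are exactly the diagonal and hence a single orbit with stabiliser $\Delta\mathfrak{B}_2$, and that the character on the base point is $\omega_{\pi_1}\otimes\omega_{\pi_2}$, since $\Delta(s)\in P_{\delta_5}$ and $\delta_5^{-1}\Delta(s)\delta_5$ has Levi part $(xI_2,zI_2)$. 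This yields $\pi^{\delta_5}_{N,\psi}\cong\Ind_{\mathfrak{B}_2}^{\GL_2(\cO_2)}(\omega_{\pi_1}\otimes\omega_{\pi_2})$ in one stroke. What your approach buys is a uniform, case-free proof that moreover makes Proposition \ref{dimension of all prinicpal series in pi-delta_5-N,psi} a corollary rather than an input; what the paper's approach buys is an explicit bookkeeping of how each irreducible constituent of $\sigma$ arises, at the cost of repeated double-coset calculations. The ``lighter alternative'' you sketch at the end is essentially the skeleton of the paper's own argument.
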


\begin{proof}
By Lemma \ref{irreducibility of ps of gl2}, the number of irreducible components of $\sigma$ is either 1, 2 or 3.
Let $\chi_1 = \omega_{\pi_1}$ and $\chi_2 = \omega_{\pi_2}$ and $\chi = \chi_1 \otimes \chi_2$ a character of $\mathfrak{B}_2$ as defined in Section \ref{Section 2.7}.
\\
{\bf Case 1:} Suppose $\sigma$ is irreducible.
Since $\dim(\pi^{\delta_5}_{N,\psi}) = \dim(\sigma)$, the corollary follows from Proposition \ref{dimension of all prinicpal series in pi-delta_5-N,psi} $(i)$. 
\\
{\bf Case 2:}
Suppose $\sigma$ has two irreducible components.
By Lemma \ref{irreducibility of ps of gl2}, $\chi_1|_{1+ \varpi \cO_2} = \chi_2|_{1+\varpi \cO_2}$ but $\chi_1 \neq \chi_2$.
Using Proposition \ref{Proposition 2.1.6}, we have $\sigma = \sigma_1 \oplus \sigma_2$, where $\sigma_1 = \Ind_{\tilde{\mathfrak{B}}_2}^{\GL_2(\cO_2)}\tilde{\chi}$. 
By Mackey theory, we get
$$
\Hom_{\Delta \GL_2(\cO_2)\cdot N}\left(\pi^{\delta_5}, \sigma_1\otimes \psi\right) 
\cong 
\underset{\gamma\in \Delta\tilde{\mathfrak{B}}_2\cdot N\backslash P/P_{\delta_5}}{\bigoplus} 
W'_{\gamma},
$$
where $W'_{\gamma} := \Hom_{\gamma P_{\delta_5}\gamma^{-1}\cap \Delta \tilde{\mathfrak{B}}_2\cdot N}\left((\pi_1\otimes\pi_2)^{\delta_5\gamma}, \tilde{\chi}\otimes \psi\right)$.
Clearly, a set of distinct representatives for $\Delta\tilde{\mathfrak{B}}_2\cdot N\backslash P/P_{\delta_5}$ is 
$\left\{ \Diag(g,h) 
:g \in \{I_2, w_0\},~h\in \GL_2(\cO_2)/\mathfrak{B}_2\right\}$.
It can be checked that
$W'_{\gamma} =0$ for $\gamma\neq I_2$ and $\dim(W'_{I_2}) =1$ for which we leave the details. 
Thus, $m(\pi^{\delta_5}_{N, \psi}, \sigma_1) =1$.
From Proposition \ref{dimension of all prinicpal series in pi-delta_5-N,psi} $(ii)$, it follows that $m(\pi^{\delta_5}_{N, \psi}, \sigma_2) =1$.
Then $\sigma= \sigma_1 \oplus \sigma_2 \subseteq \pi^{\delta_5}_{N, \psi}$. 
Since $\dim(\pi^{\delta_5}_{N,\psi}) = \dim(\sigma)$, the corollary follows. 
\\
{\bf Case 3:} 
Suppose $\sigma$ has three irreducible components.
Then by Lemma \ref{irreducibility of ps of gl2}, $\chi_1=\chi_2$ and
by Proposition \ref{Proposition 2.1.6}, we have
$\sigma
\cong \sigma'_1 \oplus \sigma'_2 \oplus \sigma'_3$.
From Proposition~\ref{dimension of all prinicpal series in pi-delta_5-N,psi}~$(iii)$, it suffices to show that \( \sigma'_1 \) and \( \sigma'_2 \) each appear in \( \pi^{\delta_5}_{N,\psi} \) with multiplicity one. 
Using similar computations as in Case 2, it can be verified that
$m\left( \pi^{\delta_5}_{N,\psi}, \sigma'_1 \oplus \sigma'_2 \right) = 2$.
Therefore, it is enough to prove that \( \sigma'_1 =\chi_1\circ \det
\), appears in \( \pi^{\delta_5}_{N,\psi} \) with multiplicity one. This follows from Mackey theory, and we omit the details. 
This completes the proof of the corollary.
\qedhere
\end{proof}

\subsection{A description of $\pi^{\delta_6}_{N, \psi}$}
Since $\pi^{\delta_6} = \Ind_{\delta_6 P\delta_6^{-1}\cap P}^P(\pi_1\otimes\pi_2)^{\delta_6}$, 
using Mackey theory we have
\begin{align}\label{equation 4.11}
\pi^{\delta_6}_{N,\psi}=
\Hom_{N}(\pi^{\delta_6},\psi)
\cong \underset{\gamma\in \delta_6 P\delta_6^{-1}\cap P\backslash P/N}{\bigoplus} W_{\gamma},
\end{align}
where $W_{\gamma} : = \Hom_{ P\cap (\delta_6^{-1}N\delta_6)} \left(\pi_1\otimes\pi_2,\psi^{\gamma^{-1}\delta_6^{-1}} \right)$.
Now we write $P_{\delta_6} := \delta_6 P\delta_6^{-1}\cap P$.
The following lemma describes the double coset $P_{\delta_6}\backslash P/N$.

\begin{lemma}
\label{Several double cosets}
There is a bijection between the double cosets  $P_{\delta_6}\backslash P/N$ and $\bar{P}_{\delta_6}\cdot \bar{N}\backslash \bar{P}$. 
A set of distinct representatives of the cosets $\bar{P}_{\delta_6}\cdot \bar{N}\backslash \bar{P}$ is the following 
$$
\Gamma = 
\left\lbrace \Diag(g,h) : g \in \left\{ w_0, u^-(b) : b \in \F_{q} \right\}, 
h \in \left\{ dw_0,~ h(a,c)=\begin{pmatrix}
    1 & 0\\
    c & a
\end{pmatrix} :
c\in \F_q,~
a,d\in \F_q^\times \right\rbrace \right\rbrace.
$$
Moreover, $|\Gamma| = (q+1)(q^2-1)$.
\end{lemma}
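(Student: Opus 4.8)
The strategy is to reduce everything modulo $\varpi$ and then to identify the resulting coset space with a quotient of $\GL_2(\F_q)\times\GL_2(\F_q)$. The first input I would compute is the shape of $P_{\delta_6}=\delta_6 P\delta_6^{-1}\cap P$: a direct conjugation (using $\varpi^2=0$) shows it consists of the block--upper--triangular matrices whose $(2,3)$-entry is $0$, whose $(2,1)$-entry equals $-\varpi$ times the $(2,4)$-entry, whose $(4,3)$-entry equals $\varpi$ times the $(1,3)$-entry, and whose $(1,1)$- and $(4,4)$-entries are congruent modulo $\varpi$. Two consequences I would record at once: (i) the image $\bar P_{\delta_6}$ of $P_{\delta_6}$ in $\bar P$ is the group of upper triangular matrices in $\GL_4(\F_q)$ with $(2,3)$-entry $0$ and $(1,1)$-entry $=(4,4)$-entry; and (ii) the image of $P_{\delta_6}$ in $M\cong P/N\cong\GL_2(\cO_2)\times\GL_2(\cO_2)$ is the subgroup $\tilde{\mathcal B}:=\{(\beta_1,\beta_2)\in\tilde{\mathfrak B}_2\times\tilde{\mathfrak B}_2:\overline{(\beta_1)_{11}}=\overline{(\beta_2)_{22}}\}$, whose reduction in $\GL_2(\F_q)\times\GL_2(\F_q)$ I call $\bar{\mathcal B}$.

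For the bijection: since $N\trianglelefteq P$ one has $P_{\delta_6}\backslash P/N=(P_{\delta_6}N)\backslash P$, and reduction modulo $\varpi$ gives a surjection $(P_{\delta_6}N)\backslash P\twoheadrightarrow(\bar P_{\delta_6}\bar N)\backslash\bar P$, which is a bijection as soon as the kernel $K$ of $\GL_4(\cO_2)\to\GL_4(\F_q)$ satisfies $K\cap P\subseteq P_{\delta_6}N$. By (ii), $P_{\delta_6}N$ is exactly the preimage in $P$ of $\tilde{\mathcal B}$ under $P\to M$; since an element of the kernel of $M\to\GL_2(\F_q)\times\GL_2(\F_q)$ has the form $(I_2+\varpi A,I_2+\varpi D)$ and the two matrix entries appearing in the definition of $\tilde{\mathcal B}$ are $\equiv 1$ for it, this kernel lies in $\tilde{\mathcal B}$; hence $K\cap P$, which maps into that kernel, lies in $P_{\delta_6}N$. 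This establishes the first assertion.

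For the transversal: by (i), $\bar P_{\delta_6}\bar N$ is the preimage in $\bar P$ of $\bar{\mathcal B}$ under $\bar P\to\bar P/\bar N\cong\GL_2(\F_q)\times\GL_2(\F_q)$; using $\bar N\trianglelefteq\bar P$ and $\bar P=\bar M\ltimes\bar N$ it follows that $(\bar P_{\delta_6}\bar N)\backslash\bar P$ is represented by the elements $\Diag(g,h)$ with $(g,h)$ running over a transversal of $\bar{\mathcal B}$ in $\GL_2(\F_q)\times\GL_2(\F_q)$. I would build such a transversal in two moves. First, acting in the first factor of $\bar{\mathfrak B}_2\times\bar{\mathfrak B}_2$, move $g$ into the transversal $\{w_0\}\cup\{u^-(b):b\in\F_q\}$ of $\bar{\mathfrak B}_2\backslash\GL_2(\F_q)$; the accompanying second-factor element is then constrained only in its $(2,2)$-entry. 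Once $g$ has been fixed to a representative, the elements of $\bar{\mathcal B}$ still fixing it are precisely $\{I_2\}\times H_2(\F_q)$, so the residual freedom in $h$ is left multiplication by $H_2(\F_q)$; by Lemma~\ref{GL_2 double cosets}(4) I take $h$ in the transversal $\{dw_0:d\in\F_q^\times\}\cup\{h(a,c):a\in\F_q^\times,\,c\in\F_q\}$ of $H_2(\F_q)\backslash\GL_2(\F_q)$. Distinctness of the resulting representatives follows from the same two observations. This is exactly the set $\Gamma$ of the statement, with $|\Gamma|=(q+1)\big((q-1)+q(q-1)\big)=(q+1)(q^2-1)$.

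The delicate point is pinning down $P_{\delta_6}$ and the inclusion $K\cap P\subseteq P_{\delta_6}N$: the $\varpi$ in the $(4,1)$-slot of $\delta_6$ both pushes $P_{\delta_6}$ outside the group of upper triangular matrices (so the naive argument of intersecting with $K$ fails, forcing the passage through $M$) and creates the coupling between the $(1,1)$- and $(4,4)$-entries; it is this coupling that makes $H_2(\F_q)$, rather than a second copy of $\bar{\mathfrak B}_2$, govern the second coordinate of the transversal.
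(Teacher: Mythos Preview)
Your argument is correct and follows essentially the same route as the paper's proof: both use normality of $N$ to pass to $(P_{\delta_6}N)\backslash P$, describe $P_{\delta_6}$ explicitly, reduce modulo $\varpi$ via the containment $(I_4+\varpi M_4(\cO_2))\cap P\subseteq P_{\delta_6}N$, and then read off the transversal from Lemma~\ref{GL_2 double cosets}. Your write-up is in fact more detailed than the paper's---you make explicit the identification of the image of $P_{\delta_6}$ in $M\cong P/N$ with the subgroup $\tilde{\mathcal B}$, and you spell out why the residual stabilizer in the second factor is exactly $H_2(\F_q)$, which is precisely what makes Lemma~\ref{GL_2 double cosets}(4) the relevant input---whereas the paper simply asserts the containment and refers globally to Lemma~\ref{GL_2 double cosets}.
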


\begin{proof}
Since $ N $ is a normal subgroup of $ P $, there exists a bijection between the double coset $ P_{\delta_6} \backslash P / N $ and $ (P_{\delta_6} \cdot N) \backslash P $. 
Since {\small{$P_{\delta_6} = \left\lbrace \begin{pmatrix}
    p_{11} & p_{12} & p_{13} & p_{14}\\
    -\varpi p_{24} & p_{22} & 0 & p_{24}\\
    0 & 0 & p_{33} & p_{34}\\
    0 & 0 & \varpi p_{13} & p_{44}
\end{pmatrix}:\begin{matrix}
 p_{11}-p_{44}\in \varpi \cO_2,
\\
p_{ij}\in \cO_2\text{ for }i\neq j,
\\
p_{ii}\in \cO_2^\times
\end{matrix}
\right\rbrace$}}, the subgroup $ P_{\delta_6} \cdot N $ contains $ (I_4 + \varpi M_4(\cO_2)) \cap P $, which gives a bijection between 
$(P_{\delta_6} \cdot N) \backslash P$ and $(\bar{P}_{\delta_6} \cdot \bar{N}) \backslash \bar{P}$.
Further
$\bar{P} / \bar{N} \cong \GL_2(\mathbb{F}_q) \times \GL_2(\mathbb{F}_q)$, we get 
$|(\bar{P}_{\delta_6} \cdot \bar{N}) \backslash \bar{P}| = (q+1)(q^2 - 1)$.
Now the lemma follows using Lemma \ref{GL_2 double cosets}.
\end{proof}

\begin{proposition}
\label{dimension of pi-delta-6}
If $\delta=\delta_6$, then 
$\dim(\pi^{\delta}_{N,\psi})=q(q^2-1)$.
\end{proposition}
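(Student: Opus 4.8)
The plan is to compute $\dim(\pi^{\delta_6}_{N,\psi})$ directly from the Mackey decomposition \eqref{equation 4.11}, i.e. $\dim(\pi^{\delta_6}_{N,\psi}) = \sum_{\gamma \in \Gamma} \dim W_\gamma$, where $W_\gamma = \Hom_{P \cap (\delta_6^{-1} N \delta_6)}(\pi_1 \otimes \pi_2, \psi^{\gamma^{-1}\delta_6^{-1}})$ and $\Gamma$ is the explicit set of $(q+1)(q^2-1)$ representatives $\Diag(g,h)$ furnished by Lemma \ref{Several double cosets}. So the first step is to write down the subgroup $P \cap (\delta_6^{-1} N \delta_6)$ explicitly (conjugating the unipotent radical $N$ by $\delta_6^{-1}$ and intersecting with $P$), and to record the character $\psi^{\gamma^{-1}\delta_6^{-1}}$ restricted to it as a function of the entries of $g$ and $h$, exactly as was done for $\delta_5$ in Lemma \ref{dim of pi delta_5}. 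As there, the key will be to isolate a small ``test'' subgroup — the analogue of $N_0$ in \eqref{The subgoup N_0} — on which $\pi_1 \otimes \pi_2$ (pulled back through the relevant projection) acts trivially, so that $W_\gamma = 0$ whenever $\psi^{\gamma^{-1}\delta_6^{-1}}$ is non-trivial on that subgroup.

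Next I would run the case analysis over $\gamma = \Diag(g,h) \in \Gamma$, splitting according to whether $g = w_0$ or $g = u^-(b)$ and whether $h = dw_0$ or $h = h(a,c)$. In each branch one evaluates the character on the test subgroup: for ``most'' $\gamma$ it is non-trivial there and $W_\gamma = 0$; for the surviving $\gamma$ the intersection $P \cap (\delta_6^{-1} N \delta_6)$ maps onto a product of two one-dimensional unipotent subgroups, one inside each $\GL_2(\cO_2)$ factor, and the character becomes a genuine (generic) Whittaker character on each factor after twisting by the relevant central/torus contributions. Then uniqueness of the Whittaker model for the strongly cuspidal $\pi_i$ — Lemma \ref{tensor product on hom} together with \cite{SP2022} — forces $\dim W_\gamma = 1$ for exactly those $\gamma$. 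Counting the surviving representatives should give precisely $q(q^2-1)$; I expect the surviving set to be something like $\{\Diag(w_0, h(a,c)) : a \in \F_q^\times, c \in \F_q\}$ possibly together with a correction, whose cardinality is $q(q-1)$ or $q^2-q$, so the bookkeeping must be arranged so that the total comes out to $q^3 - q$. (One should double-check against the known total $\dim \pi^{\delta_6}_{N,\psi} \le \dim \pi_1 \otimes \pi_2 \cdot |\Gamma| / |\text{something}|$ as a sanity bound, but the clean route is the direct sum.)

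The main obstacle will be the bookkeeping in the case analysis: the conjugation $\delta_6^{-1} N \delta_6$ involves the $\varpi$ in the bottom-left corner of $\delta_6$, so the subgroup $P \cap (\delta_6^{-1} N \delta_6)$ is ``twisted'' by level-$\varpi$ terms (much as $P_{\delta_6}$ itself in Lemma \ref{Several double cosets} has entries like $-\varpi p_{24}$ and $\varpi p_{13}$), and one must track carefully which coordinates of the unipotent group survive the intersection and how the character $\psi_0(\tr(\cdot))$ restricts after the $g$- and $h$-conjugations. Distinguishing the $h = dw_0$ sub-cases from the $h = h(a,c)$ sub-cases, and verifying that in the non-vanishing cases the induced character on each $\GL_2$ factor is indeed non-degenerate (so that the Whittaker-uniqueness input applies and gives dimension exactly $1$, not $0$), is where the care is needed. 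Once the surviving $\gamma$'s are correctly identified and counted, summing the contributions yields $\dim(\pi^{\delta_6}_{N,\psi}) = q(q^2-1)$.
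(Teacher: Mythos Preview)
Your overall strategy---Mackey decomposition over $\Gamma$, a test subgroup to kill most $W_\gamma$, then Whittaker uniqueness for the survivors---mirrors the $\delta_5$ argument, but the last step breaks down here and this is a genuine gap, not just bookkeeping.

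The issue is the size of the unipotent image. From \eqref{expression for x} and \eqref{pi1 pi2 action} one finds that $K := P\cap \delta_6^{-1}N\delta_6$ projects to the first $\GL_2$-factor as $\left(\begin{smallmatrix}1+\varpi p_{14} & \varpi p_{13}\\ 0 & 1\end{smallmatrix}\right)$ and to the second as $\left(\begin{smallmatrix}1 & \varpi p_{24}\\ 0 & 1-\varpi p_{14}\end{smallmatrix}\right)$. The off-diagonal entries lie only in $\varpi\cO_2$, not in all of $\cO_2$; so the image in each factor is \emph{not} the full unipotent $N'$ of Lemma~\ref{tensor product on hom}, only its level-$\varpi$ subgroup together with a diagonal piece. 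Whittaker uniqueness \cite{SP2022} is a statement about the full $N'$ and does not control $\Hom$ over this smaller group. In fact, for the surviving $\gamma$ one has $\dim W_\gamma = q$, not $1$.

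The paper circumvents this by a \emph{second} application of Mackey: it realises $\pi_1\otimes\pi_2 \cong \Ind_{T_1T_2N}^{P}(\tilde\phi_{B_1}\otimes\tilde\phi_{B_2}\otimes 1)$ and decomposes each $W_\gamma$ further as $\bigoplus_{\beta\in\mathcal B} W_{\gamma,\beta}$ over $\mathcal B = K\backslash P/(T_1T_2N)$, reducing everything to equalities of one-dimensional characters on $\beta^{-1}K\beta$ (equations \eqref{character phi B_1 B_2}--\eqref{For gamma=Gamma_1}). The surviving $\gamma$'s turn out to be the ``diagonal'' pairs $\Diag(u^-(b),h(a,c))$ with $b=c$ (giving $q(q-1)$ of them) together with $\Diag(w_0,dw_0)$ (giving $q-1$), not the $\Diag(w_0,h(a,c))$ you guessed; each contributes $\dim W_\gamma = q$ via $q$ admissible $\beta$'s, and $q\cdot\bigl(q(q-1)+(q-1)\bigr)=q(q^2-1)$. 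Your test-subgroup idea is still what isolates these $\gamma$'s (Case~3 in the paper), but you need the inner Mackey layer, not Lemma~\ref{tensor product on hom}, to evaluate the surviving $W_\gamma$.
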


\begin{proof}
We will use identity \eqref{equation 4.11} and above lemma to prove this proposition.
Note that 
\begin{equation} \label{expression for x}
\delta_6 P \delta_6^{-1} \cap N = \left\{ x= \begin{pmatrix} 
I_2 & X \\
0 & I_2 
\end{pmatrix} : X = \begin{pmatrix}  \varpi p_{13} & p_{14} \\ 0 & \varpi p_{24}  \end{pmatrix} ; p_{ij} \in \mathfrak{o}_2 \right\}.
\end{equation}
Let \( K := P \cap \delta_6^{-1} N \delta_6 \). 
Then \( K \) is given by
{\small{$$
\left\{\begin{pmatrix}
    K_1 & K_2\\
    0 & K_4
\end{pmatrix} \text{ with } K_1=\begin{pmatrix}
    1+\varpi p_{14} & \varpi p_{13}\\
    0 & 1
\end{pmatrix},~K_2=\begin{pmatrix}
        0 & p_{14}\\
        0 & 0
\end{pmatrix},~K_4=\begin{pmatrix}
        1 & \varpi p_{24}\\
        0 & 1-\varpi p_{14}
\end{pmatrix} : p_{13}, p_{14}, p_{24}\in \cO_2\right\}.
$$}
}
Then, for $x \in \delta_{6}P \delta_{6}^{-1}\cap N$ as in \eqref{expression for x}, we get
\begin{equation}\label{pi1 pi2 action}
(\pi_1\otimes \pi_2)(\delta_6^{-1} x \delta_6) =
\pi_1 \begin{pmatrix} 
1 + \varpi p_{14} & \varpi p_{13} \\ 0 & 1 \end{pmatrix}\otimes \pi_2\begin{pmatrix} 1 & \varpi p_{24} \\ 0 & 1 - \varpi p_{14} \end{pmatrix}.
\end{equation}
Recall that as a representation of $P$, 
$\pi_1 \otimes \pi_2 \cong \Ind_{T_1 T_2 N}^{P}(\tilde{\phi}_{B_1}\otimes \tilde{\phi}_{B_2}\otimes 1)$, where $T_{i} := I(\phi_{B_i})$ for $i=1,2$.
Using Mackey theory and \eqref{equation 4.11}, we get
\begin{align}\label{4.10}
\pi^{\delta_6}_{N,\psi}  \cong \underset{\gamma\in \Gamma}{\bigoplus}  W_{\gamma} \cong \underset{\gamma\in \Gamma}{\bigoplus} ~\underset{\beta\in \mathcal{B}}{\bigoplus} ~ W_{\gamma, \beta}.
\end{align}
where $W_{\gamma, \beta} = \Hom_{\beta^{-1}K\beta\cap (T_1 T_2 N)}\left(\tilde{\phi}_{B_1}\otimes  \tilde{\phi}_{B_2}\otimes 1,\psi^{\gamma^{-1}\delta_6^{-1}\beta^{-1}}\right)$ and $\mathcal{B}$ is a set of representatives for $K\backslash P/(T_1 T_2N)$.
Clearly, there is a bijection between $\mathcal{B}$ and the cosets $\GL_2(\F_q)/\F_{q^2}^\times\times \GL_2(\F_q)/\F_{q^2}^\times$ for which a set of representatives can be taken to be 
{\small{$$
\left\{
\beta(y,z;y',z') = \Diag \left( \begin{pmatrix}
        1 & y \\
        0 & z
\end{pmatrix}, \begin{pmatrix}
        1 & y' \\
        0 & z'
    \end{pmatrix} \right)
 :  z, z' \in \mathbb{F}_q^\times \text{ and } y, y' \in \mathbb{F}_q
\right\}.
$$
}}

Then for $\beta= \beta(y,z;y',z')$, we have $\beta^{-1}K\beta \subseteq (T_1 T_2 N)$. 
In fact,
{\small{$$
\beta^{-1}K\beta =\left\lbrace 
\begin{pmatrix}
    1+\varpi p_{14} & \varpi (p_{14}\tilde{y}+p_{13}\tilde{z}) & 0 & p_{14}\tilde{z}'\\
    0 & 1 & 0 & 0\\
    0 & 0 & 1 & \varpi (p_{14}\tilde{y}'+p_{24}\tilde{z}')\\
    0 & 0 & 0 & 1-\varpi p_{14}
\end{pmatrix} :  p_{13}, p_{14}, p_{24} \in \cO_2 \right\rbrace.
$$
}}
For $\beta = \beta(y,z;y',z') \in \mathcal{B}$, we have $\beta^{-1}K\beta = \beta^{-1} \delta_{6}^{-1} ( \delta_{6}P \delta_{6}^{-1} \cap N)\delta_{6} \beta$. 
For any $x\in \delta_{6}P \delta_{6}^{-1} \cap N$ as given in \eqref{expression for x}, we get
\begin{equation*}\label{action of psi^gamma for gamma in Gamma_1}
\psi^{\gamma^{-1} \delta_6^{-1}\beta^{-1}}(\beta^{-1} \delta_{6}^{-1} x \delta_{6} \beta)
= \psi^{\gamma^{-1}}(x).
\end{equation*}
Moreover, 
\begin{align} \label{character phi B_1 B_2}
(\tilde{\phi}_{B_1}\otimes  \tilde{\phi}_{B_2}\otimes 1) (\beta^{-1} \delta_{6}^{-1} x \delta_{6} \beta)
& =\phi_{B_1}\begin{pmatrix}
    1+\varpi p_{14} & \varpi(p_{14}\tilde{y}+p_{13}\tilde{z})\\
    0 & 1
\end{pmatrix}\phi_{B_2}\begin{pmatrix}
    1 & \varpi( p_{14}\tilde{y}'+p_{24}\tilde{z}')\\
    0 & 1-\varpi p_{14}
\end{pmatrix} \nonumber \\
& = \psi_0\left(((m_1-m_2)+n_1y+n_2y')\bar{p}_{14}+n_1z\bar{p}_{13}+n_2z'\bar{p}_{24}\right).
\end{align}
{\bf Case 1 :} Let
$ \Gamma_{1} := \left\{ \gamma(a,b,c) = \Diag( u^-(b) , h(a,c)) : a \in \F_{q}^{\times} \text{ and } b, c \in \F_q \right\} $.
Note that $\beta^{-1}K \beta = \beta^{-1} \delta_6^{-1} ( \delta_{6}P \delta_{6}^{-1} \cap N ) \delta_{6} \beta$.
For $\gamma = \gamma(a,b,c) \in \Gamma_{1}$ and $x\in \delta_{6}P\delta_{6}^{-1}\cap N$ as given in \eqref{expression for x} we get
\begin{equation}\label{action of psi^gamma for gamma in Gamma_1-1}
\psi^{\gamma^{-1} \delta_6^{-1}\beta^{-1}}(\beta^{-1} \delta_{6}^{-1} x \delta_{6} \beta)
= \psi^{\gamma^{-1}}(x)
= \psi_0\left( \frac{\tilde{b} - \tilde{c}}{\tilde{a}} p_{14} + \varpi p_{13} +  \frac{\varpi p_{24}}{\tilde{a}} \right)
\end{equation}
where $\tilde{a}, \tilde{b}, \tilde{c} \in \cO_2$ are the lifts of $a,b,c\in \F_q$ under the quotient map $\cO_2\twoheadrightarrow \cO_1\cong \F_q$.
By looking at the coefficient of $p_{14}$ in \eqref{character phi B_1 B_2} and \eqref{action of psi^gamma for gamma in Gamma_1} and using \eqref{4.10}, we get
\begin{align}
W_{\gamma,\beta}\neq 0 
&\implies  b=c.
\end{align}
Assume $b=c$ and write $\tilde{b} = \tilde{c} + \varpi k$ for some $k \in \cO_2$.
Thus, Equation \eqref{action of psi^gamma for gamma in Gamma_1} becomes
{\small{\begin{equation}\label{For gamma=Gamma_1}
\psi^{\gamma^{-1}}(x)= \psi_0\left(\dfrac{  \varpi k p_{14}}{\tilde{a}}+\varpi p_{13}+\dfrac{\varpi p_{24}}{\tilde{a}}\right).
\end{equation}
}}
Using \eqref{character phi B_1 B_2} and \eqref{For gamma=Gamma_1}, we get $W_{\gamma,\beta} \neq 0$ if and only if 
{\small{\begin{equation*}\psi_0\left(((m_1-m_2)+n_1y+n_2y')\bar{p}_{14}+n_1z\bar{p}_{13}+n_2z'\bar{p}_{24}\right)=\psi_0\left(\frac{\bar{k}\bar{p}_{14}}{a}+\bar{p}_{13}+\frac{\bar{p}_{24}}{a}\right)
\end{equation*}
}} 
for all $p_{12}, p_{13},p_{14}\in \cO_2$.
This gives 
{\small{$z=\dfrac{1}{n_1},~z'=\dfrac{1}{an_2},~y'=\dfrac{1}{n_2}\left(\dfrac{k}{a}+(m_2-m_1)-n_1y\right)$.}}
Thus, for a fixed $\gamma=\gamma(a,b,c) \in \Gamma_1$ with $b=c$, 
$\dim\left(W_{\gamma}\right) 
= \# \left\{ \beta\in \mathcal{B} : W_{\gamma,\beta} \neq 0 \right\}  = q$.
The number of matrices $\gamma=\gamma(a,b,c)\in \Gamma_1$ with $b=c$ is $q(q-1)$. 
Therefore, 
$\underset{\gamma\in \Gamma_1}{\sum}\dim (W_{\gamma} ) =q^2(q-1)$.
\\
{\bf Case 2 :}  Let
$ \Gamma_2 := \left\{ \gamma(d) = \Diag(
    w_0 , dw_0) : d \in \F_{q}^{\times} \right\}$.
For $\gamma= \gamma(d) \in \Gamma_2$ and $x \in \delta_6^{-1}K\delta_6$, we get
{\small{\begin{equation*}\label{character value of psi-gamma for Gamma_2}
\psi^{\gamma^{-1}}(x)  = \psi_0 \left( \frac{\varpi (p_{13} + p_{24}) }{\tilde{d}} \right).
\end{equation*}
}}
Using \eqref{character phi B_1 B_2}, we get $\Hom_{\beta^{-1}K\beta}\left(\tilde{\phi}_{B_1}\otimes  \tilde{\phi}_{B_2}\otimes 1,\psi^{\gamma^{-1}\delta_6^{-1}\beta^{-1}}\right) \neq 0$
if and only if 
{\small{\begin{equation*}
\psi_0\left(((m_1-m_2)+n_1y+n_2y')\bar{p}_{14}+n_1z\bar{p}_{13}+n_2z'\bar{p}_{24}\right)=\psi_0 \left( \frac{\bar{p}_{13} + \bar{p}_{24}}{d} \right)
\end{equation*}
}}
for all $p_{13}, p_{14}, p_{24}\in \cO_2$.
This gives 
{\small{$z=\dfrac{1}{n_1d},~z'=\dfrac{1}{n_2d},~y'=\dfrac{1}{n_2}((m_2-m_1)-n_1y)$.
}}
For any $\gamma=\gamma(d) \in \Gamma_2$, we get
$\dim\left(W_{\gamma}\right) = q$. 
Since $|\Gamma_2| = q-1$, 
$\underset{\gamma\in \Gamma_2}{\sum}
\dim \left(  W_{\gamma} \right) =q(q-1)$.
\\
{\bf Case 3 :} 
Let $\gamma \in \Gamma \smallsetminus (\Gamma_1 \cup \Gamma_2)$. Let $H=\left\{\begin{pmatrix}
    I_2 & X\\
    0 & I_2
\end{pmatrix} : X=\begin{pmatrix}
    0 & \varpi p_{14}\\
    0 & 0
\end{pmatrix} \right\} \subseteq \delta_{6}^{-1}K \delta_{6}$.
It can be checked that $\psi^{\gamma^{-1}}$ is non-trivial but $\tilde{\phi}_{B_1}\otimes  \tilde{\phi}_{B_2}\otimes 1$ is trivial on $\beta^{-1} \delta_{6}^{-1} H \delta_{6} \beta \subseteq \beta^{-1} K \beta$.
This gives $W_{\gamma, \beta}=0$ for all $\beta$.  
\\ 
Using \eqref{4.10}, together with Case 1, Case 2 and Case 3, we get 
\[
\dim(\pi^{\delta_6}_{N, \psi}) = \sum_{\gamma \in \Gamma} \dim (W_{\gamma}) = q^2(q-1) + q(q-1) = q(q^2-1).
\qedhere
\]
\end{proof}

\begin{lemma}\label{reduction of computation by saying other gamma=I}
For $\sigma\in \Irr(\GL_2(\cO_2))$,
$$
\Hom_{\Delta \GL_2(\cO_2)\cdot N}\left(\pi^{\delta_6}, \sigma\otimes\psi\right) \cong \Hom_{P_{\delta_6}\cap \Delta \GL_2(\cO_2)\cdot N}\left((\pi_1\otimes \pi_2)^{\delta_6},(\sigma\otimes\psi )\right).
$$
\end{lemma}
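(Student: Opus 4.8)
The plan is to combine Mackey theory with Frobenius reciprocity and then to show that only the trivial double coset contributes. Since $\pi^{\delta_6}=\Ind_{P_{\delta_6}}^{P}(\pi_1\otimes\pi_2)^{\delta_6}$, the same manipulation as in \eqref{principal series computation with Hom} gives
\[
\Hom_{\Delta\GL_2(\cO_2)\cdot N}\!\left(\pi^{\delta_6},\,\sigma\otimes\psi\right)\;\cong\;\bigoplus_{\gamma}\ \Hom_{\gamma P_{\delta_6}\gamma^{-1}\,\cap\,\Delta\GL_2(\cO_2)\cdot N}\!\left((\pi_1\otimes\pi_2)^{\delta_6\gamma},\,\sigma\otimes\psi\right),
\]
where $\gamma$ runs over a set of representatives of $\Delta\GL_2(\cO_2)\cdot N\backslash P/P_{\delta_6}$ that contains $I_4$. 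The summand attached to $\gamma=I_4$ is exactly the right-hand side of the lemma, so it suffices to prove that the summand vanishes for every $\gamma\neq I_4$.

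First I would pin down the double cosets. Because $N\trianglelefteq P$ and $N\subseteq\Delta\GL_2(\cO_2)\cdot N$, the quotient map $P\to P/N\cong M$ identifies $\Delta\GL_2(\cO_2)\cdot N\backslash P/P_{\delta_6}$ with $\Delta\GL_2(\cO_2)\backslash M/\overline{P_{\delta_6}}$, where $\overline{P_{\delta_6}}$ is the image of $P_{\delta_6}$ in $M$, read off from the explicit matrix description of $P_{\delta_6}$ in Lemma \ref{Several double cosets}. Since $\overline{P_{\delta_6}}\supseteq J^1_2\times J^1_2$, this reduces further to a double coset computation over the residue field, namely $\Delta\GL_2(\F_q)\backslash\bigl(\GL_2(\F_q)\times\GL_2(\F_q)\bigr)/B$ with $B=\{(g_1,g_2)\in\bar{\mathfrak{B}}_2\times\bar{\mathfrak{B}}_2 : (g_1)_{11}=(g_2)_{22}\}$, and one writes down a complete set of representatives, one of which is $I_4$. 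It is worth noting that the $N$-double cosets which contributed to $\pi^{\delta_6}_{N,\psi}$ in Proposition \ref{dimension of pi-delta-6} (the families $\Gamma_1$ with $b=c$ and $\Gamma_2$) all lie over the trivial class here; this is precisely what makes the trivial-coset summand alone recover all of $\pi^{\delta_6}_{N,\psi}$.

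Finally, for each representative $\gamma\neq I_4$ I would prove the vanishing of the corresponding summand exactly as in Case 3 of the proof of Proposition \ref{dimension of pi-delta-6} (and part (d) of Proposition \ref{dimension of all prinicpal series in pi-delta_5-N,psi}): one produces a one-parameter unipotent subgroup $H_\gamma$ of $\gamma P_{\delta_6}\gamma^{-1}\cap\Delta\GL_2(\cO_2)\cdot N$ contained in $N$ — a suitable conjugate of the subgroup $H$ used there, sitting in the ``$p_{14}$-direction'' of $N$ — on which $(\pi_1\otimes\pi_2)^{\delta_6\gamma}$ restricts trivially while $\sigma\otimes\psi$ (being $\psi$ on $N$) restricts to a sum of copies of a nontrivial character. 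Then $\Hom_{H_\gamma}\bigl((\pi_1\otimes\pi_2)^{\delta_6\gamma},\sigma\otimes\psi\bigr)=0$, hence a fortiori the whole $\gamma$-summand vanishes. I expect the main obstacle to be exactly this last step carried out uniformly: enumerating the classes of $\Delta\GL_2(\F_q)\backslash\GL_2(\F_q)^2/B$ and, for each nontrivial one, choosing $\gamma$ together with the subgroup $H_\gamma$ and verifying the two compatibility conditions; everything else is formal.
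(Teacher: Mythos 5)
Your proposal follows the same route as the paper's own proof: apply Mackey theory to $\pi^{\delta_6}=\Ind_{P_{\delta_6}}^{P}(\pi_1\otimes\pi_2)^{\delta_6}$ restricted to $\Delta\GL_2(\cO_2)\cdot N$, observe that the summand for the trivial double coset is exactly the stated right-hand side, and kill the other summands by restricting to a unipotent line in $N$ on which the two sides cannot agree. Your reduction of $\Delta\GL_2(\cO_2)\cdot N\backslash P/P_{\delta_6}$ (first mod $N$, then mod the first congruence subgroup, then over $\F_q$) is the same one the paper carries out via Lemma~\ref{GL_2 double cosets}, and your identification of $B$ is correct.

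One detail is worth flagging, and it is in your favour. You propose to use (a conjugate of) the subgroup $H=\{I_4+\varpi p_{14}E_{14}\}$ from Case 3 of Proposition~\ref{dimension of pi-delta-6}, while the paper's written proof invokes $N_0=\{I_4+p_{14}E_{14}\}$. For $\delta_5$ either choice works, but for $\delta_6$ the distinction matters: conjugation by $\delta_6$ gives
\[
\delta_6^{-1}\bigl(I_4+p_{14}E_{14}\bigr)\delta_6
=I_4+p_{14}\bigl(\varpi E_{11}+E_{14}-\varpi E_{44}\bigr),
\]
so $(\pi_1\otimes\pi_2)^{\delta_6}$ is \emph{not} trivial on $N_0$ — it sees $\pi_1(\diag(1+\varpi p_{14},1))\otimes\pi_2(\diag(1,1-\varpi p_{14}))$. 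By contrast, $\delta_6$ fixes $I_4+\varpi p_{14}E_{14}$ (since $\varpi^2=0$), so $(\pi_1\otimes\pi_2)^{\delta_6}$ \emph{is} trivial on $H$, while $\psi^{\gamma^{-1}}$ restricted to $H$ is $\psi_0\bigl(\overline{(h^{-1}g)_{21}}\,\bar p_{14}\bigr)$ for $\gamma=\Diag(g,h)$, hence nontrivial exactly when $\gamma$ lies in a nontrivial double coset. So your choice of $H$ is the one that makes the dichotomy work cleanly; the paper's $N_0$, taken literally, does not give the needed triviality on the $(\pi_1\otimes\pi_2)^{\delta_6}$ side. Aside from that, the two arguments are the same, and like the paper you leave the case-by-case verification of the vanishing unwritten.
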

\begin{proof}
Using Mackey theory, we get
\begin{align}\label{equation ijn terms of sigma sec-4, 4.26}
\Hom_{\Delta \GL_2(\cO_2)\cdot N}\left(\pi^{\delta_6}, \sigma\otimes \psi\right)  
& \cong \underset{\gamma \in \Gamma}{\bigoplus}~\Hom_{P_{\delta_6}^{\gamma}}\left((\pi_1\otimes \pi_2)^{\delta_6},(\sigma\otimes\psi )^{\gamma^{-1}}\right)
\end{align}
where $\Gamma$ is a set of representatives for $\Delta \GL_2(\cO_2)\cdot N\backslash P/P_{\delta_6}$ and $P_{\delta_6}^{\gamma} = P_{\delta_6}\cap \gamma^{-1}(\Delta \GL_2(\cO_2)\cdot N)\gamma$. 
There is a bijection between set $\Gamma$ and $\Delta \GL_2(\F_q) \cdot \bar{N}\backslash \bar{P}/\bar{P}_{\delta_6}$ for which a set of representatives can be written using Lemma \ref{GL_2 double cosets}.
Recall the subgroup $N_0$ as defined in \eqref{The subgoup N_0}.
Note that $N_0\subseteq P_{\delta_6}^{\gamma}$ and
$(\pi_1 \otimes \pi_2)^{\delta_6}$ is trivial on $N_0$.
Moreover, it can be verified that if $\gamma$ represents a non-trivial double coset, then $(\sigma \otimes \psi)^{\gamma^{-1}}$ is non-trivial on $N_0$. 
Hence, using \eqref{equation ijn terms of sigma sec-4, 4.26}, the lemma follows.
\end{proof}

Recall that
$\pi_1 \otimes \pi_2 \cong \Ind_{T_1T_2N}^{P}(\tilde{\phi}_{B_1}\otimes \tilde{\phi}_{B_2}\otimes 1)$.
Using Mackey theory and using Lemma \eqref{reduction of computation by saying other gamma=I}, we get
\begin{equation}\label{equation 4.24-1}
\Hom_{\Delta \GL_2(\cO_2)\cdot N}\left(\pi^{\delta_6}, \sigma\otimes\psi\right)
\cong \underset{\gamma \in \Gamma_{I}}{\bigoplus} \Hom_{\gamma^{-1}K_{I}\gamma\cap T_1T_2N}\left(\tilde{\phi}_{B_1}\otimes\tilde{\phi}_{B_2}\otimes 1,(\sigma\otimes\psi )^{\delta_6^{-1}\gamma^{-1}}\right),
\end{equation}
where $K_{I}=\delta_6^{-1}\left(P_{\delta_6}\cap (\Delta\GL_2(\cO_2)\cdot N)\right)\delta_6$ is given by 
{\small{$$
\left\{
\begin{pmatrix}
    p_{11}+\varpi p_{14} & p_{13} & p_{12} & p_{14}\\
   \varpi p_{12} & p_{11} & 0 & p_{12}\\
    0 & 0 & p_{22} & p_{24}\\
    0 & 0 & -\varpi p_{12} & p_{22}-\varpi p_{14}
\end{pmatrix}:\begin{matrix}
p_{22}-p_{11}\in \varpi \cO_2, p_{11}\in \cO_2^\times,
\\
p_{12},p_{13},p_{14},p_{24}\in \cO_2,\\
p_{13}+p_{24}\in \varpi \cO_2
\end{matrix}
\right\}.
$$
}}
and $\Gamma_{I}$ is a set of representatives of $K_{I}\backslash P/T_1T_2N$
which is in bijection with the following set
{\small{\begin{equation}\label{Gamma_I}
\left\{
\gamma(y,z;y',z') = \Diag\left(\begin{pmatrix}
        1 & y \\
        0 & z
\end{pmatrix}, \begin{pmatrix}
        1 & y' \\
        0 & z'
\end{pmatrix}\right) :
    z, z' \in \mathbb{F}_q^\times,~ y, y' \in \mathbb{F}_q
\right\}.
\end{equation}
}}

\begin{theorem}\label{main theorem for 2-2 parabolic subgroup}
For $i=1,2$, let $\pi_i = \Ind_{I(\phi_{B_i})}^{\GL_2(\cO_2)}  \tilde{\phi}_{B_i}$ be strongly cuspidal representations of $\GL_2(\cO_2)$ with central characters $\omega_{\pi_1}$ and $\omega_{\pi_2}$ respectively.
Assume that $\tr(B_1) \neq \tr(B_2)$.
Let $B= \diag(m,n) \in M_2(\F_q)$ with $m \neq n$. 
Let $\tilde{\phi}_{B}$ be an extension to $I(\phi_{B})$ of the character $\omega_{\pi} \phi_{B}$ of $Z \cdot J^1_2$ and $\sigma = \Ind_{I(\phi_{B })}^{\GL_2(\cO_2)} \tilde{\phi}_{B}$.
Then $m (\pi^{\delta_6}_{N, \psi}, \sigma)=1$ if and only if $(m,n) \in \{ (\tr(B_1), \tr(B_2)), ~~~ (\tr(B_2), \tr(B_1)) \}$. 
\end{theorem}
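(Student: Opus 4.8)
\smallskip
\noindent\textbf{Strategy of the proof.}
The plan is to start from the Mackey-theoretic identity \eqref{equation 4.24-1}, which together with the multiplicity identity \eqref{equivalence in Hom for Ch-2} gives
\[
m(\pi^{\delta_6}_{N,\psi},\sigma)=\sum_{\gamma\in\Gamma_I}\dim\Hom_{\gamma^{-1}K_I\gamma\cap T_1T_2N}\!\left(\tilde\phi_{B_1}\otimes\tilde\phi_{B_2}\otimes 1,\ (\sigma\otimes\psi)^{\delta_6^{-1}\gamma^{-1}}\right),
\]
the sum running over the $q^2(q-1)^2$ elements $\gamma=\gamma(y,z;y',z')$ of \eqref{Gamma_I}. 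The task is then to compute each of these local Hom spaces and add the dimensions.

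The first step is to reduce the non-character $\sigma$ to characters. Using the explicit description of $K_I$ (displayed just before \eqref{equation 4.24-1}) together with the fact that $T_1=I(\phi_{B_1})$ and $T_2=I(\phi_{B_2})$ are preimages of non-split tori (so they contain no non-central unipotents modulo $\varpi$), one checks that any element of $\gamma^{-1}K_I\gamma\cap T_1T_2N$ is scalar modulo $\varpi$ in the relevant $\GL_2(\cO_2)$-coordinate, because $K_I$ forces a scalar-plus-unipotent form there while the anisotropic tori exclude non-central unipotents; hence that coordinate lies in $Z\cdot J^1_2$. Consequently $\sigma$ is seen only through $\tilde\phi_B|_{Z\cdot J^1_2}=\omega_\pi\phi_B$, and since $B=\diag(m,n)$ is split regular semisimple, $\sigma$ restricts on $Z\cdot J^1_2$ to the \emph{multiplicity-free} direct sum $\bigoplus_{B'}\omega_\pi\phi_{B'}$, where $B'$ ranges over the $\GL_2(\F_q)$-conjugates of $B$. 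Thus, for fixed $\gamma$, the local Hom space becomes a finite sum over such $B'$ of $\dim\Hom_{\gamma^{-1}K_I\gamma\cap T_1T_2N}\!\left(\tilde\phi_{B_1}\otimes\tilde\phi_{B_2}\otimes 1,\ (\omega_\pi\phi_{B'}\otimes\psi)^{\delta_6^{-1}\gamma^{-1}}\right)$, each term of which is $0$ or $1$.

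The second step is to evaluate and match the characters. On $\gamma^{-1}K_I\gamma\cap T_1T_2N$ the character $\tilde\phi_{B_1}\otimes\tilde\phi_{B_2}\otimes 1$ is computed exactly as in \eqref{character phi B_1 B_2} (using $\tilde\phi_{B_i}|_{Z\cdot J^1_2}=\omega_{\pi_i}\phi_{B_i}$ and the explicit form of $B_i$), and $(\omega_\pi\phi_{B'}\otimes\psi)^{\delta_6^{-1}\gamma^{-1}}$ by the same conjugation bookkeeping as in \eqref{action of psi^gamma for gamma in Gamma_1-1}. Setting the two equal and comparing coefficients of the free parameters occurring in the subgroup --- the off-diagonal parameters $p_{13},p_{14},p_{24}$ and the diagonal/scalar parameters --- produces a linear system in $(y,z,y',z')$ and in the entries of $B'$: the off-diagonal equations pin $z,z',y,y'$ down in terms of $n_1,n_2$ and the entries of $B'$, while the diagonal equations force $B'$ to be conjugate to $\diag(\tr(B_1),\tr(B_2))$ (this is where the hypothesis $\tr(B_1)\neq\tr(B_2)$, ensuring $\diag(\tr(B_1),\tr(B_2))$ is itself regular, is used). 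Putting the constraints together, the system is solvable for some pair $(\gamma,B')$ precisely when $B$ is conjugate to $\diag(\tr(B_1),\tr(B_2))$, i.e. $\{m,n\}=\{\tr(B_1),\tr(B_2)\}$, and then for exactly one pair $(\gamma,B')$, contributing a one-dimensional space; otherwise every term vanishes. Hence $m(\pi^{\delta_6}_{N,\psi},\sigma)=1$ under the stated condition and $0$ otherwise.

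I expect the main obstacle to be the first step: passing cleanly from $\Hom(-,\sigma\otimes\psi)$ with $\sigma$ an induced (not one-dimensional) representation to the sum over the characters $\omega_\pi\phi_{B'}$ without over- or under-counting, which hinges on the two facts above --- that the relevant subgroups meet the pertinent $\GL_2(\cO_2)$ inside $Z\cdot J^1_2$, and that $\sigma|_{J^1_2}$ is multiplicity-free because $B$ is split. The subsequent linear-algebra matching is routine but lengthy. One should also verify that the multiplicity is exactly $1$ and not $2$ in the affirmative case, since $\diag(\tr(B_1),\tr(B_2))$ and $\diag(\tr(B_2),\tr(B_1))$ are a priori distinct conjugates of $B$, so the off-diagonal constraints must single out a unique admissible $(\gamma,B')$. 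As a consistency check this agrees with $\dim\pi^{\delta_6}_{N,\psi}=q(q^2-1)$ from Proposition~\ref{dimension of pi-delta-6}: the result forces $\pi^{\delta_6}_{N,\psi}$ to be the multiplicity-free sum of the $q-1$ regular representations of split type $\diag(\tr(B_1),\tr(B_2))$ with central character $\omega_\pi$, and $(q-1)\cdot q(q+1)=q(q^2-1)$.
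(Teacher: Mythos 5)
Your first step contains a genuine gap. You claim that for every $\gamma\in\Gamma_I$, each element of $\gamma^{-1}K_I\gamma\cap T_1T_2N$, viewed after the relevant twist inside $\Delta\GL_2(\cO_2)\cdot N$, has $\GL_2(\cO_2)$-coordinate lying in $Z\cdot J^1_2$, and you justify this by noting that $T_1,T_2$ contain no non-central unipotents modulo $\varpi$. But that argument only controls the entries $p_{13},p_{24}$ of $K_I$ (which are what the anisotropic tori force into $\varpi\cO_2$). It says nothing about the entry $p_{12}$, which is unconstrained in the explicit description of $K_I$. A direct computation of $\delta_6 k\delta_6^{-1}$ for $k\in K_I\cap\gamma(T_1T_2N)\gamma^{-1}$ shows that its $\Delta\GL_2(\cO_2)$-coordinate is
\[
\begin{pmatrix} p_{11} & p_{12} \\ 0 & p_{22}\end{pmatrix}\quad\text{with } p_{22}-p_{11}\in\varpi\cO_2,\ p_{12}\in\cO_2\text{ free},
\]
which modulo $\varpi$ is $\begin{pmatrix}\bar p_{11} & \bar p_{12}\\ 0 & \bar p_{11}\end{pmatrix}$. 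This is scalar-plus-unipotent, not scalar: it lies in $\tilde{\mathfrak{B}}_2$ but not in $Z\cdot J^1_2$. In particular the group $K_I^\gamma$ is non-abelian and $\sigma|_{K_I^\gamma}$ is not a direct sum of the one-dimensional characters $\omega_\pi\phi_{B'}$, so the reduction to a sum over $B'$ with each term $0$ or $1$ does not hold as stated, and the subsequent character-matching cannot begin.

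The paper circumvents exactly this obstruction by applying Mackey theory a second time, writing $\sigma\otimes\psi=\Ind_{\Delta I(\phi_B)\cdot N}^{\Delta\GL_2(\cO_2)\cdot N}(\tilde\phi_B\otimes\psi)$ and decomposing over double cosets $\Lambda_\gamma = K_I^\gamma\backslash \Delta\GL_2(\cO_2)\cdot N/\Delta I(\phi_B)\cdot N$. It is only after this second decomposition that the intersection with $\Delta I(\phi_B)\cdot N$ (where $I(\phi_B)$ is the split torus, diagonal modulo $\varpi$) forces $p_{12}\in\varpi\cO_2$ and hence lands the $\GL_2$-coordinate in $Z\cdot J^1_2$; this is visible in the explicit descriptions of $\lambda^{-1}K_I^\gamma\lambda\cap\Delta I(\phi_B)\cdot N$ in the paper's three claims, where the off-diagonal entry appears as $\varpi p_{12}$ and not $p_{12}$. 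Your proposal skips this layer and so loses the very constraint it needs. It also loses the accounting that produces the two separate contributions: the paper's $\Lambda_\gamma$ contains both $\lambda=I_4$ and $\lambda=W_0$, and these account for the two orderings $(\tr(B_1),\tr(B_2))$ and $(\tr(B_2),\tr(B_1))$; your mechanism for seeing two admissible pairs $(\gamma,B')$ is less transparent precisely because the Mackey layer that creates the $I_4$ vs.\ $W_0$ dichotomy has been removed. To repair the argument you would essentially need to reinsert that second Mackey decomposition, at which point you recover the paper's proof.
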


\begin{proof}
Using Mackey theory and using \eqref{equation 4.24-1}, we get
{\small{\begin{align}\label{4.38 equation for the theorem 4.1.9}
\Hom_{\Delta \GL_2(\cO_2)\cdot N}\left(\pi^{\delta_6}, \sigma\otimes\psi\right)
&  \cong   \underset{\gamma \in \Gamma_{I}}{\bigoplus} \Hom_{K_{I}^{\gamma}}\left((\tilde{\phi}_{B_1}\otimes\tilde{\phi}_{B_2}\otimes 1)^{\gamma\delta_6},\Res_{K_{I}^{\gamma}}^{\Delta \GL_2(\cO_2)\cdot N}\Ind_{\Delta I(\phi_B)\cdot N}^{\Delta \GL_2(\cO_2)\cdot N}(\tilde{\phi}_B\otimes\psi) \right)
\nonumber
\\
& \cong \underset{\gamma \in \Gamma_I}{\bigoplus}\underset{\lambda \in \Lambda_{\gamma}}{\bigoplus}\Hom_{\lambda^{-1}K_{I}^{\gamma}\lambda\cap \Delta I(\phi_B)\cdot N}\left((\tilde{\phi}_{B_1}\otimes \tilde{\phi}_{B_2}\otimes 1)^{\gamma \delta_6\lambda^{-1}},(\tilde{\phi}_B\otimes \psi)\right),
\end{align}
}}
where $\gamma=\gamma(y,z;y',z')\in \Gamma_{I}$ (see \eqref{Gamma_I}),  $K_{I}^{\gamma}=\delta_6[K_I\cap \gamma(T_1T_2N)\gamma^{-1}]\delta_6^{-1}$ and $\Lambda_{\gamma}$ is a set of representatives of the double coset $K_{I}^{\gamma}\backslash \Delta \GL_2(\cO_2)\cdot N/\Delta I(\phi_B)\cdot N$.
Then $\Lambda_{\gamma}$ is in bijection with 
$$
\left\lbrace \Diag( u^-(x), u^-(x)): x \in \F_q\right\rbrace  \bigcup~ \{W_0\},
$$
where $W_{0} = \Diag( w_0, w_0)$.
Let $W_{\gamma, \lambda}:=\Hom_{\lambda^{-1}K_{I}^{\gamma}\lambda\cap \Delta I(\phi_B)\cdot N}\left((\tilde{\phi}_{B_1}\otimes \tilde{\phi}_{B_2}\otimes 1)^{\gamma \delta_6\lambda^{-1}},(\tilde{\phi}_B\otimes \psi)\right)$. 
Using \ref{4.38 equation for the theorem 4.1.9}, we get 
\begin{equation}\label{main equation for hom in Theprem 3.12}
\Hom_{\GL_2(\cO_2)}(\pi^{\delta_6}_{N,\psi}, \sigma)\cong \underset{\gamma \in \Gamma_I}{\bigoplus}~\underset{\lambda\in \Lambda_{\gamma}}{\bigoplus}~W_{\gamma, \lambda}.
\end{equation} 
Now the theorem follows from the following three claims.\\
{\bf Claim 1:}
For any $\gamma\in \Gamma_{I}$ and  $\lambda \in \Lambda_{\gamma} \smallsetminus \{ I_4, W_{0} \}$,  
$W_{\gamma, \lambda}= 0$.
\\
{\bf Claim 2:} 
For $\lambda=I_4$, 
$\underset{\gamma\in \Gamma_{I}}{\sum}\dim\left(W_{\gamma, \lambda}\right)=1$ if $B=\diag(\tr(B_1), \tr(B_2))$.
\\
{\bf Claim 3:} For $\lambda=W_0$, 
$\underset{\gamma\in \Gamma_{I}}{\sum}\dim\left(W_{\gamma, \lambda}\right)=1$ if $B=\diag(\tr(B_2), \tr(B_1))$.
\begin{proof}[Proof of Claim 1:]  \let\qed\relax
Let $\lambda\in \Lambda_{\gamma}\smallsetminus \{ W_0 \}$, i.e., $\lambda=\Diag(u^-(x),u^-(x))$, where $\tilde{x}\in \cO_2$ is a lift of $x\in \F_q$. 
Then the subgroup $\lambda^{-1} K_{I}^{\gamma}\lambda \cap \Delta I(\phi_B)\cdot N$ is given by 
{\footnotesize{$$
\left\lbrace 
Z : =\begin{pmatrix}
    p_{11}+\varpi p_{12}\tilde{x} & \varpi p_{12} & \varpi p_{13}+p_{14}\tilde{x} & p_{14}\\
    (p_{22}-p_{11})\tilde{x}-\varpi p_{12}\tilde{x}^2 & p_{22}-\varpi p_{12}\tilde{x} & \varpi(p_{24}\tilde{x}-p_{13}\tilde{x})-p_{14}\tilde{x}^2 & \varpi p_{24}-p_{14}\tilde{x}\\
    0 & 0 & p_{11}+\varpi p_{12}\tilde{x} & \varpi p_{12}\\
    0 & 0  & (p_{22}-p_{11})\tilde{x}-\varpi p_{12}\tilde{x}^2 & p_{22}-\varpi p_{12}\tilde{x}
\end{pmatrix} ~~ \middle| ~~  \begin{matrix}
p_{11}\in \cO_2^\times, 
\\
p_{ij}\in \cO_2 \text{ for $i\neq j$},
\\
p_{22}-p_{11}\in \varpi \cO_2
\end{matrix}
\right\rbrace.
$$
}}
For $\gamma=\gamma(y,z;y',z')\in \Gamma_I$ as defined in \eqref{Gamma_I}, we have
{\small{\begin{equation}\label{4.28-1-1}
(\tilde{\phi}_{B_1}\otimes \tilde{\phi}_{B_2}\otimes 1)^{\gamma \delta_6\lambda^{-1}}(Z)=\tilde{\phi}_{B_1}\begin{pmatrix}
    p_{11}+\varpi p_{14} & \varpi (p_{14}\tilde{y}+p_{13}\tilde{z})\\
    0 & p_{11}
\end{pmatrix}\tilde{\phi}_{B_2}\begin{pmatrix}
    p_{22} & \varpi (p_{14}\tilde{y}'+p_{24}\tilde{z}')\\
    0 & p_{22}-\varpi p_{14}
\end{pmatrix}
\end{equation}}}
and  
\begin{equation}\label{4.46 equation for RHS comparsion for characters}
(\tilde{\phi}_B\otimes \psi)(Z)=\tilde{\phi}_B\begin{pmatrix}
    p_{11}+\varpi p_{12}\tilde{x} & \varpi p_{12}\\
    (p_{22}-p_{11})\tilde{x}-\varpi p_{12}\tilde{x}^2 & p_{22}-\varpi p_{12}\tilde{x}
\end{pmatrix}\psi_0(\varpi p_{13}+\varpi p_{24}).
\end{equation}
Let $K_0^{(\gamma,\lambda)}$ be the subgroup of $\lambda^{-1} K_{I}^{\gamma}\lambda \cap \Delta I(\phi_B)\cdot N$ defined as follows: 
{\small{$$
K_0^{(\gamma,\lambda)}: = \left\lbrace Z(p_{12}):=\Diag\left(\begin{pmatrix}
    1+\varpi p_{12}\tilde{x} & \varpi p_{12} \\
    -\varpi p_{12}\tilde{x}^2 & 1-\varpi p_{12}\tilde{x}
    \end{pmatrix}, \begin{pmatrix}
   1+\varpi p_{12}\tilde{x} & \varpi p_{12} \\
    -\varpi p_{12}\tilde{x}^2 & 1-\varpi p_{12}\tilde{x}
\end{pmatrix}\right)~{\Big{|}}~ p_{12}\in \cO_2~\right\rbrace.
$$
}}
Clearly, using \eqref{4.28-1-1} the character $(\tilde{\phi}_{B_1}\otimes \tilde{\phi}_{B_2}\otimes 1)^{\gamma \delta_6\lambda^{-1}}$  is trivial on $K_0^{(\gamma,\lambda)}$.
Using \eqref{4.46 equation for RHS comparsion for characters} 
we get
$$
(\tilde{\phi}_B\otimes \psi) (Z(p_{12})) = \psi_0\left((m-n)x\bar{p}_{12}\right),
$$
which is a non-trivial character of $K_0^{(\gamma,\lambda)}$ if and only if $x\neq 0$ and $m \neq n$. 
Therefore, for $\lambda \neq I_4$, $W_{\gamma, \lambda}=0$.
\end{proof}

\begin{proof}[Proof of Claim 2] \let\qed\relax
Let $\lambda=I_4$, i.e. $x=0$ in Claim 1. 
The computations in the proof of Claim 1 are also valid for $x=0$.
Then $W_{\gamma,\lambda}\neq 0$ if and only if $(\tilde{\phi}_{B_1}\otimes \tilde{\phi}_{B_2}\otimes 1)^{\gamma \delta_6\lambda^{-1}} = \tilde{\phi}_B\otimes \psi$ on $\lambda^{-1}K_{I}^{\gamma}\lambda\cap \Delta I(\phi_B)\cdot N$.
Since $B_i=\begin{pmatrix}
    m_i & n_i\alpha\\
    n_i & m_i
\end{pmatrix}\in \F_{q^2}\smallsetminus \F_q$ for $i=1,2$, using \eqref{4.28-1-1} and \eqref{4.46 equation for RHS comparsion for characters}, we get $W_{\gamma,\lambda}\neq 0$ if and only if 
$$
z=\frac{1}{n_1},~z'=\frac{1}{n_2},~y'=\frac{m_2-m_1-n_1y}{n_2},~m=\tr(B_1),~n=\tr(B_2).
$$

Therefore $W_{\gamma, \lambda} \neq 0 \implies \gamma \in \left\{ \gamma\left(y, \frac{1}{n_1};\frac{m_2 - m_1 - n_1 y}{n_2}, \frac{1}{n_2} \right) : y \in \mathbb{F}_q \right\}$
and it can be checked that all the elements in this set represent the same double coset in $\Gamma_{I}$.
Therefore, if $\lambda =I_4$ and $B =\diag(\tr(B_1), \tr(B_2))$, then
$\underset{\gamma\in \Gamma_{I}}{\sum}\dim\left(W_{\gamma, \lambda}\right)=1$.
\end{proof}

\begin{proof}[Proof of Claim 3] 
Let $\lambda=W_0$.
Then 
{\small{$$
\lambda^{-1} K_{I}^{\gamma}\lambda \cap \Delta I(\phi_B)\cdot N
=
\left\lbrace Z' :=\begin{pmatrix}
    p_{22} & 0 & \varpi p_{24} & 0\\
    \varpi p_{12} & p_{11} & p_{14} & \varpi p_{13}\\
    0 & 0 & p_{22} & 0\\
    0 & 0  & \varpi p_{12} & p_{11}
\end{pmatrix}
~~ \middle| ~~
\begin{matrix}
p_{22}-p_{11}\in \varpi \cO_2\\
p_{11},p_{22}\in \cO_2^\times,\\
p_{ij}\in \cO_2 \text{ for $i\neq j$}
\end{matrix}\right\rbrace.
$$
}}
Let $\gamma=\gamma(y,z;y',z')\in \Gamma_I$, as defined in \eqref{Gamma_I}. 
Then we get
{\small{\begin{equation}\label{4.28-2}
(\tilde{\phi}_{B_1} \otimes \tilde{\phi}_{B_2}\otimes 1)^{\gamma \delta_6 \lambda^{-1}} (Z')=\tilde{\phi}_{B_1}\begin{pmatrix}
    p_{11}+\varpi p_{14} & \varpi (p_{14}\tilde{y}+p_{13}\tilde{z})\\
    0 & p_{11}
\end{pmatrix}\tilde{\phi}_{B_2}\begin{pmatrix}
    p_{22} & \varpi (p_{14}\tilde{y}'+p_{24}\tilde{z}')\\
    0 & p_{22}-\varpi p_{14}
\end{pmatrix}
\end{equation}}}
and
\begin{equation}\label{4.32}
(\tilde{\phi}_{B}\otimes\psi)(Z')=\tilde{\phi}_B\begin{pmatrix}
    p_{22} & 0\\
    \varpi p_{12} & p_{11}
\end{pmatrix}\psi_0(\varpi p_{13}+\varpi p_{24}).
\end{equation}
Using \eqref{4.28-2} and \eqref{4.32}, we get that  $W_{\gamma, \lambda}\neq 0$ if and only if 
$$
z=\frac{1}{n_1},~z'=\frac{1}{n_2},~y'=\frac{m_2-m_1-n_1y}{n_2},~m=\tr(B_2),~n=\tr(B_1).
$$ 
Following the arguments similar to the case for $\lambda=I_4$, we get that, if 
$B =\diag(\tr(B_2), \tr(B_1))$, then 
$\underset{\gamma\in \Gamma_{I}}{\sum}\dim\left(W_{\gamma, \lambda}\right)=1$.
\end{proof}
\let\qed\relax
\end{proof}

\begin{corollary}\label{delta_6 coset}
Assume $\tr(B_1) \neq \tr(B_2)$. Let $B= \diag(\tr(B_1), \tr(B_2))$ or $\diag( \tr(B_2), \tr(B_1) )$.
Then 
\[
\pi^{\delta_6}_{N,\psi} \cong \Ind_{Z \cdot J^1_2}^{\GL_2(\cO_2)} (\omega_{\pi} \phi_B).
\]
\end{corollary}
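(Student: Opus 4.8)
The plan is to realise both sides as $\GL_2(\cO_2)$-modules of dimension $q(q^2-1)$ and then use Theorem~\ref{main theorem for 2-2 parabolic subgroup} to embed the right-hand side into the left. First I would reduce to $B=\diag(\tr(B_1),\tr(B_2))$: the other candidate $\diag(\tr(B_2),\tr(B_1))$ equals $w_0Bw_0^{-1}$, and since conjugation by $w_0$ fixes $Z$, preserves $J^1_2$ and $\omega_\pi$, and carries $\phi_B$ to $\phi_{w_0Bw_0^{-1}}$, it induces an isomorphism between the two representations $\Ind_{Z\cdot J^1_2}^{\GL_2(\cO_2)}(\omega_\pi\phi_{\bullet})$. (Implicit here is that $\omega_\pi\phi_B$ is a well-defined character of $Z\cdot J^1_2$; on the overlap $Z\cap J^1_2 = 1+\varpi\cO_2$ both factors are $1+\varpi\tilde{x}\mapsto\psi_0\big((\tr(B_1)+\tr(B_2))\,\bar{x}\big)$, using $\tr(B)=\tr(B_1)+\tr(B_2)$ and the known formula for the central character of a strongly cuspidal representation of $\GL_2(\cO_2)$ on $1+\varpi\cO_2$.)

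Next I would decompose $V:=\Ind_{Z\cdot J^1_2}^{\GL_2(\cO_2)}(\omega_\pi\phi_B)$. Since $B$ is non-scalar with distinct eigenvalues, $\phi_B$ is regular and $I(\phi_B)$ is the preimage in $\GL_2(\cO_2)$ of the centralizer $Z_{\GL_2(\F_q)}(B)$, the diagonal torus; hence $Z\cdot J^1_2\trianglelefteq I(\phi_B)$ with $A:=I(\phi_B)/(Z\cdot J^1_2)$ abelian of order $q-1$. As $\omega_\pi\phi_B$ is $I(\phi_B)$-invariant and, by hypothesis, equals $\tilde{\phi}_B|_{Z\cdot J^1_2}$ for an extension $\tilde{\phi}_B$, the tensor identity gives
\[
\Ind_{Z\cdot J^1_2}^{I(\phi_B)}(\omega_\pi\phi_B)\;\cong\;\tilde{\phi}_B\otimes\Ind_{Z\cdot J^1_2}^{I(\phi_B)}(\C)\;\cong\;\bigoplus_{\chi}\tilde{\phi}_B\otimes\chi,
\]
where $\chi$ ranges over the $q-1$ characters of $A$, so by transitivity of induction $V\cong\bigoplus_{\chi}\Ind_{I(\phi_B)}^{\GL_2(\cO_2)}(\tilde{\phi}_B\otimes\chi)$. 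Each $\tilde{\phi}_B\otimes\chi$ restricts to $\phi_B$ on $J^1_2$, so by Theorem~\ref{Construction of regualr reprresentation} each summand is an irreducible regular representation of dimension $[\GL_2(\cO_2):I(\phi_B)]=q(q+1)$, and the summands are pairwise non-isomorphic: an isomorphism would make $\tilde{\phi}_B\otimes\chi$ and $\tilde{\phi}_B\otimes\chi'$ $\GL_2(\cO_2)$-conjugate, but as both restrict to $\phi_B$ on $J^1_2$ the conjugating element lies in $I(\phi_B)$ and acts trivially, forcing $\chi=\chi'$. Hence $V$ is multiplicity-free of dimension $(q-1)q(q+1)=q(q^2-1)$.

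Finally, I would apply Theorem~\ref{main theorem for 2-2 parabolic subgroup} to each $\sigma=\Ind_{I(\phi_B)}^{\GL_2(\cO_2)}(\tilde{\phi}_B\otimes\chi)$: this is a permissible choice of extension of $\omega_\pi\phi_B$, and with $(m,n)=(\tr(B_1),\tr(B_2))$ (recall $\tr(B_1)\neq\tr(B_2)$) the theorem yields $m(\pi^{\delta_6}_{N,\psi},\sigma)=1$ for every $\chi$. Thus each of the $q-1$ distinct irreducible constituents of $V$ appears in $\pi^{\delta_6}_{N,\psi}$ with multiplicity at least its multiplicity ($=1$) in $V$, so $V$ embeds in $\pi^{\delta_6}_{N,\psi}$. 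Since $\dim\pi^{\delta_6}_{N,\psi}=q(q^2-1)=\dim V$ by Proposition~\ref{dimension of pi-delta-6}, the embedding is an isomorphism, which is the assertion.

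I expect the main obstacle to be the decomposition step: verifying that $A=I(\phi_B)/(Z\cdot J^1_2)$ has order exactly $q-1$, that its characters produce genuinely distinct irreducible summands, and that Theorem~\ref{main theorem for 2-2 parabolic subgroup} may be invoked uniformly over all the twists $\tilde{\phi}_B\otimes\chi$ rather than for a single distinguished extension; once this is in place, the proof is just a matching of dimensions.
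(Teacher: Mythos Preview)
Your argument is correct and follows essentially the same route as the paper: decompose $\Ind_{Z\cdot J^1_2}^{\GL_2(\cO_2)}(\omega_\pi\phi_B)$ by inducing in stages through $I(\phi_B)$ into $q-1$ irreducible regular summands, apply Theorem~\ref{main theorem for 2-2 parabolic subgroup} to each to embed the whole representation into $\pi^{\delta_6}_{N,\psi}$, and conclude by matching dimensions via Proposition~\ref{dimension of pi-delta-6}. You spell out a few points the paper leaves implicit (the reduction to one choice of $B$, the well-definedness of $\omega_\pi\phi_B$ on $Z\cdot J^1_2$, and the pairwise inequivalence of the summands), but the strategy is the same.
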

\begin{proof}
Note that $\Ind_{Z \cdot J^1_2}^{I(\phi_B)} (\omega_{\pi} \phi_{B})$ is direct sum of $|I(\phi_B)/Z \cdot J^1_2 | = q-1$ distinct characters of $I(\phi_B)$.
By Theorem \ref{main theorem for 2-2 parabolic subgroup}, if $\tilde{\phi}_{B}$ appears in $\Ind_{Z \cdot J^1_2}^{I(\phi_B)} (\omega_{\pi} \phi_{B})$ then $\Ind_{I(\phi_B)}^{\GL_2(\cO_2)}\tilde{\phi}_B$ appears in $\pi^{\delta_6}_{N, \psi}$ with multiplicity one.
Therefore, we get
\begin{equation}
\Ind_{Z \cdot J^1_2}^{\GL_2(\cO_2)} (\omega_{\pi} \phi_B) \cong \Ind_{I(\phi_B)}^{\GL_2(\cO_2)} \left( \Ind_{Z \cdot J^1_2}^{I(\phi_B)} (\omega_{\pi} \phi_B) \right) \subseteq \pi^{\delta_6}_{N,\psi}.
\end{equation}
Using Lemma \ref{dimension of pi-delta-6}, we get $\dim (\pi^{\delta_6}_{N,\psi}) = \dim \left(\Ind_{Z \cdot J^1_2}^{\GL_2(\cO_2)} (\omega_{\pi} \phi_B)\right)$, and the lemma follows. 
\end{proof}


Now we prove the Theorem \ref{induction from P-intro} mentioned in the introduction.
\begin{proof}[Proof of Theorem \ref{induction from P-intro}]
Note that $\pi_{N, \psi} \cong \oplus_{i=1}^{6} \pi^{\delta_i}_{N, \psi}$.
By Proposition \ref{deltas with 0 WM}, $\pi^{\delta_i}_{N, \psi} =0$ for $i=1,2,3$. 
Therefore 
\[
\pi_{N, \psi} \cong \pi^{\delta_4}_{N, \psi} \oplus  \pi^{\delta_5}_{N, \psi} \oplus \pi^{\delta_6}_{N, \psi}.
\]
By Corollary \ref{delta_4 coset}, $\pi^{\delta_4}_{N, \psi} \cong \pi_1 \otimes \pi_2$; 
by Corollary \ref{delta_5 coset}, $\pi^{\delta_5}_{N, \psi} \cong \Ind_{\mathfrak{B}_2}^{\GL_2(\cO_2)} (\omega_{\pi_1} \otimes \omega_{\pi_2})$ and by Corollary \ref{delta_6 coset}, $\pi^{\delta_6}_{N, \psi} \cong \Ind_{Z \cdot J^1_2}^{\GL_2(\cO_2)} (\omega_{\pi} \phi_B)$.
This proves Theorem \ref{induction from P-intro}.
\end{proof}

\section{$\pi_{N,\psi}$ for $\pi=\Ind_{Q}^{\GL_4(\cO_2)}(\rho\otimes \chi)$}\label{Sec : 5}

Let $\chi$ be a character of $\cO_2^\times$ and $\rho$ an irreducible strongly cuspidal representation of $\GL_3(\cO_2)$.
By Theorem \ref{Construction of regualr reprresentation}, $\rho \cong \Ind_{I(\phi_{C})}^{\GL_3(\cO_2)} \tilde{\phi}_{C}$ for some regular elliptic element $C \in M_3(\F_q)$.
Consider $\rho \otimes \chi$ as a representation of $Q$ via $Q \rightarrow Q/U \cong \GL_3(\cO_2) \times \GL_1(\cO_2)$. 
In this section, we describe the degenerate Whittaker space $\pi_{N, \psi}$ for the induced representation $\pi=\Ind_{Q}^{\GL_4(\cO_2)}(\rho\otimes \chi)$.
In order to compute $\pi_{N,\psi}$, we first restrict $\pi$ to the subgroup $P$ and use Mackey theory to understand the components of $\pi$ as a representation of $P$.
We get
\[
\pi|_{P}=(\Ind_{Q}^{\GL_4(\cO_2)}(\rho\otimes\chi))|_{P}=\underset{\delta\in P\backslash \GL_4(\cO_2)/Q}{\bigoplus} \pi^{\delta}
\]
where $\pi^{\delta}=\Ind_{\delta Q\delta^{-1}\cap P}^{P}(\rho\otimes\chi)^{\delta}$.
Therefore,
\[
\pi_{N,\psi}=\underset{\delta \in P\backslash \GL_4(\cO_2)/Q}{\bigoplus}\pi^{\delta}_{N,\psi}.
\]

\subsection{A description of  $P\backslash \GL_4(\cO_2)/Q$}
The following lemma follows from Proposition \ref{Double cosets proposition over finite field}, for which we skip the proof.
\begin{lemma}\label{Double coset for finite field}
A set of distinct representatives for $\bar{P}\backslash \GL_4(\F_q)/\bar{Q}$ is given by $\{I_4,(24)\}$, where $(24)$ is the permutation matrix obtained by interchanging second and forth rows of $I_4$.
\end{lemma}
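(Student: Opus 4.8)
The plan is to apply Proposition~\ref{Double cosets proposition over finite field} directly. First I would note that $\bar{P}$ is exactly the standard parabolic subgroup of $\GL_4(\F_q)$ attached to the partition $(2,2)$, while $\bar{Q}$ is the standard parabolic attached to the partition $(3,1)$. Since both are standard parabolics, Proposition~\ref{Double cosets proposition over finite field} applies with $\mathcal{P}_2 = \bar{P}$ and $\mathcal{P}_1 = \bar{Q}$, yielding a bijection
\[
\bar{P}\backslash \GL_4(\F_q)/\bar{Q} \;\xrightarrow{\ \sim\ }\; W_{\bar{P}}\backslash W/W_{\bar{Q}},
\]
where $W$ is the group of permutation matrices, identified with $S_4$, and $W_{\bar{P}} = W\cap\bar{P}$, $W_{\bar{Q}} = W\cap\bar{Q}$.

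Next I would identify the two Weyl subgroups: $W_{\bar{P}} = \langle (12),(34)\rangle$, the permutations preserving the partition $\{\{1,2\},\{3,4\}\}$, and $W_{\bar{Q}} = \langle (12),(23)\rangle = S_{\{1,2,3\}} = \Stab_{S_4}(4)$. Because $W_{\bar{Q}}$ is the stabilizer of the point $4$, the right coset space $W/W_{\bar{Q}}$ is in bijection with $\{1,2,3,4\}$ via $\sigma W_{\bar{Q}} \mapsto \sigma(4)$, and under this identification the left action of $W_{\bar{P}}$ on $W/W_{\bar{Q}}$ becomes the natural action of $\langle (12),(34)\rangle$ on $\{1,2,3,4\}$. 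This action has precisely two orbits, $\{1,2\}$ and $\{3,4\}$, so $W_{\bar{P}}\backslash W/W_{\bar{Q}}$ has exactly two elements.

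Finally I would choose representatives. Under the bijection $\sigma W_{\bar{Q}}\mapsto\sigma(4)$, the identity corresponds to $4\in\{3,4\}$ and the transposition $(24)$ corresponds to $2\in\{1,2\}$; since these lie in distinct $W_{\bar{P}}$-orbits, $\{I_4,(24)\}$ is a complete and irredundant set of representatives for $\bar{P}\backslash\GL_4(\F_q)/\bar{Q}$. There is no genuine obstacle in this argument; the only point requiring a little care is to match the combinatorial bijection $W/W_{\bar{Q}}\leftrightarrow\{1,2,3,4\}$ with the left $W_{\bar{P}}$-action, which is immediate once one observes that $W_{\bar{Q}}$ is the stabilizer of $4$ in $S_4$.
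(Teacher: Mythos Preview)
Your proof is correct and follows exactly the approach indicated in the paper, which simply states that the lemma follows from Proposition~\ref{Double cosets proposition over finite field} and skips the details. You have filled in those details carefully, correctly identifying $W_{\bar P}=\langle(12),(34)\rangle$, $W_{\bar Q}=\Stab_{S_4}(4)$, and computing the two $W_{\bar P}$-orbits on $W/W_{\bar Q}\cong\{1,2,3,4\}$.
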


\begin{corollary} 
A set of distinct representatives for $P\backslash \GL_4(\cO_2)/Q$ is
{\small{\begin{center}
$\left\lbrace I_4,~I^{\varpi}=\begin{pmatrix}
   I_2 & 0\\
   \begin{pmatrix}
       0 & 0\\
       \varpi & 0
   \end{pmatrix} & I_2
\end{pmatrix},~(24)\right\rbrace$.
\end{center}}}
\end{corollary}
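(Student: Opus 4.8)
The statement to prove is a corollary: that a complete set of representatives for $P\backslash \GL_4(\cO_2)/Q$ is $\{I_4,\ I^{\varpi},\ (24)\}$. The plan is to lift the finite-field double coset decomposition of Lemma \ref{Double coset for finite field} to $\cO_2$, exactly as was done for $P\backslash\GL_4(\cO_2)/P$ in the proof of Theorem \ref{6 cosets P-G-P}. First I would invoke the surjection $P\backslash\GL_4(\cO_2)/Q \twoheadrightarrow \bar P\backslash\GL_4(\F_q)/\bar Q$ induced by reduction mod $\varpi$; by Lemma \ref{Double coset for finite field} the target has exactly two elements, represented by $I_4$ and $(24)$. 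Consequently every double coset of $P\backslash\GL_4(\cO_2)/Q$ has a representative of the form $I_4+\varpi A$ or $(24)+\varpi B$ with $A,B\in M_4(\cO_2)$, and it remains to sort these into double cosets over $\cO_2$.

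For the coset lying over $I_4$: I would show that $P\cdot(I_4+K_2^1)\cdot Q$ splits into exactly two double cosets, with representatives $I_4$ and $I^{\varpi}$. The key observation is that $P$ and $Q$ together generate a large congruence subgroup: writing $I_4+\varpi A$ with $A\in M_4(\cO_2)$, one can use left multiplication by upper-triangular-block elements of $P\cap K_4^1$ and right multiplication by elements of $Q\cap K_4^1$ (whose Lie-algebra images are the block-upper-triangular matrices for the $(2,2)$ and $(3,1)$ partitions respectively) to clear all entries of $\varpi A$ except the $(4,2)$ and $(4,1)$ entries — and in fact the $(4,1)$ entry can also be absorbed. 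What survives is the single parameter in position $(4,2)$, which is either zero (giving $I_4$) or a unit-multiple of $\varpi$ (giving $I^{\varpi}$, which one checks is not in $PQ$ by reducing the relevant matrix identity mod $\varpi^2$). For the coset lying over $(24)$: here $(24)$ already normalizes enough structure that I expect $P\cdot(24)\cdot(I_4+K_4^1)\cdot Q$ to be a single double coset, i.e. all lifts $(24)+\varpi B$ are equivalent to $(24)$; this is a direct computation using that $(24)P(24)^{-1}$ and $Q$ jointly cover the congruence subgroup modulo $\varpi$-corrections.

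The routine bookkeeping is the entry-clearing argument; the one genuine point requiring care is verifying that $I_4$ and $I^{\varpi}$ are \emph{not} in the same double coset — that is, $I^{\varpi}\notin PQ$. For this I would suppose $I^{\varpi}=pq$ with $p\in P$, $q\in Q$, reduce mod $\varpi$ to force $\bar p\in\bar P$, $\bar q\in\bar Q$ with $\bar p\,\bar q=I_4$ hence $\bar q=\bar p^{-1}\in\bar P\cap\bar Q$ (block upper triangular for both partitions), then examine the $\varpi$-linear term of the identity $p^{-1}I^{\varpi}=q$: the left side has a nonzero $(4,2)$-entry in its $K_4^1/K_4^2$ component that cannot be matched by any element of $Q$, since $Q$'s congruence part only contributes to the $(i,4)$ column for $i\le 3$. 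This contradiction, together with the two displayed spanning computations, shows the three listed matrices are pairwise inequivalent and exhaustive, completing the proof.
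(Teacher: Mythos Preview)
Your strategy is exactly the one the paper intends (it states the corollary without proof, relying on the method of Theorem~\ref{6 cosets P-G-P}): lift the two $\bar P\backslash\GL_4(\F_q)/\bar Q$ cosets from Lemma~\ref{Double coset for finite field} and sort the congruence fibers. The overall structure is sound, but a few details need correcting.

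First, a position mix-up: $I^{\varpi}$ has its nonzero entry in position $(4,1)$, not $(4,2)$, so your running references to the $(4,2)$ entry should all be $(4,1)$. More substantively, the claim that ``the $(4,1)$ entry can also be absorbed'' is not quite right. After the congruence-level clearing you correctly identify, \emph{both} entries $(4,1)$ and $(4,2)$ survive; what collapses them to a single nontrivial orbit is the conjugation action of $\bar P\cap\bar Q$, whose upper-left $\GL_2(\F_q)$ block and $(4,4)$ scalar act on the pair $(a_{41},a_{42})\in\F_q^2$ by $(a,b)\mapsto r_{44}\cdot(a,b)R^{-1}$ with $R\in\GL_2(\F_q)$, giving exactly the two orbits $\{0\}$ and $\F_q^2\setminus\{0\}$. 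Phrase it this way rather than as ``absorption.''

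Second, your justification that $I^{\varpi}\notin PQ$ has the right skeleton but the wrong reason: it is not true that ``$Q$'s congruence part only contributes to the $(i,4)$ column'' (you are thinking of the unipotent radical $U$, not $Q$). The correct one-line argument is that any $q\in Q$ satisfies $q_{4,1}=0$, whereas $(p^{-1}I^{\varpi})_{4,1}=(p^{-1})_{4,1}+\varpi(p^{-1})_{4,4}=\varpi\cdot(\text{unit})\neq 0$, since $p^{-1}\in P$ forces $(p^{-1})_{4,1}=0$ and $\bar p^{-1}\in\bar P\cap\bar Q$ forces $(p^{-1})_{4,4}\in\cO_2^\times$.
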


\subsection{$\pi^{\delta}_{N, \psi} =0$ for $\delta =I, I^{\varpi}$}
\begin{lemma}\label{important lemma for reduction of Hom}
For any $\delta \in \{ I_4, I^{\varpi}, (24) \}$, we have 
\[
\pi^{\delta}_{N,\psi} \cong 
\bigoplus_{\beta \in N \backslash P / \delta Q \delta^{-1} \cap P} \Hom_{ (\delta Q \delta^{-1} \cap P)  \cap N } \left( (\rho \otimes \chi)^{\delta}, \psi^{\beta^{-1}} \right)
\]
\end{lemma}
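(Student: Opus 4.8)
The plan is to obtain the claimed decomposition by one further application of Mackey theory, restricting the already–induced representation $\pi^{\delta}$ from $P$ down to $N$. Write $R_{\delta}:=\delta Q\delta^{-1}\cap P$, so that $\pi^{\delta}=\Ind_{R_{\delta}}^{P}(\rho\otimes\chi)^{\delta}$ by definition. By the same reasoning used to obtain \eqref{3.1 equation-} (valid since $N\trianglelefteq P$ and $\psi$ is a character of $N$), one has $\pi^{\delta}_{N,\psi}\cong\Hom_{N}(\pi^{\delta},\psi)$, so it suffices to compute $\Hom_{N}\bigl(\Ind_{R_{\delta}}^{P}(\rho\otimes\chi)^{\delta},\psi\bigr)$.

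First I would apply the Mackey restriction formula \eqref{2.1 equation for mackey theory} with $G=P$, $H=R_{\delta}$ and $K=N$, giving
\[
\Res^{P}_{N}\Ind_{R_{\delta}}^{P}(\rho\otimes\chi)^{\delta}\;\cong\;\bigoplus_{\beta\in N\backslash P/R_{\delta}}\Ind_{\beta R_{\delta}\beta^{-1}\cap N}^{N}\bigl((\rho\otimes\chi)^{\delta}\bigr)^{\beta}.
\]
The decisive simplification is the normality of $N$ in $P$: since $\beta^{-1}N\beta=N$ for every $\beta\in P$, we have $\beta R_{\delta}\beta^{-1}\cap N=\beta\,(R_{\delta}\cap N)\,\beta^{-1}$. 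Now apply $\Hom_{N}(-,\psi)$, distribute it over the finite direct sum, and invoke Frobenius reciprocity on each summand, so that
\[
\Hom_{N}(\pi^{\delta},\psi)\;\cong\;\bigoplus_{\beta\in N\backslash P/R_{\delta}}\Hom_{\beta(R_{\delta}\cap N)\beta^{-1}}\Bigl(\bigl((\rho\otimes\chi)^{\delta}\bigr)^{\beta},\,\psi\Bigr).
\]

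Finally I would conjugate each term by $\beta$: a linear map intertwines the $\beta(R_{\delta}\cap N)\beta^{-1}$–action on $\bigl((\rho\otimes\chi)^{\delta}\bigr)^{\beta}$ with $\psi$ exactly when it intertwines the $R_{\delta}\cap N$–action on $(\rho\otimes\chi)^{\delta}$ with the character $y\mapsto\psi(\beta y\beta^{-1})$, and in the paper's convention $\psi^{s}(x)=\psi(s^{-1}xs)$ this character is precisely $\psi^{\beta^{-1}}$. Since $R_{\delta}\cap N=(\delta Q\delta^{-1}\cap P)\cap N$, stitching the two displays together yields the lemma, and the argument is uniform in the choice of double coset representative $\delta$. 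I do not expect a genuine obstacle here: the only points requiring care are the consistent handling of the twist conventions ($(\,\cdot\,)^{\beta}$ versus $\psi^{\beta^{-1}}$) and the repeated, essential use of $N\trianglelefteq P$ to replace $\beta R_{\delta}\beta^{-1}\cap N$ by a conjugate of $R_{\delta}\cap N$, which is what makes the double induction collapse so cleanly.
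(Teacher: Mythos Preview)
Your proposal is correct and is precisely the Mackey-theory argument the paper invokes in its one-line proof; you have simply spelled out the two steps (Mackey restriction plus Frobenius reciprocity, followed by the conjugation-by-$\beta$ identification) that the paper leaves implicit. The essential use of $N\trianglelefteq P$ to rewrite $\beta R_{\delta}\beta^{-1}\cap N$ as $\beta(R_{\delta}\cap N)\beta^{-1}$ is exactly what the paper relies on elsewhere (cf.\ \eqref{equation for pi-delta-N,psi for trivial doubel coset}), and your handling of the twist convention $\psi^{\beta^{-1}}(y)=\psi(\beta y\beta^{-1})$ matches the paper's.
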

\begin{proof}
Since $\pi^{\delta}=\Ind_{\delta Q\delta^{-1}\cap P}^{P}(\rho\otimes\chi)^{\delta}$, the lemma follows from Mackey theory.    
\end{proof}

\begin{proposition}\label{pi^I, pi_I_w have zero Whittaker models}
For $\delta\in \{I_4, I^{\varpi}\}$, $\pi^{\delta}_{N,\psi}=0$.
\end{proposition}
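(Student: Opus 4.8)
The plan is to use Lemma~\ref{important lemma for reduction of Hom}, which expresses $\pi^{\delta}_{N,\psi}$ as a direct sum of $\Hom$-spaces of the form $\Hom_{(\delta Q \delta^{-1} \cap P) \cap N}\left((\rho \otimes \chi)^{\delta}, \psi^{\beta^{-1}}\right)$, and to show that for $\delta \in \{I_4, I^{\varpi}\}$ each such $\Hom$-space vanishes. The strategy, mirroring the proof of Proposition~\ref{deltas with 0 WM}, is to exhibit a subgroup $N_1$ of $(\delta Q \delta^{-1} \cap P) \cap N$ on which $(\rho \otimes \chi)^{\delta}$ is trivial but on which every twist $\psi^{\beta^{-1}}$ (for $\beta \in P$) is non-trivial; this forces the $\Hom$-space to be zero for every $\beta$, hence $\pi^{\delta}_{N,\psi} = 0$.

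First I would compute the groups $(\delta Q \delta^{-1} \cap P) \cap N$ explicitly for $\delta = I_4$ and $\delta = I^{\varpi}$. For $\delta = I_4$: since $Q$ consists of block upper-triangular matrices with a $3+1$ block structure, intersecting $Q$ with the unipotent radical $N$ of $P$ (the $2+2$ block structure) gives the subgroup of matrices $\left(\begin{smallmatrix} I_2 & X \\ 0 & I_2 \end{smallmatrix}\right)$ with $X \in M_2(\cO_2)$ supported only in the positions that lie in the $(3,1)$-off-diagonal block of $Q$ — concretely $X = \left(\begin{smallmatrix} 0 & x_{13} \\ 0 & x_{23} \end{smallmatrix}\right)$ with $x_{13} = 0$ forced by the $\GL_3$-block structure, leaving $X = \left(\begin{smallmatrix} 0 & 0 \\ x_{23} & 0 \end{smallmatrix}\right)$; wait — I would carefully recompute this using the actual block positions, but the upshot is that $(Q \cap N)$ is a proper subgroup of $N$ (roughly half-dimensional), and since $\rho \otimes \chi$ is inflated from $Q/U$, it is trivial on this whole group. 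For $\delta = I^{\varpi}$, the conjugate $\delta Q \delta^{-1}$ is a slight perturbation and $(\delta Q \delta^{-1} \cap P) \cap N$ contains at least the "level-$\varpi$" part $N_1 := \left\{\left(\begin{smallmatrix} I_2 & \varpi X \\ 0 & I_2 \end{smallmatrix}\right) : X \in M_2(\cO_2)\right\}$ restricted to the relevant block positions — and again $(\rho\otimes\chi)^{\delta}$ is trivial there.

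The key point is then that $\psi$ is a \emph{non-degenerate} character: by the standing assumption $\psi_0|_{\varpi^{l-1}\cO_l} \neq 1$ (here $l = 2$), the character $\psi(X) = \psi_0(\tr X)$ restricted to $N_1$, i.e. $\psi_0(\varpi \cdot (\text{relevant entries}))$, is non-trivial, and crucially this non-triviality is preserved under every $P$-conjugate $\psi^{\beta^{-1}}$: conjugation by $\beta \in P$ acts on $N$ by $X \mapsto g_1 X g_2^{-1}$ for the block-diagonal part $(g_1, g_2)$ of $\beta$, and since $N_1$ is the full level-$\varpi$ subgroup (stable under this action up to the block-position constraint), $\psi^{\beta^{-1}}|_{N_1}$ remains non-trivial. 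I would verify this last invariance claim carefully — it is the analogue of the sentence "it follows that $\psi^{\gamma^{-1}}$ is non-trivial on $N_1$ for any $\gamma \in P$" in the proof of Proposition~\ref{deltas with 0 WM}. Once that is in place, $\Hom_{N_1}\left((\rho\otimes\chi)^{\delta}, \psi^{\beta^{-1}}\right) = 0$ for all $\beta$, and since $N_1 \subseteq (\delta Q \delta^{-1}\cap P) \cap N$, each summand in Lemma~\ref{important lemma for reduction of Hom} vanishes, giving $\pi^{\delta}_{N,\psi} = 0$.

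The main obstacle I anticipate is purely bookkeeping: pinning down the exact block positions of $(\delta Q\delta^{-1}\cap P)\cap N$ for $\delta = I^{\varpi}$ (where the conjugation by $I^{\varpi}$ mixes the blocks slightly) and confirming that this intersection genuinely contains a subgroup on which $\psi$ and all its $P$-twists are non-trivial while $(\rho\otimes\chi)^{\delta}$ is trivial. For $\delta = I_4$ this is essentially immediate; for $\delta = I^{\varpi}$ one must track how the off-diagonal $\varpi$-entry of $I^{\varpi}$ interacts with $N$, but since everything happens modulo $\varpi^2$ the computation is finite and elementary. No deep input is needed beyond the non-degeneracy of $\psi_0$ and the fact that $\rho \otimes \chi$ is inflated from $Q/U$.
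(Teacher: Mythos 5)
Your overall strategy matches the paper's: apply Lemma~\ref{important lemma for reduction of Hom} and exhibit a subgroup of $(\delta Q\delta^{-1}\cap P)\cap N$ on which $(\rho\otimes\chi)^{\delta}$ is trivial while every $\psi^{\beta^{-1}}$ is non-trivial. But your candidate subgroup does not satisfy the triviality requirement, and this is a genuine gap.

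You propose $N_1=\left\{\left(\begin{smallmatrix}I_2&\varpi X\\0&I_2\end{smallmatrix}\right):X\in M_2(\cO_2)\right\}$ --- the same $N_1$ as in Proposition~\ref{deltas with 0 WM} --- and assert that $(\rho\otimes\chi)^{\delta}$ is trivial on it because $\rho\otimes\chi$ is inflated from $Q/U$. That reasoning conflates $U$ with $N\cap Q$. Inside $Q$, the positions $(1,3)$ and $(2,3)$ of an element $\left(\begin{smallmatrix}I_2&X\\0&I_2\end{smallmatrix}\right)\in N$ lie in the $\GL_3(\cO_2)$-factor of $L\cong Q/U$, not in $U$. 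For $\delta=I_4$ and $n\in N_1$ with, say, $X_{11}\neq 0$, one has $(\rho\otimes\chi)(n)=\rho(I_3+\varpi Y)$ with $Y$ supported in positions $(1,3),(2,3)$, and $\rho$ is \emph{not} trivial there: $\rho$ is a regular (strongly cuspidal) representation, so $\rho|_{I_3+\varpi M_3(\cO_2)}$ is a sum of non-trivial regular characters $\phi_{C'}$. The reason $N_1$ works in Proposition~\ref{deltas with 0 WM} is that there $\pi_1\otimes\pi_2$ is inflated from $P/N$ and hence trivial on all of $N$; here $\rho\otimes\chi$ is trivial only on $U$, not on $N\cap Q$. (Your intermediate claim that ``$\rho\otimes\chi$ is trivial on $Q\cap N$'' is also incorrect, and for $\delta=I_4$ in fact $Q\cap N=N$ rather than a half-dimensional subgroup.)

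The paper instead takes
$$
J := \left\{ \begin{pmatrix} I_2 & X \\ 0 & I_2 \end{pmatrix} : X = \begin{pmatrix} 0 & \varpi y \\ 0 & \varpi w \end{pmatrix},\ y,w\in\cO_2 \right\}\subseteq N,
$$
killing $x_{13},x_{23}$ (so the $\GL_3$-component is the identity) and keeping $x_{14},x_{24}$ only at level $\varpi$ (so that conjugation by $I^{\varpi}$, which would otherwise push a $\varpi x_{13}$ term into position $(4,3)$ outside $Q$, still lands inside $U$ modulo $\varpi^2=0$). On $J$, $(\rho\otimes\chi)^{\delta}$ is genuinely trivial for both $\delta=I_4$ and $\delta=I^{\varpi}$; and for $\beta\in P$ with block-diagonal part $(g_1,g_2)$ one computes $\psi^{\beta^{-1}}(X)=\psi_0(\tr(g_1Xg_2^{-1}))=\psi_0\bigl(\varpi(a_{21}y+a_{22}w)\bigr)$, where $(a_{21},a_{22})$ is the second row of $g_2^{-1}g_1$, which cannot reduce to zero mod $\varpi$ since $g_2^{-1}g_1$ is invertible. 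That non-degeneracy argument is the one you gesture at, but it must be paired with the correct subgroup $J$, not $N_1$.
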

\begin{proof}
For the given $\delta$ and for any $\beta \in P$, we have $\beta (\delta Q \delta^{-1} \cap P) \beta^{-1} \cap N =N$. 
Let 
\[
J := \left\{ \begin{pmatrix}
    I_2 & X \\
    0 & I_2
\end{pmatrix} : X= \begin{pmatrix}
    0 & \varpi y\\
    0 & \varpi w
\end{pmatrix} \text{ with } y, w \in \cO_2 
\right\} \subseteq N.
\]
It can be checked that $(\rho \otimes \chi)^{\delta}$ is trivial on $J$, but  $\psi^{\beta^{-1}}$ is non-trivial.
Now the proposition follows from Lemma \ref{important lemma for reduction of Hom}.
\end{proof}

\begin{remark}
\noindent
It is important to note that the above proposition holds for any representation $\rho$ of 
$\GL_3(\cO_2)$.
\end{remark}

\subsection{A description of $\pi^{(24)}_{N, \psi}$}\label{pi_N,psi for last double coset for Q}
We first obtain the dimension of $\pi^{(24)}_{N, \psi}$.

\begin{theorem}\label{dim of pi^24_N,psi}
Let $\delta=(24)$. 
Then $\dim(\pi^{\delta}_{N,\psi})=q^2(q^2-1)$.
\end{theorem}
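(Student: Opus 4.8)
The plan is to use Lemma~\ref{important lemma for reduction of Hom} with $\delta = (24)$, which expresses $\pi^{(24)}_{N,\psi}$ as a direct sum, over a set of representatives $\beta$ of $N\backslash P/(\delta Q\delta^{-1}\cap P)$, of the spaces $\Hom_{(\delta Q\delta^{-1}\cap P)\cap N}\big((\rho\otimes\chi)^{\delta},\psi^{\beta^{-1}}\big)$. First I would compute the subgroup $\delta Q\delta^{-1}\cap P$ explicitly: writing out $(24)\,Q\,(24)^{-1}$ and intersecting with the block-upper-triangular $P$ should give a concrete parabolic-type subgroup, and in particular its intersection with $N$ should be a proper subgroup of $N\cong M_2(\cO_2)$ cut out by one linear condition (so of order $q^6$), on which $(\rho\otimes\chi)^{(24)}$ restricts to a character coming from $\chi$ together with (part of) the $\GL_3$-data of $\rho$. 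Since $N$ is normal in $P$, the double cosets $N\backslash P/(\delta Q\delta^{-1}\cap P)$ are in bijection with $\bar N\backslash \bar P/(\overline{\delta Q\delta^{-1}\cap P})$, which I would identify with a quotient of $\GL_2(\F_q)\times\GL_2(\F_q)$ (or a Grassmannian-type set via Lemma~\ref{double cosets P-1-n-2_GL_n-F_{q^n}} and Lemma~\ref{GL_2 double cosets}); this gives an explicit, finite index set $\Gamma$ for $\beta$.

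Next, for each representative $\beta = \Diag(g,h)\in\Gamma$ I would compute the twisted character $\psi^{\beta^{-1}}$ restricted to $(\delta Q\delta^{-1}\cap P)\cap N$ and compare it with the restriction of $(\rho\otimes\chi)^{(24)}$. The key point is that $(\rho\otimes\chi)^{(24)}$ restricted to this subgroup of $N$ is a single (one-dimensional) character — because $N$ lies in the unipotent radical and $\rho\otimes\chi$ is realized via $Q\to L$, so $N\cap (\delta Q\delta^{-1}\cap P)$ maps into the abelian part and acts by a character built from $\chi$ and a row/column of the $\phi_C$-data. Hence each $\Hom$ space is either $0$ or one-dimensional, and it is nonzero precisely when $\psi^{\beta^{-1}}$ agrees with that character on the relevant subgroup. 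I would then count: the number of $\beta\in\Gamma$ for which this matching occurs, accounting for the fact that once the "matching" condition pins down some of the coordinates of $g$ and $h$, a certain number of free parameters remain (parametrized by $\F_q$ and $\F_q^\times$), each contributing dimension $1$. Summing gives the claimed total $q^2(q^2-1)$.

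The main obstacle I expect is the bookkeeping in the character-matching step: one must correctly track how the conjugation by $(24)$ and by $\beta=\Diag(g,h)$ transforms the character $\psi_0(\tr(\cdot))$ of $N$ and how it interacts with the $2m_0$-type trace condition coming from $\omega_\pi$ (as in Theorem~\ref{induction from Q-intro}); getting the linear algebra over $\cO_2$ right — especially distinguishing the $\varpi$-level contributions from the mod-$\varpi$ ones, since $K^1_2\cong M_n(\F_q)$ — is where errors creep in. A secondary subtlety is verifying that the chosen $\Gamma$ really is a complete and irredundant set of double coset representatives, and that $(\delta Q\delta^{-1}\cap P)\cap N$ is as claimed; both follow from direct matrix computation but need care. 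Once the nonvanishing $\beta$'s are enumerated and each is seen to contribute exactly $1$ to the dimension (using uniqueness of Whittaker models for $\rho$, cf.\ Lemma~\ref{tensor product on hom} and \cite{SP2022}, where a full Whittaker space of $\rho$ does not collapse), the count $q^2(q^2-1)$ drops out.
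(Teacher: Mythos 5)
There is a genuine gap. Your computation hinges on the claim that $(\rho\otimes\chi)^{(24)}$ restricts to a \emph{single} one-dimensional character of $(\delta Q\delta^{-1}\cap P)\cap N$, and that consequently each $\Hom$-space indexed by $\beta$ is $0$ or $\mathbb{C}$. Neither is true. First, $(\delta Q\delta^{-1}\cap P)\cap N$ is cut out by \emph{two} linear conditions (it is $\{I_4+aE_{13}+bE_{14}:a,b\in\cO_2\}$, of order $q^4$, not $q^6$). More importantly, conjugating this subgroup by $(24)$ gives $L_0=\{I_4+aE_{12}+bE_{13}\}$, which sits inside the $\GL_3(\cO_2)$ factor of $L$ — it does \emph{not} land in the unipotent radical $U$ of $Q$, and it is \emph{not} contained in $I_3+\varpi M_3(\cO_2)$. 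Since $\rho\otimes\chi$ factors through $Q\to L$, the restriction $(\rho\otimes\chi)^{(24)}|_{(\delta Q\delta^{-1}\cap P)\cap N}$ is literally $\rho|_{L_0}$, a $(\dim\rho)$-dimensional representation of the abelian unipotent group $L_0$; it splits into many characters, not one. Your appeal to uniqueness of Whittaker models for $\rho$ also fails here, because $L_0$ is a \emph{proper} subgroup of the full upper unitriangular of $\GL_3(\cO_2)$ (the $(2,3)$-entry is absent), so $\Hom_{L_0}(\rho,\eta)$ is typically large, not at most one-dimensional.

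The missing idea is a \emph{second} application of Mackey theory using the induced description $\rho\otimes\chi=\Ind_{S\cdot U\cdot\cO_2^\times}^{Q}(\tilde\phi_C\otimes 1\otimes\chi)$ with $S=I(\phi_C)$. This introduces an inner sum over $\gamma\in L_0\backslash Q/(S\cdot U\cdot\cO_2^\times)$, parametrized (via $\GL_3(\F_q)=\F_{q^3}^\times\cdot P_{1,2}$) by $\gamma(g)=\Diag(1,g,1)$, $g\in\GL_2(\F_q)$. Only after this reduction are both sides one-dimensional characters, and the matching conditions become explicit linear equations in the entries of $g$ (involving the components $c_1,c_2$ of the regular element $C$, not the trace/$m_0$ data, which plays no role in the dimension count). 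For each of the $q^2$ representatives $\beta_1(x)$ one then gets $q(q-1)$ valid $g$'s, and for each of the $q$ representatives $\beta_2(y)$ also $q(q-1)$; summing yields $q^2(q^2-1)$. Without the second Mackey step there is no mechanism in your plan to produce contributions of size $q(q-1)$ from a single $\beta$, so the count cannot come out correctly.
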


\begin{proof}
Let $P_{(24)}=(24)Q(24)\cap P$ and $L_0=(24)(P_{(24)}\cap N)(24)$.
Since $\rho = \Ind_{I(\phi_C)}^{\GL_3(\cO_2)} (\tilde{\phi}_C)$, we get
 $\rho\otimes \chi=\Ind_{S\cdot U\cdot \cO_2^\times}^{Q}(\tilde{\phi}_C\otimes 1\otimes\chi)$, where $S:= I(\phi_{C})$.
Using Mackey theory and Lemma \ref{important lemma for reduction of Hom}, we get
\begin{align}\label{dimension of pi24}
\pi^{(24)}_{N,\psi} 
& \cong \underset{\beta \in N\backslash P/P_{(24)}}{\bigoplus}~\underset{\gamma\in L_0\backslash Q/S\cdot U\cdot \cO_2^\times}{\bigoplus}W_{\gamma, \beta}, 
\end{align}
where $W_{\gamma, \beta}:=\Hom_{\gamma^{-1}L_0\gamma\cap(S\cdot U\cdot \cO_2^\times)}\left((\tilde{\phi}_C\otimes 1\otimes\chi),\psi^{\beta^{-1} (24)\gamma^{-1}}\right)$.
Since $N$ is normal in $P$, there is a bijection between $N\backslash P/P_{(24)}$ and $P/P_{(24)}\cdot N$. 
Then, we get a bijection between  $P/P_{(24)}\cdot N$ and $\GL_2(\cO_2)/\mathfrak{B}_2$.
Thus, a set of representatives of $N\backslash P/P_{(24)}$ is given by
{\small{$$
\left\lbrace
\beta_1(x)=
\begin{pmatrix}
\begin{pmatrix}
    1 & 0\\
    x & 1
\end{pmatrix} & 0\\
0 & I_2
\end{pmatrix}
:x\in \cO_2\right\rbrace\bigcup \left\lbrace\beta_2(y)=\begin{pmatrix}
\begin{pmatrix}
    \varpi y & 1\\
    1 & 0
\end{pmatrix} & 0\\
0 & I_2
\end{pmatrix} :y\in \cO_2
\right\rbrace.
$$
}}
Now, we compute $L_0\backslash Q/S\cdot U\cdot \cO_2^\times$.
More explicitly,
$$
L_0 = \left\lbrace
\begin{pmatrix}
1 & \mathbf{v} \\
0 & I_3
\end{pmatrix}
: \mathbf{v} = \begin{pmatrix} p_{14} &  p_{13} & 0 \end{pmatrix}, p_{14}, p_{13} \in \cO_2
\right\rbrace.
$$
Since $ (I_4 + \varpi M_4(\cO_2)) \cap Q\subseteq S\cdot U\cdot \cO_2^\times$  and $\GL_3(\F_q)= P_{1,2}\cdot \F_{q^3}^\times$ (see Lemma \ref{double cosets P-1-n-2_GL_n-F_{q^n}}), there is a bijection between $L_0\backslash Q/S\cdot U\cdot \cO_2^\times$ and $\{ \gamma(g)= \Diag( 1, g, 1) \in \GL_4(\F_q) : g \in \GL_2(\F_q) \}$.
Now, we compute $W_{\gamma, \beta}$ for all $\beta,\gamma$.
\\
Consider $\gamma=\gamma(g)$ for $g=\begin{pmatrix}
   g_1 & g_2\\
    g_3 & g_4
 \end{pmatrix}\in \GL_2(\F_q)$.
Then 
$$
L_0^{\gamma}
:=\gamma^{-1}L_0\gamma \cap(S\cdot U\cdot \cO_2^\times)
=\left\lbrace Y :
=
\begin{pmatrix}
1 & \mathbf{v}^T \\
0 & I_3
\end{pmatrix}
: \mathbf{v} = \begin{pmatrix} \varpi (\tilde{g}_3p_{13}+\tilde{g}_1p_{14}) \\
\varpi (\tilde{g}_4p_{13}+\tilde{g}_2p_{14}) 
\\
0 \end{pmatrix}, p_{13},p_{14}\in \cO_2\right\rbrace,
$$
where $\mathbf{v}^T$ is the transpose of $\mathbf{v}$.
Let 
{\small{\begin{equation}\label{matriX C}
C=\begin{pmatrix}
    c_0 & c_2a & c_1a\\
    c_1 & c_0+c_2 & c_1+c_2a\\
    c_2 & c_1 & c_2+c_0
\end{pmatrix}\in \F_{q^3}\smallsetminus \F_q,
\end{equation}}}
where embedding of $\F_{q^3}$ in $M_3(\F_q)$ is defined in \eqref{Embedding of Fq^3}.
We have, 
\begin{equation}\label{computation of phi_B,1,chi-1}
(\tilde{\phi}_C\otimes 1\otimes \chi)(Y)=\psi_0((c_1g_3+c_2g_4)\bar{p}_{13})\psi_0((c_1g_1+c_2g_2)\bar{p}_{14}).
\end{equation}
For $\beta=\beta_1(x) \in N\backslash P/P_{(24)}$, we have
\begin{equation}\label{computation for coset a-1}
\psi^{\beta^{-1}(24)\gamma^{-1}}(Y)
=\psi_0(\bar{p}_{13}+\bar{x}\bar{p}_{14}).
\end{equation}
For fixed $\beta=\beta_1(x)$, $W_{\gamma, \beta}\neq 0$ if and only if there exists $g\in \GL_2(\F_q)$, such that the character values in \eqref{computation of phi_B,1,chi-1} and \eqref{computation for coset a-1} are the same,
equivalently
$$
c_1g_3+c_2g_4=1, \hspace{0.6 cm}c_1g_1+c_2g_2=\bar{x}.
$$
For any $x\in \cO_2$, we have $q(q-1)$ choices of $g$. 
Therefore, 
{\small{$$
\underset{\beta \in \{\beta_1(x):x\in \cO_2\}}{\sum}~\underset{\gamma\in L_0\backslash Q/S\cdot U\cdot \cO_2^\times}{\sum}
\dim(W_{\gamma,\beta})=q^2\cdot q(q-1).
$$ 
}}
For $\beta=
\beta_2(y)$, we have  
\begin{equation}\label{Computation for b coset-1}
\psi^{\beta^{-1}(24)\gamma^{-1}}(Y)=\psi_0(\bar{p}_{14}).
\end{equation}
Using the similar computations as for $\beta=\beta_1(x)$, for any $\beta=\beta_2(y)$, we get $q(q-1)$ choices of $g$ for which $W_{\gamma, \beta}\neq 0$.
Therefore, 
{\small{$$
\underset{\beta \in \{\beta_2(y):y\in \cO_2\}}{\sum}~\underset{\gamma\in L_0\backslash Q/S\cdot U\cdot \cO_2^\times}{\sum}
\dim(W_{\gamma,\beta})
=q\cdot q(q-1).
$$ 
}}
Hence, using \eqref{dimension of pi24}, we get
$\dim(\pi^{(24)}_{N,\psi})=q^3(q-1)+q^2(q-1) = q^2(q^2-1)$.
\qedhere
\end{proof}

The following lemma is a consequence of Mackey theory and Frobenius reciprocity, and we skip the proof.
\begin{lemma}\label{technical lemma for section-5-1}
Let $\sigma=\Ind_{I(\phi_B)}^{\GL_2(\cO_2)}\tilde{\phi}_B \in \Irr(\GL_2(\cO_2))$ be regular.
Then 
\begin{equation}\label{Equation for technical lemma 1}
\Hom_{\Delta\GL_2(\cO_2)\cdot N}(\pi^{(24)},\sigma\otimes\psi) 
\cong  \underset{\lambda\in \Lambda}{\bigoplus}W_{\lambda},
\end{equation}
where $\Lambda$ is a set of representatives for $\Delta I(\phi_B)\cdot N\backslash P/P_{(24)}$, $W_{\lambda}:=\Hom_{L_{\lambda}} \left(\rho\otimes \chi, (\tilde{\phi}_B\cdot \psi)^{\lambda^{-1}(24)} \right)$ and $L_{\lambda} = (24)[P_{(24)}\cap \lambda^{-1}\left(\Delta I(\phi_B)\cdot N\right)\lambda](24)$.
\end{lemma}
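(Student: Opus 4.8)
The plan is to reduce the left-hand side of \eqref{Equation for technical lemma 1} to the right-hand side by a chain of elementary manipulations: realise $\sigma\otimes\psi$ as an induced representation, apply Frobenius reciprocity, unfold $\pi^{(24)}$ by Mackey's formula \eqref{2.1 equation for mackey theory}, apply Frobenius reciprocity once more, and finally conjugate each resulting summand into the shape of $W_{\lambda}$.

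First I would realise $\sigma\otimes\psi$ as an induced representation of $\Delta\GL_2(\cO_2)\cdot N$. Since $\psi(X)=\psi_0(\tr X)$ is invariant under conjugation by $\Delta\GL_2(\cO_2)$, it extends to a character of $\Delta\GL_2(\cO_2)\cdot N$ that is trivial on $\Delta\GL_2(\cO_2)$; together with $\sigma=\Ind_{I(\phi_B)}^{\GL_2(\cO_2)}\tilde\phi_B$ and the projection formula this gives
\[
\sigma\otimes\psi\;\cong\;\Ind_{\Delta I(\phi_B)\cdot N}^{\Delta\GL_2(\cO_2)\cdot N}\bigl(\tilde\phi_B\cdot\psi\bigr),
\]
where $\tilde\phi_B\cdot\psi$ is the character of $\Delta I(\phi_B)\cdot N$ restricting to $\tilde\phi_B$ on $\Delta I(\phi_B)$ and to $\psi$ on $N$. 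Substituting this and applying Frobenius reciprocity for the outer induction,
\[
\Hom_{\Delta\GL_2(\cO_2)\cdot N}\bigl(\pi^{(24)},\sigma\otimes\psi\bigr)\;\cong\;\Hom_{\Delta I(\phi_B)\cdot N}\bigl(\Res^{P}_{\Delta I(\phi_B)\cdot N}\pi^{(24)},\,\tilde\phi_B\cdot\psi\bigr).
\]

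Next I would substitute $\pi^{(24)}=\Ind_{P_{(24)}}^{P}(\rho\otimes\chi)^{(24)}$ and apply Mackey's formula to the restriction over a set $\Lambda$ of representatives of $\Delta I(\phi_B)\cdot N\backslash P/P_{(24)}$, obtaining
\[
\Res^{P}_{\Delta I(\phi_B)\cdot N}\Ind_{P_{(24)}}^{P}(\rho\otimes\chi)^{(24)}\;\cong\;\bigoplus_{\lambda\in\Lambda}\Ind_{H_\lambda}^{\Delta I(\phi_B)\cdot N}\bigl(\bigl((\rho\otimes\chi)^{(24)}\bigr)^{\lambda}\bigr),\quad H_\lambda:=\lambda P_{(24)}\lambda^{-1}\cap\Delta I(\phi_B)\cdot N,
\]
and then Frobenius reciprocity once more, termwise, turns the right-hand Hom-space into $\bigoplus_{\lambda\in\Lambda}\Hom_{H_\lambda}\bigl(\bigl((\rho\otimes\chi)^{(24)}\bigr)^{\lambda},\tilde\phi_B\cdot\psi\bigr)$.

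Finally, a Hom-space between representations of a finite group is unchanged (as a subspace of the space of linear maps) when one simultaneously conjugates the acting group by a fixed element and twists both arguments accordingly, so I would conjugate the $\lambda$-th summand by $\lambda(24)$. One checks that the acting group becomes $(24)\lambda^{-1}H_\lambda\lambda(24)=(24)\bigl(P_{(24)}\cap\lambda^{-1}(\Delta I(\phi_B)\cdot N)\lambda\bigr)(24)=L_\lambda$, that the two $(24)$-conjugations buried in $\bigl((\rho\otimes\chi)^{(24)}\bigr)^{\lambda}$ and in the passage from $H_\lambda$ to $L_\lambda$ cancel, leaving $\rho\otimes\chi$ restricted untwisted to $L_\lambda$, and that $\tilde\phi_B\cdot\psi$ becomes $(\tilde\phi_B\cdot\psi)^{\lambda^{-1}(24)}$; this identifies the $\lambda$-th summand with $W_\lambda$ and proves the lemma. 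Here one uses that $L_\lambda\subseteq Q$, since $(24)P_{(24)}(24)=Q\cap(24)P(24)\subseteq Q$, so that $\rho\otimes\chi$ really does restrict to $L_\lambda$. The one step demanding genuine care is this last bookkeeping of conjugations — verifying that the subgroup $L_\lambda$ and the twisted character emerge in exactly the stated form; everything else is the standard formalism of Mackey theory and Frobenius reciprocity, which is why the statement is recorded in the text without proof.
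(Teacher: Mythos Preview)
Your proof is correct and follows precisely the approach the paper itself indicates: it records the lemma as ``a consequence of Mackey theory and Frobenius reciprocity'' without further detail, and your unfolding---realising $\sigma\otimes\psi$ as $\Ind_{\Delta I(\phi_B)\cdot N}^{\Delta\GL_2(\cO_2)\cdot N}(\tilde\phi_B\cdot\psi)$, applying Frobenius, then Mackey to $\pi^{(24)}=\Ind_{P_{(24)}}^{P}(\rho\otimes\chi)^{(24)}$, then Frobenius again, then conjugating each summand by $\lambda(24)$---is exactly what is intended. Your observation that the only delicate point is the bookkeeping of the conjugations (verifying $L_\lambda=(24)\lambda^{-1}H_\lambda\lambda(24)$ and that the twists on the two arguments come out as stated) is on target, and your check that $L_\lambda\subseteq Q$ so that $\rho\otimes\chi$ legitimately restricts is a good sanity step that the paper leaves implicit.
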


Recall the sets $\mathcal{X}_i$ for $i=1,2,3$ as defined in Section 2.3.

\begin{lemma}\label{4.13 lemma for coset representatives of gamma}
\begin{enumerate}[label =(\alph*)]
\item If $B \in \mathcal{X}_1$, 
then $\Lambda=\{I_4\}$.
\item If $B\in \mathcal{X}_2$, then $\Lambda$ can be taken to be {\small{$\left\lbrace \Diag( h , I_2):h\in \left\{I_2,w_0
\right\}\right\rbrace$}}.
\item If $B\in \mathcal{X}_3$, then $\Lambda$ can be taken to be
{\small{$\left\lbrace \Diag( h , I_2) :h\in \left\{I_2,
w_0, u^{-}(1)\right\}\right\rbrace$}}.
\end{enumerate}
\end{lemma}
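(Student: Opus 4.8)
The plan is to identify the double coset space $\Delta I(\phi_B)\cdot N\backslash P/P_{(24)}$ with $I(\phi_B)\backslash\GL_2(\cO_2)/\mathfrak{B}_2$, and then reduce the latter to an orbit count over $\F_q$.

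First, I would exploit the double coset analysis already carried out in the proof of Theorem \ref{dim of pi^24_N,psi}: $N$ is normal in $P$, the space $N\backslash P/P_{(24)}$ is in bijection with $P/(P_{(24)}\cdot N)\cong\GL_2(\cO_2)/\mathfrak{B}_2$ via the representatives $\Diag(g,I_2)$ with $g$ running over a transversal for $\GL_2(\cO_2)/\mathfrak{B}_2$, and consequently $P_{(24)}\cdot N/N=\mathfrak{B}_2\times\GL_2(\cO_2)$ inside $M\cong\GL_2(\cO_2)\times\GL_2(\cO_2)$ (this is forced, since $P_{(24)}\cdot N$ contains $N$ and $\{I_2\}\times\GL_2(\cO_2)$, meets $\GL_2(\cO_2)\times\{I_2\}$ in $\mathfrak{B}_2\times\{I_2\}$, and has index $q(q+1)$ in $P$). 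Since $N\subseteq\Delta I(\phi_B)\cdot N$, the space $\Delta I(\phi_B)\cdot N\backslash P/P_{(24)}$ is the quotient of $N\backslash P/P_{(24)}$ by the residual left action of $\Delta I(\phi_B)$; on the representative $\Diag(g,I_2)$ this sends it to $\Diag(sg,s)$ for $s\in I(\phi_B)$, and since $\Diag(I_2,s)\in P_{(24)}\cdot N$ the second coordinate is absorbed, so $\Diag(sg,s)$ lies in the same double coset as $\Diag(sg,I_2)$. Hence $\Delta I(\phi_B)\cdot N\backslash P/P_{(24)}\cong I(\phi_B)\backslash\GL_2(\cO_2)/\mathfrak{B}_2$, and one may take $\Lambda=\{\Diag(h,I_2)\}$ with $h$ running over a transversal for $I(\phi_B)\backslash\GL_2(\cO_2)/\mathfrak{B}_2$.

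Next I would reduce modulo $\varpi$. Since $J^1_2=K^1_2\subseteq I(\phi_B)$ and $K^1_2$ is normal in $\GL_2(\cO_2)$, the projection $\GL_2(\cO_2)\twoheadrightarrow\GL_2(\F_q)$ identifies $I(\phi_B)\backslash\GL_2(\cO_2)/\mathfrak{B}_2$ with $\overline{I(\phi_B)}\backslash\GL_2(\F_q)/\bar{\mathfrak{B}}_2$. Writing $I_2+\varpi X\mapsto\bar X\in M_2(\F_q)$ for the identification $K^1_2\cong M_2(\F_q)$, the defining relation $\phi_B(g^{-1}(I_2+\varpi X)g)=\phi_B(I_2+\varpi X)$ becomes $\psi_0(\tr(\bar g^{-1}B\bar g\,\bar X))=\psi_0(\tr(B\bar X))$ for all $\bar X$, i.e. $\bar gB\bar g^{-1}=B$; hence $\overline{I(\phi_B)}=C_{\GL_2(\F_q)}(B)$. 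It remains to count the orbits of $C_{\GL_2(\F_q)}(B)$ on $\GL_2(\F_q)/\bar{\mathfrak{B}}_2\cong\mathbb P^1(\F_q)$, the point stabilized by $\bar{\mathfrak{B}}_2$ being the line $\F_q e_1$, and to pick representatives $g$ with $g\cdot\F_qe_1$ in each orbit.

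Finally, the three cases. If $B\in\mathcal X_1$ then $B$ is regular elliptic, $\F_q[B]\cong\F_{q^2}$, and $C_{\GL_2(\F_q)}(B)=\F_q[B]^\times$ is a nonsplit torus acting transitively on $\F_{q^2}\smallsetminus\{0\}$, hence transitively on $\mathbb P^1(\F_q)$; thus there is a single double coset and $\Lambda=\{I_4\}$. If $B\in\mathcal X_2$ then, up to translation by $Z$, $B$ is a single Jordan block and $C_{\GL_2(\F_q)}(B)$ is the group of upper triangular matrices with equal diagonal entries; it fixes $\F_qe_1$ and acts transitively on the remaining $q$ lines, giving two orbits represented by $\F_qe_1$ and $\F_qe_2$, i.e. $\Lambda=\{\Diag(I_2,I_2),\Diag(w_0,I_2)\}$. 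If $B\in\mathcal X_3$ then $C_{\GL_2(\F_q)}(B)$ is the diagonal torus, fixing $\F_qe_1$ and $\F_qe_2$ and acting transitively on the remaining $q-1$ lines, giving three orbits represented by $\F_qe_1$, $\F_qe_2$, $\F_q(e_1+e_2)$, i.e. $\Lambda=\{\Diag(I_2,I_2),\Diag(w_0,I_2),\Diag(u^-(1),I_2)\}$. The only slightly delicate step is the first paragraph, namely making the reduction to $I(\phi_B)\backslash\GL_2(\cO_2)/\mathfrak{B}_2$ precise and forcing the representatives into the form $\Diag(h,I_2)$; this is essentially bookkeeping built on the double coset description already obtained for Theorem \ref{dim of pi^24_N,psi}, and the remaining steps are a routine orbit count on $\mathbb P^1(\F_q)$.
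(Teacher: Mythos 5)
Your proof is correct and follows essentially the same route as the paper: reduce $\Lambda$ via $N\backslash P/P_{(24)}\cong\GL_2(\cO_2)/\mathfrak{B}_2$ to the double cosets $\overline{I(\phi_B)}\backslash\GL_2(\F_q)/\bar{\mathfrak{B}}_2$, identify $\overline{I(\phi_B)}$ with the centralizer $C_{\GL_2(\F_q)}(B)$, and count its orbits on $\mathbb{P}^1(\F_q)$. The paper states this chain of bijections tersely and leaves the orbit count to the reader; you have simply filled in those details.
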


\begin{proof}
Recall that $P_{(24)}=(24) Q(24)\cap P$.
We have a bijection between $N\backslash P/P_{(24)}$ and $\GL_2(\cO_2)/\mathfrak{B}_2$.
Thus we get a bijection between $\Lambda$ and $\bar{I}(\phi_B)\backslash \GL_2(\F_q)/\mathfrak{\bar{B}}_2$, and the lemma follows. 
\qedhere
\end{proof}

For $\lambda \in \Lambda$, the following series of lemmas explicitly describes the subgroup $L_{\lambda}= (24)[P_{(24)}\cap \lambda^{-1}\left(\Delta I(\phi_B)\cdot N\right)\lambda](24)$ for various $B$'s.

\begin{lemma}\label{subgroup L_I for non-split semisimple} 
Let $B\in \mathcal{X}_1$. 
If $\lambda=I_4$, then 
{\small{$L_{\lambda}=
\left\{\begin{pmatrix}
    p_{11} & p_{14} & p_{13} & \varpi p_{12}\\
    0 & p_{22} & 0 & 0\\
    0 & \varpi p_{12} & p_{11} & 0\\
    0 & 0 & 0 & p_{22}
\end{pmatrix}:\begin{matrix}
p_{11}, p_{22}\in \cO_2^\times,\\
p_{22}-p_{11}\in \varpi \cO_2,\\
p_{12}, p_{13},p_{14}\in \cO_2
\end{matrix}
\right\}$.}}
\end{lemma}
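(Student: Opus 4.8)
The plan is to peel off the three defining constraints of $L_{I_4} = (24)\bigl[P_{(24)}\cap(\Delta I(\phi_B)\cdot N)\bigr](24)$ one at a time, and to conjugate back by the transposition $(24)$ only at the very end.

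First I would describe $(24)Q(24)$. Since $Q$ consists of those matrices of $\GL_4(\cO_2)$ whose fourth row is supported on the $(4,4)$ entry, conjugating this condition by $(24)$ shows that $(24)Q(24)$ consists of those matrices whose second row is supported on the $(2,2)$ entry. Because $\Delta I(\phi_B)\cdot N\subseteq P$, the group I actually need is $(24)Q(24)\cap(\Delta I(\phi_B)\cdot N)$, and its elements are exactly the matrices $\bigl(\begin{smallmatrix} g & Y \\ 0 & g\end{smallmatrix}\bigr)$ with $g\in I(\phi_B)$ and $Y\in M_2(\cO_2)$ whose second row is supported on the $(2,2)$ entry. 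This forces the $(2,1)$ entry of $g$ to vanish, so $g$ is upper triangular, and it forces the second row of $Y$ to vanish, i.e.\ $Y = \bigl(\begin{smallmatrix} p_{13} & p_{14}\\ 0 & 0\end{smallmatrix}\bigr)$.

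The one step that is not bookkeeping is identifying the upper-triangular elements of $I(\phi_B)$ for $B\in\mathcal{X}_1$. Recall (Section \ref{reg reps of GL_2}) that $I(\phi_B)$ is the preimage, under $\GL_2(\cO_2)\twoheadrightarrow\GL_2(\F_q)$, of the centralizer $Z_{\GL_2(\F_q)}(\bar B)$, and that for $\bar B = \bigl(\begin{smallmatrix} m & n\alpha\\ n & m\end{smallmatrix}\bigr)$ with $n\in\F_q^\times$ this centralizer is the non-split torus $(\F_q[\bar B])^\times$ attached to the embedding $\F_{q^2}\hookrightarrow M_2(\F_q)$ fixed in Section \ref{all embeddings}. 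The only upper-triangular matrices in that torus are the scalars, so an upper-triangular $g\in I(\phi_B)$ must satisfy $g\equiv cI_2\pmod\varpi$ for some $c\in\F_q^\times$; equivalently $g = \bigl(\begin{smallmatrix} p_{11} & \varpi p_{12}\\ 0 & p_{22}\end{smallmatrix}\bigr)$ with $p_{11},p_{22}\in\cO_2^\times$ and $p_{22}-p_{11}\in\varpi\cO_2$.

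Combining the three stages, $P_{(24)}\cap(\Delta I(\phi_B)\cdot N)$ is the group of all $\bigl(\begin{smallmatrix} g & Y\\ 0 & g\end{smallmatrix}\bigr)$ with $g$ and $Y$ as above. Conjugating this group by $(24)$, which interchanges the second and fourth coordinate axes, then merely relabels the entries $p_{11},p_{22},\varpi p_{12},p_{13},p_{14}$ and the zeros, and it produces precisely the matrix displayed in the statement. I expect the only delicate point to be this last conjugation, since $(24)$ does not respect the $2\times 2$ block structure used to describe $P$, so the images of the off-diagonal entries $\varpi p_{12}$ and $p_{14}$ have to be traced through by hand; everything else is routine.
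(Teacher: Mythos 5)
Your argument is correct, and it fills in a proof that the paper itself omits (Lemma~\ref{subgroup L_I for non-split semisimple} is stated without proof, being presented as a direct matrix computation). All the key observations hold: conjugation by $(24)$ moves the ``fourth row supported only at $(4,4)$'' constraint defining $Q$ to ``second row supported only at $(2,2)$''; the intersection with $P$ is redundant once one intersects with $\Delta I(\phi_B)\cdot N\subseteq P$; the second-row constraint then kills $g_{21}$ and the bottom row of $Y$; and since $I(\phi_B)$ is the preimage of $Z_{\GL_2(\F_q)}(\bar B)$, for $\bar B\in\mathcal{X}_1$ (non-split torus case) the only upper-triangular elements of the centralizer are scalars, forcing $g\equiv cI_2\pmod\varpi$. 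Tracing the entries through the final conjugation by $(24)$ (which relabels $(i,j)\mapsto(\sigma(i),\sigma(j))$ with $\sigma=(24)$) does yield precisely the displayed matrix: the $(1,2)$ and $(1,4)$ slots swap, $(3,2)$ receives $\varpi p_{12}$, $(3,4)$ becomes $0$, and the rest is unchanged. The only point you leave slightly implicit is the identification $I(\phi_B)=\{g\in\GL_2(\cO_2):\bar g\in Z_{\GL_2(\F_q)}(\bar B)\}$; while standard (and tacitly used throughout the paper), it is worth a sentence, since it is the one input beyond matrix bookkeeping.
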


\begin{lemma}\label{subgroup L_beta for B unipotent}
Let $B\in \mathcal{X}_2$.
\begin{enumerate}[label = (\alph*)]
\item  If $\lambda=I_4$, then {\small{$L_{\lambda}=
\left\{\begin{pmatrix}
    p_{11} & p_{14} & p_{13} &  p_{12}\\
    0 & p_{22} & 0 & 0\\
    0 &  p_{12} & p_{11} & 0\\
    0 & 0 & 0 & p_{22}
\end{pmatrix}:\begin{matrix}
p_{11}\in \cO_2^\times,\\
p_{22}-p_{11}\in \varpi \cO_2,\\
p_{ij}\in \cO_2~\text{for}~i\neq j
\end{matrix}
\right\}$}}.
\item If $\lambda=\Diag(w_0, I_2)$, then {\small{$L_{\lambda}=
\left\lbrace \begin{pmatrix}
 p_{11} & p_{14} & p_{13} & \varpi p_{12}\\
    0 & p_{11} & \varpi p_{12} & 0 \\
    0 & 0 & p_{22} & 0\\
    0 & 0 & 0 & p_{22}
   \end{pmatrix}:\begin{matrix}
       p_{11}, p_{22}\in \cO_2^\times,\\
       p_{22}-p_{11}\in \varpi \cO_2,\\
       p_{ij}\in \cO_2 \text{ for } i\neq j
   \end{matrix}
\right\rbrace$}}.
\end{enumerate}
\end{lemma}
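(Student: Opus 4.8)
The plan is to compute the subgroup $L_\lambda = (24)\,\bigl[P_{(24)} \cap \lambda^{-1}(\Delta I(\phi_B)\cdot N)\lambda\bigr]\,(24)$ directly, entry by entry, for each of the two representatives $\lambda \in \{I_4,\ \Diag(w_0, I_2)\}$ produced by Lemma \ref{4.13 lemma for coset representatives of gamma}(b), exactly in the spirit of the preceding Lemma \ref{subgroup L_I for non-split semisimple}. Three ingredients are needed: the explicit form of $I(\phi_B)$ for $B = \begin{pmatrix} m & 1 \\ 0 & m\end{pmatrix} \in \mathcal{X}_2$, the explicit form of $P_{(24)} = (24)Q(24) \cap P$, and bookkeeping of conjugation by the permutation matrix $(24)$.

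First I would record the inertia group. As in the discussion underlying Theorem \ref{Construction of regualr reprresentation}, for $x = I_2 + \varpi X \in J^1_2$ one has $\phi_B(g^{-1}xg) = \psi_0\bigl(\tr(\bar g B \bar g^{-1}\bar X)\bigr)$, so $g \in I(\phi_B)$ if and only if $\bar g$ centralizes $B$ in $\GL_2(\F_q)$. The centralizer of the Jordan block $B$ is the group of upper-triangular matrices with equal diagonal entries, so $I(\phi_B)$ is the full preimage of this group under $\GL_2(\cO_2) \twoheadrightarrow \GL_2(\F_q)$; consequently $\Delta I(\phi_B)\cdot N$ is the set of $\left(\begin{smallmatrix} g & Y \\ 0 & g\end{smallmatrix}\right)$ with $g$ of that shape and $Y \in M_2(\cO_2)$ arbitrary. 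Next, since $Q$ is the $(3,1)$-parabolic, one checks that $(24)Q(24)$ is cut out by the vanishing of the $(2,1)$-, $(2,3)$- and $(2,4)$-entries, and intersecting with the block-upper-triangular $P$ shows that $P_{(24)}$ is cut out by the vanishing of the entries in positions $(2,1),(2,3),(2,4),(3,1),(3,2),(4,1),(4,2)$.

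The heart of the argument is the intersection and the final conjugation, carried out for each $\lambda = \Diag(h, I_2)$. A generic element of $\Delta I(\phi_B)\cdot N$ satisfies $\lambda^{-1}\left(\begin{smallmatrix} g & Y \\ 0 & g\end{smallmatrix}\right)\lambda = \left(\begin{smallmatrix} h^{-1}gh & h^{-1}Y \\ 0 & g\end{smallmatrix}\right)$, and imposing the $P_{(24)}$-conditions forces $(h^{-1}gh)_{21} = 0$ together with the vanishing of the bottom row of $h^{-1}Y$. For $h = I_2$ this makes $g$ upper-triangular, and combined with $\bar g \in C_{\GL_2(\F_q)}(B)$ it gives $g = \left(\begin{smallmatrix} p_{11} & p_{12} \\ 0 & p_{22}\end{smallmatrix}\right)$ with $p_{11} \in \cO_2^\times$, $p_{22} - p_{11} \in \varpi\cO_2$, while $Y$ keeps only its first row. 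For $h = w_0$ the condition makes $g$ lower-triangular; a lower-triangular matrix whose reduction commutes with $B$ must have its $(2,1)$-entry in $\varpi\cO_2$ and congruent diagonal entries, so $g = \left(\begin{smallmatrix} p_{11} & 0 \\ \varpi p_{12} & p_{22}\end{smallmatrix}\right)$, and $w_0 Y$ keeps only its first row. Conjugating the resulting subgroup back by $(24)$ — which permutes both row and column indices by $(24)$ — relocates the surviving parameters $p_{11}, p_{22}, p_{12}, p_{13}, p_{14}$ into exactly the matrix shapes claimed in parts (a) and (b).

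There is no real obstacle; the only delicate point is the interaction between the inertia condition "$\bar g \in C_{\GL_2(\F_q)}(B)$" and the parabolic condition that kills an off-diagonal entry of $g$. In case (a) these combine to make $g$ genuinely upper-triangular with congruent diagonal reductions, which is why $p_{12}$ appears undecorated in the $(1,4)$- and $(3,2)$-slots of the answer; in case (b) the surviving off-diagonal entry of $g$ sits on the opposite side, so it is only forced into $\varpi\cO_2$ — this is precisely the reason part (b) displays $\varpi p_{12}$ rather than $p_{12}$. Once the convention $(wMw)_{ij} = M_{w(i)w(j)}$ is fixed, tracking the indices through the final $(24)$-conjugation is routine.
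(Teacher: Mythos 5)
Your proof is correct and is precisely the routine computation the paper leaves implicit (Lemmas~\ref{subgroup L_I for non-split semisimple}–\ref{subgroup L_beta for B split semisimple} are stated without proof). The three ingredients you assemble — $I(\phi_B)$ as the preimage of $C_{\GL_2(\F_q)}(B)$, the characterization of $P_{(24)}$ by the vanishing of entries $(2,1),(2,3),(2,4),(3,1),(3,2),(4,1),(4,2)$, and the identity $\lambda^{-1}\left(\begin{smallmatrix} g & Y \\ 0 & g\end{smallmatrix}\right)\lambda = \left(\begin{smallmatrix} h^{-1}gh & h^{-1}Y \\ 0 & g\end{smallmatrix}\right)$ followed by the $(24)$-conjugation with convention $((24)M(24))_{ij}=M_{(24)(i),(24)(j)}$ — are exactly right, and your explanation of why $p_{12}$ appears bare in part (a) but as $\varpi p_{12}$ in part (b) correctly traces back to which off-diagonal slot of $g$ survives the parabolic constraint.
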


\begin{lemma}\label{subgroup L_beta for B split semisimple} 
Let $B\in \mathcal{X}_3$.
\begin{enumerate}[label =(\alph*)]
\item  If $\lambda=I_4$, then {\small{$L_{\lambda}=\left\{
\begin{pmatrix}
    p_{11} & p_{14} & p_{13} & \varpi p_{12}\\
    0 & p_{22} & 0 & 0\\
    0 & \varpi p_{12} & p_{11} & 0\\
    0 & 0 & 0 & p_{22}
\end{pmatrix}:\begin{matrix}
p_{11},p_{22}\in \cO_2^\times,
\\
p_{ij}\in \cO_2~\text{for}~i\neq j
\end{matrix}
\right\}$}}.
\item If $\lambda=\Diag(w_0, I_2)$, then {\small{$L_{\lambda}=
\left\lbrace \begin{pmatrix}
 p_{11} & p_{14} & p_{13} & \varpi p_{12}\\
    0 & p_{11} & \varpi p_{12} & 0\\
    0 & 0 & p_{22} & 0\\
    0 & 0 & 0 & p_{22}
   \end{pmatrix}:\begin{matrix}
       p_{11}\in \cO_2^\times,\\
       p_{22}-p_{11}\in \varpi \cO_2,\\
       p_{ij}\in \cO_2 \text{ for } i\neq j
   \end{matrix}
\right\rbrace$}}.
\item If $\lambda=\Diag(
    u^-(1) , I_2)$ , then 
{\small{$$
L_{\lambda}=
\left\lbrace \begin{pmatrix}
     p_{11} & p_{14} & p_{13} & \varpi p_{12}\\
    0 & p_{22}+\varpi p_{12} & p_{11}-p_{22}-\varpi p_{12} & 0\\
    0 & \varpi p_{12} & p_{11}-\varpi p_{12} & 0\\
    0 & 0 & 0 & p_{22}
\end{pmatrix} :\begin{matrix}
p_{22}-p_{11}\in \varpi \cO_2, \\
p_{11}\in \cO_2^\times, p_{ij}\in \cO_2 \text{ for $i\neq j$}
\end{matrix}
\right\rbrace.
$$}}
\end{enumerate}
\end{lemma}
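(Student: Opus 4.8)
The plan is to compute each $L_{\lambda}$ straight from its definition $L_{\lambda}=(24)[P_{(24)}\cap\lambda^{-1}(\Delta I(\phi_B)\cdot N)\lambda](24)$; as in the two preceding lemmas, every group involved here is a set of $4\times4$ matrices cut out by the vanishing of certain entries and by $\varpi$-divisibility conditions, so both the intersection and the two conjugations are completely explicit, and the only data that change with $B$ are the group $I(\phi_B)$ and the list $\Lambda$ of double-coset representatives.

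First I would record the three ingredients. For $B=\diag(m,n)\in\mathcal{X}_3$ with $m\neq n$, the inertia group $I(\phi_B)$ is the full preimage in $\GL_2(\cO_2)$ of the centralizer $C_{\GL_2(\F_q)}(B)$; since $m\neq n$ this centralizer is the diagonal torus, so $I(\phi_B)=\left\{\begin{pmatrix}a & \varpi b\\ \varpi c & d\end{pmatrix}: a,d\in\cO_2^\times,\ b,c\in\cO_2\right\}$, and hence $\Delta I(\phi_B)\cdot N=\left\{\begin{pmatrix}g & Y\\ 0 & g\end{pmatrix}: g\in I(\phi_B),\ Y\in M_2(\cO_2)\right\}$. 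Because $\lambda=\Diag(h,I_2)$ acts only on the first $\GL_2$-block, one gets $\lambda^{-1}(\Delta I(\phi_B)\cdot N)\lambda=\left\{\begin{pmatrix}h^{-1}gh & Y\\ 0 & g\end{pmatrix}: g\in I(\phi_B),\ Y\in M_2(\cO_2)\right\}$. Finally, from the description of $Q$ one checks that $(24)Q(24)$ is the subgroup of $\GL_4(\cO_2)$ whose $(2,1)$, $(2,3)$, $(2,4)$ entries vanish, so $P_{(24)}=P\cap(24)Q(24)$ is the subgroup whose $(2,1)$, $(2,3)$, $(2,4)$ entries vanish together with the whole lower-left $2\times2$ block.

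Next, for each of the three representatives $h\in\{I_2,w_0,u^-(1)\}$ provided by Lemma \ref{4.13 lemma for coset representatives of gamma}(c), I would impose the $P_{(24)}$-conditions on $\begin{pmatrix}h^{-1}gh & Y\\ 0 & g\end{pmatrix}$. The conditions coming from the positions $(2,3)$, $(2,4)$ simply force the bottom row of $Y$ to vanish, and the lower-left block is already $0$; the decisive one is $(2,1)=0$, i.e.\ the $(2,1)$-entry of $h^{-1}gh$ must be zero. For $h=I_2$ this says $\varpi c=0$, so $c\in\varpi\cO_2$ while $a,d$ stay unconstrained; for $h=w_0$ the same analysis applies to $w_0gw_0$. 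For $h=u^-(1)$, however, $h^{-1}gh=u^-(-1)\,g\,u^-(1)$ has $(2,1)$-entry $(d-a)+\varpi(c-b)$, so the condition becomes $d-a=\varpi(b-c)$; this is exactly the source of the congruence $p_{22}-p_{11}\in\varpi\cO_2$, and substituting $d=a+\varpi(b-c)$ back produces the more intricate entries appearing in part (c). In every case, conjugating the resulting subgroup by $(24)$ (which swaps rows $2,4$ and columns $2,4$) and relabelling the surviving free parameters as $p_{11},p_{22}\in\cO_2^\times$ and $p_{12},p_{13},p_{14}\in\cO_2$ gives the displayed form of $L_{\lambda}$.

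The main obstacle is purely the bookkeeping: one must track carefully which matrix entries lie in $\cO_2$ and which in $\varpi\cO_2$ as they are moved around by the $(24)$-conjugation and by conjugation by $h$, and then read off the correct set of independent parameters cutting out each $L_{\lambda}$. The case $h=u^-(1)$ is the only one in which the conjugation genuinely couples the two torus coordinates, so there the algebra must be carried out modulo $\varpi^2$ with some attention. A convenient sanity check at the end is to verify directly that each displayed set of matrices is closed under multiplication and inversion, which it must be since $L_{\lambda}$ is by construction an intersection of conjugates of subgroups.
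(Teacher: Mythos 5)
Your method is the right one and mirrors how Lemmas 5.7--5.9 are implicitly established in the paper (they are stated without proof, so there is nothing to compare against beyond the result itself): compute $L_{\lambda}$ straight from $L_{\lambda}=(24)[P_{(24)}\cap\lambda^{-1}(\Delta I(\phi_B)\cdot N)\lambda](24)$, using the explicit forms of $I(\phi_B)$, of $P_{(24)}$ (vanishing at $(2,1)$, $(2,3)$, $(2,4)$ together with the lower-left $2\times2$ block), and of the two conjugations. Your identification of $I(\phi_B)$ for $B\in\mathcal{X}_3$ as the preimage of the diagonal torus, and your observation that the only non-automatic condition from $P_{(24)}$ is the vanishing of the $(2,1)$-entry of $h^{-1}gh$, are both correct.

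There is, however, a mismatch you do not address. For $h=w_0$ your own analysis gives $(h^{-1}gh)_{21}=\varpi b$, hence the sole extra condition $\varpi b=0$, with $a$ and $d$ left free; the resulting $L_{\lambda}$ therefore carries \emph{no} congruence between the two diagonal parameters. But the displayed set in part~(b) of the lemma lists the condition $p_{22}-p_{11}\in\varpi\cO_2$, which under the identification $p_{11}=d$, $p_{22}=a$ would force $a\equiv d\pmod{\varpi}$. Your final sentence asserts that ``conjugating by $(24)$ and relabelling gives the displayed form of $L_{\lambda}$'' in every case, but for $h=w_0$ your derivation does not produce the displayed form: you obtain the strictly larger set with $p_{11},p_{22}\in\cO_2^\times$ unrelated. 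You should either re-derive the congruence (your argument supplies none, and indeed there is none: unlike $\mathcal{X}_1$ and $\mathcal{X}_2$, where $\bar{I}(\phi_B)$ forces equal diagonal entries, for $\mathcal{X}_3$ it does not, which is exactly why part~(a) correctly has no such condition) or explicitly flag the discrepancy as an apparent typo in the statement. As written, the proposal silently asserts agreement where there is none, which is a genuine gap in the verification even though the underlying computation is sound.
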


The following lemma follows from Mackey theory and Lemma~\ref{technical lemma for section-5-1}.
\begin{lemma} \label{Hom_H_lambda}
For $\lambda \in \Lambda$, let $\Gamma_{\lambda}$ be a set of representatives for $L_{\lambda}\backslash Q/(S\cdot U\cdot \cO_2^\times)$.
Then
\begin{align} 
W_{\lambda} 
\cong 
\underset{\gamma\in \Gamma_{\lambda}}{\bigoplus} W_{\gamma, \lambda},
\end{align}
where $W_{\gamma, \lambda}:=\Hom_{\gamma^{-1}L_{\lambda}\gamma \cap (S\cdot U\cdot \cO_2^\times)}\left(\tilde{\phi}_{C}\otimes 1\otimes \chi, (\tilde{\phi}_B\cdot \psi)^{\lambda^{-1}(24)\gamma^{-1}}\right)$.
\end{lemma}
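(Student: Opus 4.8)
The plan is to substitute the induced description of $\rho$ into the formula for $W_{\lambda}$ provided by Lemma~\ref{technical lemma for section-5-1}, and then to run Mackey's restriction formula one more time. Since $\rho$ is strongly cuspidal, Theorem~\ref{Construction of regualr reprresentation} gives $\rho\cong\Ind_{S}^{\GL_3(\cO_2)}\tilde{\phi}_{C}$ with $S=I(\phi_{C})$, hence (exactly as already used in the proof of Theorem~\ref{dim of pi^24_N,psi}) $\rho\otimes\chi\cong\Ind_{S\cdot U\cdot\cO_2^{\times}}^{Q}(\tilde{\phi}_{C}\otimes1\otimes\chi)$ as a representation of $Q$. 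One first records that $L_{\lambda}$ is a subgroup of $Q$: since $(24)^{2}=I_4$, we have $(24)L_{\lambda}(24)=P_{(24)}\cap\lambda^{-1}(\Delta I(\phi_{B})\cdot N)\lambda\subseteq P_{(24)}=(24)Q(24)\cap P$, whence $L_{\lambda}\subseteq Q\cap(24)P(24)\subseteq Q$, so that restricting a representation of $Q$ to $L_{\lambda}$ makes sense.

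The computation then proceeds in three routine steps. First I would rewrite
\[
W_{\lambda}\;\cong\;\Hom_{L_{\lambda}}\!\left(\Res^{Q}_{L_{\lambda}}\Ind_{S\cdot U\cdot\cO_2^{\times}}^{Q}(\tilde{\phi}_{C}\otimes1\otimes\chi),\;(\tilde{\phi}_{B}\cdot\psi)^{\lambda^{-1}(24)}\right)
\]
and apply the Mackey decomposition~\eqref{2.1 equation for mackey theory} to the inner restriction, taking $G=Q$, $H=S\cdot U\cdot\cO_2^{\times}$, $K=L_{\lambda}$ and $\Gamma_{\lambda}$ a set of representatives for $L_{\lambda}\backslash Q/(S\cdot U\cdot\cO_2^{\times})$. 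Second, I would apply $\Hom_{L_{\lambda}}(-,(\tilde{\phi}_{B}\cdot\psi)^{\lambda^{-1}(24)})$ summand by summand and invoke Frobenius reciprocity, turning the $\gamma$-th summand into $\Hom_{\gamma(S\cdot U\cdot\cO_2^{\times})\gamma^{-1}\cap L_{\lambda}}\!\left((\tilde{\phi}_{C}\otimes1\otimes\chi)^{\gamma},(\tilde{\phi}_{B}\cdot\psi)^{\lambda^{-1}(24)}\right)$. Third, transporting this Hom-space along conjugation by $\gamma$ identifies it, after the usual rearrangement of conjugation exponents, with $\Hom_{\gamma^{-1}L_{\lambda}\gamma\cap(S\cdot U\cdot\cO_2^{\times})}\!\left(\tilde{\phi}_{C}\otimes1\otimes\chi,(\tilde{\phi}_{B}\cdot\psi)^{\lambda^{-1}(24)\gamma^{-1}}\right)=W_{\gamma,\lambda}$ in the notation of the statement; summing over $\gamma\in\Gamma_{\lambda}$ yields the claimed isomorphism.

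I do not expect a genuine obstacle here: this is the standard combination of Mackey's formula with Frobenius reciprocity, and the lemma is essentially a formal reorganization of $W_{\lambda}$. The only points that need care are bookkeeping ones — checking $L_{\lambda}\subseteq Q$ so the outer Mackey step is legitimate, and confirming that $(\tilde{\phi}_{B}\cdot\psi)^{\lambda^{-1}(24)\gamma^{-1}}$ is a well-defined character on $\gamma^{-1}L_{\lambda}\gamma\cap(S\cdot U\cdot\cO_2^{\times})$, which is immediate since $\tilde{\phi}_{B}\cdot\psi$ is a genuine one-dimensional character of $\Delta I(\phi_{B})\cdot N$ and $L_{\lambda}$ is built from $P_{(24)}\cap\lambda^{-1}(\Delta I(\phi_{B})\cdot N)\lambda$ by construction. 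The real work, deferred to the subsequent analysis, is the concrete choice of $\Gamma_{\lambda}$ for each $B\in\mathcal{X}_1\cup\mathcal{X}_2\cup\mathcal{X}_3$ and the evaluation of each $W_{\gamma,\lambda}$ using the explicit forms of $L_{\lambda}$ in Lemmas~\ref{subgroup L_I for non-split semisimple}, \ref{subgroup L_beta for B unipotent}, \ref{subgroup L_beta for B split semisimple} together with the matrix $C$ in \eqref{matriX C}.
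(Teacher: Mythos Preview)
Your proposal is correct and follows precisely the approach the paper indicates: the paper states only that the lemma follows from Mackey theory and Lemma~\ref{technical lemma for section-5-1}, and you have spelled out exactly that argument (Mackey decomposition applied to $\Res^{Q}_{L_{\lambda}}\Ind_{S\cdot U\cdot\cO_2^{\times}}^{Q}(\tilde{\phi}_{C}\otimes1\otimes\chi)$, followed by Frobenius reciprocity and transport by conjugation). Your bookkeeping checks on $L_{\lambda}\subseteq Q$ and on the well-definedness of the twisted character are appropriate and fill in details the paper leaves implicit.
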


\begin{lemma}\label{reprsentatives for set Delta_beta}
For $B\in \mathcal{X}_1\bigcup \mathcal{X}_2\bigcup \mathcal{X}_3$ and $\lambda \in \Lambda$, there is a bijection between $\Gamma_{\lambda}$ and
$$
\{\gamma(g):=\Diag(1, g, 1)\in \GL_4(\F_q)~|~g\in \GL_2(\F_q)\}.
$$

\end{lemma}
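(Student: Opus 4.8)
The plan is to turn the double-coset computation $L_\lambda\backslash Q/(S\cdot U\cdot \cO_2^\times)$ into a computation over the residue field and then dispose of the various subgroups $L_\lambda$ from Lemmas \ref{subgroup L_I for non-split semisimple}--\ref{subgroup L_beta for B split semisimple} by a uniform argument. First I would record the reduction modulo $\varpi$. Since $\phi_C$ is a character of the abelian group $J^1_3=K^1_2$, we have $I(\phi_C)\supseteq J^1_3$; together with $\cO_2^\times\supseteq 1+\varpi\cO_2$ and $U\supseteq\varpi U$ this shows that the principal congruence subgroup $(I_4+\varpi M_4(\cO_2))\cap Q$ of $Q$ lies inside $S\cdot U\cdot \cO_2^\times$. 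As the kernel of $Q\twoheadrightarrow\bar Q$ is contained in $S\cdot U\cdot \cO_2^\times$, the reduction map induces a bijection $L_\lambda\backslash Q/(S\cdot U\cdot \cO_2^\times)\cong\bar L_\lambda\backslash\bar Q/\overline{S\cdot U\cdot \cO_2^\times}$, so a set of representatives $\Gamma_\lambda$ can be taken inside $\bar Q$ (this is the same reduction used in the proof of Theorem \ref{dim of pi^24_N,psi}).

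Next I would make the target quotient explicit. The image $\bar Q$ is the $(3,1)$-parabolic of $\GL_4(\F_q)$, with unipotent radical $\bar U$ and $\GL_1$-Levi factor $\overline{\cO_2^\times}$. Because $C\in M_3(\F_q)$ is regular elliptic, its centralizer in $\GL_3(\F_q)$ is the non-split torus $T$ which is the image of $\F_{q^3}^\times$ under the embedding \eqref{Embedding of Fq^3}, and $I(\phi_C)$ is exactly the full preimage of this centralizer; hence $\bar S=T$. Since $\bar U\trianglelefteq\bar Q$ and $\bar U\subseteq\overline{S\cdot U\cdot \cO_2^\times}$, passing to $\bar Q/\bar U=\GL_3(\F_q)\times\GL_1(\F_q)$ and cancelling the matching $\GL_1$-factors gives $\bar Q/\overline{S\cdot U\cdot \cO_2^\times}\cong\GL_3(\F_q)/T$, with $\bar L_\lambda$ acting through its image $\bar L'_\lambda$ in the $\GL_3$-Levi. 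Thus $\Gamma_\lambda$ is in bijection with $\bar L'_\lambda\backslash\GL_3(\F_q)/T$.

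Finally I would carry out the case analysis. For each $B\in\mathcal X_1\cup\mathcal X_2\cup\mathcal X_3$ and each $\lambda\in\Lambda$ (Lemma \ref{4.13 lemma for coset representatives of gamma}), one reads $\bar L'_\lambda$ off from the corresponding description in Lemmas \ref{subgroup L_I for non-split semisimple}--\ref{subgroup L_beta for B split semisimple}: it is generated by the central scalars $\F_q^\times I_3\subseteq T$ together with a unipotent subgroup $V_\lambda$. One then (i) absorbs the scalars into $T$ on the right, reducing to the action of $V_\lambda$; (ii) invokes $\GL_3(\F_q)=P_{1,2}(\F_q)\cdot\F_{q^3}^\times$ (Lemma \ref{double cosets P-1-n-2_GL_n-F_{q^n}}) to realize $\GL_3(\F_q)/T$ as a quotient of $P_{1,2}(\F_q)$; (iii) uses that a unipotent subgroup meets every conjugate of the torus $T$ only in the identity, so $V_\lambda$ acts freely on $\GL_3(\F_q)/T$; and (iv) identifies the orbit space with $U_{1,2}\backslash P_{1,2}(\F_q)\cong Z(\GL_3(\F_q))\backslash(\GL_1(\F_q)\times\GL_2(\F_q))\cong\GL_2(\F_q)$, where $U_{1,2}$ is the unipotent radical of $P_{1,2}(\F_q)$, so that the representatives may be taken to be $\Diag(1,g)$, which lift to $\gamma(g)=\Diag(1,g,1)\in\GL_4(\F_q)$. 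A dimension count $|\GL_3(\F_q)|=|U_{1,2}|\cdot|T|\cdot|\GL_2(\F_q)|$ then confirms there is no further identification, yielding the asserted bijection $\Gamma_\lambda\leftrightarrow\{\gamma(g):g\in\GL_2(\F_q)\}$.

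I expect the main obstacle to be steps (iii)--(iv): one must verify, using the explicit embedding \eqref{Embedding of Fq^3}, that for the subgroups $L_\lambda$ occurring in the unipotent case $B\in\mathcal X_2$ and the split-semisimple case $B\in\mathcal X_3$ — where $\bar L'_\lambda$ carries extra off-diagonal parameters not present in the elliptic case — the unipotent part $V_\lambda$ still meets every conjugate of $T$ only in the identity, and that the resulting orbit count and representative set come out genuinely independent of $B$ and $\lambda$. This amounts to a short but somewhat delicate linear-algebra verification in each of the finitely many cases listed in Lemma \ref{4.13 lemma for coset representatives of gamma}.
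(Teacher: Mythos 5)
Your reduction modulo $\varpi$ and the identification $\bar Q/\overline{S\cdot U\cdot\cO_2^\times}\cong\GL_3(\F_q)/T$ with $T=\F_{q^3}^\times$ are exactly the steps in the paper's proof, and the plan to finish via $\GL_3(\F_q)=P_{1,2}\cdot\F_{q^3}^\times$ is also the paper's. The gap is precisely where you anticipate one: the uniform description of $\bar L_\lambda'$ as ``central scalars times $V_\lambda$'' with the orbit count coming out $|\GL_2(\F_q)|$ ``genuinely independent of $B$ and $\lambda$'' is simply false for two of the cases. From Lemma~\ref{subgroup L_beta for B unipotent}(a), for $B\in\mathcal X_2$ and $\lambda=I_4$ the $(3,2)$ entry of $L_\lambda$ is $p_{12}$ (not $\varpi p_{12}$), so the image of $\bar L_\lambda$ in $\GL_3(\F_q)$ is $\F_q^\times\cdot\{I_3+aE_{12}+bE_{13}+cE_{32}\}$, whose unipotent part has order $q^3$ and strictly contains $U_{1,2}$. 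From Lemma~\ref{subgroup L_beta for B split semisimple}(a), for $B\in\mathcal X_3$ and $\lambda=I_4$ there is no constraint $p_{22}-p_{11}\in\varpi\cO_2$, so $\bar L_\lambda'$ contains the full non-scalar torus $\{\diag(s_1,s_2,s_1)\}$ and is not of the form scalars times unipotent at all. In both cases the subgroup $\bar L'_\lambda$ of $P_{1,2}$ has order strictly larger than $(q-1)q^2$, and since the action of $\bar L'_\lambda/Z$ on $P_{1,2}/Z$ is free (the stabilizer of $pZ$ is $\bar L'_\lambda\cap pZp^{-1}=\bar L'_\lambda\cap Z$), the number of double cosets is $|P_{1,2}|/|\bar L'_\lambda|$, which comes out $(q-1)(q^2-1)$ and $q(q^2-1)$ respectively, both strictly less than $|\GL_2(\F_q)|=q(q-1)(q^2-1)$. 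So a literal bijection cannot hold in those two cases and your step (iv) cannot be carried out as written.

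To be fair, the paper's own one-line proof does not address this either, and the lemma as stated appears to be formally stronger than what is true. The discrepancy is harmless to the downstream results because (i) for every pair $(B,\lambda)$ the set $\{\gamma(g):g\in\GL_2(\F_q)\}$ is still exhaustive (it surjects onto the double cosets via the $T\cdot P_{1,2}$ decomposition, possibly with repetitions), which is all Proposition~\ref{reduction to one gamma for split semisimple and unipotent-1} needs to show $W_{\gamma,\lambda}=0$; and (ii) in the three pairs $(B,\lambda)$ that actually contribute to $\pi^{(24)}_{N,\psi}$ — namely $\mathcal X_1$ with $\lambda=I_4$, $\mathcal X_2$ with $\lambda=\Diag(w_0,I_2)$, and $\mathcal X_3$ with $\lambda=\Diag(u^-(1),I_2)$ — one does have $\bar L'_\lambda=\F_q^\times\cdot U_{1,2}$ and your count goes through. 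A correct write-up should therefore either restrict the bijection claim to those three pairs, or weaken ``bijection'' to ``surjection onto the set of double cosets,'' and your proof needs the same adjustment.
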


\begin{proof}
Recall $\Gamma_{\lambda}$ from the above lemma.
It is clear that there is a bijection between 
\[ 
L_{\lambda} \backslash Q / (S \cdot U \cdot \cO_2^\times) \quad \text{and} \quad \bar{L}_{\lambda} \backslash \bar{Q} / (\F_{q^3}^\times \cdot \bar{U} \cdot \F_q^\times).
\]
Now, we use the description of $L_{\lambda}$ for each $B$ given in Lemma \ref{subgroup L_I for non-split semisimple}, \ref{subgroup L_beta for B unipotent} and \ref{subgroup L_beta for B split semisimple}.
The lemma follows from the decomposition
\(
\GL_3(\F_q) = \F_{q^3}^\times \cdot P_{1,2}
\), see Lemma \ref{double cosets P-1-n-2_GL_n-F_{q^n}}.
\end{proof}

\begin{proposition}\label{reduction to one gamma for split semisimple and unipotent-1}
\begin{enumerate}[label = (\alph*)]
\item Let $B\in \mathcal{X}_2$ and $\lambda=I_4$.
Then
$\underset{\gamma \in \Gamma_{\lambda}}{\bigoplus}W_{\gamma, \lambda} =0$.
\item Let $B\in \mathcal{X}_3 $ and $\lambda\in \Lambda$ such that $\lambda \neq \Diag( u^-(1), I_2)$.
Then 
$\underset{\gamma \in \Gamma_{\lambda}}{\bigoplus}W_{\gamma, \lambda} =0$.
\end{enumerate}
\end{proposition}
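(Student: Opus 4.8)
The plan is to prove the stronger statement that $W_{\gamma,\lambda}=0$ for every individual $\gamma$; the asserted vanishing of the direct sum follows. By Lemma~\ref{reprsentatives for set Delta_beta} we may take $\gamma=\gamma(g)=\Diag(1,g,1)$ with $g\in\GL_2(\F_q)$, and by Lemma~\ref{Hom_H_lambda} the space $W_{\gamma,\lambda}$ is a $\Hom$-space between the two one–dimensional characters $\tilde\phi_C\otimes 1\otimes\chi$ and $(\tilde\phi_B\cdot\psi)^{\lambda^{-1}(24)\gamma^{-1}}$ of the group $H_\gamma:=\gamma^{-1}L_\lambda\gamma\cap(S\cdot U\cdot\cO_2^\times)$. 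So it suffices, for each $g$, to exhibit a one–parameter subgroup of $H_\gamma$ on which these two characters disagree; the whole argument is the construction of such a subgroup.

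First I would pin down $H_\gamma$. Using the explicit shape of $L_\lambda$ from Lemma~\ref{subgroup L_beta for B unipotent}(a) (for part~(a)) and from Lemma~\ref{subgroup L_beta for B split semisimple}(a),(b) (for part~(b)), the $\GL_3(\cO_2)$–component $h_1$ of a general element of $L_\lambda$ is block upper triangular with two of its three diagonal entries equal to $p_{11}$; hence $\bar h_1$ has reducible characteristic polynomial over $\F_q$. On the other hand $\overline S=\overline{I(\phi_C)}=C_{\GL_3(\F_q)}(\bar C)\cong\F_{q^3}^\times$, which, realized inside $\GL_3(\F_q)$ via \eqref{Embedding of Fq^3}, consists only of scalars and of elements with irreducible cubic characteristic polynomial. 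Therefore $h_1\in S$ (and the same after conjugating by $\gamma$) forces $\bar h_1=\bar p_{11}I_3$; concretely, $H_\gamma$ is the $\gamma$–conjugate of the subgroup of $L_\lambda$ cut out by $p_{13},p_{14},(p_{22}-p_{11})\in\varpi\cO_2$ (together, in part~(a), with $p_{12}\in\varpi\cO_2$, the level–$\varpi$ conditions being analogous in the remaining cases).

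Next I would compute the two characters on $H_\gamma$ in the level–$\varpi$ coordinates $p_{12}=\varpi p_{12}'$, $p_{13}=\varpi p_{13}'$, $p_{14}=\varpi p_{14}'$, $p_{22}=p_{11}+\varpi\nu$. On the $\GL_3$–side the component is $p_{11}\bigl(I_3+\varpi\,\Diag(1,\bar g)^{-1}N_0\,\Diag(1,\bar g)\bigr)$ with $N_0:=(h_1-p_{11}I_3)/\varpi$, so the left–hand character equals $\omega_\rho(p_{11})\,\chi(p_{22})\cdot\psi_0\!\bigl(\tfrac1{\bar p_{11}}\tr(C^gN_0)\bigr)$, where $C^g:=\Diag(1,\bar g)\,C\,\Diag(1,\bar g)^{-1}$ is a conjugate of $C$; expanding $\tr(C^gN_0)$ with the explicit $N_0=\bigl(\begin{smallmatrix}0&p_{14}'&p_{13}'\\0&\nu&0\\0&p_{12}'&0\end{smallmatrix}\bigr)$ shows that the coefficients of $p_{14}'$ and of $p_{12}'$ are, up to the harmless factor $1/\bar p_{11}$, the entries $(C^g)_{21}$ and $(C^g)_{23}$. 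For the right–hand character, undoing the $(24)$– and $\lambda$–twists identifies $H_\gamma$ with a subgroup of $\Delta I(\phi_B)\cdot N$ of the form $\Diag(h,h)\bigl(\begin{smallmatrix}I&X\\0&I\end{smallmatrix}\bigr)$ with $h\in Z\cdot J^1_2$ congruent to $p_{11}I_2$ and $X$ strictly upper triangular over $\varpi\cO_2$; since $B$ is diagonal (part~(b)) or equals $\bigl(\begin{smallmatrix}m&1\\0&m\end{smallmatrix}\bigr)$ (part~(a)), the pairing $\phi_B(I+\varpi Y)=\psi_0(\tr(BY))$ is trivial on the $(1,2)$–slot of $Y$ and $\tr X$ ignores the off–diagonal of $X$, so $(\tilde\phi_B\cdot\psi)^{\lambda^{-1}(24)\gamma^{-1}}$ does not involve the parameters $p_{14}'$ or $p_{12}'$ at all.

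Finally I would draw the contradiction. Comparing the two characters on the one–parameter subgroups of $H_\gamma$ in the $p_{14}'$– and $p_{12}'$–directions forces $(C^g)_{21}=(C^g)_{23}=0$; but then the second row of $C^g$ is $(0,\ast,0)$, i.e.\ $\bar g_{11}e_2+\bar g_{12}e_3$ — a nonzero vector, since $g\in\GL_2(\F_q)$ — is an $\F_q$–rational left eigenvector of $C$, which is impossible because $C$ is regular elliptic and hence has irreducible characteristic polynomial over $\F_q$. So no $g$ makes both characters agree, whence $W_{\gamma,\lambda}=0$ for all $\gamma$ and the direct sum vanishes. The case $\lambda=\Diag(w_0,I_2)$ in part~(b) is handled in exactly the same way, now using Lemma~\ref{subgroup L_beta for B split semisimple}(b): the analogous matching equations again force two entries of a conjugate of $C$ lying in a single row (or column) to vanish, producing a rational eigenvector and the same contradiction. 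The only real obstacle here is bookkeeping rather than ideas: one must track the three simultaneous conjugations — by $(24)$, by $\lambda$, and by $\gamma$ — carefully enough to extract the precise coefficients $(C^g)_{21},(C^g)_{23}$ and to confirm the independence of the right–hand character from $p_{14}'$ and $p_{12}'$; the ``rational eigenvector'' obstruction itself is a one–line consequence of strong cuspidality of $\rho$.
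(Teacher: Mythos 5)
Your argument is correct, and the computational skeleton --- Mackey decomposition, explicit description of $\gamma^{-1}L_\lambda\gamma\cap S\cdot U\cdot\cO_2^\times$, evaluation of the two characters on it, and comparison of the coefficients of the level-$\varpi$ parameters --- is the same as the paper's. You diverge only at the very last step. The paper reads off all the matching equations (e.g.\ in part (a): $c_1g_1+c_2g_2=0$, $c_1g_3+c_2g_4=1$, $c_1g_2^2-(c_1+c_2a)g_1^2=0$), eliminates $g_2$, and lands on $c_1^3-c_1c_2^2-c_2^3a=0$, contradicting irreducibility of the particular cubic $x^3-x-a$ fixed in Section~\ref{all embeddings}. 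You observe instead that two of these equations say that two off-diagonal entries of $C^g=\Diag(1,\bar g)\,\bar C\,\Diag(1,\bar g)^{-1}$ lying in a single row vanish; since $C^g$ is invertible that row is then a scalar multiple of $e_i^T$, so $e_i^T\Diag(1,\bar g)$ is a nonzero $\F_q$-rational left eigenvector of $\bar C$, impossible because $\bar C$ is regular elliptic. This is a modest but genuine improvement: the contradiction is independent of which irreducible cubic was chosen, uses one fewer matching equation, and makes explicit that strong cuspidality of $\rho$ is the hypothesis doing the work. One caution about the bookkeeping you postpone: the pair of vanishing entries is $((C^g)_{21},(C^g)_{23})$ for $\lambda=I_4$ but $((C^g)_{31},(C^g)_{32})$ for $\lambda=\Diag(w_0,I_2)$, because which $p_{ij}$'s survive in the right-hand character depends on $\lambda$ (compare \eqref{action of character in RHS for first gamma of split non-semisimple matrix-1} with \eqref{action of character in RHS for first gamma of split non-semisimple matrix}); the ``handled in exactly the same way'' you assert for the $w_0$ case conceals precisely this shift and needs to be verified rather than waved at.
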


\begin{proof}
\begin{enumerate}[label = ($\alph*$)]
\item 
Let $B=\begin{pmatrix}
    m & 1\\
    0 & m
\end{pmatrix}\in \mathcal{X}_2$, $\lambda=I_4$ and $\gamma\in \Gamma_{\lambda}$.
We use the description of $L_{\lambda}$ as given in Lemma~\ref{subgroup L_beta for B unipotent}.
Now, using Lemma \ref{reprsentatives for set Delta_beta}, for $\gamma = \gamma(g) $ with 
$ g = \begin{pmatrix}
g_1 & g_2 \\
g_3 & g_4
\end{pmatrix} $,
the subgroup $\gamma^{-1}L_{\lambda}\gamma\cap S\cdot  U\cdot \cO_2^\times$ is given by
$$
\left\lbrace
Y : =\begin{pmatrix}
p_{11} & \varpi \tilde{g}_3p_{13}+\varpi \tilde{g}_1p_{14} & \varpi \tilde{g}_2 p_{14}+\varpi \tilde{g}_4 p_{13} & \varpi p_{12}\\
0 & \frac{\tilde{g}_2\tilde{g}_3p_{11}-\tilde{g}_1\tilde{g}_4p_{22}+\varpi \tilde{g}_1\tilde{g}_2p_{12}}{\tilde{g}_2\tilde{g}_3-\tilde{g}_1\tilde{g}_4} & \frac{-\tilde{g}_2\tilde{g}_4(p_{22}-p_{11})+\varpi \tilde{g}_2^2p_{12}}{\tilde{g}_2\tilde{g}_3-\tilde{g}_1\tilde{g}_4} & 0\\
0 & \frac{\tilde{g}_1\tilde{g}_3(p_{22}-p_{11})-\varpi \tilde{g}_1^2p_{12}}{\tilde{g}_2\tilde{g}_3-\tilde{g}_1\tilde{g}_4} & \frac{\tilde{g}_2\tilde{g}_3p_{22}-\tilde{g}_1\tilde{g}_4p_{11}-\varpi \tilde{g}_1\tilde{g}_2p_{12}}{\tilde{g}_2\tilde{g}_3-\tilde{g}_1\tilde{g}_4} & 0\\
0 &  0 &  0 & p_{22}
\end{pmatrix}
~\middle | ~ \begin{matrix}
 p_{22}-p_{11}\in \varpi \cO_2,\\
 p_{11}\in \cO_2^\times,
 \\
 p_{ij}\in \cO_2\text{ for } i\neq j
\end{matrix}
 \right\rbrace.
$$ 
Write $p_{22}-p_{11} = \varpi t\in \varpi \cO_2$. 
Then
\begin{equation}\label{action of character in RHS for first gamma of split non-semisimple matrix-1}
(\tilde{\phi}_B\cdot \psi)^{(24)\gamma^{-1}}(Y)=\omega_{\sigma}(p_{11})\psi_0(m\bar{t})\psi_0(\bar{p}_{13}).
\end{equation}
Recall the matrix $C\in \F_{q^3}\smallsetminus \F_q$, as given in \eqref{matriX C}. 
Then
\begin{align}\label{action of character in LHS for first gamma of split non-semisimple matrix-1}
(\tilde{\phi}_C\otimes 1\otimes \chi)(Y) & =\omega_{\rho}(p_{11})\cdot \chi(p_{22})\cdot\psi_0\left((c_1g_1+c_2g_2)\bar{p}_{14}+(c_2g_4+c_1g_3)\bar{p}_{13}\right) \cdot 
\nonumber
\\
& \psi_0\left(\left((c_0+c_2)+\frac{g_1g_3(c_1+c_2a)-c_1g_2g_4}{(g_2g_3-g_1g_4)}\right)\bar{t}+\left(\frac{c_1g_2^2-(c_1+c_2a)g_1^2}{g_2g_3-g_1g_4}\right)\bar{p}_{12}\right).
\end{align}
Using \eqref{action of character in RHS for first gamma of split non-semisimple matrix-1} and \eqref{action of character in LHS for first gamma of split non-semisimple matrix-1}, $W_{\gamma, \lambda}\neq 0$ if and only if $\exists~g\in \GL_2(\F_q)$ such that
\begin{align*}
\psi_0\left((c_1g_3+c_2g_4)\bar{p}_{13}\right)=\psi_0(\bar{p}_{13}),~\psi_0((c_1g_1+c_2g_2)\bar{p}_{14})=1,
\psi_0\left((c_1g_2^2-(c_1+c_2a)g_1^2)\bar{p}_{12}\right)=1,
\end{align*}
which gives 
\begin{equation}\label{3 equations -2-1}
c_1g_3+c_2g_4=1, \hspace{0.6 cm} c_1g_1+c_2g_2=0, \hspace{0.6 cm} c_1g_2^2-(c_1+c_2a)g_1^2=0.
\end{equation}
This implies $c_2 \neq 0$ and $c_1^3 - c_1 c_2^2 - c_2^3a = 0$, a contradiction to the irreducibility of the polynomial $x^3-x-a$ (see Section \ref{all embeddings}).
This proves part $(a)$.
\item
Let $B=\diag(m_1,n_1)\in \mathcal{X}_3$, $\lambda=I_4$ and $\gamma\in \Gamma_{\lambda}$. 
Then $\gamma^{-1}L_{\lambda}\gamma \cap S\cdot U\cdot \cO_2^\times$ is same as that in part $(a)$. 
Then $W_{\gamma,\lambda}=0$ follows along similar line.
Now, by Lemma~\ref{4.13 lemma for coset representatives of gamma}, it is enough to prove $W_{\gamma, \lambda}=0$ for 
$\lambda=\Diag(w_0, I_2)$
and $ \gamma \in \Gamma_{\lambda}$.
Using Lemma \ref{reprsentatives for set Delta_beta}, for $\gamma = \gamma(g)$ with 
$g = \begin{pmatrix}
g_1 & g_2 \\
g_3 & g_4
\end{pmatrix}$,
the subgroup $\gamma^{-1}L_{\lambda}\gamma\cap S\cdot U\cdot \cO_2^\times$ is given by
$$
\left\lbrace
Y':=\begin{pmatrix}
p_{11} & \varpi \tilde{g}_3p_{13}+\varpi \tilde{g}_1p_{14} & \varpi \tilde{g}_2p_{14}+\varpi \tilde{g}_4p_{13}
& \varpi p_{12}\\
0 & \frac{\tilde{g}_2\tilde{g}_3p_{22}-\tilde{g}_1\tilde{g}_4p_{11}-\varpi \tilde{g}_3\tilde{g}_4p_{12}}{\tilde{g}_2\tilde{g}_3-\tilde{g}_1\tilde{g}_4} & \frac{\tilde{g}_2\tilde{g}_4(p_{22}-p_{11})-\varpi \tilde{g}_4^2p_{12}}{\tilde{g}_2\tilde{g}_3-\tilde{g}_1\tilde{g}_4} & 0\\
0 & \frac{-\tilde{g}_1\tilde{g}_3(p_{22}-p_{11})+\varpi \tilde{g}_3^2p_{12}}{\tilde{g}_2\tilde{g}_3-\tilde{g}_1\tilde{g}_4} & \frac{\tilde{g}_2\tilde{g}_3p_{11}-\tilde{g}_1\tilde{g}_4p_{22}+\varpi \tilde{g}_3\tilde{g}_4p_{12}}{\tilde{g}_2\tilde{g}_3-\tilde{g}_1\tilde{g}_4} & 0\\
0 &  0 &  0 & p_{22}
\end{pmatrix}
 ~\middle |~\begin{matrix}
 p_{22}-p_{11}\in \varpi \cO_2,\\
 p_{11}\in \cO_2^\times,
 \\
 p_{ij}\in \cO_2\text{ for } i\neq j
\end{matrix}
\right\rbrace.
$$
Write $p_{22}-p_{11}=\varpi t\in \varpi \cO_2$.
Then
\begin{equation}\label{action of character in RHS for first gamma of split non-semisimple matrix}
(\tilde{\phi}_B\cdot \psi)^{(24)\gamma^{-1}}(Y')=\omega_{\sigma}(p_{11})\psi_0(m_1\bar{t})\psi_0(\bar{p}_{14})
\end{equation}
and 
{\small{\begin{align}\label{action of character in LHS for first gamma of split non-semisimple matrix}
(\tilde{\phi}_C\otimes 1\otimes \chi)(Y')= 
& \omega_{\rho}(p_{11})\cdot \chi(p_{22})\cdot\psi_0\left((c_1g_1+c_2g_2)\bar{p}_{14}+(c_2g_4+c_1g_3)\bar{p}_{13}\right)\nonumber 
\\
& \cdot 
\psi_0\left(\left(c_0+c_2+\frac{c_1g_2g_4-g_1g_3(c_1+c_2a)}{(g_2g_3-g_1g_4)}\right)\bar{t}+ \left (\frac{(c_1+c_2a)g_3^2-c_1g_4^2}{g_2g_3-g_1g_4}\right)\bar{p}_{12}\right).
\end{align}}}
Thus $W_{\gamma, \lambda}\neq 0$
if and only if there exists $g\in \GL_2(\F_q)$ such that
\begin{equation}\label{3 equations -2}
c_1g_3+c_2g_4=0, \hspace{0.6 cm} c_1g_1+c_2g_2=1,\hspace{0.6 cm} c_1g_4^2-(c_1+c_2a)g_3^2=0.
\end{equation}
From \eqref{3 equations -2}, we get a contradiction as in \eqref{3 equations -2-1}.
Thus, the proposition follows.
\qedhere
\end{enumerate}
\end{proof}

\begin{proposition}\label{First proposition for non-split semisimple with central character involving m_0}
Let $m_0 \in \F_q$ be such that $\psi_0(2m_0 x) = \omega_{\pi}(1+ \varpi \tilde{x})$ for all $x \in \F_q$, where $\tilde{x} \in \cO_2$ is a lift of $x$. 
Let $B \in \mathcal{X}^0_1 := \{ A \in \mathcal{X}_1 : \tr(A) = 2m_0 \}$.
Let $\tilde{\phi}_{B}$ be an extension of $\phi_{B}$ to $I(\phi_B)$  and $\sigma = \Ind_{I(\phi_B)}^{\GL_2(\cO_2)} \tilde{\phi}_{B}$. 
Then
$m(\pi^{(24)}_{N,\psi},\sigma)=1$.
\end{proposition}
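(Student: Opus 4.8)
The plan is to run the reduction already used for Theorem~\ref{dim of pi^24_N,psi} and Proposition~\ref{reduction to one gamma for split semisimple and unipotent-1}, now in the elliptic case $B\in\mathcal{X}_1$, and to show that among the double cosets indexing the resulting sum exactly one contributes, with a one-dimensional $\Hom$-space. Since $B\in\mathcal{X}_1$, Lemma~\ref{4.13 lemma for coset representatives of gamma}(a) gives $\Lambda=\{I_4\}$, so Lemma~\ref{technical lemma for section-5-1} together with the irreducibility of $\sigma$ yields $m(\pi^{(24)}_{N,\psi},\sigma)=\dim W_{I_4}$. Combining Lemma~\ref{Hom_H_lambda} with Lemma~\ref{reprsentatives for set Delta_beta}, this becomes
\[
m(\pi^{(24)}_{N,\psi},\sigma)=\dim W_{I_4}=\sum_{g\in\GL_2(\F_q)}\dim W_{\gamma(g),I_4},
\]
where $\gamma(g)=\Diag(1,g,1)$ and $W_{\gamma(g),I_4}=\Hom_{H_g}\bigl(\tilde\phi_C\otimes 1\otimes\chi,\ (\tilde\phi_B\cdot\psi)^{(24)\gamma(g)^{-1}}\bigr)$ with $H_g:=\gamma(g)^{-1}L_{I_4}\gamma(g)\cap(S\cdot U\cdot\cO_2^\times)$. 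It therefore suffices to prove that there is exactly one $g\in\GL_2(\F_q)$ for which $W_{\gamma(g),I_4}\neq 0$, and that $\dim W_{\gamma(g),I_4}=1$ for this $g$.

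First I would make $H_g$ explicit. Starting from the description of $L_{I_4}$ in Lemma~\ref{subgroup L_I for non-split semisimple} and conjugating by $\gamma(g)$ with $g=\begin{pmatrix} g_1 & g_2\\ g_3 & g_4\end{pmatrix}$, the intersection with $S\cdot U\cdot\cO_2^\times$ is parametrised by $p_{11}\in\cO_2^\times$ (set $\varpi t:=p_{22}-p_{11}$) and $p_{12},p_{13},p_{14}\in\cO_2$, with the entries of the $\GL_3(\cO_2)$-block being $\F_q$-linear combinations of $\bar t,\bar p_{12},\bar p_{13},\bar p_{14}$ whose coefficients are rational expressions in the $g_i$, just as in the subgroups appearing in the proof of Proposition~\ref{reduction to one gamma for split semisimple and unipotent-1}. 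On $H_g$ I would then evaluate both characters: $\tilde\phi_C\otimes 1\otimes\chi$ equals $\omega_{\rho}(p_{11})\chi(p_{22})$ times $\psi_0$ of a linear form in $\bar t,\bar p_{12},\bar p_{13},\bar p_{14}$ with coefficients involving $c_0,c_1,c_2,a$ and the $g_i$ (here $C$ is the fixed regular elliptic element of Section~\ref{all embeddings}), while $(\tilde\phi_B\cdot\psi)^{(24)\gamma(g)^{-1}}$ equals $\omega_{\sigma}(p_{11})$ times $\psi_0$ of $2m_0\,\bar t$ plus a single $\bar p_{1j}$-term, since $B=\begin{pmatrix} m_0 & n\alpha\\ n & m_0\end{pmatrix}$ with $n\in\F_q^\times$ has $\tr(B)=2m_0$.

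Equating the two characters on $H_g$ then splits into three pieces: (i) the scalar-center part, which pins down the restriction of $\omega_{\sigma}$ to $\cO_2^\times$ --- on $1+\varpi\cO_2$ this unwinds to precisely the hypothesis $\psi_0(2m_0x)=\omega_{\pi}(1+\varpi\tilde x)$, i.e.\ $B\in\mathcal{X}^0_1$; (ii) matching the coefficient of $\bar t$, which produces a condition on $g$ tying $2m_0$ to the coefficients $c_0,c_1,c_2$ of $C$; and (iii) matching the coefficients of $\bar p_{12},\bar p_{13},\bar p_{14}$, which gives a system of linear and quadratic equations in $g_1,g_2,g_3,g_4$ with coefficients built from $c_1,c_2$. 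I would then show that this combined system has a unique solution $g\in\GL_2(\F_q)$; for that $g$ the two characters coincide identically on $H_g$, hence $\dim W_{\gamma(g),I_4}=1$, and the proposition follows. The existence of a solution is where the non-splitness of $B$ enters: the quadratic $x^2-2m_0x+\det B=(x-m_0)^2-n^2\alpha$ is irreducible over $\F_q$ (as $\alpha$ is a non-square and $n\neq 0$), which makes the system consistent --- in contrast to the unipotent and split cases of Proposition~\ref{reduction to one gamma for split semisimple and unipotent-1}, where the analogous system was inconsistent because $x^3-x-a$ has no root in $\F_q$; uniqueness should follow from the same irreducibility together with the regular ellipticity of $C$.

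The main obstacle will be carrying out steps (ii) and (iii): writing the entries of $H_g$ cleanly, extracting the linear/quadratic system in $g_1,g_2,g_3,g_4$, and proving it has exactly one invertible solution. This is the point at which the arithmetic of the two embeddings $\F_{q^2}\hookrightarrow M_2(\F_q)$ and $\F_{q^3}\hookrightarrow M_3(\F_q)$ genuinely interacts, and where the hypothesis $B\in\mathcal{X}^0_1$, rather than $B\in\mathcal{X}_1$ in general, is needed to align the central characters.
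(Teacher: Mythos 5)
Your reduction matches the paper's: both use Lemma~\ref{4.13 lemma for coset representatives of gamma}(a) to see $\Lambda=\{I_4\}$, then Lemmas~\ref{technical lemma for section-5-1}, \ref{Hom_H_lambda}, \ref{reprsentatives for set Delta_beta} to write $m(\pi^{(24)}_{N,\psi},\sigma)=\sum_{g\in\GL_2(\F_q)}\dim W_{\gamma(g),I_4}$, and both plan to evaluate the two characters on the explicit subgroup $\gamma(g)^{-1}L_{I_4}\gamma(g)\cap S\cdot U\cdot\cO_2^\times$ and compare. But the step you defer --- ``show the combined system has a unique solution'' --- is where the whole proof lives, and the heuristic you offer for it is wrong in a way that would have misled you. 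You describe $(\tilde\phi_B\cdot\psi)^{(24)\gamma(g)^{-1}}$ as $\omega_{\sigma}(p_{11})$ times $\psi_0$ of ``$2m_0\bar t$ plus a single $\bar p_{1j}$-term.'' In fact (cf.\ \eqref{action of character in LHS of strongly cuspidal}) it equals $\omega_{\sigma}(p_{11})\,\psi_0(m_0\bar t+n\bar p_{12})\,\psi_0(\bar p_{13})$: the $\bar t$-coefficient is $m_0$ (the $(2,2)$-entry of $B$), not $\mathrm{tr}(B)=2m_0$, and crucially there is a $\bar p_{12}$-term with coefficient $n$ --- the $(2,1)$-entry of $B$. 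That term is precisely what makes the $\bar p_{12}$-equation read $\frac{c_1g_2^2-(c_1+c_2a)g_1^2}{g_2g_3-g_1g_4}=n$ with nonzero right-hand side.

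Consequently your proposed explanation for consistency --- irreducibility of $x^2-2m_0x+\det B=(x-m_0)^2-n^2\alpha$ over $\F_q$ --- is a red herring; it plays no role in the algebra. What matters is only that $n\neq 0$. With $n\neq 0$, equations (a)--(c) give $g_2g_3-g_1g_4=-g_1/c_2$ and then $g_1=\frac{-nc_2}{c_1^3-c_1c_2^2-c_2^3a}$, where the denominator is nonzero because $y^3-y-a$ is irreducible (regular ellipticity of $C$, not of $B$); the remaining condition from the $\bar t$-coefficient then pins down $g_3$, hence $g_4$, so there is exactly one $g$ and $m(\pi^{(24)}_{N,\psi},\sigma)=1$. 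Your comparison to Proposition~\ref{reduction to one gamma for split semisimple and unipotent-1} is likewise misframed: there the $\bar p_{12}$-coefficient on the $\phi_B$-side is $0$ (the $(2,1)$-entry of the unipotent or diagonal $B$), and then the same irreducibility of $y^3-y-a$ forces inconsistency rather than a unique solution; it is the same polynomial working in opposite directions, not a contrast between $B$'s characteristic polynomials. The trace hypothesis $B\in\mathcal{X}^0_1$ is used only to match $\omega_\sigma|_{1+\varpi\cO_2}$ with $\omega_\pi|_{1+\varpi\cO_2}$, exactly as you say in (i); it does not interact with the $g$-system. To finish the proof you must actually write out the character values \eqref{action of character in LHS of strongly cuspidal}--\eqref{action of character in RHS of strongly cuspidal} and carry out the elimination above.
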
 

\begin{proof}
Write $B=\begin{pmatrix}
    m_0 & n\alpha\\
    n & m_0
\end{pmatrix}$ with $n\in \F_q^\times$.
Using Lemma \ref{technical lemma for section-5-1}, Lemma \ref{4.13 lemma for coset representatives of gamma} and Lemma \ref{Hom_H_lambda}, we have
\begin{equation}\label{Equation 4.66}
\Hom_{\Delta \GL_2(\cO_2)\cdot N}(\pi^{(24)},\sigma\otimes\psi)
\cong \underset{\gamma \in \Gamma_{I}}{\bigoplus}W_{\gamma, I}.
\end{equation}
Using $L_I$ from Lemma \ref{subgroup L_I for non-split semisimple} and $\Gamma_{I}$ from Lemma \ref{reprsentatives for set Delta_beta}, if $\gamma=\gamma(g)$ for $g=\begin{pmatrix}
     g_1 & g_2\\
     g_3 & g_4
 \end{pmatrix}$ then subgroup $\gamma^{-1}L_{I}\gamma \cap S\cdot U\cdot \cO_2^\times$ is given by
$$
{\small{\left\lbrace
Y: =\begin{pmatrix}
p_{11} & \varpi \tilde{g}_3p_{13}+\varpi \tilde{g}_1p_{14} & \varpi \tilde{g}_4p_{13}+\varpi \tilde{g}_2p_{14} & \varpi p_{12}
\\
0 & p_{11}+\frac{\varpi \tilde{g}_1\tilde{g}_2p_{12}-\tilde{g}_1\tilde{g}_4(p_{22}-p_{11})}{\tilde{g}_2\tilde{g}_3-\tilde{g}_1\tilde{g}_4}	& \frac{-\tilde{g}_2\tilde{g}_4(p_{22}-p_{11})+\varpi \tilde{g}_2^2p_{12}}{\tilde{g}_2\tilde{g}_3-\tilde{g}_1\tilde{g}_4}	& 0\\
0 &  \frac{\tilde{g}_1\tilde{g}_3(p_{22}-p_{11})-\varpi \tilde{g}_1^2p_{12}}{\tilde{g}_2\tilde{g}_3-\tilde{g}_1\tilde{g}_4}	& p_{11}+\frac{\tilde{g}_2\tilde{g}_3(p_{22}-p_{11})-\varpi \tilde{g}_1\tilde{g}_2p_{12}}{\tilde{g}_2\tilde{g}_3-\tilde{g}_1\tilde{g}_4}	 & 0\\
0 &  0 &  0	 &  p_{22}
\end{pmatrix} ~\middle|~
\begin{matrix}
p_{22}-p_{11}\in \varpi \cO_2,\\
p_{11}\in \cO_2^\times, 
\\
p_{ij}\in \cO_2 \text{ for $i\neq j$}
\end{matrix}
\right\rbrace
}}.
$$
Let $p_{22}-p_{11}=\varpi t\in \varpi \cO_2$, we have
\begin{equation}\label{action of character in LHS of strongly cuspidal}
(\tilde{\phi}_B\cdot \psi)^{(24)\gamma^{-1}}(Y)=\omega_{\sigma}(p_{11})\tilde{\phi}_B\begin{pmatrix}
    1 & \varpi p_{12}\\
    0 & \varpi t
\end{pmatrix}\psi_0(\varpi p_{13})= \omega_{\sigma}(p_{11})\psi_0\left(m_0\bar{t}+n\bar{p}_{12}\right)\psi_0(\bar{p}_{13}).
\end{equation}
Recall the matrix $C$ as given in \eqref{matriX C}, we have
{\small{\begin{align}
\label{action of character in RHS of strongly cuspidal}
(\tilde{\phi}_C\otimes 1\otimes \chi)(Y) = & \omega_{\pi}(p_{11})\cdot \chi(1+\varpi t)\cdot \psi_0\left((c_1g_3+c_2g_4)\bar{p}_{13}+(c_2g_2+c_1g_1)\bar{p}_{14}\right)
\nonumber
\\
&
\cdot \psi_0\left(
\left(c_0+c_2+\frac{(c_1+c_2a)g_1g_3-c_1g_2g_4}{g_2g_3-g_1g_4}\right)\bar{t}+\left(\frac{c_1g_2^2-(c_1+c_2a)g_1^2}{(g_2g_3-g_1g_4)}\right)\bar{p}_{12}\right).
\end{align}
}}
Using \eqref{action of character in LHS of strongly cuspidal} and  \eqref{action of character in RHS of strongly cuspidal}, we get that $W_{\gamma, I} \neq 0$ if and only if
\begin{enumerate}[label = (\roman*)]
\item $\omega_{\pi}(1+\varpi \tilde{x})=\psi_0(2m_0x)$, which is an assumption,
\item  $c_1g_1+c_2g_2=0$,        \hspace{0.3 cm } $c_1g_3+c_2g_4=1$,  \hspace{0.3 cm } $\dfrac{c_1g_2^2-(c_1+c_2a)g_1^2}{g_2g_3-g_1g_4}=n$ and 
\\
$\psi_0\left(
\left(c_0+c_2+\dfrac{(c_1+c_2a)g_1g_3-c_1g_2g_4}{g_2g_3-g_1g_4}\right)\bar{t}\right)
\chi(1+\varpi t)=\psi_0(m_0\bar{t})$.
\end{enumerate}
The condition (ii) gives a unique $g\in \GL_2(\F_q)$ and hence a unique $\gamma \in \Gamma_{I}$ for which $W_{\gamma, I} \neq 0$.
Thus, using Equation \eqref{Equation 4.66}, the proposition follows.
\end{proof}

\begin{corollary}\label{corollary 1}
Let $m_0$ and $B$ be as in the previous proposition.
Then
\( \underset{B \in \mathcal{X}_1^0 / \sim}{\bigoplus} \Ind_{Z \cdot J^1_2}^{\GL_2(\cO_2)} (\omega_{\pi} \phi_B) \subseteq \pi^{(24)}_{N,\psi},
\)
where $A_1 \sim A_2 \iff A_1$ and  $A_2$ are similar matrices.
\end{corollary}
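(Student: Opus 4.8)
The plan is to exhibit each summand $\Ind_{Z\cdot J^1_2}^{\GL_2(\cO_2)}(\omega_{\pi}\phi_B)$ as an explicit direct sum of pairwise non-isomorphic irreducible regular representations of $\GL_2(\cO_2)$, and then to place each of these inside $\pi^{(24)}_{N,\psi}$ using Proposition~\ref{First proposition for non-split semisimple with central character involving m_0}.

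First I would verify that $\omega_{\pi}\phi_B$ is a well-defined character of $Z\cdot J^1_2$ for $B\in\mathcal{X}_1^0$. On the overlap $Z\cap J^1_2=\{(1+\varpi\tilde{x})I_2\}$ one computes $\phi_B\big((1+\varpi\tilde{x})I_2\big)=\psi_0\big(\bar{x}\,\tr(B)\big)=\psi_0(2m_0 x)$, which equals $\omega_{\pi}(1+\varpi\tilde{x})$ by the defining property of $m_0$; since $\tr(B)=2m_0$, this also records that $\omega_{\pi}\phi_B|_Z=\omega_{\pi}$. As $I(\phi_B)/(Z\cdot J^1_2)$ is abelian (it is isomorphic to the quotient $\F_{q^2}^\times/\F_q^\times$ of the centraliser torus of $B$), the representation $\Ind_{Z\cdot J^1_2}^{I(\phi_B)}(\omega_{\pi}\phi_B)$ is a direct sum of $[I(\phi_B):Z\cdot J^1_2]$ distinct characters, each of which is an extension $\tilde{\phi}_B$ of $\phi_B$ to $I(\phi_B)$ restricting to $\omega_{\pi}$ on $Z$; this is exactly the mechanism already used in Corollary~\ref{delta_6 coset}. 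By transitivity of induction,
\[
\Ind_{Z\cdot J^1_2}^{\GL_2(\cO_2)}(\omega_{\pi}\phi_B)\;\cong\;\Ind_{I(\phi_B)}^{\GL_2(\cO_2)}\!\Big(\Ind_{Z\cdot J^1_2}^{I(\phi_B)}(\omega_{\pi}\phi_B)\Big)\;\cong\;\bigoplus_{\tilde{\phi}_B}\Ind_{I(\phi_B)}^{\GL_2(\cO_2)}\tilde{\phi}_B,
\]
a direct sum of irreducible regular representations by Theorem~\ref{Construction of regualr reprresentation}.

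Then I would invoke Proposition~\ref{First proposition for non-split semisimple with central character involving m_0}: for each $B\in\mathcal{X}_1^0$ and each extension $\tilde{\phi}_B$ of $\omega_{\pi}\phi_B$ as above, $m\big(\pi^{(24)}_{N,\psi},\Ind_{I(\phi_B)}^{\GL_2(\cO_2)}\tilde{\phi}_B\big)=1$. It remains to check that all the irreducible constituents obtained this way, as $B$ ranges over $\mathcal{X}_1^0/\!\sim$ and $\tilde{\phi}_B$ over the extensions of $\omega_{\pi}\phi_B$ attached to a fixed $B$, are pairwise non-isomorphic: an irreducible regular representation $\Ind_{I(\phi_B)}^{\GL_2(\cO_2)}\tilde{\phi}_B$ determines $\phi_B$ up to $\GL_2(\F_q)$-conjugacy, hence $B$ up to similarity (by the Remark following Theorem~\ref{Construction of regualr reprresentation} and the classification in Section~\ref{reg reps of GL_2}), while for fixed $B$ two distinct extensions of the character $\omega_{\pi}\phi_B$ induce to non-isomorphic representations. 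Hence $\bigoplus_{B\in\mathcal{X}_1^0/\sim}\Ind_{Z\cdot J^1_2}^{\GL_2(\cO_2)}(\omega_{\pi}\phi_B)$ is a direct sum of distinct irreducibles, each occurring in $\pi^{(24)}_{N,\psi}$ with multiplicity one, and therefore is a subrepresentation of $\pi^{(24)}_{N,\psi}$. The entire computational content sits in Proposition~\ref{First proposition for non-split semisimple with central character involving m_0}; what is left here is bookkeeping, and the only points that demand a little care are that $\omega_{\pi}\phi_B$ genuinely defines a character of $Z\cdot J^1_2$ (which is where the choice of $m_0$ enters) and that its induction up to $I(\phi_B)$ is multiplicity-free (which uses the abelianness of $I(\phi_B)/(Z\cdot J^1_2)$); neither is expected to cause real difficulty.
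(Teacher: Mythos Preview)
Your proof is correct and follows essentially the same route as the paper's: induce in stages through $I(\phi_B)$, decompose $\Ind_{Z\cdot J^1_2}^{I(\phi_B)}(\omega_{\pi}\phi_B)$ into extensions $\tilde{\phi}_B$, and invoke Proposition~\ref{First proposition for non-split semisimple with central character involving m_0} for each resulting irreducible $\Ind_{I(\phi_B)}^{\GL_2(\cO_2)}\tilde{\phi}_B$. You are somewhat more careful than the paper in two respects: you explicitly check that $\omega_{\pi}\phi_B$ is well-defined on $Z\cdot J^1_2$ (which pins down why $\tr(B)=2m_0$ is needed), and you verify that the irreducible constituents are pairwise non-isomorphic both within a fixed $B$ and across different classes $B\in\mathcal{X}_1^0/\!\sim$, so that the full direct sum---and not merely each individual summand---embeds in $\pi^{(24)}_{N,\psi}$; the paper's proof leaves this last point implicit.
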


\begin{proof}
For $B \in \mathcal{X}_1^0$, the representation $\Ind_{Z \cdot J^1_2}^{I(\phi_B)} (\omega_{\pi} \phi_{B})$ is a direct sum of characters of $I(\phi_{B})$ that extend the character $\omega_{\pi} \phi_{B}$ of $Z\cdot J^1_2$. 
Moreover, if $\tilde{\phi}_{B}$ is such an extension of $\phi_{B}$ and $\sigma=\Ind_{I(\phi_B)}^{\GL_2(\cO_2)}\tilde{\phi}_B$, then $m(\pi^{(24)}_{N, \psi}, \sigma)=1$, i.e. $\sigma \subseteq \pi^{(24)}_{N, \psi}$. 
Now the corollary follows by using 
\[
\Ind_{Z\cdot J^1_2}^{\GL_2(\cO_2)} ( \omega_{\pi}\phi_B ) = \Ind_{I(\phi_B)}^{\GL_2(\cO_2)} \left( \Ind_{Z \cdot J^1_2}^{I(\phi_B)} ( \omega_{\pi}\phi_B) \right) \subseteq \pi^{(24)}_{N,\psi}.
\qedhere
\]
\end{proof}

\begin{proposition}
\label{Third proposition for Q}
Let $m_0 \in \F_q$ be such that $\psi_0(2m_0 x) = \omega_{\pi}(1+ \varpi \tilde{x})$ for all $x \in \F_q$, where $\tilde{x} \in \cO_2$ is any lift of $x$.
Let $B \in \mathcal{X}^0_2:=\{ A \in \mathcal{X}_2: \tr(A)=2m_0\}$.
Let $\tilde{\phi}_{B}$ be an extension to $I(\phi_B)$ of the character $\phi_{B}$ of $J^1_2$ and $\sigma = \Ind_{I(\phi_B)}^{\GL_2(\cO_2)} \tilde{\phi}_{B}$.
Then  $m(\pi^{(24)}_{N,\psi}, \sigma)=1$.
\end{proposition}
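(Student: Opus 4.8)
The plan is to run the argument of Proposition~\ref{First proposition for non-split semisimple with central character involving m_0} once more, but now exploiting that for $B\in\mathcal{X}_2$ the index set $\Lambda$ has two elements, exactly one of which contributes. Write $B=\begin{pmatrix} m_0 & 1\\ 0 & m_0\end{pmatrix}$, so $\tr(B)=2m_0$. Combining Lemma~\ref{technical lemma for section-5-1}, Lemma~\ref{4.13 lemma for coset representatives of gamma}(b) and Lemma~\ref{Hom_H_lambda}, I would first record
$$\Hom_{\Delta\GL_2(\cO_2)\cdot N}\bigl(\pi^{(24)},\sigma\otimes\psi\bigr)\cong\bigoplus_{\lambda\in\{I_4,\ \Diag(w_0,I_2)\}}\ \bigoplus_{\gamma\in\Gamma_\lambda}W_{\gamma,\lambda}.$$
By Proposition~\ref{reduction to one gamma for split semisimple and unipotent-1}(a) the summand for $\lambda=I_4$ vanishes, so it suffices to prove that $\sum_{\gamma\in\Gamma_{\lambda_0}}\dim W_{\gamma,\lambda_0}=1$ for $\lambda_0:=\Diag(w_0,I_2)$.

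For that step I would use the explicit form of $L_{\lambda_0}$ from Lemma~\ref{subgroup L_beta for B unipotent}(b), parametrise $\Gamma_{\lambda_0}$ by $\gamma(g)=\Diag(1,g,1)$ with $g=\begin{pmatrix} g_1 & g_2\\ g_3 & g_4\end{pmatrix}\in\GL_2(\F_q)$ as in Lemma~\ref{reprsentatives for set Delta_beta}, and compute the subgroup $\gamma(g)^{-1}L_{\lambda_0}\gamma(g)\cap(S\cdot U\cdot\cO_2^\times)$; this is the same style of matrix bookkeeping already carried out in the proofs of Propositions~\ref{reduction to one gamma for split semisimple and unipotent-1} and~\ref{First proposition for non-split semisimple with central character involving m_0}. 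On this subgroup I would then evaluate the two characters $\tilde{\phi}_C\otimes 1\otimes\chi$ and $(\tilde{\phi}_B\cdot\psi)^{\lambda_0^{-1}(24)\gamma^{-1}}$, writing $p_{22}-p_{11}=\varpi t$, and compare the results coefficient by coefficient in the free parameters $\bar p_{12},\bar p_{13},\bar p_{14},\bar t$.

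The comparison should yield two things: the central-character identity $\omega_\pi(1+\varpi\tilde x)=\psi_0(2m_0x)$ for all $x\in\F_q$, which holds by hypothesis and matches $\omega_\sigma|_{1+\varpi\cO_2}$ precisely because $\tr(B)=2m_0$; and a system of equations in $(g_1,g_2,g_3,g_4)$ of the shape $c_1g_3+c_2g_4=0$, $c_1g_1+c_2g_2=1$, together with one quadratic relation whose right-hand side equals the off-diagonal entry $1$ of $B$. The essential point — and this is where the $\lambda_0$ case differs from Proposition~\ref{reduction to one gamma for split semisimple and unipotent-1}(b) — is that this right-hand side is \emph{nonzero}, so no contradiction with the irreducibility of $y^3-y-a$ (see Section~\ref{all embeddings}) arises; rather, the system has a unique solution $g\in\GL_2(\F_q)$, hence determines a single double coset in $\Gamma_{\lambda_0}$, at which $\dim W_{\gamma,\lambda_0}=1$, while $W_{\gamma,\lambda_0}=0$ for all other $\gamma$. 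Together with the vanishing of the $\lambda=I_4$ part this gives $m(\pi^{(24)}_{N,\psi},\sigma)=1$.

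The main obstacle is purely computational: carrying out the conjugation and intersection defining $\gamma(g)^{-1}L_{\lambda_0}\gamma(g)\cap(S\cdot U\cdot\cO_2^\times)$ accurately, and then organising the character comparison so that the resulting system is transparently seen to have exactly one solution. Conceptually nothing new beyond the techniques already used for $\mathcal{X}_1$ and $\mathcal{X}_3$ is required; the only genuine verification is the consistency of the quadratic constraint, which is forced by the nonvanishing of the nilpotent part of $B$ and is exactly what prevents the $\lambda_0$-contribution from collapsing to zero.
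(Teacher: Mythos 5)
Your proposal mirrors the paper's argument precisely: reduce via Lemma~\ref{technical lemma for section-5-1}, Lemma~\ref{4.13 lemma for coset representatives of gamma}(b), and Lemma~\ref{Hom_H_lambda}; kill the $\lambda=I_4$ summand by Proposition~\ref{reduction to one gamma for split semisimple and unipotent-1}(a); then for $\lambda_0=\Diag(w_0,I_2)$ compute $\gamma(g)^{-1}L_{\lambda_0}\gamma(g)\cap(S\cdot U\cdot\cO_2^\times)$ using Lemma~\ref{subgroup L_beta for B unipotent}(b) and Lemma~\ref{reprsentatives for set Delta_beta}, compare characters, and conclude a unique solvable system in $g$ because the quadratic constraint now has right-hand side $1$. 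This is exactly the paper's proof, including the central role of the nonvanishing off-diagonal entry of $B$ in making the quadratic consistent rather than contradictory with the irreducibility of $y^3-y-a$.
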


\begin{proof}
Write $B=\begin{pmatrix}
    m_0 & 1\\
    0 & m_0
\end{pmatrix}\in \mathcal{X}^0_2$.
Using Lemma \ref{technical lemma for section-5-1}, Lemma \ref{Hom_H_lambda} and Proposition \ref{reduction to one gamma for split semisimple and unipotent-1} $(a)$, we have
\begin{equation}\label{4.57}
\Hom_{\Delta \GL_2(\cO_2) \cdot N}(\pi^{(24)},\sigma\otimes\psi)
\cong 
W_{\gamma,\lambda}
\end{equation}
where $\lambda=\Diag(w_0, I_2)$.
Using $L_{\lambda}$ from Lemma \ref{subgroup L_beta for B unipotent} and $\Gamma_{\lambda}$ from Lemma \ref{reprsentatives for set Delta_beta}, for $\gamma = \gamma (g)$ where $g=\begin{pmatrix}
     g_1 & g_2\\
     g_3 & g_4
 \end{pmatrix}$,  the subgroup $\gamma^{-1}L_{\lambda}\gamma \cap S\cdot U\cdot \cO_2^\times$ is given by 
$$
\left\lbrace
Y := \begin{pmatrix}
p_{11} & \varpi \tilde{g}_3p_{13}+\varpi \tilde{g}_1p_{14} & \varpi \tilde{g}_2p_{14}+\varpi \tilde{g}_4p_{13} & \varpi p_{12}\\
0 & \frac{\tilde{g}_2\tilde{g}_3p_{22}-\tilde{g}_1\tilde{g}_4p_{11}-\varpi \tilde{g}_3\tilde{g}_4p_{12}}{\tilde{g}_2\tilde{g}_3-\tilde{g}_1\tilde{g}_4} & \frac{\tilde{g}_2\tilde{g}_4(p_{22}-p_{11})-\varpi \tilde{g}_4^2p_{12}}{\tilde{g}_2\tilde{g}_3-\tilde{g}_1\tilde{g}_4} & 0\\
0 & \frac{-\tilde{g}_1\tilde{g}_3(p_{22}-p_{11})+\varpi \tilde{g}_3^2p_{12}}{\tilde{g}_2\tilde{g}_3-\tilde{g}_1\tilde{g}_4} & \frac{\tilde{g}_2\tilde{g}_3p_{11}-\tilde{g}_1\tilde{g}_4p_{22}+\varpi \tilde{g}_3\tilde{g}_4p_{12}}{\tilde{g}_2\tilde{g}_3-\tilde{g}_1\tilde{g}_4} & 0\\
0 &  0 &  0 & p_{22}
\end{pmatrix}
~\middle |~
\begin{matrix}
     p_{22}-p_{11}\in \varpi \cO_2,\\
     p_{11}\in \cO_2^\times,
     \\
     p_{ij}\in \cO_2 \text{ for } i\neq j
\end{matrix}\right\rbrace.
$$
Write $p_{22}-p_{11}=\varpi t\in \varpi \cO_2$. 
Then
\begin{equation}\label{action of character of LHS of last gamma for split non-semisimple}
(\tilde{\phi}_B\cdot \psi)^{\lambda^{-1}(24)\gamma^{-1}}(Y)=\omega_{\sigma}(p_{11})\psi_0(m_0\bar{t})\psi_0(\bar{p}_{14})\psi_0(\bar{p}_{12}).
\end{equation}
Recall the matrix $C$, as given in \eqref{matriX C}. 
Then
{\small{\begin{align}\label{action of character of RHS of last gamma for split non-semisimple}
(\tilde{\phi}_C\otimes 1\otimes \chi)(Y)= 
& \omega_{\pi}(p_{11})\cdot \chi(1+\varpi t)\cdot \psi_0\left(\varpi (c_1g_3+c_2g_4)\bar{p}_{13}+\varpi (c_1g_1+c_2g_2)\bar{p}_{14}\right)
\nonumber
\\
& 
\cdot \psi_0\left(\left(c_0+c_2+\frac{c_1g_2g_4-(c_1+c_2a)g_1g_3}{g_2g_3-g_1g_4}\right)\bar{t}
+\left(\frac{ (c_1+c_2a)g_3^2-c_1g_4^2}{g_2g_3-g_1g_4}\right)\bar{p}_{12}\right).
\end{align}
}}
Using \eqref{action of character of LHS of last gamma for split non-semisimple} and \eqref{action of character of RHS of last gamma for split non-semisimple}, 
$W_{\gamma, \lambda}\neq 0$ if and only if 
\begin{enumerate}[label = (\roman*)]
\item $\omega_{\pi}(1+\varpi \tilde{x})=\psi_0(2m_0x)$ which is an assumption,
\item $c_1g_3+c_2g_4=0$, \hspace{0.3 cm}
$c_1g_1+c_2g_2=1$, \hspace{0.3 cm}
$\dfrac{(c_1+c_2a)g_3^2-c_1g_4^2}{g_2g_3-g_1g_4}=1$ and 
\\
$\psi_0\left(\left(c_0+c_2+\dfrac{c_1g_2g_4-(c_1+c_2a)g_1g_3}{g_2g_3-g_1g_4}\right)
\bar{t}\right)\chi(1+\varpi t)=\psi_0(m_0\bar{t})~\forall~t\in \cO_2$.
\end{enumerate}
The condition (ii) gives a unique $g\in \GL_2(\F_q)$ and hence a unique $\gamma \in \Gamma_{\lambda}$ for which $W_{\gamma, \lambda} \neq 0$.
Thus, using \eqref{4.57}, the proposition follows.
\qedhere
\end{proof}

\begin{corollary}
\label{corollary 3}
Let $m_0$ and $B$ be as in the previous proposition.
Then
\(
\Ind_{Z \cdot J^1_2}^{\GL_2(\cO_2)} (\omega_{\pi} \phi_B) \subseteq \pi^{(24)}_{N,\psi}.
\)
\end{corollary}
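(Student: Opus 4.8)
The plan is to argue exactly as in the proof of Corollary~\ref{corollary 1}, with Proposition~\ref{Third proposition for Q} now playing the role that Proposition~\ref{First proposition for non-split semisimple with central character involving m_0} played there. The essential point is that $Z\cdot J^1_2$ is a normal subgroup of $I(\phi_B)$ with finite abelian quotient, so $\Ind_{Z\cdot J^1_2}^{I(\phi_B)}(\omega_\pi\phi_B)$ is multiplicity-free and decomposes as the direct sum of precisely those linear characters of $I(\phi_B)$ that restrict to $\omega_\pi\phi_B$ on $Z\cdot J^1_2$. In particular each such character is an extension of the character $\phi_B$ of $J^1_2$, so Proposition~\ref{Third proposition for Q} applies to each summand.

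First I would fix an arbitrary character $\tilde\phi_B$ of $I(\phi_B)$ extending $\omega_\pi\phi_B$ and put $\sigma=\Ind_{I(\phi_B)}^{\GL_2(\cO_2)}\tilde\phi_B$; by Theorem~\ref{Construction of regualr reprresentation} this is an irreducible regular representation, and by Proposition~\ref{Third proposition for Q} we have $m(\pi^{(24)}_{N,\psi},\sigma)=1$, hence $\sigma$ embeds in $\pi^{(24)}_{N,\psi}$. Summing over all such extensions and invoking transitivity of induction gives
\[
\Ind_{Z\cdot J^1_2}^{\GL_2(\cO_2)}(\omega_\pi\phi_B)
\;\cong\; \Ind_{I(\phi_B)}^{\GL_2(\cO_2)}\!\left(\Ind_{Z\cdot J^1_2}^{I(\phi_B)}(\omega_\pi\phi_B)\right)
\;\cong\; \bigoplus_{\tilde\phi_B}\Ind_{I(\phi_B)}^{\GL_2(\cO_2)}\tilde\phi_B ,
\]
the sum running over the extensions above. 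Since distinct extensions give inequivalent irreducible representations (again Theorem~\ref{Construction of regualr reprresentation}), each of which sits inside $\pi^{(24)}_{N,\psi}$, their direct sum embeds in $\pi^{(24)}_{N,\psi}$, which is the assertion.

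I do not expect any real obstacle here: the content is entirely the verbatim analogue of Corollary~\ref{corollary 1} for the class $\mathcal{X}_2^0$ in place of $\mathcal{X}_1^0$. The one point I would take care to state explicitly is that the multiplicity-one conclusion of Proposition~\ref{Third proposition for Q} is uniform over all extensions of $\phi_B$ whose central character is $\omega_\pi$, so that every summand in the displayed decomposition genuinely contributes; but this is exactly what that proposition provides, so the corollary follows in two lines.
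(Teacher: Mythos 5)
Your proposal is correct and is precisely the argument the paper intends (the paper states this corollary without proof, and its implicit justification is the same transitivity-of-induction argument used explicitly for Corollary~\ref{corollary 1}). You correctly note that the relevant extensions $\tilde{\phi}_B$ are exactly those with central character $\omega_\pi$, and that for $B\in\mathcal{X}_2^0$ there is a single conjugacy class, which is why no direct sum over $\sim$ appears in the statement.
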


\begin{proposition}\label{Second proposition for Q}
Let $m_0 \in \F_q$ be such that $\psi_0(2m_0 x) = \omega_{\pi}(1+ \varpi \tilde{x})$ for all $x \in \F_q$, where $\tilde{x} \in \cO_2$ is any lift of $x$.
Let $B \in \mathcal{X}^0_3 := \{ A \in \mathcal{X}_3 : \tr(A)=2m_0 \}$.
Let $\tilde{\phi}_{B}$ be an extension to $I(\phi_B)$ of the character $\phi_{B}$ of $J^1_2$ and $\sigma = \Ind_{I(\phi_B)}^{\GL_2(\cO_2)} \tilde{\phi}_{B}$.
Then  $m(\pi^{(24)}_{N,\psi}, \sigma)=1$.
\end{proposition}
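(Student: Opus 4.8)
The plan is to follow the template already established in the proofs of Proposition~\ref{First proposition for non-split semisimple with central character involving m_0} and Proposition~\ref{Third proposition for Q}, adapting it to the case $B = \diag(m_1, n_1) \in \mathcal{X}_3^0$ with $m_1 + n_1 = 2m_0$ and $m_1 \neq n_1$. Write $B = \diag(m_1, n_1)$. By Proposition~\ref{reduction to one gamma for split semisimple and unipotent-1}$(b)$, of the three coset representatives $\lambda \in \Lambda$ listed in Lemma~\ref{4.13 lemma for coset representatives of gamma}$(c)$, only $\lambda = \Diag(u^-(1), I_2)$ can contribute; the summands for $\lambda = I_4$ and $\lambda = \Diag(w_0, I_2)$ vanish. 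Hence, combining Lemma~\ref{technical lemma for section-5-1}, Lemma~\ref{Hom_H_lambda}, and Lemma~\ref{reprsentatives for set Delta_beta}, we obtain
\[
\Hom_{\Delta \GL_2(\cO_2)\cdot N}(\pi^{(24)},\sigma\otimes\psi) \cong \underset{\gamma\in \Gamma_{\lambda}}{\bigoplus} W_{\gamma,\lambda}, \qquad \lambda = \Diag(u^-(1), I_2),
\]
where $\gamma = \gamma(g)$ ranges over $\{\Diag(1,g,1) : g \in \GL_2(\F_q)\}$.

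Next I would compute the subgroup $\gamma^{-1} L_{\lambda} \gamma \cap S\cdot U\cdot \cO_2^\times$ explicitly, using the description of $L_{\lambda}$ for $\lambda = \Diag(u^-(1), I_2)$ from Lemma~\ref{subgroup L_beta for B split semisimple}$(c)$ and conjugating by $\gamma(g)$ with $g = \left(\begin{smallmatrix} g_1 & g_2 \\ g_3 & g_4\end{smallmatrix}\right)$. Writing $p_{22} - p_{11} = \varpi t$, I would then evaluate the two characters $(\tilde{\phi}_B \cdot \psi)^{\lambda^{-1}(24)\gamma^{-1}}(Y)$ and $(\tilde{\phi}_C \otimes 1 \otimes \chi)(Y)$ on a generic element $Y$ of this subgroup. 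On the left side, because $B = \diag(m_1, n_1)$ is diagonal, the character $\tilde{\phi}_B$ restricted to the relevant unipotent-type coordinates will pick out $\psi_0(m_1 \bar t)$ (and possibly a $\psi_0$ of one of the off-diagonal variables $p_{12}, p_{13}, p_{14}$), times $\omega_\sigma(p_{11})$. On the right side, exactly as in \eqref{action of character in RHS of strongly cuspidal} and \eqref{action of character of RHS of last gamma for split non-semisimple}, the coordinates $\bar p_{13}, \bar p_{14}$ will carry coefficients linear in $c_1 g_i + c_2 g_j$, the variable $\bar t$ will carry $c_0 + c_2$ plus a rational expression in the $g_i$'s, and $\bar p_{12}$ will carry a quadratic expression in the $g_i$'s over $\det g$, all multiplied by $\omega_\pi(p_{11}) \chi(1 + \varpi t)$.

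Equating the two characters then forces: first, the central-character condition $\omega_\pi(1 + \varpi \tilde x) = \psi_0(2m_0 x)$, which is the hypothesis (using $\tr B = 2m_0$); second, two linear conditions in $g$ of the form $c_1 g_i + c_2 g_j = 0$ and $c_1 g_k + c_2 g_\ell = 1$; third, a quadratic condition matching the $\bar p_{12}$-coefficient to $1$ (or whatever the diagonal entry of $B$ dictates); and fourth, a condition on the $\bar t$-coefficient pinning down $\chi$ on $1 + \varpi\cO_2$ in terms of $c_0, c_2, g$. I expect the two linear conditions plus the quadratic one to determine $g \in \GL_2(\F_q)$ uniquely — this is the crux, and it should go through by the same elimination used to derive the contradictions in \eqref{3 equations -2-1} and \eqref{3 equations -2}, except that here (with $B$ diagonal rather than Jordan, and with the specific trace constraint) the system is consistent and has a single solution rather than none; one must check the candidate $g$ is invertible and that the residual $\bar t$-condition is automatically satisfied given the central-character hypothesis. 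This yields a unique $\gamma \in \Gamma_\lambda$ with $\dim W_{\gamma,\lambda} = 1$ and all others zero, so $m(\pi^{(24)}_{N,\psi}, \sigma) = 1$. The main obstacle is the bookkeeping in the conjugated subgroup and verifying that the linear-plus-quadratic system in $g$ has exactly one solution in $\GL_2(\F_q)$; I would handle this by solving the linear part first to express $g_2, g_4$ (say) in terms of $g_1, g_3$ and $c_1, c_2$, then substituting into the quadratic to get a single value, checking $\det g \neq 0$ using $c \in \F_{q^3} \smallsetminus \F_q$ (so $c_1, c_2$ are not both zero and $C$ is regular).
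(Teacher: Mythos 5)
Your overall plan matches the paper's proof: reduce to $\lambda = \Diag(u^-(1), I_2)$ via Proposition~\ref{reduction to one gamma for split semisimple and unipotent-1}$(b)$, conjugate $L_\lambda$ from Lemma~\ref{subgroup L_beta for B split semisimple}$(c)$ by $\gamma(g)$, evaluate both characters, and isolate the unique $g$. However, there is a genuine error in your prediction of how the uniqueness of $g$ arises, and it would leave a gap if you carried out the plan as written.

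You assert that the two linear conditions plus the single quadratic condition determine $g$ uniquely, and that the residual $\bar t$-condition is ``automatically satisfied given the central-character hypothesis.'' Neither claim is right here. First, in this case both linear conditions turn out to be $c_1 g_1 + c_2 g_2 = 1$ and $c_1 g_3 + c_2 g_4 = 1$ (both equal to $1$, not one $=0$ and one $=1$); this places $(g_1,g_2)$ and $(g_3,g_4)$ on the same affine line, leaving a two-parameter family. Second, the quadratic $\bar p_{12}$-condition is $\frac{c_1(g_2-g_4)^2-(c_1+c_2a)(g_1-g_3)^2}{g_2g_3-g_1g_4}=n-m$. Since $(g_1-g_3,\,g_2-g_4)$ is proportional to $(c_2,-c_1)$, this condition depends only on $s := g_1-g_3$ and determines $s$ (using the irreducibility of $y^3-y-a$ to see the relevant cubic coefficient is nonzero, and $m\neq n$ to get $s\neq 0$, which also gives $\det g\neq 0$). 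That still leaves one free parameter, say $g_1+g_3$. It is the fourth condition — the $\bar t$-coefficient equation, where $\chi|_{1+\varpi\cO_2}$ is fixed by hypothesis and corresponds to a fixed $\delta\in\F_q$ — that supplies the final linear constraint in $g$ and pins down the remaining parameter. So the $\bar t$-condition is not a consistency check that comes for free; it is essential for uniqueness. If you skip verifying it, you would be left with a one-parameter family of candidate $\gamma$'s and could not conclude $m(\pi^{(24)}_{N,\psi},\sigma)=1$. Once you incorporate this, the rest of your outline goes through exactly as in the paper.
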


\begin{proof}
Write $B=\diag(m,n)$ with $m\neq m_0$ and $n=2m_0-m$.
Using Lemma \ref{technical lemma for section-5-1}, Lemma \ref{Hom_H_lambda} and Proposition \ref{reduction to one gamma for split semisimple and unipotent-1} $(b)$,  we get
\begin{align}\label{For second induction and principal series-1}
\Hom_{\Delta \GL_2(\cO_2)\cdot N}(\pi^{(24)},\sigma\otimes\psi)
\cong \underset{\gamma\in \Gamma_{\lambda}}{\bigoplus}W_{\gamma, \lambda}
\end{align}
where $\lambda=\Diag(u^{-}(1), I_2)$.
Using $L_{\lambda}$ from Lemma \ref{subgroup L_beta for B split semisimple} and $\Gamma_{\lambda}$ from Lemma \ref{reprsentatives for set Delta_beta}, if $\gamma = \gamma(g)$ for $g=\begin{pmatrix}
     g_1 & g_2\\
     g_3 & g_4
 \end{pmatrix}$ then the subgroup $\gamma^{-1}L_{\lambda}\gamma \cap S\cdot U\cdot \cO_2^\times$ consists of matrices of the form
\begin{equation} \label{matrix Q_0}   
Y :  = 
\begin{pmatrix}
Y_1 & Y_2 \\
0 &  Y_3
\end{pmatrix}, 
\end{equation}
where 
$Y_2 : = 
\begin{pmatrix}
\varpi p_{12} & 0 & 0
\end{pmatrix}^{T}\in M_{3\times 1}(\cO_2),~
Y_3 : = 
\begin{pmatrix}
p_{22}
\end{pmatrix}\in \GL_1(\cO_2)$
and $ Y_1 : =\begin{pmatrix}
y_{ij}
\end{pmatrix}\in \GL_3(\cO_2) $ as described below.
Write $D := \tilde{g}_2 \tilde{g}_3 - \tilde{g}_1 \tilde{g}_4$,
{\small{\[
\begin{aligned}
y_{11} &= p_{11} \text{ such that $p_{22}-p_{11}\in \varpi \cO_2$,} \quad y_{12} = \tilde{g}_3 p_{13}+ \tilde{g}_1 p_{14}, \quad y_{13} = \tilde{g}_4 p_{13} + \tilde{g}_2 p_{14},
\\
y_{21} & =0, \quad y_{22} = \frac{(\tilde{g}_2 \tilde{g}_3 - \tilde{g}_3 \tilde{g}_4) p_{11} + (\tilde{g}_3 \tilde{g}_4 - \tilde{g}_1 \tilde{g}_4) p_{22} + \varpi (\tilde{g}_1 \tilde{g}_2 - \tilde{g}_2 \tilde{g}_3 + \tilde{g}_3 \tilde{g}_4 - \tilde{g}_1 \tilde{g}_4) p_{12}}{D}, \\
y_{23} & = \frac{\tilde{g}_4^2 - \tilde{g}_2 \tilde{g}_4 (p_{22} - p_{11}) + \varpi p_{12} (\tilde{g}_2 - \tilde{g}_4)^2}{D}, \quad y_{31}= 0, \quad y_{32} = \frac{\tilde{g}_3^2 - \tilde{g}_1 \tilde{g}_3 (p_{11} - p_{22}) - \varpi p_{12} (\tilde{g}_1 - \tilde{g}_3)^2}{D},
\\
y_{33} & = \frac{(\tilde{g}_2 \tilde{g}_3 - \tilde{g}_3 \tilde{g}_4) p_{11} + (\tilde{g}_3 \tilde{g}_4 - \tilde{g}_1 \tilde{g}_4) p_{22} - \varpi (\tilde{g}_1 \tilde{g}_2 - \tilde{g}_2 \tilde{g}_3 + \tilde{g}_3 \tilde{g}_4 - \tilde{g}_1 \tilde{g}_4) p_{12}}{D}.
\end{aligned}
\]
}}
Write $p_{22}-p_{11}=\varpi t\in \varpi\cO_2$.
Then 
\begin{equation}\label{action of character computation of LHS for last gamma for split semisimple-1}
(\tilde{\phi}_B\psi)^{\lambda^{-1}(24)\gamma^{-1}}(Y) =\omega_{\sigma}(p_{11})\psi_0(n\bar{t})\psi_0(\bar{p}_{14})\psi_0(\bar{p}_{13})\psi_0((n-m)\bar{p}_{12}).
\end{equation} 
Recall the matrix $C$ as given in \eqref{matriX C}. 
Then
{\small{\begin{align}\label{action of character computation of RHS for last gamma for split semisimple-1}
(\tilde{\phi}_C\otimes 1\otimes \chi) (Y) =
&
~\omega_{\pi}(p_{11})\cdot \chi(1+\varpi t)\cdot \psi_0\left((c_1g_3+c_2g_4)\bar{p}_{13}+(c_1g_1+c_2g_2)\bar{p}_{14}\right)
\nonumber
\\
& \cdot  \psi_0\left(\left((c_0+c_2)+\frac{c_1(g_4^2-g_2g_4)-(c_1+c_2a)(g_3^2-g_1g_3)}{(g_2g_3-g_1g_4)}\right)\bar{t}\right)
\nonumber
\\
& \cdot  \psi_0\left(\left(\frac{c_1(g_2-g_4)^2-(c_1+c_2a)(g_1-g_3)^2}{g_2g_3-g_1g_4}\right) \bar{p}_{12}\right).
\end{align}
}}
Now using \eqref{action of character computation of LHS for last gamma for split semisimple-1} and \eqref{action of character computation of RHS for last gamma for split semisimple-1} we get that $W_{\gamma, \lambda}\neq 0$ if and only if 
\begin{enumerate}[label =(\roman*)]
\item $\omega_{\pi}(1+\varpi \tilde{x})=\psi_0(2m_0x)$ which is an assumption,
\item  $c_1g_3+c_2g_4=1$, \hspace{0.3 cm}
$c_1g_1+c_2g_2=1$, \hspace{0.3 cm} $\frac{c_1(g_2-g_4)^2-(c_1+c_2a)(g_1-g_3)^2}{g_2g_3-g_1g_4}=n-m$ and
\\
$\psi_0\left(\left(c_0+c_2 +\frac{c_1(g_4^2-g_2g_4)-(c_1+c_2a)(g_3^2-g_1g_3)}{(g_2g_3-g_1g_4)}\right)
\bar{t}\right)\chi(1+\varpi t)=\psi_0( n\bar{t})$ for all  
$t\in \cO_2$.
\end{enumerate}
The condition (ii) gives a unique $g\in \GL_2(\F_q)$ and hence a unique $\gamma \in \Gamma_{\lambda}$ for which $W_{\gamma, \lambda} \neq 0$.
Therefore, using \eqref{For second induction and principal series-1}, the proposition follows.
\qedhere
\end{proof}

\begin{corollary}\label{corollary 2}
Let $m_0$ and $B$ be as in the previous proposition.
Then
\(
 \underset{B \in \mathcal{X}_3^0 / \sim}{\bigoplus} \Ind_{Z \cdot J^1_2}^{\GL_2(\cO_2)} (\omega_{\pi} \phi_B) \subseteq \pi^{(24)}_{N,\psi},
\)
where $A_1 \sim A_2 \iff A_1$ and  $A_2$ are similar matrices.
\end{corollary}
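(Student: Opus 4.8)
The plan is to obtain Corollary \ref{corollary 2} as a purely formal consequence of Proposition \ref{Second proposition for Q}, following verbatim the argument used for Corollary \ref{corollary 1}; the one extra ingredient is the disjointness of the pieces attached to non-similar $B$'s.

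First I would fix $B \in \mathcal{X}_3^0$. Since $\tr(B) = 2m_0$, one has $\phi_B\big((1+\varpi\tilde t)I_2\big) = \psi_0(2m_0\bar t) = \omega_\pi(1+\varpi\tilde t)$ by the hypothesis relating $m_0$ and $\omega_\pi$, so $\phi_B$ and $\omega_\pi$ agree on $Z \cap J_2^1 = \{(1+\varpi\tilde t)I_2 : t \in \cO_2\}$ and $\omega_\pi\phi_B$ is a well-defined character of $Z \cdot J_2^1$. Then $\Ind_{Z \cdot J_2^1}^{I(\phi_B)}(\omega_\pi\phi_B)$ is a direct sum of the $[I(\phi_B) : Z \cdot J_2^1]$ pairwise distinct linear characters of $I(\phi_B)$ restricting to $\omega_\pi\phi_B$ on $Z \cdot J_2^1$, each of which is in particular an extension $\tilde\phi_B$ of $\phi_B$ from $J_2^1$ to $I(\phi_B)$. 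For every one of these extensions Proposition \ref{Second proposition for Q} applies and gives $m(\pi^{(24)}_{N,\psi}, \sigma) = 1$ for $\sigma = \Ind_{I(\phi_B)}^{\GL_2(\cO_2)}\tilde\phi_B$, so $\sigma$ occurs in $\pi^{(24)}_{N,\psi}$. These $\sigma$ are pairwise non-isomorphic irreducible regular representations by Theorem \ref{Construction of regualr reprresentation}, hence by transitivity of induction
\[
\Ind_{Z \cdot J_2^1}^{\GL_2(\cO_2)}(\omega_\pi\phi_B) \;\cong\; \Ind_{I(\phi_B)}^{\GL_2(\cO_2)}\Big( \Ind_{Z \cdot J_2^1}^{I(\phi_B)}(\omega_\pi\phi_B) \Big) \;=\; \bigoplus_{\tilde\phi_B} \Ind_{I(\phi_B)}^{\GL_2(\cO_2)}\tilde\phi_B \;\subseteq\; \pi^{(24)}_{N,\psi}.
\]

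Next I would check that the sum over $B \in \mathcal{X}_3^0 / \sim$ is direct inside $\pi^{(24)}_{N,\psi}$. Restricting $\Ind_{Z \cdot J_2^1}^{\GL_2(\cO_2)}(\omega_\pi\phi_B)$ to $K_2^1 = J_2^1$ and using Mackey theory, every irreducible constituent $\sigma$ is a regular representation whose restriction to $K_2^1 \cong M_2(\F_q)$ contains the regular character $\phi_{B'}$ for some $B'$ conjugate to $B$ under $\GL_2(\F_q)$. By the Remark following Theorem \ref{Construction of regualr reprresentation}, a regular representation whose restriction to $K_2^1$ contains $\phi_B$ contains $\phi_{B''}$ if and only if $B'' \sim B$; hence for non-similar $B_1, B_2 \in \mathcal{X}_3^0$ the representations $\Ind_{Z \cdot J_2^1}^{\GL_2(\cO_2)}(\omega_\pi\phi_{B_1})$ and $\Ind_{Z \cdot J_2^1}^{\GL_2(\cO_2)}(\omega_\pi\phi_{B_2})$ share no irreducible constituent. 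The corresponding subrepresentations of $\pi^{(24)}_{N,\psi}$ produced above are therefore independent, giving
\[
\bigoplus_{B \in \mathcal{X}_3^0 / \sim} \Ind_{Z \cdot J_2^1}^{\GL_2(\cO_2)}(\omega_\pi\phi_B) \;\subseteq\; \pi^{(24)}_{N,\psi}.
\]

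Since Proposition \ref{Second proposition for Q} is already in hand, I do not expect any serious obstacle: the argument is formal. The only point demanding a little care is that the multiplicity-one statement must be invoked for \emph{every} extension $\tilde\phi_B$ of $\omega_\pi\phi_B$ rather than for a single chosen one, which is why I first split $\Ind_{Z \cdot J_2^1}^{I(\phi_B)}(\omega_\pi\phi_B)$ into linear characters before inducing up to $\GL_2(\cO_2)$; after that, the disjointness bookkeeping for distinct $B$ is immediate from the classification of regular representations of $\GL_2(\cO_2)$.
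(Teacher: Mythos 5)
Your proof is correct and follows exactly the route the paper takes: the paper omits a written proof of Corollary~\ref{corollary 2}, expecting it to follow verbatim the argument given for Corollary~\ref{corollary 1}, which is precisely what you do — split $\Ind_{Z\cdot J_2^1}^{I(\phi_B)}(\omega_\pi\phi_B)$ into linear characters, invoke Proposition~\ref{Second proposition for Q} for each extension, and use transitivity of induction. Your explicit justification that the sum over non-similar $B \in \mathcal{X}_3^0$ is direct (via the Remark after Theorem~\ref{Construction of regualr reprresentation}) is a useful addition that the paper leaves implicit even in the proof of Corollary~\ref{corollary 1}.
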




Now we prove Theorem \ref{induction from Q-intro} mentioned in the introduction.
\begin{proof}[Proof of Theorem \ref{induction from Q-intro}]
By Proposition \ref{pi^I, pi_I_w have zero Whittaker models}, $\pi_{N, \psi} \cong \pi^{(24)}_{N, \psi}$.
By Corollary ~\ref{corollary 1}, Corollary \ref{corollary 3} and   Corollary \ref{corollary 2}  we have 
\[
\underset{B \in \mathcal{X}_i^0 / \sim}{\bigoplus} \Ind_{Z \cdot J^1_2}^{\GL_2(\cO_2)} (\omega_{\pi} \phi_B) \subseteq \pi^{(24)}_{N,\psi} ~~~~~~\text{ for } ~~~i=1,2,3.
\] 
Therefore 
\[
\underset{B \in (\mathcal{X}^0_1  \bigcup \mathcal{X}^0_2  \bigcup \mathcal{X}^0_3) / \sim}{\bigoplus} \Ind_{Z \cdot J^1_2}^{\GL_2(\cO_2)} (\omega_{\pi} \phi_B) \subseteq \pi^{(24)}_{N, \psi}.
\]
Observe that
\[
\underset{B \in (\mathcal{X}^0_1  \bigcup \mathcal{X}^0_2  \bigcup \mathcal{X}^0_3) / \sim}{\sum} \dim \left(\Ind_{Z \cdot J^1_2}^{\GL_2(\cO_2)} (\omega_{\pi} \phi_B) \right)
= q^2(q^2-1)
= \dim(\pi^{(24)}_{N,\psi}),
\]
where the second equality follows from Theorem \ref{dim of pi^24_N,psi}.
Note that $(\mathcal{X}^0_1  \bigcup \mathcal{X}^0_2  \bigcup \mathcal{X}^0_3) / \sim$ is a set of representative of conjugacy classes of regular element of $M_2(\F_q)$ with trace $2m_0$.
Hence $\pi_{N, \psi} \cong \bigoplus_{B} \Ind_{Z \cdot J^1_2}^{\GL_2(\cO_2)} (\omega_{\pi} \phi_B)$, where $B$ varies over the set of equivalence classes of all regular elements of $M_2(\F_q)$ with trace $2m_0$, 
proving Theorem \ref{induction from Q-intro}.
\end{proof}

The following corollaries are easy consequences of Theorem \ref{induction from Q-intro}.
\begin{corollary}
Let $\pi$ be as in Theorem \ref{induction from Q-intro}.
Then
\begin{enumerate}[label = ($\alph*$)]
\item The degenerate Whittaker space $\pi_{N, \psi}$ consists of all the regular representations of $\GL_2(\cO_2)$ with central character as $\omega_{\pi}$.
\item $\pi_{N, \psi}$ is a multiplicity-free representation. 
\end{enumerate}  
\end{corollary}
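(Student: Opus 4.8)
The plan is to unwind the description of $\pi_{N,\psi}$ furnished by Theorem~\ref{induction from Q-intro} into a direct sum of irreducible regular representations, and then to check that this list coincides with the list of all regular representations of $\GL_2(\cO_2)$ having central character $\omega_\pi$. Fix a regular $B\in M_2(\F_q)$ with $\tr(B)=2m_0$, so that $\Ind_{Z\cdot J^1_2}^{\GL_2(\cO_2)}(\omega_\pi\phi_B)$ is one of the summands in Theorem~\ref{induction from Q-intro}. Since $Z\cdot J^1_2\subseteq I(\phi_B)$ and $\phi_B$ extends to $I(\phi_B)$ by Theorem~\ref{Construction of regualr reprresentation}(1), the character $\omega_\pi\phi_B$ of $Z\cdot J^1_2$ — well defined because $\phi_B$ and $\omega_\pi$ agree on $Z\cap J^1_2=\{(1+\varpi\tilde x)I_2:x\in\F_q\}$, by the defining relation $\psi_0(2m_0x)=\omega_\pi(1+\varpi\tilde x)$ — also extends to $I(\phi_B)$; moreover every character $\tilde\phi_B$ of $I(\phi_B)$ restricting to $\omega_\pi\phi_B$ occurs in $\Ind_{Z\cdot J^1_2}^{I(\phi_B)}(\omega_\pi\phi_B)$ with multiplicity one by Frobenius reciprocity, and these exhaust the constituents by a dimension count, so $\Ind_{Z\cdot J^1_2}^{I(\phi_B)}(\omega_\pi\phi_B)\cong\bigoplus_{\tilde\phi_B}\tilde\phi_B$. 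Transitivity of induction then gives
\[
\Ind_{Z\cdot J^1_2}^{\GL_2(\cO_2)}(\omega_\pi\phi_B)\;\cong\;\bigoplus_{\tilde\phi_B}\Ind_{I(\phi_B)}^{\GL_2(\cO_2)}\tilde\phi_B,
\]
and each summand on the right is an irreducible regular representation of $\GL_2(\cO_2)$ (Theorem~\ref{Construction of regualr reprresentation}(1)) with central character $\tilde\phi_B|_Z=\omega_\pi$. Summing over $B$, $\pi_{N,\psi}$ is a direct sum of irreducible regular representations of $\GL_2(\cO_2)$, all with central character $\omega_\pi$.

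I then establish the reverse inclusion needed for $(a)$: every irreducible regular representation $\sigma$ of $\GL_2(\cO_2)$ with central character $\omega_\pi$ occurs in $\pi_{N,\psi}$. By Section~\ref{reg reps of GL_2} we may write $\sigma\cong\Ind_{I(\phi_{B'})}^{\GL_2(\cO_2)}\tilde\phi_{B'}$ for some regular $B'\in M_2(\F_q)$ and some extension $\tilde\phi_{B'}$ of $\phi_{B'}$. Restricting the central character of $\sigma$ to $1+\varpi\cO_2\subseteq J^1_2$ gives $\omega_\pi(1+\varpi\tilde x)=\phi_{B'}\big((1+\varpi\tilde x)I_2\big)=\psi_0(\tr(B')\,x)$ for all $x\in\F_q$; since $\psi_0$ is nontrivial on $\F_q$ this forces $\tr(B')=2m_0$. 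Hence the conjugacy class of $B'$ is one of those appearing in Theorem~\ref{induction from Q-intro}, $\tilde\phi_{B'}$ is one of the characters of $I(\phi_{B'})$ extending $\omega_\pi\phi_{B'}$, and so $\sigma$ is a constituent of $\Ind_{Z\cdot J^1_2}^{\GL_2(\cO_2)}(\omega_\pi\phi_{B'})$, hence of $\pi_{N,\psi}$. Together with the first paragraph, this proves $(a)$.

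For $(b)$ it remains to check that the constituents obtained above are pairwise non-isomorphic. If $B_1,B_2$ are non-conjugate, then $\Ind_{I(\phi_{B_1})}^{\GL_2(\cO_2)}\tilde\phi_{B_1}\not\cong\Ind_{I(\phi_{B_2})}^{\GL_2(\cO_2)}\tilde\phi_{B_2}$, because the conjugacy class of $B_i$ is recovered from the regular characters occurring in the restriction to $K^1_2$ (the Remark following Theorem~\ref{Construction of regualr reprresentation}). For a fixed $B$ and two distinct extensions $\tilde\phi_B\neq\tilde\phi_B'$ of $\phi_B$ to $I(\phi_B)$: since $J^1_2\trianglelefteq\GL_2(\cO_2)$ and $\tilde\phi_B|_{J^1_2}=\tilde\phi_B'|_{J^1_2}=\phi_B$, in the Mackey decomposition of $\Hom_{\GL_2(\cO_2)}\big(\Ind_{I(\phi_B)}^{\GL_2(\cO_2)}\tilde\phi_B,\,\Ind_{I(\phi_B)}^{\GL_2(\cO_2)}\tilde\phi_B'\big)$ over $g\in I(\phi_B)\backslash\GL_2(\cO_2)/I(\phi_B)$ a term can be nonzero only if the $g$-conjugate of $\phi_B$ equals $\phi_B$, i.e.\ only for $g\in I(\phi_B)$ (the trivial double coset), where it contributes $\Hom_{I(\phi_B)}(\tilde\phi_B,\tilde\phi_B')=0$. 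Hence all constituents of $\pi_{N,\psi}$ are distinct, i.e.\ $\pi_{N,\psi}$ is multiplicity-free. The only step requiring any care is this last Mackey computation, and even it is routine once one invokes normality of $J^1_2$ and the fact that $I(\phi_B)$ is exactly the inertia subgroup of $\phi_B$; everything else is transitivity of induction, Frobenius reciprocity, and bookkeeping with central characters.
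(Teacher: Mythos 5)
Your proof is correct and follows essentially the same approach as the paper, whose own proof is a two-line sketch (part (a) by citing the classification in Section~\ref{reg reps of GL_2}, part (b) by invoking multiplicity-freeness of each $\Ind_{Z\cdot J^1_2}^{\GL_2(\cO_2)}(\omega_\pi\phi_B)$). You fill in the details the paper leaves implicit, notably the reverse inclusion in (a) via the forced identity $\tr(B')=2m_0$, and in (b) the observation that constituents coming from non-conjugate $B$'s are distinct because the conjugacy class of $B$ is recovered from the restriction to $K^1_2$.
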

\begin{proof}
Part $(a)$ follows from Section \ref{reg reps of GL_2}.
Part $(b)$ follows from the fact that $\Ind_{Z \cdot J^1_2}^{\GL_2(\cO_2)} (\omega_{\pi} \phi_B)$ is multiplicity-free for a regular $B$. 
\end{proof}
\begin{corollary}
Let $\pi_1 = \Ind_Q^{\GL_4(\cO_2)}(\rho_1 \otimes \chi_1)$ and $\pi_2 = \Ind_Q^{\GL_4(\cO_2)}(\rho_2 \otimes \chi_2)$, where $\rho_1, \rho_2$ are strongly cuspidal representations of $\GL_3(\cO_2)$ and $\chi_1, \chi_2$ are characters of $\cO_2^{\times}$ (as in Theorem \ref{induction from Q-intro}). If the central characters of $\pi_1$ and $\pi_2$ are the same, then $(\pi_1)_{N, \psi} \cong (\pi_2)_{N, \psi}$ as representations of $\GL_2(\cO_2)$. 
\end{corollary}
\begin{proof}
This follows from the fact that $(\pi_1)_{N, \psi}$ and $(\pi_2)_{N, \psi}$ both consist of all regular representations of $\GL_2(\cO_2)$ with central characters $\omega_{\pi_1}$ and $\omega_{\pi_2}$, respectively, which are assumed to be the same.   
\end{proof}

\section*{Acknowledgements}
The authors thank Dipendra Prasad for several helpful conversations and insightful comments. 
AP acknowledges the support of PMRF fellowship (1400772). 
SPP acknowledges the support of the SERB MATRICS grant (MTR/2022/000782).

\bibliographystyle{alpha}

\end{document}